\date{\today}
\newcommand{\ms}[1]{\mathscr{#1}}
\newcounter{prcounter}
\newcommand{\pause}{\setcounter{prcounter}{\value{enumi}}}
\newcommand{\resume}{\setcounter{enumi}{\value{prcounter}}}
\newcommand{\lf}{\left}
\newcommand{\ri}{\right}
\newcommand{\f}{\frac} 
\newcommand{\into}{\hookrightarrow}
\newcommand{\xto}[1]{\xrightarrow{#1}}
\newcommand{\iso}{\xrightarrow{\sim}}
\newcommand{\wh}{\widehat}
\DeclareMathOperator{\sgn}{sgn}
\DeclareMathOperator{\rank}{rank}
\DeclareMathOperator{\Supp}{Supp}
\DeclareMathOperator{\Aut}{Aut}
\DeclareMathOperator{\Int}{Int}
\DeclareMathOperator{\Out}{Out}
\DeclareMathOperator{\Stab}{Stab}
\DeclareMathOperator{\Gal}{Gal}
\DeclareMathOperator{\Res}{Res}
\DeclareMathOperator{\vol}{vol}
\DeclareMathOperator{\tr}{tr}
\newcommand{\id}{\mathrm{id}}
\DeclareMathOperator{\ch}{ch}
\DeclareMathOperator{\Lie}{\mathrm{Lie}}
\newcommand{\Ld}[1]{{}^L\!{#1}}
\newcommand{\m}[1]{\mathbf{#1}}
\newcommand{\mf}[1]{\mathfrak{#1}}
\newcommand{\mc}[1]{\mathcal{#1}}
\newcommand{\td}[1]{\tilde{#1}}
\newcommand{\C}{\mathbb C}
\newcommand{\R}{\mathbb R}
\newcommand{\Q}{\mathbb Q}
\newcommand{\N}{\mathbb N}
\newcommand{\Z}{\mathbb Z}
\newcommand{\F}{\mathbb F}
\newcommand{\A}{\mathbb A}
\newcommand{\SL}{\mathrm{SL}}
\newcommand{\Sp}{\mathrm{Sp}}
\newcommand{\GSp}{\mathrm{GSp}}
\newcommand{\SO}{\mathrm{SO}}
\newcommand{\GL}{\mathrm{GL}}
\newcommand{\Frob}{\mathrm{Frob}}
\newcommand{\Gm}{\mathbb G_m}
\newcommand{\eps}{\epsilon}
\newcommand{\om}{\omega}
\newcommand{\lb}{\lambda}
\newcommand{\Om}{\Omega}
\newcommand{\dif}[2]{\f{d#1}{d#2}}
\newcommand{\bs}{\backslash}
\newcommand{\1}{\m 1}
\newcommand{\ab}{\mathrm{ab}}
\newcommand{\adj}{\mathrm{ad}}
\newcommand{\scn}{\mathrm{sc}}
\newcommand{\rat}{\mathrm{rat}}
\newcommand{\disc}{\mathrm{disc}}
\newcommand{\cts}{\mathrm{cts}}
\newcommand{\spec}{\mathrm{spec}}
\newcommand{\geom}{\mathrm{geom}}
\newcommand{\el}{\mathrm{ell}}
\newcommand{\bad}{\mathrm{bad}}
\newcommand{\tam}{\mathrm{tam}}
\newcommand{\ur}{\mathrm{ur}}
\newcommand{\ssm}{\mathrm{ss}}
\newcommand{\can}{\mathrm{can}}
\newcommand{\reg}{\mathrm{reg}}
\newcommand{\pl}{\mathrm{pl}}
\newcommand{\wt}{\mathrm{wt}}
\newcommand{\lv}{\mathrm{lv}}
\newcommand{\der}{\mathrm{der}}
\newcommand{\qs}{\mathrm{qs}}
\newcommand{\temp}{\mathrm{temp}}
\newcommand{\ST}{\mathrm{ST}}
\DeclareMathOperator{\obs}{\mathrm{obs}}
\newcommand{\cusp}{\mathrm{cusp}}
\newtheorem{thm}{Theorem}[subsection]
\newtheorem{prop}[thm]{Proposition}
\newtheorem{cor}[thm]{Corollary}
\newtheorem{lem}[thm]{Lemma}
\theoremstyle{remark}
\newtheorem*{note}{Note}
\theoremstyle{definition}
\newtheorem*{dfn}{Definition}
\newcommand{\red}[1]{}
\title[Sato-Tate Equidistribution through the Stable Trace Formula]{Sato-Tate Equidistribution for Families of Automorphic Representations through the Stable Trace Formula}
\author{Rahul Dalal}
\begin{document}
\begin{abstract}
Shin and Templier studied families of automorphic representations with local restrictions: roughly, Archimedean components contained in a fixed $L$-packet of discrete series and non-Archimedean components ramified only up to a fixed level. They computed limiting statistics of local components as either the weight of the $L$-packet or level went to infinity. We extend their weight-aspect results to families where the Archimedean component is restricted to a single discrete-series representation instead of an entire $L$-packet. 

We do this by using a so-called ``hyperendoscopy'' version of the stable trace formula of Ferarri. The main technical difficulties are first, defining a version of hyperendoscopy that works for groups without simply connected derived subgroup and second, bounding the values of transfers of unramified functions. We also present an extension to non-cuspidal groups of Arthur's simple trace formula since it does not seem to appear elsewhere in the literature.
\end{abstract}

\maketitle
\tableofcontents

\section{Introduction}
\subsection{Context}
This writeup generalizes work in \cite{Shi12} and \cite{ST16} on equidistribution of local components of families of automorphic representations (see the summary next section). We roughly extend their weight-aspect to the case where the infinite component can be restricted to a single discrete series instead of an entire $L$-packet. Beyond the main result (theorem \ref{mainresult}), the methods used should also be of interest. In particular, we do somewhat explicit computations with Arthur's stable trace formula and develop techniques to deal with some practical issues that thereby arise. 

Generally, problems of statistics of families automorphic representations are interesting for a few potential reasons.
First, when interpreted classically, such statistics are information on the spectra of lattices in locally symmetric spaces. 

Second, they give so-called globalization results such as \cite[lem 6.2.2]{Art13} through probabilistic method-style arguments. These allow the construction of automorphic forms satisfying desired local conditions. This is important since a very standard technique in studying local representations is to find a global representation with the local representation as a component and then use global methods to study the global representation: see for example the classification in \cite{Art13} or the cohomology formula in \cite{Shi12a}. Globalization results were the motivation for \cite{Shi12}. 

Next, certain bounds on automorphic representations---in particular the generalized Ramanujan conjecture and what it says about the sizes of Fourier coefficients---have various bizarre, unexpected implications. These include some striking ones outside of number theory such as the original construction of expander graphs. See \cite{Sar05} for a review of this subject. As is common in analytic number theory, bounds on averages in families instead of bounds on individual representations are often good enough for these applications. Conveniently enough, average bounds over families are also directly provided by studying statistics. This seems to be the original motivation for studying the problem in \cite{ST16}.

As far as we know, this is the first work to apply the general stable trace formula to computing statistics of automorphic representations. A more common method seems to be using the non-invariant trace formula. This has the advantage of working for very general types of automorphic representations like Maas forms, but the disadvantage of requiring difficult explicit computations that create problems when dealing with general groups (as mentioned later, see \cite{FLM15} and \cite{FL18} for current progress removing this difficulty). One of the key insights of \cite{ST16} is that, for certain families, the nicer abstract properties of terms in the invariant trace formula simplify computations to the point where good error bounds can be derived even for very general groups. As a next step, the more powerful stable trace formula allows generalizing the class of more-easily-studied families. Here, we focus on a first example of distinguishing between elements of an $L$-packet at infinity. Another potential example could be families appearing in cohomologies of locally symmetric spaces---like the type studied in \cite{Ger19} but maybe coming from groups that are not anisotropic. The main trace formula term counting this family comes from endoscopic groups. 

While automorphic representations with components in the same $L$-packet are almost definitionally indistinguishable from the point of view of Galois representations and $L$-functions, they do differ in other important aspects. For example, a discrete series $L$-packet can contain both holomorphic and non-holomorphic discrete series as in the case of $\GSp_4$ (see \cite[\S3.2]{Sch17}). Breaking up $L$-packets is therefore useful in studying, for example, holomorphic Siegel modular forms. Breaking up $L$-packets can be similarly useful for accessing the forms corresponding specifically to the quaternionic discrete series from \cite{GW96}. 

We also hope that this work can serve as a blueprint for other statistics computations using the stable trace formula. We develop some practical methods to get around common difficulties that may arise:
\begin{itemize}
\item
In section \ref{Hformsection}, a version of the hyperendoscopy formula from \cite{Fer07} that works when groups without simply connected derived subgroup appear in hyperendoscopy 
\item
In section \ref{straceformsection}, a generalization of the simple trace formula in \cite{Art89} to non-cuspidal groups with fixed central character datum 
\item
In sections \ref{unramformulasection} and \ref{unramboundsection}, some computations and bounds on unramified transfers 
\end{itemize}
We also attempt to summarize the relevant endoscopy background, focusing mostly on computational practicalities.

We point out some relevant previous work: pseudocoefficients and their simplification of the trace formula were developed by Clozel and Delorme \cite{CD90} and Arthur \cite{Art89}. They were used to study statistics of families by Clozel \cite{Clo86}. The exact families studied and the setup to study them are of course a small modification from \cite{Shi12} and \cite{ST16}. The use of the stable trace formula is through the hyperendoscopy formula in \cite{Fer07}, although the results of \cite{Pen19} give a different potential strategy. The paper \cite{KWY18} solves this problem for $\GSp_4$ with far more explicit bounds through different methods. For a fuller history of this field of ``limit multiplicity''-type problems, see the introduction to \cite{FLM15}. As for using the theory of endoscopy to count automorphic representations, there are a few articles by Marshall and collaborators, such as \cite{Mar14}, \cite{MS19}, and \cite{Ger19}, that use endoscopic character identities to bound cohomology dimensions of symmetric spaces for certain unitary groups.

Finally, the results here should be compared to \cite{FLM15} and \cite{FL18} by Finis, Mueller and Lapid. These use the non-invariant trace formula to develop similar though much more general results. In particular, they show Shin and Templier's level aspect with the Archimedean component restricted to any set of positive measure in the unitary dual. The result is dependent on some technical estimates on intertwining operators that are satisfied for $\GL_n$ and $\SL_n$. A future work promises the estimates for most other groups. In addition, their methods do not currently deal with the weight aspect or give error bounds though they could presumably be pushed to do both.

\subsection{Summary}
\subsubsection{Shin-Templier's work} Let $G$ be a reductive group satisfying some technical conditions (described in section \ref{cndtns}). In \cite{ST16} building off \cite{Shi12}, Shin and Templier studied certain families of automorphic representations with level and weight restrictions:
\[
\mc F_{U, \xi} = \{\pi \in \mc{AR}_\disc(G) : \pi_\infty \in \Pi_\disc(\xi), \dim (\pi^\infty)^U \geq 1\}
\]
where $\mc{AR}_\disc(G)$ is the set of the discrete automorphic representations of $G$, $U$ is an open compact subgroup of $G(\A^{\infty, S_0})$ for some finite set of places $S_0$, $\xi$ is a regular weight of $G_\C$, and $\Pi_\disc(\xi)$ is the discrete series $L$-packet corresponding to $\xi$. Pick another finite set of places $S \supseteq S_0$ and consider the empirical distribution 
\[
\mu_{\mc F, S} = \sum_{\pi \in \mc F_{U, \xi}} a_\pi \delta_{\pi_S}
\]
of $S$-components of $\pi \in \mc F$ weighted by
\[
a_\pi = m_\disc(\pi)  \dim(\pi^{S, \infty})^U.
\]

Shin and Templier used Arthur's invariant trace formula to study the limits of these distributions under either increasing level ($U \to 1$) or increasing weight ($\xi \to \infty$). In both cases, the limits converged to the Plancherel measure. They furthermore provided bounds on how quickly the integrals $\mu_{\mc F, S}(f)$ converge in the case where both $f$ and the elements of $\mc F$ are unramified on $S \setminus S_0$. The increasing weight aspect required that the center of $G$ was trivial. 

Their method was in a few broad steps:
\begin{enumerate}
\item
Realize the empirical distribution $\mu_{\mc F, S}$ as the trace of a function with a special Archimedean component $\eta_\xi$ against the discrete automorphic spectrum. Here, $\eta_\xi$ is the Euler-Poincar\'e function from \cite{CD90}.
\item
Since the Archimedean component is an Euler-Poincar\'e function, Arthur's invariant trance formula reduces to the simple trace formula in \cite{Art89} giving a reasonably tractable expression for this trace.
\item
Bound the appropriate terms and take a limit. This is most of the work.
\end{enumerate}

The form of the error bound allowed the proving of Sato-Tate equidistribution limits of $\mu_{\mc F, v}$ for a single place $v$ as $v$ and $\xi$ jointly go to infinity. They also provided some results on the statistics of low-level zeros of $L$-functions over the entire family. 

\subsubsection{The extension}
Here, we extend Shin-Templier's weight aspect described in the previous section. First, instead of looking at a sequence of entire $L$-packets $\Pi_\disc(\xi_k)$, we fix a single representation $\rho_k \in \Pi_\disc(\xi_k)$ for each $k$. Second, we allow $G$ to have trivial center. 

Then we consider the limit as $k \to \infty$ of the empirical distribution 
\[
\mu_{\mc F_k, S} = \sum_{\pi \in \mc F_{U, \rho_k}} a_\pi \delta_{\pi_S}
\]
of representations with $\pi_\infty = \rho_k$ weighted by
\[
a_\pi = m_\disc(\pi)  \dim(\pi^{S, \infty})^U.
\]
and compute error bounds on its convergence to Plancherel measure. The precise definition of the family we study is in section \ref{cndtns} and the final result is theorem \ref{mainresult}. 

Here are the broad steps of the argument:
\begin{enumerate}
\item
Realize the empirical distribution $\mu_{\mc F, S}$ as the trace of a function with a special Archimedean component $\varphi_\pi$ against the discrete automorphic spectrum. The function $\varphi_\pi$ is the pseudocoefficient from \cite{CD90}.
\item
Notice that pseudocoefficients have the same stable orbital integrals as Euler-Poincar\'e functions.
\item
Use the stable trace formula to write this trace as a linear combination of traces of functions with Euler-Poincar\'e components at infinity on the smaller endoscopic groups.
\item
Proceed as before to bound each term in the sum. Showing that enough technical conditions are satisfied and that the bounds are uniform enough that you are allowed to do so is most of the new work.
\item
Redo the computations showing the versions of Plancherel and Sato-Tate equidistribution that the new main term gives. 
\end{enumerate}

It is worth discussing these in more detail. For step (3), the key difficulty is that Arthur's simple trace formula only works when the Archimedean component is Euler-Poincar\'e instead of a pseudocoefficient. However, the stable trace formula roughly gives the trace of a function as a linear combination of stable traces of transfers of the function on smaller endoscopic groups---we get an expansion of shape:
\[
I^G(f) = \sum_{H \in \mc E_\el(G)} S^H(f^H).
\]
Since pseudocoefficients have the same stable orbital integrals as their corresponding Euler-Poincar\'e functions, the $f^H$ can without loss of generality be chosen to have Euler-Poincar\'e components at infinity. See section \ref{archtransfersection} for details on these transfers.

The most direct way to proceed is to then repeat the work in \cite{Art89} on the stable distributions $S^H$ instead of the invariant distribution $I^G$. We choose to instead use the hyperendoscopy formula from \cite{Fer07} (see the remark at the beginning of section \ref{Hformsection}). 

It gives an expansion of shape
\[
I^G(f) = I^G(f^*) + \sum_{\mc H \in \mc{HE}_\el(G)} \iota(G, \mc H) I^{\mc H}((f - f^*)^{\mc H}).
\]
Here $f^*$ is a function with the same stable orbital integrals as $f$, $\mc{HE}_\el(G)$ is roughly the set of groups that can show up in a sequence of iteratively choosing an endoscopic group starting from $G$, and $\iota(G,\mc H)$ is a non-troublesome constant. See section \ref{Hformsection} for the full details. The distributions $I^{\mc H}$ can then be treated exactly as in \cite{ST16} provided technical conditions still hold.

There are also some complications in step (4). First, the distribution $I^G_\spec(f)$ is not obviously the trace of $f$ against the discrete automorphic spectrum like we want it to be. The paper \cite{Art89} shows this for Euler-Poincar\'e at infinity and an unpublished lemma of Vogan (appearing here as lemma \ref{pstrace}) is needed to extend to the pseudocoefficient case. Next, the groups appearing in $\mc{HE}_\el(G)$ do not satisfy the technical simplifying conditions of \cite{Art89}. We therefore need to slightly generalize the result, in particular to non-cuspidal groups. This is section \ref{straceformsection}. Finally, we need some bounds on endoscopic transfers of test functions so that Shin-Templier's orbital integral bounds apply. This takes some work in the non-Archimedean case and is sections \ref{unramformulasection} and \ref{unramboundsection}.

For step (5), as explained in section \ref{themainterm}, allowing a non-trivial center changes the main term in theorem \ref{mainresult} to something more complicated than originally in \cite{ST16}. We therefore have to redo the computations for Sato-Tate and Plancherel equidistribution. This produces slightly different limiting measures that can be roughly thought of as Sato-Tate or Plancherel measure conditioned to be on a certain subset of $\wh G_S$: representations with central character contained in a particular discrete set. The computations appear in section \ref{corollaries}. We do not do the computation for low-lying zeros of $L$-functions due to complexity. 

Finally, we save the level aspect computation for a future writeup. The main difficulty here is that as level gets larger, the test function $f$ becomes more and more ramified adding more and more non-zero terms to the sum over $\mc{HE}_\el(G)$. This necessitates proving much stronger uniformity of the bounds in \cite[\S8]{ST16} over endoscopic groups. 

\subsection{Acknowledgements}
This work was done under the support of NSF RTG grant DMS-1646385 and the UC Berkeley math department summer grants for 2017 and 2018. Part of it was done while under funding from the workshop ``On the Langlands Program: Endoscopy and Beyond" at the National University of Singapore. 

I would like to thank my advisor Sug Woo Shin for suggesting this problem and for many hours teaching relevant material, discussing strategies for solving the problem, pointing out helpful references, and checking arguments. I would also like to thank Jasmin Matz and Erez Lapid for showing me some tricks for dealing with central character issues, Tasho Kaletha for much help understanding intuitions behind endoscopy, Peter Sarnak for a helpful discussion on the context surrounding this type of problem, Julia Gordon and Silvain Rideau for help understanding motivic integration and its applicability, David Vogan for lemma \ref{pstrace}, Alex Youcis for teaching many useful facts about algebraic groups, Aaron Landesman for help with some cohomology arguments in proposition \ref{zextpath}, Jeremy Meza and Alexander Sherman for helpful discussions about the representation theory in sections \ref{sectionArchtransferbounds} and \ref{unramformulasection}, Ian Gleason and Ravi Fernando for help with the proof of lemma \ref{aboundgap}, and Alexander Bertoloni-Meli for many, many helpful discussions that touched almost every argument involving endoscopy. 

Finally, much thanks to the reviewers for a very thorough reading, invaluable help with making my writing clearer, and for catching a lot of errors I had in earlier drafts. 

\subsection{Notational Conventions} Here are some notational conventions we will use throughout:

\noindent Basics
\begin{itemize}
\item
$F$ is a fixed number field.
\item
$G$ is a fixed reductive group over $F$. In certain sections where we are working locally, $G$ will be the local component instead. 
\item
$\A$ is $\A_F$ for shorthand.
\item
$\A_\infty$, $\A^\infty$ are the at infinity and away from infinity parts of $\A$ respectively.
\item
$W_E$ is the Weil group of local or global field $E$. 
\item
$\mc O_E$ is the ring of integers of local field $E$.
\item
$k_E$ is the residue field of local field $E$. 
\item
$\1_X$ is the indicator function for set $X$.
\item
$\wh H$ is the reductive dual of reductive group $H$.
\item
$\wh S$ is the unitary dual of abstract group $S$.  
\item
$\wh S^\temp$ is the tempered part of $\wh S$. 
\item
$\wh f$ is the Fourier transform of function $f$ on an abstract group $S$ that should be clear from context.
\item
$\bar f$ is the Fourier transform of $f$ restricted to some subgroup of the center of $S$ with respect to that subgroup. The exact subgroup should be clear from context. 
\end{itemize}
\noindent Reductive Groups
\begin{itemize}
\item
$Z_H$ is the center of abstract or reductive group $H$. 
\item
$Z_H(G)$ is the centralizer of $H$ inside $G$. 
\item
$A_H$ is the maximum split component in the center of reductive group $H$. 
\item
$H_S$ for group $H$ over $F$ and finite set of places $S$ of $F$ is $H(\A_S)$. Use the standard conventions where an upper index means everything except $S$. 
\item
$H_\infty$ may be equivalently defined as $(\Res^F_\Q H)(\R)$ since $(\Res^F_\Q H)(\R) = H(F \otimes_\Q \R) = H(\A_\infty)$. It is in particular a real reductive group.
\item
$A_{H, \rat}$ for group $H$ over $F$ is $A_{\Res^F_\Q H}(\R)^0$ (the connected component is in the real topology).
\item
$A_{H,\infty} := A_{(\Res^F_\Q H)_\R}(\R)^0$.
\item
$H(\A)^1 := H(\A)/A_{H, \rat}$.
\item
$H_\infty^1 := H_\infty/A_{H, \infty}$. 
\item
$H_\gamma$ is the centralizer of $\gamma$ in $H$ for $H$ either an algebraic or abstract group.
\item
$I^H_\gamma$ is the connected component of the identity in the centralizer of $\gamma$ in $H$.
\item
$\iota^H(\gamma)$ is the set of connected components of $H_\gamma$ with an $F$-point.
\item
$[H]$, $[H]^\ssm$, $[H]^\el$ are the sets of (semisimple, elliptic) conjugacy classes in $H$.
\item
$D^H(\gamma)$ is the Weyl discriminant for $H$.
\item
$K_S$ where $S$ is a finite set of places of $F$ is a chosen hyperspecial of $G(\A_S)$.
\item
$M$ usually represents some Levi subgroup.
\item
$P$ usually represents some parabolic subgroup.
\item
$K_{S,H}$ for $S$ some finite set of places usually represents some kind of maximal compact of $H(\A_S)$. 
\end{itemize}
\noindent Lie Theory
\begin{itemize}
\item
$\Phi^*(H), \Phi^+(H), \Phi^*_F(H), \Phi^+_F(H)$ are the sets of (positive, rational) roots of $H$. 
\item
$\Phi_*(H), \Phi_{+}(H), \Phi_{*,F}(H), \Phi_{+,F}(H)$ are the sets of (positive, rational) coroots of $H$. 
\item
$\Delta^*(H), \Delta^*_{F}(H)$ are the sets of (rational) simple roots of $H$. 
\item
$\Delta_{*}(H), \Delta_{*,F}(H)$ are the sets of (rational) simple coroots of $H$. 
\item
$\Om_H$ is the Weyl group of $H_\C$ for $H$ a reductive group.
\item
$\Om_{H,E} = \Om_E$ for $H$ over $F$ and $E$ an extension of $F$ is the subset of $\Om_H$ generated by conjugating by elements of $H(E)$. Note that this depends on the maximal torus chosen to define $\Om$. 
\end{itemize}
\noindent Volumes
\begin{itemize}
\item
$\mu^\tam, \mu^\can, \mu^{EP}$ are the Tamagawa, Gross' canonical, or Euler-Poincar\'e measures on various groups.
\item
$\bar \mu^\star$ is the quotient of measure $\mu^\star$ by something that should be clear from context.
\item
$\tau(H)$ is the Tamagawa number of $H$.
\item
$\tau'(H)$ is the modified Tamagawa number using the canonical measure $\mu^{\can, EP}$. 
\end{itemize}
\noindent Endoscopy
\begin{itemize}
\item
$(H, \mc H, s, \eta)$ is an endoscopic quadruple for $G$. 
\item
$(\td H, \td \eta)$ is a $z$-pair for $(H, \mc H, s, \eta)$.
\item
$(H_1, \eta_1)$ will also sometimes be used to represent a $z$-pair to keep diacritics from stacking too much.
\item
$\mc E_\el(H)$ is the set of elliptic endoscopic quadruples of reductive group $H$.
\item
$\mc{HE}_\el(H)$ is the set of elliptic hyperendoscopic paths of reductive group $H$.
\item
$(\mf X, \chi)$ is a central character datum on some reductive group.
\item
$\mc H$ is further overloaded: when context is clear, it can also refer to either a hyperendoscopic path or the last group in the path.
\end{itemize}
\noindent Automorphic Representations and the Trace Formula
\begin{itemize}
\item
$\ms H(H, \chi) = \ms H(H, (\mf X, \chi))$ is the space of compactly supported functions on $H(\A)$ that transform according to character $\chi^{-1}$ on $\mf X \subseteq Z_G(\A)$. 
\item
$\ms H(H_S, \chi_S)$ for $S$ a finite set of places of $F$ is compactly supported functions on $H(\A_S)$ similarly transforming according to $\chi_S^{-1}$.
\item
$\ms H(H_S, K_S, \chi_S)$ if $K_S$ is a product of hyperspecial subgroups and $\chi_S$ is unramified is the Hecke algebra of $K_S$-biinvariant elements of $\ms H(H_S, \chi)$. 
\item
$\ms H(H_S, K_S, \chi_S)^{\leq \kappa}$ is the truncated Hecke algebra from section \ref{trunhalg}.
\item
$L^2(G(\Q) \bs G(\A), \chi)$ for $(\mf X, \chi)$ a central character datum is the unitary $G(\A)$-representation of $L^2$-up-to-$\mf X$ functions on $G(\Q) \bs G(\A)$ transforming according to $\chi^{-1}$. 
\item
$L^2_\disc( \cdot )$ is the discrete part of unitary representation $L^2(\cdot)$. 
\item
$\mc{AR}_\disc(H, \chi)$ is the set of discrete automorphic representations on $H$ with character $\chi$ on $A_{H, \infty}$.  
\item
$O^H_\gamma(f)$ is the integral of $f$ on the conjugacy orbit of $\gamma$. This can either be local or global; $f$ can be a function on $H(\A)$ or some $H(F_v)$. 
\item
$I_\spec^{G,\chi}, I_\disc^{G,\chi}, I_\geom^{G,\chi}$ are the distributions on $G$ defined by Arthur's invariant trace formula depending on central character datum $(\mf X, \chi)$.
\item
$S_\spec^{H, \chi}, S_\disc^{H, \chi}, S_\geom^{H, \chi}$ are the distributions on $H$ defined by Arthur's stable trace formula depending on central character datum $(\mf X, \chi)$.
\item
$\ms L$ is the set of rational Levi's of $G$ containing a fixed minimal Levi.
\item
$\ms L^\cusp$ is the $M \in \ms L$ such that $A_{M, \rat}/A_{G, \rat} = A_{M, \infty}/A_{G, \infty}$. This is a generalization of the definition of cuspidal Levi from \cite{Art89} to the case where $G$ isn't itself cuspidal.
\end{itemize}
\noindent Representation theory
\begin{itemize}
\item
$\pi(\lb, w_0)$, $\pi(w_0(\lb + \rho))$ are two different parametrizations for discrete series representations for $\lb$ a dominant weight. 
\item
$\Pi_\disc(\lb)$ is a discrete series $L$-packet where $\lb$ is a dominant weight. 
\item
$\Theta_\pi$ is the Harish-Chandra character for representation $\pi$. 
\item
$\om_\pi$ is the central character of representation $\pi$. 
\item
$\varphi_{\pi}$ is the pseudocoefficient for discrete series representation $\pi$.
\item
$\eta_\lb$ is the Euler-Poincar\'e function for the $L$-packet $\Pi_\disc(\lb)$. 
\end{itemize}
\noindent Families
\begin{itemize}
\item
$\varphi^\infty$ is a specific function defined in section \ref{cndtns}. 
\item
$\mc F$ is a specific family (as in \cite{ST16}) of automorphic representations defined in section \ref{cndtns}.
\item
$a_{\mc F}(\pi)$ are the coefficients defining $\mc F$.  
\item
$S_0$, $S_1$, $U^{S \cup \infty}$, $\varphi_{S_1}$, $f_{S_0}$ are data used to define $\varphi^\infty$ and $\mc F$ as explained in section \ref{cndtns}. 
\item
$S_{\bad, G}$ is the unknown finite set of bad places depending on reductive group $G$ defined in section \ref{final}. 
\item
$S_{\bad', G}$ is the version of $S_{\bad, G}$ needed for the results from \cite{ST16}.
\item
$L$ is the lattice $Z_G(F) \cap U^{S, \infty} \subseteq Z_{G_{S, \infty}}/A_{G, \rat}$. 
\item
$E^\pl(\wh \varphi | \om)$ is the expectation defined in section \ref{cft}.
\item
$E^\pl(\wh \varphi_S | \om_\xi, L, \chi_S)$ is defined in proposition \ref{wtbound}.
\end{itemize}
\subsubsection{Dimensional Analysis}
A lot of the formulas here depend on choices of Haar measure. Since we are explicitly bounding terms, it is sometimes helpful to have notation for how they depend on these choices. For example, if we say that a value has dimension $[G][H]^{-1}$, then it is proportional to a choice of Haar measure on $G$ and inversely proportional to a choice on $H$. 

In any formula, dimensions on both sides need to match. In addition, any quantity with dimension needs to be normalized by a formula expressing it in terms of just dimensionless quantities and Haar measures---for example, the formulas defining traces of Hecke algebra elements, orbital integrals, or pseudocoefficients.

\section{Trace Formula Background}
\subsection{Invariant Trace Formula}
Let $G$ be a connected reductive group over a number field $F$. Let $\A = \A_F$. Fix a central character $\chi$ on $A_{G, \rat}$. Let $\ms H(G, \chi)$ be the space of functions on $G(\A)$ that are smooth and compactly supported when restricted to $G(\A)^1$ and satisfy $f(ax) = \chi^{-1}(a) f(x)$ for all $a \in A_{G, \rat}$. 

Over a long series of papers that are summarized in \cite{Art05} Arthur defines two equal distributions on $\ms H(G, \chi)$:
\[
I_\geom^{G, \chi} = I_\spec^{G,\chi}.
\]
Intuitively, one should think of $I_\geom$ as a sum of modified orbital integrals of $f$ and $I_\spec$ as a sum of modified traces of $f$ against components of $L^2(G(\Q) \bs G(\A), \chi)$. The exact definitions of these distributions are impractically complicated to use directly. However, enough useful special cases and abstract properties have been worked out---the most relevant being the simple trace formula in \cite{Art89}.  The $\chi$ will often be suppressed in notation.

Both sides have dimension $[G(\A)^1]$. The individual terms in the expansions for both sides can have more complicated dimensions.

\subsubsection{Spectral side}
As a very rough description of the spectral side, Arthur defines components
\[
I_\spec^G = I_\cts^G + \sum_{t \geq 0} I_{\disc,t}^G.
\]
$I_{\disc,t}$ is $0$ except for countably many $t$ and is much easier to evaluate. Expanding further,
\[
I_{\disc, t} = \sum_{M \in \ms L} \f{|\Om_M|}{|\Om_G|} \sum_{w \in W(M)_\reg} |\det(w-1)|_{\mf a^G_M}|^{-1} \tr(M_{P,t}(\om)\mc I_{P,t}(f)).
\]
To describe the most relevant terms, $\ms L$ is the set of Levi's of $G$ containing a chosen minimal Levi, $P$ is a chosen parabolic for $M$, $W(M)_\reg$ is a particular set of elements of a relative Weyl group (this and the Weyl group factor are a combinatorial term roughly parametrizing parabolics containing the Levi), and $M_{P,t}(\om, \chi)$ is an intertwining operator between parabolic inductions through different parabolics containing $M$ from the theory of Eisenstein series.

The last term is the most important for us. The $\chi$ induces a character on $A_{M, \rat}$ by pullback. Then $\mc I_P(\chi)$ is the representation of $G(\A)$ produced from parabolically inducing $L^2_\disc(M(\Q) \bs M(\A), \chi)$. The term $\mc I_{P,t}$ is the subrepresentation of this with archimedean infinitesimal character having imaginary part of norm $t$. By lots of work, all these decompositions makes sense and the convolution operators $\mc I_{P,t}(f)$ for $f \in \ms H(G, \chi)$ are trace class. Finally, a much later result in \cite{FLM11} implies that the sum over $t$ converges absolutely.

There are well-known and simple sufficient conditions on $f$ such that $I_\cts(f) = 0$:
\begin{dfn}[{\cite[paragraph above cor. 23.6]{Art05}}]
If $v$ is a place of $F$, $f \in \ms H(G(F_v))$ is \emph{cuspidal} if for all Levi's $M_v$ of $G_v$ and $\pi_v$ tempered representations of $M_v$:
\[
\tr_{\pi_v^G}(f) = 0. 
\]
Here $\pi_v^G$ is (any) parabolic induction of $\pi_v$.
\end{dfn}
Note that this is an alternate definition to the original one from \cite{Art88}.
\begin{thm}[{\cite[thm 7.1]{Art88}}]
If $f$ factors as $f_v \otimes f^v$ for some place $v$ with $f_v$ cuspidal, then $I_\cts(f) = 0$. 
\end{thm}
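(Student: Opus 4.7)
The plan is to show that every contribution to $I_\cts(f)$ arises from parabolic induction out of a proper Levi subgroup, and then to use cuspidality of $f_v$ to kill each such contribution.

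First I would unwind the decomposition $I_\cts^G(f) = I_\spec^G(f) - \sum_{t \geq 0} I_{\disc,t}^G(f)$ using Arthur's refined spectral expansion of $I_\spec$. That expansion writes $I_\spec(f)$ as a sum indexed by Levi subgroups $M \in \ms L$, with the $M = G$ contribution being exactly $\sum_t I_{\disc,t}$. What remains after subtraction is a sum over proper Levis $M \subsetneq G$ of expressions of the shape
\[
\sum_{\sigma} \int_{i\mf a_M^*/i\mf a_G^*} \tr\bigl(\mc M_{L|P}(\lambda)\, \mc I_P(\sigma,\lambda,f)\bigr)\,d\lambda,
\]
where $P$ is a parabolic with Levi $M$, $\sigma$ ranges over discrete automorphic representations of $M(\A)$, $\mc I_P$ is unitary parabolic induction, and $\mc M_{L|P}(\lambda)$ is the normalized weight operator coming from Eisenstein series and Arthur's truncation.

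Next I would use the factorizations $f = f_v \otimes f^v$ and $\mc I_P(\sigma,\lambda) = \mc I_{P_v}(\sigma_v,\lambda) \otimes \mc I_P^v(\sigma^v,\lambda)$ to split each trace as a product of a factor at $v$ and a factor away from $v$. The crucial point, which is the substance of \cite[\S7]{Art88}, is that after passing to the \emph{invariant} distribution one may arrange for the weight operator $\mc M_{L|P}(\lambda)$ to act only on the away-from-$v$ tensor factor, so that the $v$-contribution reduces to the unweighted trace $\tr(\mc I_{P_v}(\sigma_v,\lambda,f_v))$. Because $M \subsetneq G$ is proper, $\mc I_{P_v}(\sigma_v,\lambda)$ is a parabolic induction from a proper Levi of $G_v$: for tempered $\sigma_v$, the cuspidality hypothesis directly gives vanishing, and for a general unitary $\sigma_v$ one uses Langlands classification to write $\sigma_v$ in the Grothendieck group as a $\Z$-linear combination of parabolic inductions from tempered representations of Levis of $M_v$, then invokes transitivity of parabolic induction to reduce to the tempered case on $G_v$. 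Thus every term vanishes.

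The hard part is the second step: verifying that the non-invariant weighted contributions in the raw spectral expansion can really be pushed away from $v$ after passing to $I_\spec$. This is exactly the reason Arthur sets up the invariant trace formula with the flexibility to move weight operators between places, and in practice it reduces to showing that the local invariant distributions attached to a cuspidal $f_v$ annihilate every (possibly weighted) induced character of $G(F_v)$, not just the ordinary ones --- a reformulation which is morally equivalent to the definition of cuspidal given just above the statement and which Arthur establishes by his descent/splitting formulas in \cite{Art88}.
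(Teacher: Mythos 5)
The paper does not prove this statement; it is quoted directly from Arthur as \cite[thm 7.1]{Art88}, so the relevant comparison is with Arthur's own argument.

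Your outline captures the correct skeleton: express $I_\cts$ as a sum over proper Levi subgroups of spectral integrals, factor the test function at $v$, use cuspidality to kill each term, and pass from tempered to general unitary $\sigma_v$ via the Langlands classification (that last reduction is correct and standard). However, the step you yourself single out as ``crucial'' is stated with the roles of $v$ and its complement reversed. When $f = f_v \otimes f^v$ is fed into Arthur's invariant splitting formula
\[
I_M(\pi, X, f) \;=\; \sum_{L_1, L_2 \in \ms L(M)} d_M^G(L_1, L_2)\, \wh I_M^{L_1}(\pi_v, X_v, f_{v, Q_1})\, \wh I_M^{L_2}(\pi^v, X^v, f^v_{Q_2}),
\]
cuspidality of $f_v$ in Arthur's original sense (namely $\phi_L(f_v) = 0$ for every proper $L$) kills all terms with $L_1 \neq G$; the constraint $d_M^G(L_1, L_2) \neq 0$ then forces $L_2 = M$. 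Thus the surviving term is $\wh I_M^{G}(\pi_v, X_v, f_v)\cdot \wh I_M^{M}(\pi^v, X^v, f^v_{M})$: the weight factor is concentrated \emph{at} $v$, not away from it, and it is the away-from-$v$ factor that collapses to an unweighted trace. What one therefore needs is the vanishing of the \emph{weighted} local distribution $I_M^G(\pi_v, \cdot, f_v)$ for proper $M$, which follows from cuspidality together with the local descent formula applied to the properly induced $\pi_v$ appearing in $I_\cts$ --- not merely the vanishing of $\tr(\mc I_{P_v}(\sigma_v, \lambda, f_v))$ that you reduce to. Your closing sentence, which says the target is that cuspidal $f_v$ ``annihilate every (possibly weighted) induced character,'' is in fact the right formulation, but it directly contradicts the earlier claim that the weight operator can be pushed off the $v$-factor. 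So the sketch has the right architecture but gets the mechanics of the splitting step backwards.
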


\subsubsection{Geometric side}
The geometric side can be succinctly written as
\[
I_\geom(f)  = \sum_{M \in \ms L} \f{|\Om_M|}{|\Om_G|} \sum_{\gamma \in [M(\Q)]_{M,S}} a^M(S, \gamma) I^G_M(\gamma, f).
\]
Here $S$ is a large enough set of places in particular including those at which $f$ is not the characteristic function of a hyperspecial and $[M(\Q)]_{M,S}$ is the set of conjugacy classes mod a complicated equivalence relation involving the away-from-$S$ components of the unipotent parts. For $\gamma$ semisimple,
\[
a^M(S, \gamma) = |\iota^M(\gamma)|^{-1} \vol(I^M_\gamma(\Q) \bs I^M_\gamma(\A)^1)
\]
where $|\iota^M(\gamma)|$ is the number of connected components of $M_\gamma$ that have an $F$-point. In general, there is no explicit description of $a^M(S, \gamma)$. 

Next, $I^G_M$ is a weighted orbital integral of the $S$-components of $f$. If $M=G$, it is simply the orbital integral at $\gamma$. If $\gamma$ is semisimple, there is an explicit formula weighting the integral by a complicated combinatorial factor. Otherwise, it is only defined though some analytic continuations. The term $I^G_M$ satisfies some splitting formulas (\cite[23.8]{Art05} and \cite[23.9]{Art05}) that factor it into local components in terms of traces of $f$ against parabolic inductions. When $f$ is cuspidal at some place, these splitting formulas of course then greatly simplify. 

If $\gamma$ is semisimple, the $a^M$ have dimension $[I^M_\gamma(\A)^1]$ while the $I^G_M$ have dimension $[G(\A)^1][I^M_\gamma(\A)^1]^{-1}$. Otherwise the dimensions are more complicated. 

\subsubsection{A technicality}
Arthur actually defines two slightly different versions of his local distributions $I^G_{M,v}(\gamma, f)$. Looking at just the place at $\infty$ for notational ease, the key issue is that the weighting factor $v_M$ in his orbital integrals depends on a choice of the space $A_{M, \star}/A_{G, \star}$ where $\star \in \{\infty, \rat\}$. 

The version appearing in his splitting formula is $\star = \rat$ ,which we will denote by $I^G_{M,\infty}(\gamma, f)$.  The version in his descent formula is the purely local choice  $\star = \infty$, which we will denote by $\td I^G_{M,\infty}(\gamma, f)$.
\begin{lem}\label{centertechnicality}
If cuspidal $f \in \ms H(G_\infty, \chi)$ (so that $I^G_{M,\infty}(\gamma, f)$ is defined), then
\[
I^G_{M,\infty}(\gamma, f) = 
\begin{cases}
\td I^G_{M,\infty}(\gamma, f) & A_{M, \rat}/A_{G, \rat} = A_{M, \infty}/A_{G, \infty} \\
0 & \text{else}
\end{cases}.
\]
\end{lem}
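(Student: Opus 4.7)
The plan is to unwind the definitions and reduce the vanishing to the cuspidality of $f$. Both $I^G_{M,\infty}(\gamma,f)$ and $\td I^G_{M,\infty}(\gamma,f)$ are weighted orbital integrals of the same Archimedean test function $f$, differing only in the real vector space used to define Arthur's weight function $v_M(g)$: the global version works in $\mf a^G_M$, built from the $F$-rational split component $A_{M,\rat}/A_{G,\rat}$, while the local version works in the (possibly larger) space $\mf a^G_{M,\infty}$, built from the $\R$-split component $A_{M,\infty}/A_{G,\infty}$, with a canonical surjection $\mf a^G_{M,\infty} \twoheadrightarrow \mf a^G_M$ dual to the inclusion of character lattices. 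In the first case of the lemma, when these two quotients coincide, the surjection is an isomorphism, the two weight functions are pointwise equal, and the two distributions are trivially identical.

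For the vanishing in the second case, I would first fix a splitting of the surjection to get a decomposition $\mf a^G_{M,\infty} = \mf a^G_M \oplus \mf b$ with $\mf b \neq 0$, and then apply Arthur's $(G,M)$-family formalism to $v_M$ treated as a function on the larger space $\mf a^G_{M,\infty}$. This expands $v_M$ as a finite sum indexed by real Levis $M_\infty \subseteq L_\infty \subseteq G_\infty$ of the real group, with each summand carrying a factor of a weight function $v^{G_\infty}_{L_\infty}$ computed in the $\mf b$-direction. Because $\mf b \neq 0$, the term $L_\infty = G_\infty$ does not contribute, so every surviving summand involves a proper real Levi $L_\infty \subsetneq G_\infty$. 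Using Arthur's splitting formula at the Archimedean place, each such summand rewrites as a product of a lower-rank weighted orbital integral on $L_\infty$ with the trace of $f$ against a parabolic induction from a proper $\R$-parabolic of $G_\infty$ containing $L_\infty$. Cuspidality of $f$ kills every such trace, giving $I^G_{M,\infty}(\gamma,f) = 0$ and completing the lemma.

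The main obstacle I expect is carrying out the $(G,M)$-family splitting cleanly against the non-canonical decomposition $\mf a^G_{M,\infty} = \mf a^G_M \oplus \mf b$ and verifying that each surviving combinatorial term really does factor through a trace against a proper parabolic induction, since the decomposition uses \emph{real} rather than $F$-rational Levis of $G$ and the two notions genuinely differ exactly when $M \notin \ms L^\cusp$. Once the splitting identity is in the correct form, cuspidality annihilates every term and the vanishing is immediate.
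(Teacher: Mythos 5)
Your proof takes essentially the same approach as the paper. The paper's own argument cites Arthur's generalized descent formula (\cite{Art88I}, thm.\ 8.1) with $\mf b$ corresponding to $A_{M,\rat}A_{G,\infty}$ and combines the same two vanishing mechanisms you identify: the coefficient $d^G_M(\mf b, G)$ vanishes because the two split-component quotients disagree (your ``$\mf b \neq 0$'' observation phrased via the $(G,M)$-family dimension count), and the remaining terms, which involve $\wh I^L_{M}(\gamma, \phi_L(f))$ for proper Levis $L$, die because cuspidality of $f$ forces $\phi_L(f) = 0$. The only thing the paper flags that you elide is a small technical point in the easy case, namely that $\wh I^L_M(\gamma, \phi_L(f))$ appears superficially to depend on the choice of $\star \in \{\rat, \infty\}$ through the two descriptions of $\mc I_{\mathrm{ac}}$, but this is harmless as long as $f$ lies in both versions of $\ms H_{\mathrm{ac}}$.
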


\begin{proof}
If the two spaces are equal, then the weighting factors $v_M$ at the beginning of \cite{Art05}[\S18] and the sum over Levi's in \cite{Art05}[thm. 23.2] are equal. Note that while $\wh I^L_M(\gamma, \phi_L(f))$ in \cite{Art05}[thm. 23.2] ostensibly looks like it depends on the choice of $\star$, this is just based on different descriptions of certain spaces of functions to make conditions for containment in the two versions of $\mc I_{\text{ac}}$ easier to describe. In particular, the distinction does not matter as long as $f$ is in both versions of $\ms H_{\text{ac}}$. In total, stepping through the definitions of $I^G_M$ and $\td I^G_M$ shows that they are the same since the above are the only parts that depend on the various $A$'s. 

Otherwise, this follows from the generalized descent formula \cite{Art88I}[thm. 8.1] with $\mf b$ corresponding to $A_{M, \rat}A_{G, \infty}$. Since $f$ is cuspidal, the only possibly non-zero term is $L = G$. However, then $d^G_M(\mf b, G) = 0$.   
\end{proof}

\subsection{The Simple Trace Formula}
Whenever $G_\infty$ has discrete series, the trace formula can be simplified by setting the test function to have a special real component. 
\subsubsection{Parametrizing discrete series}\label{dseries}
The classification of discrete series is work of Harish-Chandra that can be found summarized in \cite[\S III.5]{Lab11}. They only exist when $G_\infty$ has an elliptic maximal torus or equivalently if $C \Om_G$ on any torus contains $-\id$ where $C$ is complex conjugation. 

Therefore, for this subsection and the next only, let $G$ be reductive group over $\R$ with fixed elliptic maximal torus $T$. Let $K$ be a maximal compact of $G(\R)$ containing $T(\R)$, $B_K$ a Borel of $K_\C$ containing $T$, and $B$ a Borel of $G_\C$. Let $\Om_G$ be the Weyl group of $(G_\C, T_\C)$ and $\Om_{G,\R}$ be the subgroup given by only conjugating by elements of $G(\R)$. 

The characters of $T(\R)$ are contained in $T(\C)$ so the root space of $K$ is contained in $G$. Let $\rho$ be half the sum of the positive roots of $G$. Finally, let $\Om(B_K)$ be a particular set of coset representatives of  $\Om_{G,\R} \bs \Om_G$: namely, $w$ such that $w\lb$ is $B_K$-dominant for any $\lb$ that is $B$-dominant.

The discrete series representations of $G$ are parametrized by $B$-dominant weights $\lb \in X^*(T)_\C$ and elements $w^* \in \Om(B_K)$. Call the representation parameterized by $\lb$ and $w_0$ either $\pi(\lb, w_0)$ or $\pi(w_0(\lb + \rho))$. It is the unique representation with trace character
\[
\Theta_{\pi(\lb, w_0)} = (-1)^{1/2\dim(G(\R)/K A_{G, \infty})} \frac{\sum_{w \in \Om_K} \sgn(ww_0) e^{ww_0(\lb + \rho)}}{\sum_{w \in \Om_G} \sgn(w) e^{w\rho}}
\]
on $T$. The infinitesimal character of $\pi(\lb, w)$ is $\lb + \rho$: the same as that of $V_\lb$, the finite dimensional representation with highest weight $\lb$. Therefore the $\pi(\lb, w)$ for a fixed $\lb$ are all in the same $L$-packet $\Pi_\disc(\lb)$. We call $\pi(\lb, w_0) = \pi(w_0(\lb + \rho))$ \emph{regular} if $\lb$ is. Finally, we call $\lb$ the \emph{weight} of $\pi(\lb, w_0) = \pi(w_0(\lb + \rho))$. 

\subsubsection{Pseudocoefficients and Euler-Poincar\'e functions}
Given a discrete series representation $\pi$ of a real reductive group $G(\R)$ with character $\chi$ on $A_{G, \infty}$, Clozel and Delorme in \cite{CD90} define a pseudocoefficient $\varphi_\pi \in C_c^\infty(\chi^{-1})$. The function $\varphi_\pi$ is compactly supported and has the property that for irreducible representations $\rho$ with character $\chi$, 
\[
\tr_\rho(\varphi_\pi) = \begin{cases} 1 & \pi = \rho \\0 & \pi \neq \rho, \rho \text{ basic} \\ ? & \text{else} \end{cases}.
\]
Here, a basic representation is a parabolic induction of a discrete series or limit of discrete series. The non-basic case is much more complicated. Pseudocoefficients have dimension $[G(\R)^1]^{-1}$. 

If $\Pi_\disc(\lb)$ is the discrete series $L$-packet for $\pi$, it is also useful to consider Euler-Poincar\'e functions:
\[
\eta_\lb = \f1{|\Pi_\disc(\lb)|} \sum_{\pi' \in \Pi_\disc(\lb)} \varphi_{\pi'}.
\]
Traces against Euler-Poincar\'e functions can be interepreted as Euler characteristics of certain cohomologies for basic representations and therefore all representations by the Langlands classification. If $\lb$ is regular, these Euler characteristics can be shown to be $0$ on non-tempered representations. Therefore, if $\lb$ is regular we get
\[
\tr_\rho(\eta_\lb) = \begin{cases} |\Pi_\disc(\lb)|^{-1} & \pi \in \Pi_\disc(\lb) \\0 & \text{else} \end{cases}
\]
for all irreducible representations $\rho$ (see sections 1 and 2 in \cite{Art89}). Beware that this normalization is different from the one in \cite{ST16}. It makes endoscopic computations easier.

Note that both pseudocoefficients and Euler-Poincar\'e functions are cuspidal since they have $0$ trace against any non-discrete series basic representation and therefore against all parabolic inductions of tempered representations.

\subsubsection{Simple trace formula}\label{simpletraceformula}
The simple trace formula is the main result of \cite{Art89}. A more textbook exposition is in \cite[\S24]{Art05}. We state it here. First, assume
\begin{itemize}
\item
$G$ is connected,
\item
$G$ is cuspidal over $\Q$: $\Res^F_\Q G/A_{G, \rat}$ has an $\R$-anisotropic maximal torus. 
\end{itemize}
The last condition in particular gives that $G_\infty$ has an elliptic maximal torus and therefore has discrete series mod center. In the case where $G_\infty$ has discrete series mod center, cuspidal is equivalent to $A_{G, \rat} = A_{G,\infty}$: in other words, taking infinite place points of the maximum split torus in the center is the same as base changing to $\R$, looking at the maximal split torus in the center, and taking $\R$-points.

 Consider a test function of the form $h = |\Pi_\disc(\xi)|\eta_\xi \otimes h^\infty$ for regular weight $\xi$ and $h^\infty \in \ms H(G(\A^\infty))$. Let $\chi$ be the character on $A_{G, \infty}$ determined by $\xi$. Then
\begin{equation}\label{spec}
I_\spec(h) = I_\disc(h) = \sum_{\pi : \pi_\infty \in \Pi_\disc(\xi)} m_\disc(\pi) \tr_{\pi^\infty}(h^\infty)
\end{equation}
where $m_\disc(\pi)$ is the multiplicity of $\pi$ in $\mc{AR}_\disc(G, \chi)$. 
Let $\ms L$ be the set of Levi's containing a chosen minimal Levi of $G$. For each $M \in \ms L$, choose $P_M$ a parabolic for $M$. Then
\begin{multline}\label{geom}
I_\geom(h) = \sum_{M \in \ms L^\cusp} (-1)^{\dim(A_M/A_G)} \f{|\Om_M|}{|\Om_G|} \\
 \sum_{\gamma \in [M(F)]^{\ssm}} \chi(I^M_\gamma) |\iota^M(\gamma)|^{-1} \Phi_M(\gamma_\infty, \xi) O^M_\gamma(h^\infty_M).
\end{multline}
Here $\iota^M(\gamma)$ is the set of connected components of the full centralizer $M_\gamma$ that have an $F$-point and
\[
\chi(H) = (-1)^{q(H)} \vol(H(F) A_{H, \infty}^0 \bs H(\A)) \vol(A_{H, \infty}^0 \bs \bar H_\infty)^{-1}|\Om(B_{K_{H_\infty}})|
\]
where $\bar H_\infty$ is an inner form of $H_\infty$ such that $H_\infty/A_{H, \infty}$ has anisotropic center, $\Om(B_{K_{H_\infty}})$ is the analog of $\Om(B_K)$ for $H_\infty$, and $q(H) = 1/2 \dim(H_\infty/K_{H,\infty} A_{H,\infty})$ is the Kottwitz sign. Also
\[
h^\infty_M(\gamma^\infty) = \delta_{P_M}(\gamma^\infty)^{1/2} \int_{K^\infty} \int_{N_M(\A^\infty)} h(k^{-1} \gamma^\infty n k) \, dn \, dk
\]
where $N_M$ is the unipotent group for $P_M$ and $K$ some chosen maximal compact. To make dimensions work out, the Haar measures choices should satisfy:(
\begin{itemize}
\item
The choices on $I^M_\gamma$, $M$, and in the orbital integral need to coincide,
\item
The measure on $\bar I^M_\gamma$ comes from that on $I^M_\gamma$ through them both coming from the same top form on $I^M_\C$,
\item
The choices on $N_P$, $K$, $M$, and $G$ need to coincide according to the Iwasawa decomposition.
\end{itemize}
Finally, 
\[
\Phi_M(\gamma_\infty, \xi) = \begin{cases}
\lf| \f{D^G(\gamma_\infty)}{D^M(\gamma_\infty)}\ri|^{1/2} \sum_{\pi \in \Pi^G_\disc(\xi)} \Theta_\pi(\gamma^\infty) & \gamma_\infty \text{ in an elliptic torus of $M$}\\
0 & \text{else}.
\end{cases}
\]
As written, this is only defined on regular elements, but Arthur proves it extends to a function that is continuous on every elliptic torus. 

As some notes for using this:
\begin{itemize}
\item
Comparing character formulas computes that $\Phi_G(\gamma_\infty,\xi) = \tr \xi(\gamma_\infty)$ where $\xi$ is overloaded to also denote the finite dimensional representation with highest weight $\xi$.
\item
If $M \neq G$, $\Phi_M$ cannot be evaluated through the standard Harish-Chandra character formula since it involves $\Theta_\pi$'s evaluated on tori that are not elliptic in $G$. See \cite[\S4]{Art89} for an algorithm to actually do so.
\item
The only $M$ that contribute to the outer sum are those in $L^\cusp$; in this case, those that are cuspidal over $\Q$. Arthur's original paper implicitly showed this for $M$ cuspidal over $\R$. There is a small correction using lemma \ref{centertechnicality} that the formula in \cite{Art89}[thm. 5.1]  is zero for $M$ not cuspidal over $\Q$ (Arthur was surely aware of this but seems to have forgotten to mention it). Alternatively, \cite{GKM97} shows vanishing using different methods. See section \ref{cuspvanishing} for more details. 
\item
Because of the dimensions on $\eta_\xi$, both sides of this formula have dimension $[G^\infty]$. However, explicitly computing the $\chi(I^M_\gamma)$ terms still requires choosing Haar measures at $\infty$. 
\end{itemize}

\subsection{Trace Formula with Central Character}
Stabilization requires a slightly different version of the trace formula where the fixed character $\chi$ is on a larger closed subgroup of $Z(\A)$. There is a full theory in \cite{Art02} that takes quite a bit of work to describe. We summarize the relevant parts here.
\begin{dfn}
A central character datum on $G$ is $(\mf X, \chi)$ where 
\begin{itemize}
\item
$\mf X \supseteq \A_{G, \infty}$ is closed inside $Z(\A)$ such that $Z(F) \mf X$ is also a closed subgroup. 
\item
$\chi : \mf X \cap Z(F) \bs \mf X \to \C^\times$ is a continuous character.
\end{itemize} 
Furthermore, $\ms H(G,(\mf X, \chi)) = \ms H(G, \chi)$ is the set of smooth functions $f$ on $G(\A)$ such that $f(gx) = \chi^{-1}(x) f(g)$ and $f$ is compactly supported mod $\mf X$. 
\end{dfn}

\begin{note}
For our purposes here, it suffices to consider $\mf X$ that are the product of the adelic points of some algebraic subtorus of $Z$ multiplied by some abstract subgroup of $Z_{G_\infty}(\R)$. 
\end{note}
Fix central character data $(\mf X, \chi)$. In \cite[\S3]{Art13}, Arthur defines $I_{\disc, t, \chi}$ as a distribution on $\ms H(G, \chi)$:
\begin{equation}\label{Idisc}
I_{\disc, t, \chi}(f) = \sum_{M \in \ms L} \f{|\Om_M|}{|\Om_G|} \sum_{w \in W(M)_\reg} |\det(w-1)|_{\mf a^G_M}|^{-1} \tr(M_{P,t}(\om, \chi)\mc I_{P,t}(\chi, f)).
\end{equation}
This is a generalization of $I_{\disc, t}$ and most of the terms are the same. The relevant part is how $\mc I_{P,t}$ changes. First, $\chi$ induces a character on $A_{M, \rat}\mf X$ by pullback and therefore lets us define $L^2_\disc(M(\Q) \bs M(\A), \chi)$ analogous to other $L^2$ spaces with character: as the discrete part of $\chi^{-1}$-invariant, $L^2$-up-to-$\mf X$ functions on $M(\Q) \bs M(\A)$ as an $M(\A)$-representation. Then, $\mc I_{P,t}(\chi, f)$ can be defined analogously to $\mc I_{P,t}$ from the trace formula without central character. Decompositions and traces making sense in this context requires some extra work summarized on \cite[pg 123]{Art13}. The dimensions change to $[G(\A)][\mf X]^{-1}$. 

For our work here, we only need to worry about the spectral side so we will not mention the geometric version. 

\section{Endoscopy and Stabilization Background}
The standard reference for this material, \cite{KS99}, is written for the more general case of twisted endoscopy. It is therefore easier to follow the summary in \cite[\S1.3]{Kal16}. The simpler summary in \cite[\S2]{Shi10} for the simply connected derived subgroup case is also helpful. Finally, \cite{Lab11} is a course-notes style writeup of this material and therefore more motivated albeit far less general.

For this section, allow $F$ to be a local or global number field. 
\subsection{Endoscopic Groups}
\subsubsection{Endoscopic quadruples}\label{quadruples}
\begin{dfn}[{\cite[pg 18]{KS99}}] 
An \emph{endoscopic quadruple} for $G$ is a tuple $(H,\mc H, s,\eta)$ with
\begin{itemize}
\item
$H$ a quasisplit connected reductive group over $F$,
\item
$\mc H$ is a split extension of $\wh H$ by $W_F$ such that action of $W_F$ on $\wh H$ determined by the splitting matches the one coming from $H$ (in $\Out(\wh H)$),
\item
$s \in Z_{\wh H}$ and semisimple in $\wh G$,
\item
$\eta : \mc H \to \Ld G$ an $L$-embedding
\end{itemize}
such that
\begin{enumerate}
\item
$\eta$ restricts to an isomorphism $\wh H \iso \wh G^0_{\eta(s)}$.
\item
There is then a $W_F$-equivariant sequence 
\[
1 \to Z_{\wh G} \to Z_{\wh H} \to Z_{\wh H}/Z_{\wh G} \to 0 
\]
which induces a map $(Z_{\wh H}/Z_{\wh G})^{W_F} \to H^1(F, Z_{\wh G})$. We require that $s$ maps to something locally trivial under this.
\pause
\end{enumerate}
It is furthermore \emph{elliptic} if
\begin{enumerate}
\resume
\item
$(Z_{\wh H}^{W_F})^0 \subseteq Z_{\wh G}$. 
\end{enumerate}
For future reference, we let $\mf K(s, \eta)$ be the elements that map to something locally trivial under $(Z_{\wh H}/Z_{\wh G})^{W_F} \to H^1(F, Z_{\wh G})$. 
\end{dfn}

\begin{dfn}
Two endoscopic quadruples $(H, \mc H, s, \eta), (H', \mc H', s', \eta')$ are isomorphic if there is an element $g \in \wh G$ such that 
\begin{enumerate}
\item
$\eta(\mc H)$ and $\eta'(\mc H')$ are conjugate by $g$,
\item
$s$ and $g s g^{-1}$ are equal in $Z_{\wh H}/Z_{\wh G}$. 
\end{enumerate}
Call the set of isomorphism classes of elliptic endoscopic quadruples $\mc E_\el(G)$. 
\end{dfn}
Note that the definition implicitly uses a fact which we state directly here to cite more easily later:
\begin{lem}
Let $G$ be a reductive group over global or local field $K$ and $(H, \mc H, \eta, s)$ an elliptic endoscopic quadruple. Then there is a map $Z_G \into Z_H$. 
\end{lem}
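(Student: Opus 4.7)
The plan is to produce the map from an inclusion of root systems induced by the endoscopic data. Since $\mc H$ identifies $\wh H$ with $\wh G^0_{\eta(s)}$, a theorem of Steinberg lets us pick a maximal torus $\wh T \subseteq \wh G$ containing $\eta(s)$, so that $\wh T$ is simultaneously a maximal torus of $\wh H$. The roots of $\wh H$ in $X^*(\wh T)$ are then exactly those roots of $\wh G$ that vanish at $\eta(s)$, and in particular form a subset of the roots of $\wh G$.

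Let $T_G \subseteq G$ and $T_H \subseteq H$ be the maximal tori dual to $\wh T$. Under the canonical identifications $X^*(T_G) = X_*(\wh T) = X^*(T_H)$, the above inclusion of root systems of dual groups becomes an inclusion of coroot systems $\Phi^\vee(H) \subseteq \Phi^\vee(G)$ in the shared cocharacter lattice. Since in any root datum a coroot determines its companion root uniquely, this yields $\Phi(H) \subseteq \Phi(G)$ in $X^*(T_G) = X^*(T_H)$. Combining with the descriptions $X^*(Z_G) = X^*(T_G)/\mathbb{Z}\Phi(G)$ and $X^*(Z_H) = X^*(T_H)/\mathbb{Z}\Phi(H)$ gives a surjection of character groups $X^*(Z_H) \onto X^*(Z_G)$, which dualizes to a closed immersion $Z_G \into Z_H$ of diagonalizable $F$-group schemes.

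The delicate point is Galois-equivariance, since the $W_F$-actions on $T_G$ and $T_H$ arise from different ambient groups. This is forced by the $W_F$-equivariance of the $L$-embedding $\eta$ in the definition of an endoscopic quadruple, which makes the inclusion $\wh H \subseteq \wh G$ compatible with the respective $L$-actions and hence with their root systems. Equivalently, the compatibility is already built into the $W_F$-equivariant exact sequence $1 \to Z_{\wh G} \to Z_{\wh H} \to Z_{\wh H}/Z_{\wh G} \to 0$ noted in the definition, which dualizes in the appropriate sense to the desired map on centers. Making this fully precise—tracking pinnings and the Galois action on the chosen torus $\wh T$—is the main obstacle, but once the book-keeping is set up the argument is formal.
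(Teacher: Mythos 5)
The paper offers no argument of its own for this lemma—it just cites \cite{KS99}, pg.\ 53—so there is nothing internal to match or diverge from. Your route via root data is the right one (and essentially what Kottwitz--Shelstad do), and the reduction to a surjection $X^*(T_H)/\Z\Phi(H) \onto X^*(T_G)/\Z\Phi(G)$ is sound, including the observation that the coroot-to-root bijection of $H$ is the restriction of that of $G$ because $\wh H$ and $\wh G$ share the torus $\wh T$ and the corresponding root subgroups.

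Two cautions on the Galois step, which you rightly flag as delicate but then dispose of a bit too quickly. First, the inclusion $\wh H \hookrightarrow \wh G$ is \emph{not} equivariant for the two $W_F$-actions coming from the splittings of $\mc H$ and of $\Ld G$: writing $\eta(1 \rtimes w) = a_w \rtimes w$ as in the paper's discussion of endoscopic pairs, the $\mc H$-action of $w$ on $\wh T$ is $\Int(a_w)$ composed with the $\Ld G$-action, so the two $\Gamma$-actions on $X^*(T_G) = X_*(\wh T) = X^*(T_H)$ differ by a cocycle valued in $N_{\wh G}(\wh T)$, i.e.\ by elements of $\Om_G$. (In an example like $G = \Sp_{2n}$ split with a quasi-split $\SO_{2a}$ factor in $H$, the two actions on the root system genuinely disagree.) What saves the argument is therefore not equivariance of the inclusion of root systems, but the fact that $\Om_G$ acts trivially on $X^*(T_G)/\Z\Phi(G)$ — each reflection $s_\alpha$ moves a character by a multiple of $\alpha$. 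Since $\Z\Phi(H) \subseteq \Z\Phi(G)$, the two $\Gamma$-actions on $X^*(T)$ then induce the same action after passage to $X^*(Z_G)$, which is exactly the $\Gamma$-equivariance you need for the surjection $X^*(Z_H) \to X^*(Z_G)$. Note this is the same subtlety that the paper's lemma \ref{roots} explicitly avoids by working over $\overline{K}$ and disclaiming any statement about Galois actions. Second, the ``equivalently'' sentence about dualizing $1 \to Z_{\wh G} \to Z_{\wh H} \to Z_{\wh H}/Z_{\wh G} \to 1$ should be dropped: $Z_G$ and $Z_{\wh G}$ are not Cartier or Pontryagin duals of one another (the former has character lattice $X^*(T)/\Z\Phi(G)$, the latter $X_*(T)/\Z\Phi^\vee(G)$), so that sequence does not dualize to a map $Z_G \to Z_H$ in any direct way; it is the root-datum computation, not the $\widehat{\phantom{G}}$-side center sequence, that carries the proof.
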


\begin{proof}
See \cite{KS99} pg. 53.
\end{proof}

\subsubsection{Endoscopic pairs}
Endoscopic quadruples actually contain a lot of redundant data. A more basic and easier to think about notion is the endoscopic pair defined in \cite[\S7]{Kot84}:
\begin{dfn}
An \emph{endoscopic pair} for group $G$ is $(s, \rho)$ where
\begin{itemize}
\item
$s$ is a semisimple element of $\wh G/Z_{\wh G}$,
\item
$\rho$ is a map $W_F \to \Out(\wh H)$ where $\wh H = \wh G^0_s$
\end{itemize}
satisfying 
\begin{enumerate}
\item
$\rho(\sigma)$ for $\sigma \in W_F$ is conjugation by an element in the normalizer of $\wh H$ in $\Ld G$ that projects to $\sigma$.
\item
Then, $\rho$ induces a $W_F$-action on $Z_{\wh G^0_s}$ which fits into $W_F$-equivariant sequence 
\[
1 \to Z_{\wh G} \to Z_{\wh H} \to Z_{\wh H}/Z_{\wh G} \to 0 
\]
which induces a map $(Z_{\wh H}/Z_{\wh G})^{W_F} \to H^1(F, Z_{\wh G})$. We require that $s$ maps to something locally trivial under this.
\pause
\end{enumerate}
It is furthermore \emph{elliptic} if 
\begin{enumerate}
\resume
\item
$(Z_{\wh H}^{W_F})^0 \subseteq  Z_{\wh G}$.  
\end{enumerate}
For future reference, we let $\mf K(s, \rho)$ be the elements that map to something locally trivial under $(Z_{\wh H}/Z_{\wh G})^{W_F} \to H^1(F, Z_{\wh G})$. 
\end{dfn}

The $\rho$ action can be further clarified: if $a \rtimes \gamma \in \Ld G$ and $(b,1) \in \wh G \subset \Ld G$, 
\begin{multline*}
(a \rtimes \gamma)(b \rtimes 1)(a \rtimes \gamma)^{-1} = (a \rtimes \gamma)(b \rtimes 1)(\gamma^{-1}(a^{-1}) \rtimes \gamma^{-1}) \\
= (a \gamma(b) \rtimes \gamma)(\gamma^{-1}(a^{-1}) \rtimes \gamma^{-1}) = (a \gamma(b) a^{-1} \rtimes1)
\end{multline*}
so if $\rho$ is part of an endoscopic pair, any $\rho(\gamma)$ is of the form $b \mapsto a_\gamma \gamma_{\wh G} (b) a^{-1}_\gamma$ for some $a_\gamma \in \wh G$ where the subscript $\wh G$ denotes that the $\gamma$ action is as it is on $\wh G$. The choices of $a_\gamma$ are unique up to 
\[
a_\gamma \in \Int \wh H \bs \wh G / Z_{\gamma_{\wh G} \wh H}(\wh G) =  {\wh H}_\adj \bs \wh G /  Z_{\gamma_{\wh G} \wh H}=  {\wh H}_\adj  \bs \wh G / Z_{\wh H}= \wh H \bs \wh G
\]
since $\gamma_{\wh G} \wh H$ is the centralizer of $\gamma_{\wh G} s$.
 
 \begin{dfn}
 An isomorphism of endoscopic pairs $(s, \rho)$ and $(s', \rho')$ is an element $g \in \wh G$ such that
 \begin{itemize}
 \item
 $\wh G^0_s, \wh G^0_{s'}$ and $\rho, \rho'$ are $g$-conjugate,
 \item
 $s,s'$ have the same image in $\mf K(s, \rho)$. 
 \end{itemize}
 \end{dfn}
 
 As explained in \cite[pg 630-631]{Kot84}, $\rho$ determines a quasisplit group $H$ from $\wh H$ and therefore the $(H, s, \eta)$ part of an endoscopic quadruple. Given $H$ and $G$, we can define $\mc H$ as follows: $\wh H$ embeds into both $\Ld H$ and $\Ld G$. Let $\mc H$ be the set of $x \in \Ld G$ such that there exists $y \in \Ld H$ such that conjugation by $x,y$ are the same on $\wh H$ and $x,y$ project to the same element of $W_F$. In terms of  the $a_\gamma$ from above, we can realize
 \[
 \mc H = \bigcup_{\gamma \in W_F} \wh H a_\gamma \rtimes \gamma
 \] 
 where we can choose representatives for $a_\gamma$ so that conjugation by $a_\gamma \rtimes \gamma$ fixes a pinning of $\wh H$. Isomorphisms are also the same on each side, so in summary:
 \begin{lem}[{\cite[\S7]{Kot84}}]
 The set of elliptic endoscopic pairs of $G$ up to isomorphism are in bijection with $\mc E_\el(G)$ where the bijection is as described above.
 \end{lem}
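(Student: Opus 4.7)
The plan is to construct explicit maps in both directions between elliptic endoscopic pairs and elliptic endoscopic quadruples modulo their respective isomorphism relations, and then check that these are mutually inverse.

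For the forward map (pair to quadruple), given an elliptic pair $(s, \rho)$, set $\wh H := \wh G^0_s$. The outer action $\rho$ together with the reductive group $\wh H$ cuts out a unique quasisplit $F$-group $H$ with dual $\wh H$: fix a pinning of $\wh H$, use $\rho$ to define a $W_F$-action on $\wh H$ via pinned automorphisms, and dualize. Define $\mc H = \bigcup_{\gamma \in W_F} \wh H a_\gamma \rtimes \gamma \subset \Ld G$ with the $a_\gamma$ chosen so that $\Int(a_\gamma \rtimes \gamma)$ preserves the chosen pinning of $\wh H$ (as in the displayed computation above the statement). Let $\eta$ be the inclusion $\mc H \into \Ld G$ and lift $s$ to any preimage in $\wh H$. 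Conditions (1) and (2) in the definition of endoscopic quadruple then hold: (1) by construction of $\wh H = \wh G^0_s$, and (2) is literally identical to condition (2) for endoscopic pairs. Ellipticity is also identical on the two sides.

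For the backward map (quadruple to pair), from $(H, \mc H, s, \eta)$, condition (1) of the quadruple says $\eta$ restricts to an isomorphism $\wh H \iso \wh G^0_{\eta(s)}$, so $\eta(s)$ projects to a class $\bar s \in \wh G/Z_{\wh G}$ whose connected centralizer is $\eta(\wh H)$. The conjugation action on $\eta(\wh H)$ by lifts in $\mc H$ of elements of $W_F$ yields, after transporting back via $\eta^{-1}$ and modding out inner automorphisms of $\wh H$, a map $\rho : W_F \to \Out(\wh H)$. One then checks directly that conditions (1)--(3) for endoscopic pairs hold.

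The two maps are mutual inverses on isomorphism classes: starting with a pair, the $\rho$ recovered from the constructed $\mc H$ agrees with the original because the $a_\gamma$ were chosen precisely to implement $\rho(\gamma)$ on $\wh H$; starting with a quadruple, the reconstructed $\mc H'$ coincides with $\mc H$ up to conjugation by some $g \in \wh G$, which is exactly the isomorphism relation on quadruples. Compatibility with the two isomorphism relations reduces to the same observation, since the definition of isomorphism for pairs is modelled on that for quadruples via $\wh G$-conjugation. The main obstacle I expect is keeping track of the ambiguity in the forward construction: the $a_\gamma$ have freedom in $\wh H \backslash \wh G$ (as the excerpt already computes via $Z_{\gamma_{\wh G} \wh H}(\wh G) = Z_{\wh H}$ and $\wh H_\adj \backslash \wh G / Z_{\wh H} = \wh H \backslash \wh G$). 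Replacing $a_\gamma$ by $h_\gamma a_\gamma$ with $h_\gamma \in \wh H$ produces a different $\mc H'$, and one must verify this differs from $\mc H$ by an overall $\wh G$-conjugation, hence defines the same class in $\mc E_\el(G)$. Together with the verification that the quasisplit form $H$ is determined up to inner twist (and not merely as based root datum with Galois action), this bookkeeping is the bulk of the honest content of the lemma.
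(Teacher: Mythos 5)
The paper doesn't actually prove this lemma; it states it with a citation to Kottwitz \cite[\S7]{Kot84}, and the construction sketch that precedes it (building $\mc H$ from the $a_\gamma$'s chosen to preserve a pinning) is the entire content the paper offers. Your proposal fills in the standard details of that construction in both directions and is correct in outline.

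One slip worth fixing: you write that replacing $a_\gamma$ by $h_\gamma a_\gamma$ with $h_\gamma \in \wh H$ produces a different $\mc H'$ and one must verify $\wh G$-conjugacy. That particular move changes nothing, since $\wh H a_\gamma \rtimes \gamma = \wh H h_\gamma a_\gamma \rtimes \gamma$, so the set $\mc H = \bigcup_\gamma \wh H a_\gamma \rtimes \gamma$ is invariant under left $\wh H$-translation of each $a_\gamma$. The genuine ambiguity is the choice of pinning of $\wh H$ used to normalize the $a_\gamma$: once a pinning is fixed, the coset $\wh H a_\gamma$ is pinned down (any two normalized lifts differ by something centralizing $\wh H$, which lands in $Z_{\wh H} \subseteq \wh H$), so $\mc H$ is canonically determined. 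Changing the pinning is conjugation by an element of $\wh H \subseteq \wh G$, which is exactly the allowed isomorphism of quadruples. Similarly, your worry that $H$ might only be determined ``as based root datum with Galois action'' rather than as an $F$-group is unfounded: a quasisplit reductive $F$-group is determined up to $F$-isomorphism by its based root datum together with the Galois action, so there is no residual inner-twist ambiguity to track. With those two points adjusted, the bookkeeping is as clean as your outline suggests.
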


 \subsubsection{Motivation and the group $\mf K$}
There are two motivations for this definition, either spectral or geometric. We briefly and very roughly describe the geometric explanation since it is somewhat relevant later. We ignore many, many Galois cohomology details. In increasing generality and detail, more information can be found in \cite[\S III.3]{Lab11}, \cite[\S9]{Kot86}, and \cite[\S6-7]{KS99}. 

Let semisimple $\gamma \in G(F_v)$ be contained in maximal torus $T$.  If $\gamma$ is strongly regular, then we can write its stable orbit as $(T \bs G)(F_v)$ and its orbit as $T(F_v) \bs G(F_v)$. Therefore, the fibers of the map from $(T \bs G)(F_v)$ onto 
\[
\mf D(F_v, T \bs G) = \ker(H^1(F_v, T) \to H^1(F_v, G))
\]
are exactly the unstable conjugacy classes making up $(T \bs G)(F_v)$.  Let
\[
\mf E(F_v, T \bs G) = \ker(H^1(F_v, T) \to H^1_\ab(F_v, G))
\]
be the abelian group version of this and
\[
\mf K(F_v, T \bs G) = \mf E(F_v, T \bs G)^\vee. 
\]
Elements $\kappa \in \mf K$ are called endoscopic characters. 

If $v$ is a place of $F$ and $\kappa \in \mf K(F_v, T \bs G)$, this allows the definition of twisted orbital integrals
\[
O^\kappa_\gamma(f) = \int_{(T \bs G)(F_v)} \kappa(g) f(g^{-1} \gamma g) \, dg
\]
using the map $(T \bs G)(F) \to \mf E(F, T \bs G)$. 

We can also define adelic versions of these groups $\mf D(\A, T \bs G), \mf E(\A, T \bs G)$, and $\mf K(\A, T \bs G)$ using corresponding cohomology groups $H^1(\A, \cdot)$.  If $\gamma \in G(\A)$ is strongly regular, $\mf D(\A, T \bs G)$ parametrizes the $\gamma'$ that have every component stably conjugate to $\gamma$. It is a restricted direct product of the $\mf D(F_v, T \bs G)$ by $\mf D(\mc O_v, T \bs G)$ which happens to be trivial. Define a measure on it by taking the product of the counting measures on $\mf D(F_v, T \bs G)$. Then for $\kappa \in \mf K(\A, T\bs G)$ we can define global twisted orbital integral
\[
O^\kappa_\gamma(f)  = \sum_{e \in \mf D(\A, T \bs G)} \kappa(\obs(\gamma_e)) O_{\gamma_e}(f)
\] 
where $\gamma_e$ is the conjugacy class corresponding to $e$ with base point $\gamma$ and $\obs$ is the obstruction defined in \cite{Kot86} and \cite{KS99}. 

Stabilization of the trace formula first produces sums of $O^\kappa_\gamma(f)$'s over triples of these $(T, \gamma, \kappa)$ over $F$. The result (\cite[lem 7.2.A]{KS99}) shows that such triples are in bijection with quintuples $(H, \mc H, s, \eta, \gamma_H)$: endoscopic quadruples with a choice of strongly regular element $\gamma_H \in H$ up to appropriately defined equivalence. Through this equivalence, the group $\mf K$ for $T$ ends up being the same as the group $\mf K$ defined above for $(s, \eta)$ (see \cite[pg 105-106]{KS99}).

\subsection{$z$-Extensions}
Our next goal is to define transfers of functions. This na\"ively needs an embedding $\Ld H \into \Ld G$, but in general $\Ld H \not\cong \mc H$ so we do not have one. There are two possible strategies for dealing with this: the original in \cite{LS87} is to take a nice enough central extension of $G$. This works for the standard endoscopy described here but not for the more general twisted endoscopy, so more modern sources prefer to take central extensions of $H$ as described in \cite{KS99}. As we will remark after proposition \ref{zextpath}, these methods are more or less interchangeable in the standard endoscopy case. 

We describe the second method in detail:
\begin{dfn}
A \emph{$z$-pair} $(\td H, \td \eta)$ for endoscopic quadruple $(H, \mc H, s, \eta)$ is an extension $\td H$ by a central induced torus such that
\begin{enumerate}
\item
$\td H_\der$ is simply connected (we call such an $\td H$ a \emph{$z$-extension}). 
\item
$\td \eta : \mc H \to \Ld{\td H}$ is an $L$-embedding that restricts to the map $\wh H \to \wh {\td H}$ dual to the projection $\td H \to H$. 
\end{enumerate}
\end{dfn}
By Lemma 2.2.A in \cite{KS99}, as long as (1) is satisfied, a valid $\eta $ satisfying (2) always exists.
\begin{lem}\label{zextbound}
Let $H$ be a reductive group that splits over $K'$. Then there exists a $z$-extension of $H$ splitting over $K'$. Furthermore, the dimension of the extending torus is bounded by $[K' : \Q](\rank_\ssm H)$. 
\end{lem}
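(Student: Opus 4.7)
The plan is to use a Milne-style fiber product to replace $H_\der$ with $H^\scn$, then fatten the resulting finite central kernel into an induced torus, and finally bound the dimension by a counting argument on permutation modules.

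First, I would set up the input. Let $H^\scn \twoheadrightarrow H_\der$ be the simply connected cover with finite multiplicative kernel, and consider the identity component $H^*$ of the fiber product $H \times_{H_\adj} H^\scn$, where $H \to H_\adj := H/Z_H$ is the adjoint projection and $H^\scn \to H_\adj$ factors through $H_\der$. A direct check shows that $H^*$ is connected reductive over $F$, splits over $K'$, and fits into a central extension
\[
1 \to \mu \to H^* \to H \to 1
\]
for some finite $F$-subgroup $\mu \subseteq Z_{H^\scn}$ of multiplicative type, with $(H^*)_\der = H^\scn$ simply connected. This reduces the problem to embedding $\mu$ into an induced torus of controlled dimension.

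Next, I would produce an induced $F$-torus $U$ splitting over $K'$ together with an embedding $\mu \hookrightarrow U$. Dualizing, what is needed is a surjection of $\Z[\Gal(K'/F)]$-modules from a permutation module onto the finite module $X^*(\mu)$. Since $\mu \subseteq Z_{H^\scn}$ lies inside any maximal torus $T^\scn \subseteq H^\scn$, the character module $X^*(\mu)$ is a quotient of $X^*(T^\scn)$, which is free of $\Z$-rank $\rank_\ssm H$. I would pick $r \leq \rank_\ssm H$ generators of $X^*(\mu)$ as a $\Z[\Gal(K'/F)]$-module, let $H_i \leq \Gal(K'/F)$ be the stabilizer of the $i$th generator with fixed field $L_i \subseteq K'$, and set $X^*(U) := \bigoplus_{i=1}^r \Z[\Gal(K'/F)/H_i]$. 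The corresponding induced torus $U = \prod_i \Res_{L_i/F}\Gm$ splits over $K'$, carries the required embedding, and
\[
\dim U \;=\; \sum_i [L_i : F] \;\leq\; r \cdot [K':F] \;\leq\; [K':\Q] \cdot \rank_\ssm H.
\]

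Finally, I would glue by defining
\[
\td H := (H^* \times U)/\mu,
\]
with $\mu$ embedded antidiagonally via $m \mapsto (m, m^{-1})$, and check the three properties of a $z$-extension: (i) the projection $\td H \to H^*/\mu = H$ is surjective with kernel $\{(1,u) : u \in U\} \cong U$; (ii) $U$ is central in $\td H$, since $U$ is abelian and the antidiagonal quotient preserves centrality; and (iii) $\td H_\der = (H^*)_\der = H^\scn$ is simply connected (the antidiagonal intersects $H^\scn \times \{1\}$ trivially). The main obstacle is the bookkeeping at step one---verifying that the identity component of the Milne fiber product really sits in a central extension of $H$ with simply connected derived subgroup, which requires care with connected components of $Z_H$ and Galois cohomology---while step two is a clean exercise in permutation modules and step three is a formal pushout computation.
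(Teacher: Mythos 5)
Your construction is correct and gives the stated bound, but it takes a genuinely different route from the paper. The paper follows the Langlands/Milne--Shih argument at the level of cocharacter lattices: one considers $P = X_*(T)/X_*(T^\scn)$, invokes \cite[prop 3.1]{MS82} to write $0 \to M \to Q \to P \to 0$ with $M$ a free $\Z[\Gal(K'/F)]$-module and $Q$ torsion-free, and then reads off $M$ as the cocharacter lattice of the extending torus; the bound on $\rank_\Z M$ comes directly from the proof of that cited lemma. You instead build the group geometrically: form $H^* = \bigl(H \times_{H_\adj} H^\scn\bigr)^0$, identify the finite central kernel $\mu$ (which works out to $i^{-1}(Z_H^0 \cap H_\der) \subseteq Z_{H^\scn}$, not all of $Z_{H^\scn}$ when $Z_H$ is disconnected --- the fiber product itself need not be connected, so passing to the identity component is doing real work here), embed $X^*(\mu)$ as a quotient of a permutation $\Gal(K'/F)$-module dominated by $X^*(T^\scn)$, and glue via the antidiagonal pushout $(H^* \times U)/\mu$. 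The two routes are morally dual --- your $X^*(\mu)$ plays the role of the torsion in the paper's $P$ --- and both land on $\dim U \le r[K':F] \le [K':\Q]\rank_\ssm H$. What the paper's lattice argument buys is brevity, since the delicate step is outsourced to \cite{MS82}; what yours buys is an explicit presentation of $\td H$ as a concrete pushout, with the verification that $\td H_\der = H^\scn$ reduced to the observation that the antidiagonal copy of $\mu$ meets $H^\scn \times \{1\}$ trivially. The ``bookkeeping'' you flag at step one is indeed the only nontrivial point: one must check that $H^* = E^0$ is the image of $Z_H^0 \times H^\scn$ under $(z,g) \mapsto (z\,i(g), g)$, that this surjects onto $H$, and that $(H^*)_\der = \Delta(H^\scn) \cong H^\scn$ by a commutator computation; all of this does go through.
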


\begin{proof}
We just go through the construction in \cite{Lan79} or \cite[pg 299]{MS82} explicitly seeing how big things get at each step. Let  $T^\scn$ be the maximal torus in the simply connected cover of $H^\der$. Let $P = X_*(T)/X_*(T^\scn)$ as a Galois module. A $z$-extension would correspond to an extension of $X_*(T)$ making this quotient have no torsion. The torsion part has less than $\rank_\ssm G$ generators.

The argument starts with a lemma writing $P$ as a quotient of Galois modules
\[
0 \to M \to Q \to P \to 0
\]
with $M$ free over $Z[G]$ and $Q$ free over $P$. The construction is \cite[prop 3.1]{MS82} and bounds $\rank_\Z M$ by $\dim K'$ times the number of generators of the torsion of $P/\Z$ which we can further bound by $(\dim K')(\rank_\ssm H)$. 

Some work with reductive groups shows that $M$ can be chosen to be the cocharacter space of the extending torus, thereby finishing the argument.
\end{proof}

In the case where $G$ has simply connected derived subgroup, the $Z$-extension can be chosen to be trivial and $\mc H \simeq \Ld H$. In this case, an endoscopic triple $(H, s, \eta)$ contains all the needed data.

\subsubsection{$z$-extensions and central character datum}
If $(\mf X, \chi)$ is a central character datum for $G$, any $(H, \mc H, s, \eta)$ and $(\td H, \td \eta)$ quadruple and extension determine a central character datum $(\mf X_{\td H}, \chi_{\td H})$ on $\td H$. The central subgroup $\mf X_{\td H}$ is produced from $\mf X$ by first taking the image under the map $Z_G \into Z_H$ and then taking the preimage under $\td H \to H$.

To get $\chi_{\td H}$, pick a section $c$ for $\mc H \to W_F$. Then if $T$ is the extending torus defining $\td H$, the composition
\[
W_F \xto c \mc H \xto{\td \eta} \Ld{\td H} \to \Ld T
\]
is an $L$-parameter for $T$. This determines a character $\lb_{\td \eta}^{-1}$ on $T(F)$ if $F$ is local or $T(F)\bs T(\A)$ if $F$ is global through the Langlands correspondence for Tori. The inverse is to match our convention for defining Hecke algebras. 

Through considerations of transfer factors (see section \ref{transfer}), $\lb_{\td \eta}$ can be extended to the preimage of $Z_G$ in $Z_{\td H}$. Therefore, we can set $\chi_{\td H}$ to be $\chi \lb_{\td \eta}$ (where $\chi$ is defined on $\mf X_H$ by pullback). We will discuss this and more properties of $\lb_{\td \eta}$ when we discuss transfer. In particular, we will show that in the relevant cases, $\lb_{\td \eta,v}$ at a place $v$ can be extended to a character on $\td H_v$. 

\subsubsection{$z$-extensions do not change much}
There is a vague intuition that taking a $z$ extension should not change a groups endoscopy:
\begin{prop}\label{zextpath}
Let $G$ be a group over $F$. 
\begin{enumerate}[(a)]
\item
If $G_1$ is a central extension of $G$ by induced torus $T$, then the (elliptic) endoscopic tuples for $G$ are in bijection with those of $G_1$. This bijection takes a group $H$ to a central extension $H_1$ by $T$. 
\item
If $H$ is an (elliptic) endoscopic group of $G$ and $H_1$ is a central extension of $H$ by induced torus $T$, then there is a central extension $G_1$ of $G$ by $T$ such that $H_1$ is an (elliptic) endoscopic group of $G_1$. Furthermore, the endoscopic tuples determining $H$ and $H_1$ correspond under the bijection from (a). 
\end{enumerate}
\end{prop}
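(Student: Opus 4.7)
The plan is to work on the dual side throughout, exploiting that a central extension of reductive groups by an induced torus dualizes cleanly to an extension of dual groups with the added quotient sitting in the center.

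For part (a), I start by dualizing $1 \to T \to G_1 \to G \to 1$ to $1 \to \wh G \to \wh{G_1} \to \wh T \to 1$ with $\wh T \subset Z_{\wh{G_1}}$. Given an endoscopic quadruple $(H, \mc H, s, \eta)$ for $G$, I define $\wh{H_1} := Z_{\wh{G_1}}^0(\eta(s))$; since $\wh T$ is central and $\eta(s) \in \wh G$, this sits in a central extension $1 \to \wh H \to \wh{H_1} \to \wh T \to 1$, giving by duality a central extension $H_1$ of $H$ by $T$. The remaining data are $s_1 := \eta(s)$, $\mc H_1 := \wh{H_1} \cdot \eta(\mc H) \subset \Ld{G_1}$ (using the inclusion $\Ld G \hookrightarrow \Ld{G_1}$), and $\eta_1$ the inclusion; that $\eta(\mc H)$ normalizes $\wh H$ combined with the centrality of $\wh T$ shows $\mc H_1$ is a well-defined split extension of $\wh{H_1}$ by $W_F$. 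The first endoscopic axiom holds by construction, and ellipticity reduces to its version for $G, H$ via $\wh T \subset Z_{\wh{G_1}}$.

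The delicate part is checking local triviality of $s_1$ in $H^1(F, Z_{\wh{G_1}})$ given that for $s$. The plan is to chase the commutative diagram with rows $1 \to Z_{\wh G} \to Z_{\wh H} \to Z_{\wh H}/Z_{\wh G} \to 1$ and $1 \to Z_{\wh{G_1}} \to Z_{\wh{H_1}} \to Z_{\wh{H_1}}/Z_{\wh{G_1}} \to 1$: the two rightmost quotients coincide via the snake lemma (both equal $Z_{\wh H}/Z_{\wh G}$), and the transfer of local triviality follows from the long exact sequences in local and global Galois cohomology attached to $1 \to Z_{\wh G} \to Z_{\wh{G_1}} \to \wh T \to 1$. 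For the bijection, I construct the inverse: any endoscopic datum for $G_1$ has $Z_{\wh{G_1}} \subset Z_{\wh{H_1}}$ by its axioms, hence $\wh T \subset Z_{\wh{H_1}}$, and quotienting everything by $\wh T$ recovers compatible data for $G$; the two constructions are mutually inverse by design.

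For part (b), I construct $G_1$ at the level of root data. Fix a maximal torus $T_{\wh G} \subset \wh G$ containing $\eta(s)$; since $\wh H$ is the connected centralizer of $\eta(s)$, $T_{\wh G}$ is also a maximal torus of $\wh H$. The extension $H_1 \to H$ provides $1 \to T_{\wh G} \to T_{\wh{H_1}} \to \wh T \to 1$, equivalently the $W_F$-equivariant sequence $0 \to X^*(T_{\wh G}) \to X^*(T_{\wh{H_1}}) \to X^*(\wh T) \to 0$. I define $\wh{G_1}$ via the root datum $(X^*(T_{\wh{H_1}}), \Phi^*(\wh G), X_*(T_{\wh{H_1}}), \Phi_*(\wh G))$, viewing the roots of $\wh G$ as sitting inside $X^*(T_{\wh G}) \subset X^*(T_{\wh{H_1}})$, with $W_F$-action extending both the action on $\wh G$ and that on $\wh{H_1}$ (the compatibility being forced by the common $\wh H$). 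This produces a reductive group with the same derived subgroup as $\wh G$ whose quotient by $\wh G$ is $\wh T$; dualizing gives $G_1 \to G$, which is a central extension by $T$ (induced because the hypothesis controls $T$ directly). Finally, applying (a) to this $G_1$ and observing that $Z_{\wh{G_1}}^0(\eta(s)) = \wh{H_1}$ by the root-datum construction yields the required compatibility of endoscopic tuples.

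The main obstacle will be the Galois-cohomological verification in axiom (2) for part (a), which requires careful diagram chases of long exact sequences attached to the centers; a secondary technicality is ensuring the $W_F$-structure on $\wh{G_1}$ in part (b) is uniquely determined by gluing the existing actions on $\wh G$ and $\wh{H_1}$ along $\wh H$, which should follow from faithful flatness of the inclusion $\wh H \hookrightarrow \wh G$.
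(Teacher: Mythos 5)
Your part (a) takes essentially the same route as the paper: work entirely on the dual side, observe that $s$ and the root data/Weyl cosets are unaffected by the central extension, and reduce the remaining endoscopic axiom to a long exact sequence in Galois cohomology attached to $1 \to Z_{\wh G} \to Z_{\wh{G_1}} \to \wh T \to 1$, where the crucial input (which you correctly flag as the key step) is that for an induced torus $T$, $\wh T^\Gamma$ is connected and $H^1(\Gamma, \wh T) = 0$. The paper phrases everything in terms of endoscopic pairs $(s,\rho)$ rather than quadruples, which streamlines the bookkeeping, but that is presentational. One persistent imprecision throughout: you repeatedly treat $\wh T$ as a subgroup of $Z_{\wh{G_1}}$ or $Z_{\wh{H_1}}$ (e.g.\ ``$\wh T \subset Z_{\wh{G_1}}$'', ``quotienting everything by $\wh T$''), but $\wh T$ is the \emph{quotient} $\wh{G_1}/\wh G$; the correct statement is $Z_{\wh{G_1}}/Z_{\wh G} \cong \wh T$. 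This does not affect the substance of (a), but you should clean it up.

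Part (b), however, has a genuine gap. You claim the sequence $1 \to T_{\wh G} \to T_{\wh{H_1}} \to \wh T \to 1$ is ``equivalently'' $0 \to X^*(T_{\wh G}) \to X^*(T_{\wh{H_1}}) \to X^*(\wh T) \to 0$, and then build your root datum by ``viewing the roots of $\wh G$ as sitting inside $X^*(T_{\wh G}) \subset X^*(T_{\wh{H_1}})$.'' But $X^*$ is contravariant: the inclusion $T_{\wh G} \hookrightarrow T_{\wh{H_1}}$ induces a \emph{surjection} $X^*(T_{\wh{H_1}}) \twoheadrightarrow X^*(T_{\wh G})$ (with kernel $X^*(\wh T)$), not an injection the other way. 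So $\Phi^*(\wh G)$ does not naturally live in $X^*(T_{\wh{H_1}})$; you must choose lifts. For the roots lying in $\Phi^*(\wh H)$ you have a preferred lift from $\wh{H_1}$, but for $\alpha \in \Phi^*(\wh G) \setminus \Phi^*(\wh H)$ there is no canonical lift available before $\wh{G_1}$ is constructed — and different lifts produce different putative $\wh{G_1}$'s, so your root datum is not well defined as written. (The coroot side is fine: $X_*(T_{\wh G}) \hookrightarrow X_*(T_{\wh{H_1}})$ does hold.) The paper avoids this entirely by working on the $F$-group side: it uses the embedding $Z_G \hookrightarrow Z_H$ to pull $Z_{H_1} \to Z_H$ back to an extension $Z_{G_1} \to Z_G$, and then defines $G_1 := Z_{G_1} \times G^\der / Z_{G^\der}$, exploiting $G \cong Z_G \times G^\der / Z_{G^\der}$. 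This sidesteps the lift problem and automatically builds in the compatibility of $W_F$-structures that you were hoping to get from ``faithful flatness.'' If you want to salvage the dual-side version you would need to identify the intended central cotorus $Z^0_{\wh{G_1}}$ first (e.g.\ as the pullback of $Z^0_{\wh{H_1}}$ along $\wh{G}_\mathrm{ab} \to \wh{H}_\mathrm{ab}$) so that ``roots vanish on the center'' pins down the lifts, but as it stands the proposal does not do this.
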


\begin{proof}
\noindent\underline{Part (a):}

\noindent\underline{The $s$:} The map $\wh G \to \wh G_1$ gives a canonical $W_F$-equivariant isomorphism $\wh G/Z_{\wh G} \to  \wh G_1/Z_{\wh G_1}$ so choices for $s$ are the same. Given such an $s$, set $\wh H_1 = (\wh G_1)_s^0$. Then we have the diagram
\[
\begin{tikzcd}
\wh H \arrow[r, hook] \arrow[d, hook] & \wh G \arrow[d, hook] \\
\wh H_1 \arrow[r, hook] \arrow[d, two heads] & \wh G_1 \arrow[d, two heads] \\
\wh T \arrow[r, "\sim"] & \wh T
\end{tikzcd}
\]

\noindent\underline{The $\rho$ and H:} This gives a canonical isomorphism $\wh H_1 \bs \wh G_1 \to \wh H \bs \wh G$ so assignments $\gamma \to a_\gamma$ as in the comment after the definition of endoscopic pair are the same for $G$ and $G_1$. There are two conditions for this assignment to give a valid $\rho$: The first is that $\gamma \mapsto \Int a_\gamma \circ \gamma$ is a homomorphism up to $\Int \wh H = \Int \wh H_1$. This condition is clearly the same with respect to either $\wh H$ or $\wh H_1$.

The second condition is that $\Int a_\gamma \circ \gamma$ needs to fix the appropriate group: $\wh H$ or $\wh H_1$. By construction, $\wh H = \wh G \cap \wh H_1$. Therefore, since $\wh G$ is $W_F$ and $\Int \wh G_1$-invariant, if such a map fixes $\wh H_1$, it fixes $\wh H$. For the other direction, since these are all complex groups and $\wh G \supseteq (\wh G_1)^\der$, all elements of $\wh G_1$ can be written as $zg$ for $z \in Z_{\wh G_1}^0$ and $g \in \wh G$. This is an element of $\wh H_1$ if and only if $g \in \wh H$. In total, $\wh H_1 = Z^0_{\wh G_1} \wh H$ so we are done since $Z^0_{\wh G_1}$ is fixed by $W_F$ and $\Int_G$. Therefore, this second condition is true for $\wh G$ if and only if it is true for $\wh G_1$. 

Note that for any such $\rho$, the columns of the above diagram and the isomorphism between $\wh T$'s are $\Gamma$-equivariant. Undoing this dual, this will give that $H_1$ is an extension of $H$ by $T$.

\noindent\underline{The cohomology condition:} In total, the possible pairs $(s, \rho)$ ignoring the cohomology condition are the same for $G$ and $G_1$. It remains to show that the cohomology condition holds with respect to $G$ if and only if it does for $G_1$. We have $W_F$-equivariant diagram where the first two rows are exact sequences (note that the actions on $Z_{\wh G}$ from $\rho$ and $\wh G$ coincide so the action here is according to $\rho$):
\[
\begin{tikzcd}
1 \arrow[r] & Z_{\wh G} \arrow[r]\arrow[d] & Z_{\wh H} \arrow[r]\arrow[d] & Z_{\wh H}/Z_{\wh G} \arrow[r]\arrow[d, "\sim"] & 1 \\
1 \arrow[r] & Z_{\wh G_1} \arrow[r]\arrow[d] & Z_{\wh H_1} \arrow[r]\arrow[d] & Z_{\wh H_1}/Z_{\wh G_1} \arrow[r] & 1 \\
& \wh T \arrow[r, "\sim"] & \wh T &&
\end{tikzcd}
\]
This gives a corresponding diagram in cohomology:
\[
\begin{tikzcd}
(Z_{\wh H}/Z_{\wh G})^\Gamma \arrow[r, "\varphi_1"] \arrow[d, "\sim"] & H^1(\Gamma, Z_{\wh G}) \arrow[d, "\psi"] \\
(Z_{\wh H_1}/Z_{\wh G_1})^\Gamma \arrow[r, "\varphi_2"] & H^1(\Gamma, Z_{\wh G_1})
\end{tikzcd}
\]
Here $\Gamma \subseteq W_F$ is some local Galois group. The cohomology conditions for $H$ and $H_1$ matching at $\Gamma$ is equivalent to $\ker \varphi_1 = \ker \varphi_2$. To show this, consider the sequence
\[
\pi_0(\wh T^\Gamma) \to  H^1(\Gamma, Z_{\wh G}) \xto \psi H^1(\Gamma, Z_{\wh G_1}) \to H^1(\Gamma, \wh T).
\]
Since $T$ is an induced torus, $\wh T$ is a power of $\Gm$ with a $\Gamma$ action by permuting coordinates. This gives first that $\wh T^\Gamma$ is connected and second that $\wh T$ is induced so $H^1(\Gamma, \wh T) = 0$. Therefore $\psi$ is an isomorphism and the cohomology conditions are equivalent at every place.

\noindent\underline{Ellipticity:} The elliptic condition is that  $(Z_{\wh H_*}^{W_F})^0 \subseteq Z_{\wh G_*}^{W_F}$. As before, $Z_{\wh H_1} = Z_{\wh H} Z_{\wh G_1}$ and $Z_{\wh H} \cap Z_{\wh G_1} = Z_{\wh G}$. Then we get the sequence
\[
1 \to Z_{\wh G} \to Z_{\wh H} \times Z_{\wh G_1} \to Z_{\wh H_1} \to 1
\]
where the first map is the antidiagonal. This gives a map in cohomology:
\[
Z_{\wh H}^{W_F} \times Z_{\wh G_1}^{W_F} \to Z_{\wh H_1}^{W_F} \to H^1({W_F}, Z_{\wh G})\to H^1({W_F}, Z_{\wh G_1}) \oplus H^1({W_F}, Z_{\wh H}).
\]
From previous arguments, $T$ being induced gives that the last map in injective into the first coordinate. Therefore the middle is $0$ and the first is surjective. Therefore $Z_{\wh H_1}^{W_F} = Z_{\wh H}^{W_F} \times Z_{\wh G_1}^{W_F}/Z_{\wh G}^{W_F}$ and $(Z_{\wh H_1}^{W_F})^0 \subseteq (Z_{\wh G_1}^{W_F})^0(Z_{\wh H}^{W_F})^0 Z_{\wh G}^{W_F}$. This gives that the elliptic condition on $H$ implies that on $H_1$. 

For the other direction, $Z_{\wh H} = Z_{\wh H_1} \cap \wh G$ gives that $Z_{\wh H}^{W_F} = Z_{\wh H_1}^{W_F} \cap \wh G^{W_F}$ which gives $(Z_{\wh H}^{W_F})^0 \subseteq (Z_{\wh H_1}^{W_F})^0 \cap \wh G^{W_F}$. Assuming $(Z_{\wh H_1}^{W_F})^0 \subseteq Z_{\wh G_1}^{W_F}$ and further using that $Z_{\wh G_1} \cap \wh G = Z_{\wh G}$ implies $Z_{\wh G_1}^{W_F} \cap \wh G^{W_F} = Z_{\wh G}^{W_F}$ finally giving that $(Z_{\wh H}^{W_F})^0 \subseteq Z_{\wh G}^{W_F}$.

\noindent\underline{Part (b):}

We are given $G$, endoscopic group $H$, and extension $H_1$ by $T$. There is a map $Z_G \into Z_H$ (see \cite{KS99} pg. 53) so we can pullback the extension $Z_{H_1}$ to an extension $Z_{G_1}$ of $Z_G$ by $T$. 

Set $G_1 = Z_{G_1} \times G_\der/Z_{G^\der}$ as an algebraic group where the $Z_{G^\der}$ is embedded antidiagonally. Then, since $G = Z_G \times G_\der/Z_{G^\der}$, $G_1$ is an extension of $G$ by $T$. If $H$ comes from data $(s, \rho)$, then through the construction of the bijection in  (a), $(s,\rho)$ gives data for $H_1$ and is elliptic if and only if $(s,\rho)$ is.
\end{proof}

Consider $H$ an endoscopic group of $G$ and $H_1$ a $z$-extension (so it has simply connected derived subgroup). Let $(H_1, \mc H_1, s, \eta)$ be the quadruple for $G_1$ produced by part (b). Then the map $\Ld H_1 \to \mc H_1$ is an isomorphism, so we actually do have an embedding $\Ld H_1 \into \Ld G_1$. This is the $z$-extension construction described in \cite{LS87}.

\subsection{Transfer}\label{transfer}
Consider quadruple $(H, \mc H, s, \eta)$ for $G$ over local or global $K$ and associated $z$-extension $(H_1, \eta_1)$. There is a transfer map
\begin{multline*}
\mc T : \{\text{strongly $G$-regular semisimple conjugacy classes in $H(K)$}\} \\
\to \{\text{strongly regular stable conjugacy classes $G(K)$}\} \cup \{*\}
\end{multline*}
where the $*$ is a dummy variable to allow maps that are not necessarily defined everywhere. We say that $\gamma_H \in H(K)$ is a norm of $\gamma_G \in G(K)$ if $\mc T$ takes the conjugacy class of $\gamma_H$ to that of $\gamma_G$. Respectively, $\gamma_{H_1} \in H_1(K)$ is a norm of something if its projection to $H(F)$ is.

\subsubsection{Local Transfer}
Now, consider local $F_v$. If strongly $G$-regular $\gamma_{H_1}$ is a norm of strongly regular $\gamma_G$, a transfer factor $\Delta(\gamma_{H_1}, \gamma_G) = \Delta^{H_1}_G(\gamma_{H_1}, \gamma_G)$ can be defined (this is the content of sections $4.1-5.1$ in \cite{KS99}). The factor is non-canonical up to a uniform constant. We recall some useful properties from \cite[\S5.1]{KS99}:
\begin{itemize}
\item
$\Delta(\gamma_{H_1}, \gamma_G)$ is $0$ unless $\gamma_{H_1}$ is a norm of $\gamma_G$. 
\item
$\Delta(\gamma_{H_1}, \gamma_G)$ is constant over the stable conjugacy class of $\gamma_{H_1}$.
\item
Let $Z_{G_1} = Z_{H_1} \times_{Z_H} Z_G$. There exists a character $\lb_{\eta_1}$ on $Z_{G_1}(F_v)$ such that if $(z_1, z) \in Z_{G_1}(F_v)$,
\[
\Delta^{H_1}_G(z_1 \gamma_{H_1}, z \gamma_G) = \lb_{\eta_1}^{-1}(z_1,z) \Delta^{H_1}_G(\gamma_{H_1}, \gamma_G).
\]
In fact, $\lb_{\eta_1}$ even extends to a character on $G_1(F_v)$ (see the construction on pg. 53 in  \cite{KS99} or pg. 55 in \cite{LS87}).
\item
Let the quadruple $(H, \mc H, s, \eta, \gamma_{H_1})$ correspond to the triple $(T, \gamma_G, \kappa)$. Then $\gamma_{H_1}$ is a norm of $\gamma_G$. If $\gamma'_G$ is a stable conjugate of $\gamma_G$, 
\[
\kappa(\gamma'_G) \Delta(\gamma_{H_1}, \gamma_G) = \Delta(\gamma_{H_1}, \gamma'_G).
\]
\end{itemize}

Fix central character datum $(\mf X, \chi)$ for $G$. Let $f \in \ms H(G(F_v), \chi_v)$. We say that $f^{H_1} \in \ms H(H_1(F_v), \chi_{H_1,v})$ matches $f$ if 
\[
SO_{\gamma_{H_1}}(f^{H_1}) = \sum_{\gamma_G} \Delta(\gamma_{H_1}, \gamma_G) O_{\gamma_G}(f)
\]
for all strongly $G$-regular $\gamma_{H_1} \in H_1(F_v)$ where $\gamma_G$ ranges over representatives of unstable conjugacy classes such that $\gamma_{H_1}$ is a norm of $\gamma_G$. Note that the right-hand side is a twisted orbital integral multiplied by an appropriate constant. 

Since $\gamma_{H_1}$ and $\gamma_G$ are strongly regular, if $T$ is a maximal torus for $G$ and $Z$ is the extending torus defining $H_1$ from $H$, the orbital integrals have dimension $[G(F_v)][T(F_v)]^{-1}$ and $[H_1(F_v)][T(F_v)]^{-1}[Z(F_v)]^{-1}  = [H(F_v)][T(F_v)]^{-1}$. Therefore, $f^{H_1}$ needs to have dimensions $[G(F_v)][H(F_v)]^{-1}$. 

A big theorem is that such an $f^H$ always exists. The Archimedean case is from Shelstad in \cite{She82} while the non-Archimedean case was reduced to the fundamental lemma (which will be discussed later) by Waldspurger in \cite{Wal97}. Call such an $f^H$ a transfer of $f$.

\subsubsection{Global Transfer} 
If $F$ is global, then the endoscopic datum determine local endoscopic datum at each place $v$. This lets us define a global transfer factor $\Delta_\A(\gamma_{H_1}, \delta_G)$ as the product of all the local transfer factors. \cite[cor 7.3.B]{KS99} gives that all the choices defining the local factors can be made consistently giving a canonical choice of global factor. 

If $f \in \ms H(G, \chi)$ factors into local factors at each place, then transferring each of the local factors gives a transfer $f^H$ satisfying a similar identity. By the fundamental lemma, this is unramified almost everywhere and is therefore an element of $\ms H(H_1, \chi_{H_1})$. 

After lots of cohomology work, $f^H$ can be shown to satisfy a global identity
\[
SO_{\gamma_{H_1}}(f^H) = O^\kappa_{\gamma_G}(f)
\]
where $(H, \mc H, s, \eta, \gamma_H)$ corresponds to $(T, \gamma_G, \kappa)$. This is \cite[lem 7.3.C]{KS99}.

\subsubsection{Characters from Transfer}\label{transferchar}
By the above, endoscopy always defines a character on $Z_{H_1}(F_v)$. However, for $v$ non-Archimedean, this actually extends to a character on $H_1(F_v)$. We will need this to state some bounds on non-Archimedean transfers later. 

Fix such a $v$ and assume without loss of generality that $G$ has simply connected derived subgroup (possibly by taking a $z$-extension and using proposition \ref{zextpath}). Take the extension $G_1$ of $G$ as in proposition \ref{zextpath}(b). Then $G_1^\der$ is an isogenous cover of $G^\der$, so the two are equal. The map $\eta$ determines a character $\lb_{\eta_1}$ on $Z_{G_1}(F_v) = Z_{H_1}(F_v) \times_{Z_H(F_v)} Z_G(F_v)$. Since this lifts to a character on $G_1(F_v)$, it is actually a character on $G_1(F_v)/G_1^\der(F_v)$. If $F$ is local then $H^1(F_v, G_1^\der) = 0$ since $G_1^\der$ is semisimple and simply connected. Therefore this is a character on $(G_1)_\ab(F_v)$ so let it correspond to the $L$-parameter $\alpha: W_{F_v} \into \Ld(G_1)_\ab$.

Next
\begin{lem}
Let $G$ be a reductive group over $F_v$. Then $Z_{\wh G}^0 = \wh{G_{\ab}}$ as groups with $W_{F_v}$-action. 
\end{lem}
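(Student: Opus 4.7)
The plan is to choose a maximal torus $T$ of $G$ defined over $F_v$ and identify the character lattices of both $Z_{\wh G}^0$ and $\wh{G_\ab}$ with the same torsion-free quotient of $X_*(T)$, compatibly with $W_{F_v}$-action. Recall that $\wh T \subseteq \wh G$ is a maximal torus with $X^*(\wh T) = X_*(T)$ as $W_{F_v}$-modules, and that the roots of $\wh G$ with respect to $\wh T$ are the coroots $\Phi^\vee$ of $G$.

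For $Z_{\wh G}^0$: since $Z_{\wh G}$ is the common kernel of the roots of $\wh G$ viewed as characters of $\wh T$, we get $X^*(Z_{\wh G}) = X_*(T)/\Z\Phi^\vee$. Using the standard correspondence between the identity component of a diagonalizable group and the torsion-free quotient of its character lattice, this gives $X^*(Z_{\wh G}^0) = (X_*(T)/\Z\Phi^\vee)/\mathrm{tors}$. For $\wh{G_\ab}$: from the short exact sequence of $F_v$-tori
\[
1 \to T \cap G_\der \to T \to G_\ab \to 1,
\]
one obtains $X_*(G_\ab) = X_*(T)/X_*(T \cap G_\der)$. The maximal torus $T \cap G_\der$ of $G_\der$ is isogenous to the maximal torus of the simply connected cover, whose cocharacter lattice is $\Z\Phi^\vee$; hence $\Z\Phi^\vee \subseteq X_*(T \cap G_\der)$ with finite quotient, and $X_*(T\cap G_\der)/\Z\Phi^\vee$ is precisely the torsion subgroup of $X_*(T)/\Z\Phi^\vee$. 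Thus $X^*(\wh{G_\ab}) = X_*(G_\ab) = (X_*(T)/\Z\Phi^\vee)/\mathrm{tors} = X^*(Z_{\wh G}^0)$, so the two tori are isomorphic.

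The main thing to verify is $W_{F_v}$-equivariance, which follows because every inclusion and quotient above is canonical and hence respects the $W_{F_v}$-action on $X_*(T)$ coming from the $F_v$-structure on $T$. More intrinsically, the canonical projection $G \twoheadrightarrow G_\ab$ dualizes to a canonical $W_{F_v}$-equivariant morphism $\wh{G_\ab} \to \wh G$; its image is connected and, by the character-lattice computation above, killed by every coroot, hence lies in $Z_{\wh G}^0$, and the lattice identification upgrades this to the desired $W_{F_v}$-equivariant isomorphism $\wh{G_\ab} \iso Z_{\wh G}^0$.
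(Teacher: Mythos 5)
Your argument is correct, but it takes the dual route from the paper's one‑line computation. The paper identifies the \emph{cocharacter} lattices of both tori with $X^*(T)^{\Om}$: Weyl‑invariant characters of $T$ are exactly those factoring through $G_\ab$, giving $X_*(\wh{G_\ab}) = X^*(G_\ab) = X^*(T)^{\Om}$, while Weyl‑invariant cocharacters of $\wh T$ are exactly those landing in the connected center, giving $X_*(Z_{\wh G}^0) = X_*(\wh T)^{\Om} = X^*(T)^{\Om}$. You instead compute the \emph{character} lattices of both tori as $(X_*(T)/\Z\Phi^\vee)/\mathrm{tors}$: the extra work this buys you is the step of identifying the torsion in $X_*(T)/\Z\Phi^\vee$ with $X_*(T \cap G_\der)/\Z\Phi^\vee$ (which is fine, but needs the observation that $X_*(T\cap G_\der)$ is the saturation of $\Z\Phi^\vee$ in $X_*(T)$, equivalently that $T \cap G_\der$ is connected). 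The paper's description via Weyl invariants is shorter and dodges the torsion bookkeeping; your description via the coroot‑lattice quotient is more explicitly tied to the ``kernel of the roots'' picture of the center. Both are valid, and your remark that $W_{F_v}$‑equivariance is automatic because the Weyl group acts trivially on $X_*(T)/\Z\Phi^\vee$ (so the inner ambiguity in the Galois action on the root datum disappears) is the right thing to say; the same point is implicit in the paper's use of $X^*(T)^\Om$.
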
 

\begin{proof}
Let $G$ have maximal torus $T$. As $W_F$-modules, $X_*(\wh{G_\ab})= X^*(G_\ab) = X^*(T)^\Om$ and $X_*(Z_{\wh G}^0) = X_*(\wh T)^\Om = X^*(T)^\Om$. This equality of cocharacters induces an equality of torii.  
\end{proof}
Since $\wh H_1$ is a connected centralizer in $\wh G_1$, we get a map $Z_{\wh G_1}^0 \into Z^0_{\wh H_1}$. Since $H_1$ is endoscopic, the map is Galois-equivariant so it extends to a map $\Ld (G_{1,\ab}) \to \Ld (H_{1,\ab})$. Therefore $\alpha$ can be pushed forward and determines a character $\lb'_{H_1}$ on $H_1$. 

Note that $\lb_{H_1}$ and $\lb'_{H_1}$ are equal on $Z_{G_1}(F_v)$ since they correspond to the same parameter of $Z_{G_1}(F_v)$. This common value is the character $\lb_{\eta_1}$ from before that determined which Hecke algebra transfers landed in. The discussion here simply shows that it extends to a character on $H_v$. 

\subsubsection{A trick for computing transfers with $z$-extensions}\label{zextransfertrick}
Most formulas for transfers in the literature only apply in the case when $\Ld H \cong \mc H$. To use these in the general case, consider the same quadruple and $z$-extension as before with $T \into H_1$ as the extending torus. Proposition \ref{zextpath}(b) lets us find $G_1$ such that $(H_1, \mc H_1, s, \eta)$ is an endoscopic quadruple for $G_1$ with $\Ld H_1 \cong \mc H_1$. Let $\pi : G_1 \to G$ be the projection. 

The key property we use is that 
\[
\Delta^{H_1}_{G_1}(\gamma_1, \delta_1) = \Delta^{H_1}_G(\gamma_1, \delta)
\]
whenever $\delta_1 \in G_1(F)$ projects to $\delta \in G(F)$ and $\gamma_1$ is a norm of $\delta_1$ (see \cite{LS87} pg. 55). Therefore, given $f \in \ms H(G(F), \chi)$, let 
\[
f_1(g) = f \circ \pi(g)
\]
for $g \in G_1(F)$. If $f_1$ and ${f_1}^{H_1}$ match, then for all appropriate $\gamma_1, \delta$,
\[
SO_{\gamma_1}(f_1^{H_1}) = \sum_{\delta_1} \Delta^{H_1}_{G_1}(\gamma_1, \delta_1) O_{\delta_1}(f_1) = \sum_{\delta_1} \Delta^{H_1}_G(\gamma_1, \pi(\delta_1)) O_{\pi(\delta_1)}(f),
\]
which is the condition for $f$ and ${f_1}^{H_1}$ matching. Therefore we can compute $f^{H_1}$ by transferring $f_1$. 

As a sanity check, note that $\gamma_1$ being a norm of $\delta_1$ is true if and only if $z\gamma_1$ is a norm of $z\delta_1$ for all $z \in Z_{G_1}$. In particular, if $x= (z_1,z) \in Z_{G_1}$ then
\begin{multline*}
\Delta^{H_1}_{G_1}(x\gamma_1, x\delta_1) = \Delta^{H_1}_{G_1}(z_1 \gamma_1, x \delta_1) = \Delta^{H_1}_G(z_1 \gamma_1, z \pi(\delta_1))\\
 = \lb_{\eta_1}(x)^{-1} \Delta^{H_1}_G(\gamma_1, \pi(\delta_1)) = \lb_{\eta_1}(x)^{-1} \Delta^{H_1}_{G_1}(\gamma_1, \delta_1).
\end{multline*}
Therefore, the transfer factor transforms appropriately so that this transfer will be in $\ms H(H_1(F), \chi_H)$. 

Beware that there is a small technical issue here. Theorems in the literature only give the existence of transfers of compactly supported functions. We get around this by finding a compactly supported function $f'$ that averages to $f \circ \phi$ along the central character datum (see lemma \ref{functiontruncation} for example) and then transferring $f'$. We then average $(f')^{H_1}$ against the central character datum.

\subsection{Stabilization}
Using all the above and with much work, $I_{\disc, t}^{G,\chi}(f)$ can be stabilized. In other words, it can be expanded as
\[
I_{\disc,t}^G(f)  = \sum_{H \in \mc E_\el(G)} \iota(G, H) \wh S_{\disc, t}^{\td H, \chi_{\td H}}(f^{\td H})
\]
for some choice of $z$-extensions. Here $\wh S^{\td H, \chi_{\td H}}_{\disc, t}$ is a stable distribution on $\ms H(\td H, \chi_{\td H})$ depending only on $t,\td H$. We will not use any properties of $S$ except that it is stable. There is no explicit construction of $f^H$ in general, so its known properties will be cited as needed.

The constant $\iota$ has an explicit formula. Recall the definition in section \ref{quadruples} of automorphisms of quadruples $(H, \mc H, s, \eta)$ by elements $g \in \wh G$. Let $\Lambda(H, \mc H, s, \eta)$ be the image of $\Aut(H, \mc H, s, \eta) \to \Out(\wh H)$. Then
\[
\iota(G,H) = |\Lambda(H, \mc H, s, \eta)|^{-1} \tau(G) \tau(H)^{-1}
\]
where $\tau$ is the Tamagawa number.

\subsection{Some Properties}
\subsubsection{Endoscopy and root data}
The following is a summary of the relation between roots data of endoscopic groups and the original group: 
\begin{lem}\label{roots}
Let $G$ be a reductive group over global or local field $K$, $(H, \mc H, \eta, s)$ an elliptic endoscopic quadruple and $(\td H, \td \eta)$ a $z$-extension. Let $T_H$ be a maximal torus for $H_{\overline K}$. Then there is a maximal torus $T$ of $G_{\overline K}$ and an isomorphism $T_H \to T$. The choice of $T$ and the map are unique up to $G_{\overline K}$-conjugacy. Let $T_{\td H}$ be the pullback of $T_H$ to $\td H$. 

Then the following also hold:
\begin{enumerate} 
\item
The positive (co)roots of $(H, T_{H})$ can be chosen to be a subset of those of $(G, T)$ through $T_H \to T$. 
\item
For any root of $\alpha$ of $(H,T_H)$, $s_\alpha \in \Om_H$ is the same as $s_\alpha \in \Om_G$ through the isomorphism $T_H \to T$.
\item
The positive roots of $(\td H, T_{\td H})$ can be chosen to be a subset of those of $(G,T)$ through $T_{\td H} \to T_H \to T$. 
\item
The Weyl action on the roots of $(\td H, T_{\td H})$ restricts to that on $(H, T_H)$ through $X^*(T_H) \into X^*(T_{\td H})$.  
\end{enumerate}
\end{lem}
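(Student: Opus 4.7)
The plan is to establish the identification $T_H \iso T$ via duality and then deduce (1)--(4) using standard facts about connected reductive subgroups of full rank and about central extensions by induced tori.

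First I would construct the torus $T$ dually. The endoscopic datum provides an isomorphism $\wh H \iso \wh G^0_{\eta(s)}$. Since $\eta(s)$ is semisimple in $\wh G$, it lies in some maximal torus of $\wh G$, which must then be contained in $\wh G^0_{\eta(s)} \cong \wh H$; hence $\wh H$ has the same rank as $\wh G$. Consequently, any maximal torus $\wh T_H$ of $\wh H$ is already a maximal torus of $\wh G$. Dualizing produces a maximal torus $T$ of $G_{\overline K}$ together with an isomorphism $T_H \iso T$, and uniqueness up to $G_{\overline K}$-conjugacy follows from the conjugacy of maximal tori inside $\wh G$ together with the conjugacy of the various possible choices of $\wh T_H$ inside $\wh H$.

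For (1) and (2), the roots of $(\wh H, \wh T_H)$ are precisely those roots $\alpha$ of $(\wh G, \wh T_H)$ satisfying $\alpha(\eta(s)) = 1$, which yields an inclusion of (co)root systems. Choosing a Borel of $\wh G$ containing $\wh T_H$ induces a compatible positive system on $\wh H$, and dualizing gives (1). For (2), the reflection $s_\alpha$ is determined intrinsically by the pair $(\alpha, \alpha^\vee) \in X^*(T) \times X_*(T)$, so the reflections computed in $\Om_H$ and $\Om_G$ agree under the isomorphism $T_H \iso T$.

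For (3) and (4), I would exploit that $\td H \to H$ is a central extension by an induced torus $Z$, yielding a short exact sequence $1 \to Z \to T_{\td H} \to T_H \to 1$ and dually an injection $X^*(T_H) \hookrightarrow X^*(T_{\td H})$ with cokernel $X^*(Z)$. Since $Z$ is central in $\td H$, the adjoint representation acts trivially on $\mathrm{Lie}(Z)$, so every root of $(\td H, T_{\td H})$ vanishes on $Z$ and lies in $X^*(T_H)$; this identifies $\Phi^*(\td H, T_{\td H})$ with $\Phi^*(H, T_H)$ via pullback, giving (3) when combined with (1). The same centrality forces $\Om_{\td H}$ to act trivially on $X^*(Z)$ and to preserve the subgroup $X^*(T_H)$, and since both Weyl groups are generated by the reflections in these common roots, the actions coincide on $X^*(T_H)$, giving (4). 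The only point that requires care is the opening rank computation — noting that ellipticity is not needed here, only that $\wh H$ is the connected centralizer of a semisimple element — after which every step reduces to bookkeeping with these two structural facts.
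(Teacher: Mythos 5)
Your proof is essentially correct and, for parts (3) and (4), follows the same route as the paper: both observe that centrality of the extending torus $Z$ forces every root of $\td H$ to kill $Z$, hence to come by pullback from $H$, and both deduce the Weyl group compatibility from the common set of roots and coroots. The one difference is that the paper simply cites Kottwitz \cite[\S3.1]{Kot86} and Langlands--Shelstad \cite[\S1.3]{LS87} for the construction of $T$ and for statements (1) and (2), whereas you reconstruct these via duality ($\wh T_H$ is maximal in $\wh G$ because $\wh H$ is a full-rank connected centralizer of a semisimple element; the coroot of a common root is computed from the same root $\SL_2$). Your remark that ellipticity is never used is correct and matches the paper's own observation that the lemma is purely a statement over $\overline K$.

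One small slip worth fixing: in (3) you write that ``the adjoint representation acts trivially on $\Lie(Z)$.'' That is a true but irrelevant statement. What you need, and what your conclusion actually uses, is that the adjoint action of $Z$ on $\Lie(\td H)$ is trivial (because $Z \subseteq Z_{\td H}$), from which it follows that every root $\alpha$ of $(\td H, T_{\td H})$ satisfies $\alpha|_Z = 1$ and therefore lies in the image of $X^*(T_H) \hookrightarrow X^*(T_{\td H})$. The paper's phrasing --- that $\Lie Z$ maps into the center of $\Lie \td H$, so the root decomposition descends to $\Lie H$ --- encodes the same fact. Your uniqueness claim in the opening paragraph is also a touch compressed: you should say explicitly that two admissible $T_H \iso T$ differ by composing with an element of $\Om_G$, and that elements of $\Om_G$ are realized by $N_G(T)(\overline K)$, which is why the ambiguity is absorbed by $G_{\overline K}$-conjugacy rather than something smaller. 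Neither of these affects the correctness of the argument.
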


\begin{proof}
The construction of $T_H$ and (1),(2) are done in \cite[\S3.1]{Kot86} and \cite[\S1.3]{LS87}. 

To deal with $\td H$, let the extension be $1 \to Z \to \td H \to H \to 1$. Every maximal torus of $\td H$ is the preimage of one of $H$ so $X^*(T_H)$ maps into the corresponding $X^*(T_{\td H})$. Since in the sequence
\[
0 \to \Lie Z \to \Lie \td H \to \Lie H \to 0,
\]
 $\Lie Z$ maps into the center, the roots of $\td H$ have to be the images of those of $H$. Choose a Borel $\td B$ containing $B_H$ to get containment of positive roots. The last statement on Weyl groups comes from $\Om_{(H, T_H)} \cong N_H(T)/Z_H(T)$. 
\end{proof}

Be careful that this lemma says nothing about the Galois actions on the roots. We will not need that information and getting it requires $G$ to be quasisplit. Also beware that this does not give that the simple roots of $H$ are a subset of the simple roots of $G$ or that the coroots of $\td H$ are a subset of the coroots of $G$.

\subsubsection{Real endoscopic characters}\label{realendoscopiccharacters}
As another computational tool, the character $\kappa = \kappa_{G,H}$ for elliptic elements has a nice form in the real case. If $G$ is a real group and $T$ is elliptic, there is an isomorphism
\[
\Om_{\C,G} / \Om_{\R,G} \to \mf D(\R, T \bs G).
\]
An endoscopic character $\kappa$ can therefore be extended to $\Om_\C(G)$. \cite[\S IV.1]{Lab11} gives that the extension is left-$\Om_{\C,H}$ invariant.  

In addition, the composition
\[
\Om(B_K) \to \Om_{\C, G} \to \Om_{\C, G} / \Om_{\R, G}
\]
is a bijection. 
This gives a bijection between any regular $\Pi_\disc(\xi)$ and $\mf D(\R, T \bs G)$ that depends on the choice of $B_K$. 

This interpretation of $\kappa$ will be used when computing transfers of pseudocoefficients.

\section{The Hyperendoscopy Formula}\label{Hformsection}
Here we will describe Ferrari's hyperendoscopy formula with some modifications in the case where groups without simply connected derived subgroup appear in the hyperendoscopic paths. Using this formula may appear a little bizarre since it may seem more reasonable to try to directly mimic the work of \cite{Art89} on the stable distributions $SO^H(f^H)$ like the main result of \cite{Pen19}. 

The advantage of using hyperendoscopy is that we can directly apply the work already done in \cite{ST16} instead of proving slightly different bounds for the slightly different terms appearing in the stable trace formula. There are two disadvantages: first, it gives worse constants in bounds, but the constants were already not explicit due to the model theory bounds that go into them. Second, hyperendoscopy requires extending Shin-Templier's results to groups with fixed central character datum, but this is interesting in its own right.  In addition, the hyperendsocopic formula itself may be a useful tool for studying future forms of the invariant trace formula that, unlike \cite{Pen19}, do not have a reasonable stabilization. 

\subsection{Raw Formula}
Recalling the key trick from \cite{Fer07}, rearrange the stabilized trace formula:
\[
\wh S_{\disc, t}^{G^\qs}(f^{G^\qs}) =  I_{\disc,t}^G(f) +  \sum_{\substack{H \in \mc E_\el(G) \\ H \neq G^\qs}} (-\iota(G, H)) \wh S_{\disc, t}^{\td H}(f^{\td H})
\]
where $G^\qs$ is the quasisplit form of $G$. We want to continue this expansion inductively to get a formula in terms of $I_\disc$ for the various groups. The result in \cite{Fer07} uses endoscopic triples, seemingly assuming that if a group has simply connected derived subgroup, then so do all its endoscopic groups. This is not true as there can be $\SO_{2k}$ factors in endoscopic groups of $\Sp_{2n}$ (see \cite[\S1.8]{Wal10}). Nevertheless, with a little more work, a formula more-or-less equivalent to Ferrari's can be derived.  

Inductively substituting in the expansions for $\wh S_{\disc, t}^{\td H}(f^{\td H})$ since the $\td H$ are all quasisplit gives something like
\[
\wh S_{\disc, t}^{G^\qs}(f^{G^\qs}) =  I_{\disc,t}^G(f) +  \sum_{\mc H \in \mc{HE}^0_\el(G)} \iota(G, \mc H) I_{\disc,t}^{H_{n_\mc H}}(f^{\mc H}).
\]

Because of the non-canonical $z$-extensions, the notation defining the indexing set becomes somewhat painful. We will later find a nicer set to index over.
\begin{dfn}
A consistent choice of length-$1$ raw endoscopic paths for $G$ is a set $\mc{HE}^0_\el(G)_1$ consisting of pairs $(H, z)$ where $H$ ranges over the proper isomorphism classes in $\mc E_\el(G)$ and $z$ is a choice of $z$-pair for $H$. 

Given a consistent choice of length-$(n-1)$ raw hyperendoscopic paths $\mc{HE}^0_\el(G)_{n-1}$, a consistent choice of length-$n$ hyperendoscopic paths is a set $\mc{HE}^0_\el(G)_n$ consisting of tuples $(\mc H, H, z)$ where $\mc H \in \mc{HE}^0_\el(G)_{n-1}$, $H$ ranges over proper isomorphism classes in $\mc H_\el(\mc H)$ (overloading notation so that $\mc H$ also refers to the group in the last $z$-pair of $\mc H$), and $z$ is a choice of $z$-pair for $H$. 

A consistent choice of raw hyperendoscopic paths $\mc{HE}^0_\el(G)$ is the union of an (inductively-chosen) consistent choice of $\mc{HE}^0_\el(G)_n$ for all $n > 0$. 
\end{dfn}
The sum is over a choice of $\mc{HE}^0_\el(G)$. If $\mc H \in \mc{HE}^0_\el(G)$, let $n_\mc H$ be its length. As shorthand, we will sometimes write
\[
\mc H = (H_1, H_2, \cdots, H_{n_\mc H})
\]
where $H_n$ is the group in the $z$-pair for the $n$th step in the path. As further shorthand, $\mc H$ will sometimes be overloaded to refer to $H_{n_\mc H}$. For indexing purposes, $H_0 = G$. Similarly define:
\[
\iota(G, \mc H) = (-1)^{n_\mc H}\prod_{i =1}^{n_\mc H} \iota(H_{i-1}, H_i) \qquad f^{\mc H} = (\cdots (f^{H_1})^{H_2\cdots})^{H_{n_\mc H}}.
\]
Note that $f^{\mc H}$ is not canonical and the choice of $f^{\mc H}$ needs to be consistent with the choice of $f^{\mc H'}$ where $\mc H'$ is $\mc H$ truncated by removing the last step. Finally, a hyperendoscopic path $\mc H$ determines central character datum $(\mf X_n, \chi_n)$ for each $H_n$. 

This expansion of course only works if the paths are all finite. This holds:
\begin{lem}
 Every element of $\mc{HE}^0_\el(G)$ has $n_\mc H \leq \rank_\ssm G$.
\end{lem}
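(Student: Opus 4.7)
The plan is to exhibit a non-negative integer invariant of reductive groups, bounded initially by $\rank_\ssm G$, that strictly decreases at each step of a hyperendoscopic path. The natural candidate is the semisimple rank itself. The argument then splits into two facts:

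\emph{(i) Passing to a $z$-pair preserves semisimple rank.} If $\td H \to H$ has kernel an induced central torus $Z$, then the induced map $\td H^{\der} \to H^{\der}$ is an isogeny, so $\rank_\ssm \td H = \rank_\ssm H$. Hence inserting a $z$-extension adds nothing to the invariant, and the bookkeeping reduces to counting endoscopic steps.

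\emph{(ii) Passing to a proper elliptic endoscopic group strictly decreases it.} Via the dual identification $\wh H \iso (\wh{G'})^0_{\eta(s)}$, both $\wh H$ and $\wh{G'}$ share a common maximal torus, so $\rank \wh H = \rank \wh{G'}$. The desired strict inequality $\rank_\ssm H < \rank_\ssm G'$ therefore reduces to $\dim Z^0_{\wh H} > \dim Z^0_{\wh{G'}}$. Since $\eta(s)$ is central in $\wh H$ but not in $\wh{G'}$ we have the inclusion $Z^0_{\wh{G'}} \subseteq Z^0_{\wh H}$, but strictness is not automatic: proper connected centralizers (for instance, the $\SL_3 \subset G_2$ arising as the centralizer of an order-three element) can preserve semisimple rank. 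The essential input is the ellipticity condition $(Z_{\wh H}^{W_F})^0 \subseteq Z_{\wh{G'}}$, which combined with properness forces the excess connected center of $\wh H$ over $\wh{G'}$ to be genuinely present and killed by the Galois action.

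Iterating (i) and (ii) along a path $(G, H_1, \ldots, H_{n_\mc H})$ yields a strictly decreasing sequence
\[
\rank_\ssm G > \rank_\ssm H_1 > \cdots > \rank_\ssm H_{n_\mc H} \geq 0,
\]
which immediately gives $n_\mc H \leq \rank_\ssm G$.

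The main obstacle will be the rigorous verification of (ii). Naive comparisons of root systems or dimensions in $\wh G'$ are insufficient, because proper connected centralizers can preserve semisimple rank; the argument has to genuinely combine the ellipticity condition with properness and the Galois structure on the endoscopic datum to exclude such cases. Once that is settled, the rest of the argument is a routine induction on the length of the path.
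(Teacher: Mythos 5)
Your approach is the same as the paper's: both aim to show that semisimple rank strictly decreases along a hyperendoscopic path, split into your (i) $z$-extensions preserve it and (ii) proper elliptic endoscopy strictly decreases it. The paper simply asserts (ii) in one sentence; you correctly flag (ii) as the real content.

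The concern you raise about (ii) is not just a formal worry --- (ii) is \emph{false}, and ellipticity does not rescue it. Take $G = \Sp_{2n}$, so $\wh G = \SO_{2n+1}$ with $Z_{\wh G} = 1$. For any $a+b=n$ with $b\geq 2$, the quasisplit group $H = \Sp_{2a}\times\SO_{2b}^{*}$ is a proper elliptic endoscopic group (its dual $\SO_{2a+1}\times\SO_{2b}$ is the connected centralizer of an order-two element of $\SO_{2n+1}$; $Z_{\wh H}$ is finite, so ellipticity is automatic; $Z_{\wh G}$ is trivial, so the cohomology condition is vacuous), and
\[
\rank_\ssm H = a + b = n = \rank_\ssm G.
\]
Here $\dim Z^0_{\wh H} = \dim Z^0_{\wh G} = 0$, so the "excess connected center" you hoped to produce from ellipticity and properness simply is not there. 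The same phenomenon occurs for $\SO_{2n+1}\to\SO_{2a+1}\times\SO_{2b+1}$ ($a,b\geq 1$) and in your own $G_2\to\PGL_3$ example. The paper's proof therefore has a genuine gap at exactly the point you identify, not merely a notational one.

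This does not mean the lemma is necessarily false --- in fact, once you start tracking the cohomology condition in the definition of endoscopic quadruple (that $s$ have locally trivial image in $H^1(F, Z_{\wh G})$), the obvious attempts to exceed the bound break down: for instance, that condition kills the anisotropic torus as an elliptic endoscopic datum for $\PGL_2 = \SO_3$. But a correct proof would have to use this extra input, and neither the paper's one-line argument nor step (ii) of your proposal does.

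What is easy to prove, and is all that is actually needed downstream (finiteness of the sum in theorem \ref{Hform}, and the finite bounds quoted in lemmas \ref{zextbound} and \ref{finnumgroups}), is the weaker estimate $n_{\mc H}\leq|\Phi^+(G)|$. At each endoscopic step $\wh H_i = (\wh H_{i-1})^0_{\eta(s_i)}$ is a proper connected subgroup of $\wh H_{i-1}$ containing the same maximal torus, so $\Phi(\wh H_i)\subsetneq\Phi(\wh H_{i-1})$ and the number of positive roots strictly drops; $z$-extensions leave the root system unchanged. You should use the number of positive roots, not the semisimple rank, as the strictly decreasing invariant.
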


\begin{proof}
Consider the quadruple $(H_i, \mc H, s_i, \eta_i)$ of $H_{i-1}$. The group $\wh H_i$ is a centralizer of $s_i \in \wh H_{i-1}$ that is not $\wh H_{i-1}$ since $H_{i-1} \neq H_i^{\qs}$. This has semisimple rank smaller than $\wh H_{i-1}$ from which the result follows.
\end{proof}

The key point then is that
\[
I_{\disc,t}^G(f) +  \sum_{\mc H \in \mc{HE}^0_\el(G)} \iota(G, \mc H) I_{\disc,t}^{H_{n_\mc H}}(f^{\mc H})
\]
is a stable distribution in $f^{G^\qs}$. Finally, since $G^\qs$ corresponds to the trivial endoscopic character, if $f^{G^\qs} = f_1^{G^\qs}$, then $f$, $f_1$ have the same stable orbital integrals. Setting this equal for two such functions:
\begin{prop}[{\cite[prop 3.4.3]{Fer07} corrected}]
Let $f$ and $f_1$ be functions on $G(\A)$ that have the same stable orbital integrals. Then
\[
I_{\disc,t}^G(f) = I_{\disc,t}^G(f_1) + \sum_{\mc H \in \mc{HE}^0_\el(G)} \iota(G, \mc H)I_{\disc,t}^{H_{n_\mc H}}((f_1 - f)^{\mc H}).
\]
\end{prop}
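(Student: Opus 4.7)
The plan is to iterate the stabilization of the invariant trace formula and then exploit the hypothesis that stable orbital integrals determine stable distributions. Recall that the stabilization reads
\[
I_{\disc,t}^G(f) = \sum_{H \in \mc E_\el(G)} \iota(G,H) \wh S_{\disc,t}^{\td H, \chi_{\td H}}(f^{\td H}).
\]
Isolating the quasisplit term $H = G^\qs$ (for which $\iota(G, G^\qs) = 1$ and the $z$-extension may be chosen trivial) gives the starting identity displayed in the excerpt, which expresses $\wh S_{\disc,t}^{G^\qs}(f^{G^\qs})$ as $I_{\disc,t}^G(f)$ plus a signed sum over \emph{proper} elliptic endoscopic $z$-pairs.

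Next I would induct on $\rank_\ssm G$, using the lemma that every hyperendoscopic path strictly decreases semisimple rank, so the recursion terminates after at most $\rank_\ssm G$ steps. Concretely, for each proper $\td H$ appearing on the right, $\td H$ is quasisplit, so one can apply the same identity with $\td H$ in place of $G^\qs$ to rewrite $\wh S_{\disc,t}^{\td H}(f^{\td H})$ as $I_{\disc,t}^{\td H}(f^{\td H})$ plus a sum over proper elliptic endoscopic groups of $\td H$. Substituting recursively, tracking signs via $(-1)^{n_{\mc H}} \prod_i \iota(H_{i-1},H_i)$, and bundling the iterated transfers $(\cdots(f^{H_1})^{H_2}\cdots)^{H_{n_{\mc H}}}$ into the notation $f^{\mc H}$ relative to a consistent choice $\mc{HE}^0_\el(G)$ yields the expansion
\[
\wh S_{\disc,t}^{G^\qs}(f^{G^\qs}) = I_{\disc,t}^G(f) + \sum_{\mc H \in \mc{HE}^0_\el(G)} \iota(G,\mc H)\, I_{\disc,t}^{H_{n_{\mc H}}}(f^{\mc H}).
\]

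The same identity holds for $f_1$ with the same choices of $z$-pairs and transfers. By hypothesis $f$ and $f_1$ have identical stable orbital integrals, so their transfers to $G^\qs$ (which are determined at the level of stable orbital integrals by the transfer identity) agree as stable distributions; in particular $\wh S_{\disc,t}^{G^\qs}(f^{G^\qs}) = \wh S_{\disc,t}^{G^\qs}(f_1^{G^\qs})$ since $\wh S_{\disc,t}^{G^\qs}$ is stable. Subtracting the two expansions and using linearity of the distribution $I_{\disc,t}^{H_{n_{\mc H}}}$ together with the fact that $(f_1-f)^{\mc H}$ may be chosen as $f_1^{\mc H} - f^{\mc H}$ (transfer is linear at the level of orbital integrals, so any matching function for the difference works and the distribution value does not depend on the choice) delivers the claimed identity.

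The main obstacle I expect is bookkeeping rather than anything deep: one must verify that a consistent choice of $z$-pairs and iterated transfers can be made so that all the substitutions above are simultaneously valid, and that the right-hand side is independent of these non-canonical choices. This reduces to noting that each $I_{\disc,t}^{H_{n_{\mc H}}}(f^{\mc H})$ depends on $f^{\mc H}$ only through its value as a distribution on $\ms H(H_{n_{\mc H}}, \chi_{n_{\mc H}})$, while the transfer identity pins this down from the stable orbital integrals of $f$, so any two valid choices yield the same scalar. The rank lemma guarantees termination, so no convergence issue arises.
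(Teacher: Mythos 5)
Your proposal reproduces the paper's argument essentially verbatim: rearrange the stabilization to isolate the quasisplit term (noting $\iota(G,G^\qs)=1$ and that the trivial $z$-pair works), recursively substitute, invoke the rank-decrease lemma to terminate, observe the two expansions of $\wh S_{\disc,t}^{G^\qs}$ for $f$ and $f_1$ agree by stability of $\wh S$ and equality of stable orbital integrals, and subtract. Your added remark that $(f_1-f)^{\mc H}$ may be realized as $f_1^{\mc H}-f^{\mc H}$ via linearity of the matching identity is exactly the point the paper leaves implicit in the phrase ``setting this equal for two such functions,'' so the proof is correct and follows the same route.
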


\subsection{Simplifying Hyperendoscopic Paths}
To control which groups appear, it is nice to have an easier definition of hyperendoscopic path. 
\begin{dfn}
An endoscopic path for $G$ is a sequence $(Q_1, \dotsc, Q_n)$ where $Q_1 \in \mc E_\el(G)$ and $Q_i \in \mc E_\el(H_{i-1})$ for $i > 1$ where $H_{i-1}$ is the group in $Q_{i-1}$. Note that if two endoscopic quadruples are isomorphic, then so are their groups.  
\end{dfn}
We use the same notation for endoscopic paths as for raw endoscopic paths. The set of endoscopic paths for $G$ will be called $\mc{HE}_\el(G)$. 

\begin{dfn}
A $z$-pair path for an endoscopic path $(Q_1, \dotsc, Q_n)$ is a sequence of $z$-pairs $(\td Q_1, \dotsc, \td Q_n)$ where
\begin{itemize}
\item
 $\td Q_1 = (\td H_1, \td \eta_1)$ is a choice of $z$-pair for $Q_1$.
 \item
 For $i > 1$ assume we have already chosen $Q_1, \dotsc, Q_{i-1}$. We get a quadruple $Q'_i$ for $H_{i-1}$ through repeated applications of the bijection from lemma \ref{zextpath}(a) down through the $Q_i$ (it will be clear that $H_{i-1}$ can be produced from the group in $Q_{i-1}$ by a sequence of central extensions by induced torii). Then $\td Q_i = (\td H_i, \td \eta_i)$ should be a $z$-pair for $Q'_i$. 
 \end{itemize}
\end{dfn}
If $\mc H \in \mc{HE}_\el(G)$ with $z$-pair path $\td{\mc H}$, we will sometimes overload notation and use $\td{\mc H}$ to denote that last group $\td H_n$ in the path. If $(\mf X, \chi)$ is a central character datum for $G$, we will also let $(\mf X_{\td{\mc H}}, \chi_{\td{\mc H}})$ be the induced datum on $\td{\mc H}$. We can also define $\iota(G, \td{\mc H})$ and transfers $f^{\td{\mc H}}$ similarly. 

As in the definition of raw hyperendoscopic paths, we can similarly inductively define a consistent choice of $z$-pair paths for all elements of $\mc{HE}_\el(G)$. 
\begin{lem}
Choose a consistent set of $z$-pair paths $\td{\mc H}$ for $\mc H \in \mc{HE}_\el(G)$. Then the set of combined data $\{[\mc H, \td{\mc H}] : \mc H \in \mc{HE}_\el(G)\}$ concatenated properly form a consistent set of raw hyperendoscopic paths for $G$. 
\end{lem}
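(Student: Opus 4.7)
The plan is to verify the lemma by induction on the length $n$ of the endoscopic path, showing that the data $(Q_1,\tilde Q_1,\ldots,Q_n,\tilde Q_n)$ concatenated as described translates into a valid length-$n$ raw hyperendoscopic path, and that consistency of the $z$-pair path choices implies consistency of the resulting raw path.

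First I would unpack what the two definitions demand and identify the one structural discrepancy between them. A raw hyperendoscopic path at step $i$ requires an element of $\mc E_\el(\td H_{i-1})$ (the endoscopic group of the previously chosen $z$-extension), while an endoscopic path supplies an element of $\mc E_\el(H_{i-1})$. The bridge between these is exactly proposition \ref{zextpath}(a), which gives a bijection $\mc E_\el(H_{i-1})\xrightarrow{\sim}\mc E_\el(\td H_{i-1})$ because $\td H_{i-1}$ is a central extension of $H_{i-1}$ by an induced torus, and which also preserves ellipticity. This bijection is precisely the $Q_i\mapsto Q'_i$ map already built into the definition of a $z$-pair path. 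So for each $i$, $\td Q_i$ is by definition a $z$-pair for the element $Q'_i\in\mc E_\el(\td H_{i-1})$ corresponding to $Q_i$.

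With this translation in hand, the base case $n=1$ is immediate: $Q_1\in\mc E_\el(G)$ and $\td Q_1$ is a $z$-pair for it, matching the definition of $\mc{HE}^0_\el(G)_1$. For the inductive step, assume $(Q_1,\tilde Q_1,\ldots,Q_{n-1},\tilde Q_{n-1})$ has been shown to form a length-$(n-1)$ raw hyperendoscopic path, so that $\td H_{n-1}$ is well-defined as the group in the last $z$-pair. Then $Q_n\in\mc E_\el(H_{n-1})$ transports via the \ref{zextpath}(a) bijection (applied along the tower of central torus extensions $H_{n-1}\leftarrow\td H_{n-1}$) to an element $Q'_n\in\mc E_\el(\td H_{n-1})$, and by definition $\td Q_n$ is a $z$-pair for $Q'_n$. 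The pair $(Q'_n,\td Q_n)$ is exactly the data of the tuple $(\mc H,H,z)$ appearing in the definition of $\mc{HE}^0_\el(G)_n$, extending the previous raw path by one step.

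Finally I would observe that consistency is preserved by construction: the inductive choice of $z$-pair paths selects one $\td Q_i$ per $Q_i$, and since the bijections of \ref{zextpath}(a) are canonical up to the choice of $G_1$, no additional coherence issue arises beyond what is already built into the assumption of a consistent choice of $z$-pair paths. The only step I expect to require care is verifying that the bijection in \ref{zextpath}(a) really does apply iteratively along the whole tower of central extensions $H_{n-1},\td H_{n-1}$; this is the point implicit in the definition of a $z$-pair path saying ``repeated applications'' and deserves to be spelled out as a short separate observation that induced-torus extensions compose to induced-torus extensions, to which \ref{zextpath}(a) still applies.
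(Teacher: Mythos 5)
Your proposal matches the paper's argument: induction on path length, base case immediate from the definitions, and inductive step via the bijection of Proposition~\ref{zextpath}(a) applied along the tower of central extensions to translate $Q_n\in\mc E_\el(H_{n-1})$ into $Q'_n\in\mc E_\el(\td H_{n-1})$. One small correction to your closing remark: you do not actually need extensions by induced tori to compose to a single extension by an induced torus (which need not hold, as extensions of tori need not split); the definition already says ``a sequence of central extensions by induced torii,'' and you simply apply the bijection of \ref{zextpath}(a) once per step of the tower and compose the resulting bijections.
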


\begin{proof}
We show this inductively on length. For length $1$, this works by definition. For longer length, we use lemma \ref{zextpath}(a): if we know this for length $i$ and $H_i$ is the $i$th group in $\mc H$, the corresponding $H_i'$ in the corresponding raw endoscopic path has the same possible ``next steps''---the elliptic quadruples of the two are in bijection.
\end{proof}
Finally,
\begin{lem}
Let $\td{\mc H}$, $\td{\mc H}'$ be two different $z$-extensions for the hyperendoscopic path $H$. Let $f \in \ms H(G, \chi)$ for some central character datum $(\mf X, \chi)$. Then the two terms $S^{\td{\mc H}}_{\chi_{\td{\mc H}}}(f^{\mc{\td H}})$ and $S^{\td{\mc H'}}_{\chi_{\td{\mc H'}}}(f^{\mc{\td H'}})$ are equal. In addition, $\iota(G, \td{\mc H})=\iota(G, \td{\mc H}')$.
\end{lem}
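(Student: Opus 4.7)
The plan is to separate the two claims and handle the constant first. The equality $\iota(G, \td{\mc H}) = \iota(G, \td{\mc H}')$ is immediate since by definition
\[
\iota(G, \td{\mc H}) = (-1)^{n_{\mc H}} \prod_{i=1}^{n_{\mc H}} \iota(H_{i-1}, H_i),
\]
and each factor $\iota(H_{i-1}, H_i) = |\Lambda(H_i, \mc H_i, s_i, \eta_i)|^{-1} \tau(H_{i-1}) \tau(H_i)^{-1}$ depends only on the endoscopic quadruple at step $i$ (which is determined by $\mc H$) and not on the $z$-pair chosen at that step. So the constant is manifestly intrinsic to $\mc H$.

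For the equality of stable distributions, I would induct on the length $n_{\mc H}$. Assuming the claim for paths of length $n-1$, choose $z$-pair paths $\td{\mc H}$ and $\td{\mc H}'$ that agree through step $n-1$ (one can first bring them into agreement up to step $n-1$ by the inductive hypothesis applied at each intermediate stage). It then suffices to show that for a fixed endoscopic quadruple $(H, \mc H, s, \eta)$ and two $z$-pairs $(\td H, \td \eta)$ and $(\td H', \td \eta')$ for it, the terms agree. The strategy here is to construct a common refinement: form $\td H'' := (\td H \times_H \td H')^0$, which is itself a $z$-extension dominating both $\td H$ and $\td H'$ via projections whose kernels are central induced tori. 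Choosing a compatible $L$-embedding $\td \eta''$, it is enough to show that the value is unchanged under passing from $\td H$ to such an enlargement $\td H''$.

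So we reduce to the following: let $\pi : \td H'' \to \td H$ be a surjection with kernel $Z$ a central induced torus, and let $\td \eta''$ be a compatible $L$-embedding. I would argue that the transfer $f^{\td H''}$ can be taken to be related to $f^{\td H}$ by the pullback trick of Section \ref{zextransfertrick}: the transfer factors satisfy $\Delta^{\td H''}_G(\gamma_{\td H''}, \delta) = \Delta^{\td H}_G(\pi(\gamma_{\td H''}), \delta) \cdot \lambda_{\td \eta''/\td \eta}(\gamma_{\td H''})$ where the extra character comes from comparing the two $L$-embeddings on $\mc H$. Stable orbital integrals of $f^{\td H''}$ at $\gamma_{\td H''}$ then reduce to those of $f^{\td H}$ at $\pi(\gamma_{\td H''})$, twisted by this character. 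The induced central character datum $(\mf X_{\td H''}, \chi_{\td H''})$ is built to absorb exactly this twist. Stability of $S^{\td H''}$ as a distribution in $\ms H(\td H'', \chi_{\td H''})$ combined with the fact that both sides depend only on stable orbital integrals, together with the matching of central character data, then gives $S^{\td H''}(f^{\td H''}) = S^{\td H}(f^{\td H})$.

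The main obstacle will be the last step: making precise the sense in which the stable distributions $S$ on a $z$-extension and its quotient are compatible under pullback/pushforward of functions with matched central character data. This is folklore in the simply-connected-derived setting but is exactly where the extension to general reductive groups in Section \ref{Hformsection} has to do real work. One option is to argue axiomatically, using that $S^{\td H}$ is characterized by its appearance in the stabilization of $I_\disc^{\td H}$ and that this stabilization is itself known to be independent of auxiliary $z$-pair choices once one tracks central character data (cf.\ the treatment in \cite{KS99} and the construction on pg.\ 53 there of the character $\lb_{\td \eta}$). Alternatively one can use the stability of $S^{\td H''}$ on fibers of $\pi$ and integrate over $Z(\A)$ against the character by which $f^{\td H''}$ transforms, reducing directly to a formula for $f^{\td H}$.
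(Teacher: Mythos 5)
Your argument for the equality of the constants $\iota$ is too fast. You assert that each factor $\iota(H_{i-1}, H_i) = |\Lambda(\dots)|^{-1}\tau(H_{i-1})\tau(H_i)^{-1}$ "depends only on the endoscopic quadruple at step $i$," but in a $z$-pair path the group $H_{i-1}$ appearing in $\tau(H_{i-1})$ \emph{is} the $z$-extension chosen at the previous step, and the quadruple relating $H_{i-1}$ to $H_i$ is the pushed-forward quadruple $Q_i'$ (via Lemma~\ref{zextpath}(a)), which also depends on that earlier choice. The Tamagawa numbers genuinely change when you replace a group by a $z$-extension; what saves the claim is a cancellation, namely that $\iota(G_1, H_1) = \iota(G, H)$ when $G_1$ is a $z$-extension of $G$ and $H_1$ the corresponding extension of $H$, which the paper establishes by invoking multiplicativity of Tamagawa numbers over the central torus factors. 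You need that explicit step; "manifestly intrinsic to $\mc H$" is not true as stated.

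For the equality of stable distributions you take a genuinely different route. The paper's proof alternates two moves in an induction: (a) for a fixed $G$ and a single endoscopic step, independence of the $z$-pair is cited as part of the formalism of the stable trace formula (the stabilization identity is constructed to be well-defined independent of this auxiliary choice, with the central character $\lb_{\td\eta}$ absorbing the difference); and (b) replacing $G$ by a $z$-extension $G_1$ and pulling back $f$ does not change $S^{H_1}(f^{H_1})$, by the transfer-factor pullback trick of Section~\ref{zextransfertrick}. Your common-refinement construction $\td H'' = (\td H \times_H \td H')^0$ is an alternative way to organize step (a), and your observation that the last step — compatibility of $S$ under pullback between a $z$-extension and its quotient — is "where the real work is" is accurate: that is precisely the content the paper delegates to the stable trace formula formalism rather than reproving. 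Two caveats on your variant: you would need to verify that $(\td H \times_H \td H')^0$ is actually a $z$-extension (simply connected derived subgroup, induced central torus kernel) — plausible but not automatic — and, as you yourself flag, the stability/pullback compatibility still needs either the axiomatic citation or a direct fibering argument over $Z(\A)$. In short, the decomposition is different, the key dependency is the same, and the constant comparison needs the missing Tamagawa-number step.
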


\begin{proof}
First, let $G$ be a group, $H$ an endoscopic group, and $f \in \ms H(G, \chi)$ for some $\chi$. Let $(\td H, \td \eta)$ and $(\td H', \td \eta')$ be two $z$-pairs. Then part of the formalism of the stable trace formula gives that $S^{\td H}_{\chi_{\td H}}(f^{\td H}) = S^{\td H'}_{\chi_{\td H'}}(f^{\td H'})$. By definition, $\iota(G, \td H) = \iota(G, H) = \iota(G, \td H')$. 

Second, if $G_1$ is a $z$-extension of $G$ and $f_1$ the pullback of $f$ to some $\ms H(G_1, \chi_1)$ where $\chi_1$ is the pullback of $\chi$, it induces extension $H_0$ of $H$ according lemma \ref{zextpath}(a). We can find a $z$-pair $(H_1, \eta_1)$ of $H$ such that $H_1$ is a $z$-extension of $H_0$. By a similar argument to section \ref{zextransfertrick}, $f^{H_1} = f_1^{H_1}$ and $\chi_{H_1} = (\chi_1)_{H_1}$. Therefore $S^{H_1}_{ (\chi_1)_{H_1}}(f_1^{H_1}) = S^{H_1}_{\chi_{H_1}}(f^{H_1})$. Since Tamagawa measures are products of Tamagawa measures of factors, $\iota(G, H_1) = \iota(G, H) = \iota(G_1, H_1)$ by the explicit formula.

The result follows from an induction alternating on these two steps. 
\end{proof}

Define $\iota(G, \mc H)$ to be the common value of all the $\iota(G, \td{\mc H})$. In total, we can choose whichever $z$-extensions we want and ignore the consistency condition:
\begin{thm}[The Hyperendoscopy Formula]\label{Hform}
Let $f$ and $f_1$ be functions on $G(\A)$ that have the same stable orbital integrals. Then
\[
I_{\disc,t}^G(f) = I_{\disc,t}^G(f_1) + \sum_{\mc H \in \mc{HE}_\el(G)} \iota(G, \mc H)I_{\disc,t}^{\td{\mc H}}((f_1 - f)^{\td{\mc H}})
\]
where $\td{\mc H}$ is a choice of z-extension path for $\mc H$ and where we suppress the central character datum.
\end{thm}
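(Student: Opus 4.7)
The plan is a reindexing of the raw formula (Proposition 3.4.3) using the two lemmas just established. Fix any consistent system of $z$-pair paths $\{\td{\mc H}\}_{\mc H \in \mc{HE}_\el(G)}$. The bijection lemma packages these into a consistent choice of raw hyperendoscopic paths in $\mc{HE}^0_\el(G)$, and observing that the terminal group $H_{n_\mc H}$ of a raw path is by construction the last group $\td{\mc H}$ of the corresponding $z$-pair path, Proposition 3.4.3 delivers the identity of the theorem for this specific consistent system.

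To upgrade to an arbitrary, not-necessarily-consistent choice of $\td{\mc H}$ for each $\mc H$, I would apply the preceding independence lemma summand by summand to swap out one $z$-pair path at a time without changing the trace value or the factor $\iota(G, \td{\mc H})$. Although the independence lemma was phrased for the stable distribution $\wh S$, the argument transfers to $I_{\disc, t}$ because stabilization writes
\[
I^{\td{\mc H}}_{\disc, t}((f_1-f)^{\td{\mc H}}) = \sum_{Q \in \mc E_\el(\td{\mc H})} \iota(\td{\mc H}, Q)\,\wh S^{\td Q}_{\disc, t}\bigl(((f_1-f)^{\td{\mc H}})^{\td Q}\bigr),
\]
and each $\wh S$-term is independent of the $z$-pair choice by direct application of the lemma as stated. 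Summing and using that $\iota(\td{\mc H}, Q)$ is also stable under $z$-pair changes (same explicit Tamagawa formula) then gives the required independence for each summand of the raw formula.

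The main obstacle I foresee is tracking the induced central character data across different $z$-pair choices: two choices $\td{\mc H}, \td{\mc H}'$ give different extension groups with different data $(\mf X_{\td{\mc H}}, \chi_{\td{\mc H}})$ and $(\mf X_{\td{\mc H}'}, \chi_{\td{\mc H}'})$, and one must check that the transfers $f^{\td{\mc H}}$ and $f^{\td{\mc H}'}$ compute the same spectral quantity. The cleanest route is to follow the trick of section \ref{zextransfertrick}: dominate both $\td{\mc H}$ and $\td{\mc H}'$ by a common larger $z$-extension, pull back $f$ and its transfers there, verify via the matching of transfer factors (together with the $\lb_{\eta_1}$ twist) that the relevant orbital and trace identities hold, and note that Tamagawa numbers factorize compatibly so that $\iota$ is preserved. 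Once this bookkeeping is in place, swapping $z$-pair paths one at a time converts the consistent-choice identity from step one into the full statement of the theorem.
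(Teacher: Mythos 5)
Your proposal is correct and follows the paper's argument precisely: apply the raw formula to a consistent system of $z$-pair paths via the packaging lemma, then invoke the independence lemma to swap $z$-pair paths out one summand at a time. You also correctly flag the only nontrivial gap---the independence lemma is stated for $\wh S$ rather than $I_{\disc,t}$---and your stabilization bridge repairs it; equivalently, the paper's own proof of that lemma (the alternating common-$z$-extension induction, which only manipulates transfers and central character data) applies verbatim to $I_{\disc,t}$, which is the implicit route the paper takes.
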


\subsection{Central Characters from Hyperendoscopy}\label{hyptransferchar}
Let $\mc H$ be a hyperendoscopic path for $G$ with $z$-extension $\td{\mc H}$ corresponding to the sequence of groups and embeddings $(\td H_i, \eta_i)$. We can, without loss of generality, assume that $H_0 = G$ has simply connected derived subgroup by taking further extensions. Then we can inductively define character on each $(H_i)_v$:
\begin{itemize}
\item
$\chi_1$ is the character $\lb_{\eta_1}$ on $(\td H_1)_v$ defined by $\eta_1$ as in section \ref{transferchar}.
\item
Let $\chi'_i$ be the character on $(\td H_{i+1})_v$ coming from character $\chi_i$ on $(\td H_i)_v$ as in section \ref{transferchar}. Let $\lb_{i+1}$ be the character on $(H_i)_v$ determined by $\eta_{i+1}$. Then set $\chi_{i+1} = \chi'_i \lb_{i+1}$. 
\end{itemize}
From all the previous discussion, we know $\chi_i$ are the characters such that given central character datum $(\mf X, \chi)$ and $f \in \ms H(G, \chi)$, the successive transfers $f^{\td H_i}$ lie in $\ms H(G, (\mf X_{\td H_i}, \chi \chi_i))$.

\subsection{Remarks on Usage}
Some notes for using this:
\begin{itemize}
\item
Beware that the transfers $(f_1 - f)^{\mc H}$ must be chosen explicitly, since the stable orbital integrals of $(f^{H_1})^{H_2}$ depend on the standard orbital integrals of $f^{H_1}$. Care should be taken in these choices since the ease of evaluating $I_\disc$ depends much on properties of $f^{H_1}$ that are not determined by stable orbital integrals.
\item
As a sum of distributions, the sum over $\mc E_\el(H_i)$ can be infinite. However, for any particular $f$ only finitely many terms are non-zero. Nevertheless, the number of such terms depends on the choices of $f^\mc H$ and can be arbitrarily large. Thankfully, if we choose the $f^\mc H$ so that they stay unramified outside of a finite set of places $S$, then there is a finite set of terms depending only on $S$ that are non-zero. See lemma \ref{conditions}.
\item
If we can choose the $f^H$ to be cuspidal, we do not need to worry that this formula is only in terms of $I_\disc$ instead of $I_\spec$.
\item
If each of the $H_i$ in path $\mc H$ are unramified, we can choose $\td{\mc H}$ to only have unramified groups since $z$-extensions can be chosen to have the same splitting field as the original group.  
\end{itemize}

\section{Lemmas on transfers}
\subsection{Formulas for Archimedean Transfer}\label{archtransfersection}
This section will compute transfers of pseudocoeffcients. We take the Whittaker normalization of transfer factors as in \cite{She10} and \cite{Lab11}. Because pseudocoefficients already have the correct dimensions, we do not need to fix Haar measures.  

Recall the parametrization of discrete series in section \ref{dseries}. We first make a basic remark:
\begin{lem}
Let $\pi \in \Pi_\disc(\xi)$ be a discrete series representation. Then for any $\gamma \in G_\infty$,
\[
SO_\gamma(f_\pi) = SO_\gamma(\eta_\xi).
\]
\end{lem}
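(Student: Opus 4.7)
The plan is to show that $SO_\gamma(\varphi_{\pi'})$ is independent of $\pi' \in \Pi_\disc(\xi)$, which immediately gives the lemma: since $\eta_\xi = |\Pi_\disc(\xi)|^{-1}\sum_{\pi' \in \Pi_\disc(\xi)} \varphi_{\pi'}$ by definition, averaging a constant in $\pi'$ yields $SO_\gamma(\eta_\xi) = SO_\gamma(\varphi_\pi)$.

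By continuity of stable orbital integrals, I reduce to strongly $G$-regular semisimple $\gamma$. Invoking the standard Clozel--Delorme/Kottwitz formula relating orbital integrals of pseudocoefficients to characters of discrete series, I have, up to measure normalization, $O_\gamma(\varphi_{\pi'}) = (-1)^{q(G)}\Theta_{\pi'}(\gamma^{-1})$ when $\gamma$ is elliptic and $O_\gamma(\varphi_{\pi'})=0$ otherwise. Because the same dichotomy applies to every stable conjugate of $\gamma$, both sides vanish in the non-elliptic case; I may therefore assume $\gamma \in T(\R)$ for an elliptic maximal torus $T$.

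For such $\gamma$, section \ref{realendoscopiccharacters} identifies rational conjugacy classes in the stable class of $\gamma$ with $\Omega_{\C,G}/\Omega_{\R,G}$, parameterized by $u \in \Omega(B_K)$ acting by conjugation. Writing $\pi' = \pi(\lambda, w_0)$ with $w_0 \in \Omega(B_K)$ and substituting $u\gamma u^{-1}$ into the Harish-Chandra character formula of section \ref{dseries}, a direct reindexing of the numerator (via $v = u^{-1} w w_0$) and the denominator (via $v = u^{-1}w$) produces a common factor $\sgn(u)$ that cancels, leaving
\[
\Theta_{\pi(\lambda,w_0)}(u\gamma u^{-1}) = (-1)^{q(G)} \frac{\sum_{v \in u^{-1}\Omega_K w_0} \sgn(v)\, e^{v(\lambda+\rho)}(\gamma)}{\sum_{v \in \Omega_G} \sgn(v)\, e^{v\rho}(\gamma)}.
\]
Using $\Omega_{\R,G} = \Omega_K$ for elliptic $T$, together with the fact that $\Omega(B_K)$ is a set of right-coset representatives for $\Omega_K \bs \Omega_G$, inversion shows that $\{u^{-1}\}_{u \in \Omega(B_K)}$ is a set of left-coset representatives and hence $\Omega_G = \bigsqcup_{u \in \Omega(B_K)} u^{-1}\Omega_K$. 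Right-multiplying by the fixed $w_0$ preserves this disjoint decomposition. Summing the displayed identity over $u$ therefore collapses the double sum into a single sum over the full Weyl group $\Omega_G$, producing
\[
\sum_{u \in \Omega(B_K)} \Theta_{\pi(\lambda,w_0)}(u\gamma u^{-1}) = (-1)^{q(G)} \frac{\sum_{v \in \Omega_G} \sgn(v)\, e^{v(\lambda+\rho)}(\gamma)}{\sum_{v \in \Omega_G} \sgn(v)\, e^{v\rho}(\gamma)},
\]
which depends only on $\lambda$, not on $w_0$. This proves independence of $\pi'$ and hence the lemma.

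The main obstacle is purely combinatorial: carefully tracking how the right cosets of $\Omega_K$ in $\Omega_G$ (represented by $\Omega(B_K)$) convert under inversion to left cosets indexing the stable conjugates, so that the union $\bigsqcup_u u^{-1}\Omega_K w_0$ really covers $\Omega_G$ exactly once. Once that bookkeeping is in place the rest is an application of the Weyl character formula, and the fact that the answer sees only the stable character $|\Pi_\disc(\xi)|^{-1} S\Theta_\xi$ of the packet is what makes both pseudocoefficients and the Euler--Poincar\'e average give the same stable orbital integral.
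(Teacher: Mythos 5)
Your proof is correct and lands on the same computational crux as the paper, but it packages the reduction more cleanly and does the key computation explicitly rather than by citation. The paper's proof first passes to the quasi-split form $G^{\mathrm{qs}}$ via the defining transfer identity $SO^G_\gamma(\cdot) = SO^{G^{\mathrm{qs}}}_\gamma((\cdot)^{G^{\mathrm{qs}}})$, invokes Shelstad (\cite{She10}) to write transfers of pseudocoefficients as linear combinations of Euler--Poincar\'e functions, argues such combinations are determined by elliptic evaluations, and then outsources the actual elliptic computation to \cite[\S IV.3]{Lab11} with $\kappa$ trivial --- using, just as you do, the bijection between $\Om(B_K)$ and rational classes in the elliptic stable class from the real endoscopic character discussion. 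Your route skips the detour through $G^{\mathrm{qs}}$: you reduce to strongly regular elliptic $\gamma$ directly from the Clozel--Delorme orbital integral formula for pseudocoefficients (which vanishes off elliptic tori, together with its stable conjugates), and you then carry out the Weyl-group reindexing in the Harish-Chandra character formula by hand, showing that $\bigsqcup_{u\in\Om(B_K)} u^{-1}\Om_K w_0 = \Om_G$ collapses the double sum. This buys you a self-contained argument that makes visible exactly why only the stable character of the packet survives, at the mild cost of re-deriving what Labesse records. Both proofs rest on the same two inputs: the parametrization of the stable class by $\Om(B_K)$ and the cancellation of $w_0$-dependence after summing over it.
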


\begin{proof}
Transfers from $G$ to $G^\qs$ are determined by the identities
\[
SO^G_\gamma(f_\pi) = SO^{G^\qs}_\gamma(f_\pi^{G^\qs}), \qquad SO^G_\gamma(\eta_\xi) = SO^{G^\qs}_\gamma(\eta_\xi^{G^\qs}).
\]
By \cite{She10}, transfers of pseudocoefficients can be chosen to be linear combinations of Euler-Poincar\'e functions. Such linear combinations are determined by evaluations on an elliptic torus so both of the transfers will be equal if we can show the lemma statement for just elliptic elements . The transfers being equal will suffice to prove the lemma. 

We therefore just need the computation from \cite[\S IV.3]{Lab11} with $\kappa$ trivial. The key point is that the $\Om(B_K)$ parametrizing $\pi \in \Pi_\disc(\xi)$ also parametrizes conjugacy classes in an elliptic stable class by section \ref{realendoscopiccharacters}.
\end{proof}
Now let $(H_\infty, \mc H, \eta, s)$ be an endoscopic quadruple of $G_\infty$. Fix an elliptic maximal torus $T$ and let and $\kappa$ be the corresponding endoscopic character on $\Om_G$.

\subsubsection{Trivial $z$-Extension case}
We will first work out the formula for transfers in the case where $\mc H \cong \Ld H$ where we do not need a $z$-extension. To start,
\begin{lem}\label{etorii}
Unless all elliptic tori $G_\infty$ are transfers of elliptic torii of $H_\infty$, transfers of pseudocoefficients can be taken to be $0$. 
\end{lem}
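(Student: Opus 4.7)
The plan is to exploit the fact that the transfer $\varphi_\pi^H$ is determined (up to functions with vanishing stable orbital integrals, which can be absorbed into the choice) by its stable orbital integrals on strongly $G$-regular semisimple elements of $H_\infty$. So it suffices to show that under the hypothesis, $SO_{\gamma_H}(\varphi_\pi^H)=0$ for every strongly $G$-regular $\gamma_H\in H_\infty$, after which the zero function is a legitimate transfer.

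First I would invoke the standard support property of pseudocoefficients from \cite{CD90}: $O_{\gamma_G}(\varphi_\pi)=0$ unless $\gamma_G$ is elliptic semisimple. This follows from the Harish-Chandra character formula and the vanishing of discrete series characters on non-compact Cartans. Plugging this into the transfer identity
\[
SO_{\gamma_H}(\varphi_\pi^H)=\sum_{\gamma_G}\Delta(\gamma_H,\gamma_G)\,O_{\gamma_G}(\varphi_\pi),
\]
any nonzero contribution requires some norm $\gamma_G$ of $\gamma_H$ to be elliptic in $G_\infty$, so its maximal torus $T_G$ must be an elliptic torus of $G_\infty$.

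Next I would use that strong $G$-regularity singles out a unique maximal torus $T_H\subset H_\infty$ containing $\gamma_H$, and that the norm correspondence provides an admissible embedding $T_H\hookrightarrow G_\infty$ identifying $T_H$ with $T_G$ compatibly with Galois actions (this is Lemma~\ref{roots} together with the definition of norm). The key structural input is that for an elliptic endoscopic datum, this identification preserves ellipticity in both directions: the condition $(Z_{\hat H}^{W_F})^0\subseteq Z_{\hat G}$ dually gives $A_H=A_G$ inside $T_H\cong T_G$, so $T_H/A_H$ is anisotropic iff $T_G/A_G$ is. Consequently, a nonzero $SO_{\gamma_H}(\varphi_\pi^H)$ forces $T_H$ to be elliptic in $H_\infty$, and the corresponding $T_G$ is then exhibited as a transfer of an elliptic torus of $H_\infty$.

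Taking the contrapositive, if the elliptic torus of $G_\infty$ is not a transfer of an elliptic torus of $H_\infty$ (using that all elliptic maximal tori of a real reductive group are $G(\R)$-conjugate, so there is essentially one to check), then no strongly $G$-regular $\gamma_H\in H_\infty$ yields a nonzero stable orbital integral, and we are done. The main obstacle is cleanly establishing the preservation-of-ellipticity statement in step two—spelling out why the elliptic endoscopic condition forces $A_G=A_H$ under the admissible embedding of tori—since the remaining pieces (the support property of $\varphi_\pi$ and the transfer identity) are already in the literature.
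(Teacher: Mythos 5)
Your proposal is correct and reconstructs exactly what the paper's cited references (Shelstad's Lemma~3.2 and Kottwitz's $\kappa$-orbital integral computation) establish; the paper itself just defers to them rather than writing out the argument.

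On the one point you flag as an obstacle: you actually need less than $A_G = A_H$. The one-sided containment $A_G \subseteq A_H$ (under the admissible identification $T_H \cong T_G$) is already enough, and it holds for any endoscopic datum, elliptic or not. Indeed $Z_G = T_G^{\Omega_G} \subseteq T_H^{\Omega_H} = Z_H$ since $\Omega_H \subseteq \Omega_G$, and taking maximal $\R$-split subtori preserves the inclusion, so $A_G \subseteq A_H$. Then $T_H/A_H$ is a quotient of $T_G/A_G$, and a quotient of an $\R$-anisotropic torus is anisotropic (via the long exact sequence in $\Gamma$-cohomology, $(X_*(T_H/A_H))^\Gamma$ is torsion if $(X_*(T_G/A_G))^\Gamma$ is), so $T_G$ elliptic forces $T_H$ elliptic. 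The elliptic condition $(Z_{\wh H}^{W_F})^0 \subseteq Z_{\wh G}$ is what gives the reverse implication (up to isogeny), which your argument does not need. With that cleaned up, the rest of your argument---vanishing of $O_{\gamma_G}(\varphi_\pi)$ off the elliptic set, the transfer identity, and the contrapositive---is exactly right.
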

\begin{proof}
See lemma 3.2 in \cite{She10} or the computation of $\kappa$-orbital integrals on page 186 of \cite{Kot90}. 
\end{proof}
Therefore, we can choose isomorphic maximal torii $T_H$ and $T$ of $H_\C$ and $G_\C$ respectively that are both elliptic over $\R$. The Weyl chambers of $(H,T_H)$ are a coarser partition than those of $(G, T)$ by lemma \ref{roots}. Therefore, we can choose a positive Weyl chambers for $H$ that contains a chosen one for $G$. Let $B_H$ and $B_G$ be the corresponding Borel subgorups. Let $\rho' = \rho_G - \rho_H$ be the half-sum of positive roots of $G$ that are not roots of $H$.

The transfer of pseudocoefficients is worked out in \cite[\S7]{Kot90}. Special cases are worked out in terms of roots in \cite[\S IV.3]{Lab11}. For full generality when $\rho'$ is not a character of $T$, we have to use a corrected transfer factor from \cite[pg 396]{She82} as worked out in \cite{Fer07}. This involves an $\Om_H$-invariant $\mu^* = \mu^*_{G,H}$ such that $\mu^* - \rho'$ is a character of $T$. The $\mu^*$ is determined by the exact chosen isomorphism $\Ld H \to \mc H$. Finally, recall the endoscopic character $\kappa := \kappa_{G,H}$ on $\Om_G$  defined in section \ref{realendoscopiccharacters}.
\begin{prop}[{\cite[prop. 4.3.1]{Fer07}}]\label{onesteppseudo}
We can take
\[
(f_{\pi_G(\lb)})^H = \sum_{\om_* \in \Om_*} \kappa(\om_*) \eps(\om^* \om_0) f_{\pi_H(\om_* \lb - \mu^*)}
\]
where $\om_0^{-1} \lb$ is $B$-dominant and $\Om_*$ is the set of representatives $w$ of $\Om_H \bs \Om_G$ such that $w\lb$ is $B_H$-dominant.
\end{prop}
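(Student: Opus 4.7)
The strategy is to verify the identity on stable orbital integrals over elliptic regular elements of $H$. By \cite{She10}, any transfer of $f_{\pi_G(\lb)}$ may be chosen as a finite linear combination of pseudocoefficients of discrete series of $H$, and such combinations are determined by their characters on tempered representations of $H$, equivalently by their stable orbital integrals on elliptic regular elements. By lemma \ref{etorii} the transfer vanishes unless every elliptic torus of $G$ transfers from an elliptic torus of $H$, so I fix a common elliptic maximal torus $T \cong T_H$ and a $G$-dominant Borel $B$ contained in an $H$-dominant Borel $B_H$; this refinement is possible by lemma \ref{roots}.

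To compute the LHS at elliptic regular $\gamma_H \in T_H(\R)$, I expand
\[
SO_{\gamma_H}\lf((f_{\pi_G(\lb)})^H\ri) = \sum_{\gamma_G} \Delta(\gamma_H, \gamma_G) O_{\gamma_G}(f_{\pi_G(\lb)}),
\]
where $\gamma_G$ runs over the finitely many conjugacy classes in the stable class above $\gamma_H$. On elliptic regular elements, the orbital integral of a pseudocoefficient is given by the classical identity $|D^G(\gamma_G)|^{1/2} O_{\gamma_G}(f_{\pi_G(\lb)}) = (-1)^{q(G)} \overline{\Theta_{\pi_G(\lb)}(\gamma_G)}$. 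I then substitute the Harish-Chandra character formula for $\pi_G(\lb)$ and use the bijection $\Om_{\C, G}/\Om_{\R, G} \cong \mf D(\R, T \bs G)$ from section \ref{realendoscopiccharacters} to re-index the sum over stable conjugates as a sum over $\Om_{\C, G}/\Om_{\R, G}$, so that $\Delta(\gamma_H, \gamma_G)$ picks up the endoscopic character $\kappa$. With the Whittaker normalization, the transfer factor contributes the product $\prod_{\alpha \in \Phi^+(G) \setminus \Phi^+(H)} (e^{\alpha/2} - e^{-\alpha/2})(\gamma_H)$, which is where $\rho' = \rho_G - \rho_H$ enters, and, when $\rho'$ is not a character of $T$, Shelstad's correction $\mu^*$ (chosen $\Om_H$-invariant with $\mu^* - \rho' \in X^*(T)$) is required.

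For the RHS, I apply the same orbital-integral-to-character identity on $H$: each $SO_{\gamma_H}(f_{\pi_H(\om_* \lb - \mu^*)})$ becomes $(-1)^{q(H)} |D^H(\gamma_H)|^{-1/2} \overline{\Theta_{\pi_H(\om_*\lb - \mu^*)}(\gamma_H)}$, and Harish-Chandra's formula expresses each such character as a signed sum over $\Om_{H, K}$ exponentials. Lemma \ref{roots} identifies $\Om_{H, K}$ with $\Om_K$ under the compatible choice of Borels, so the decomposition $\Om_G = \Om_H \cdot \Om_*$, where $\Om_*$ is chosen to make each $w\lb$ be $B_H$-dominant, reassembles the sum over $\Om_K \cdot \om_0$ appearing on the LHS. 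The ratio $|D^G/D^H|^{1/2}$ matches the missing-roots factor of the transfer factor up to the $\mu^*$-correction, the factor $\kappa(\om_*)$ matches the contribution from $\Om_{\C,G}/\Om_{\R, G}$, and $\eps(\om^* \om_0)$ absorbs the remaining $\sgn$-signs coming from Harish-Chandra's formula after reindexing.

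The main technical obstacle is the combinatorial bookkeeping involving $\mu^*$ in the case $\rho' \notin X^*(T)$: the $\Om_H$-invariance of $\mu^*$ must be used to guarantee that $\om_* \lb - \mu^*$ lies in $X^*(T_H)$ and hence parametrizes a genuine discrete series of $H$, and the sign tracking must carefully reconcile the Kottwitz signs $(-1)^{q(G)}, (-1)^{q(H)}$ with the Whittaker-normalization signs and with $\eps(\om^* \om_0)$. The cleanest way to carry this out is to follow \cite[\S4]{Fer07} closely (which itself builds on \cite[\S IV.3]{Lab11} and \cite[\S7]{Kot90}), deviating only in accommodating the possible non-integrality of $\rho'$ through $\mu^*$.
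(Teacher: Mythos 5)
The paper does not prove Proposition~\ref{onesteppseudo}; it is imported verbatim from \cite[prop.~4.3.1]{Fer07}, with the surrounding paragraph merely pointing the reader at \cite[\S7]{Kot90} and \cite[\S IV.3]{Lab11} for the special cases and at Shelstad's correction (the $\Om_H$-invariant $\mu^*$) for the general case. Your sketch faithfully reconstructs the route taken in that literature: reduce to stable orbital integrals on elliptic regular elements (using lemma~\ref{etorii} to discard the case where elliptic tori of $G$ do not transfer), convert orbital integrals of pseudocoefficients to Harish-Chandra characters, reindex the sum over the stable class via the identification $\Om_{\C,G}/\Om_{\R,G} \cong \mf D(\R, T\backslash G)$ against $\kappa$ as in section~\ref{realendoscopiccharacters}, decompose $\Om_G = \Om_H\Om_*$ using lemma~\ref{roots}, and absorb the residual signs into $\eps(\om_*\om_0)$ and the residual shifts into $\mu^*$.

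Two cautionary remarks. First, the identity relating orbital integrals to characters for pseudocoefficients, and the reconciliation of the Kottwitz signs $(-1)^{q(G)}$, $(-1)^{q(H)}$ with the Whittaker-normalized transfer factor and with $\eps$, is precisely the part you acknowledge you have not carried out; it is also precisely the part where errors of sign silently enter. Second, your description of the transfer factor as ``$\prod_{\alpha \in \Phi^+(G)\setminus\Phi^+(H)}(e^{\alpha/2}-e^{-\alpha/2})$'' is only the $\Delta_{\mathrm{II}}$/$\Delta_{\mathrm{IV}}$ piece; the Whittaker normalization fixes the $\Delta_{\mathrm{I}}$, $\Delta_{\mathrm{III}}$ contributions as well, and those are what make $\mu^*$ (rather than $\rho'$ alone) appear. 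Your outline is consistent with the cited proof, but as written it is a map of the argument rather than the argument itself.
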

As a sanity check, note that if $A_{G, \infty} \in \mf X$, then ellipticity forces $A_{H, \infty} \in \mf X_H$. 

\cite{Fer07} explicitly computes the extension to hyperendoscopy: let $\Om(G,H)$ be a set of representatives $w$ of $\Om_H \bs \Om_G$ such that $w\mu$ is $B_H$ dominant for any $\mu$ that is $B_G$ dominant. Reindexing $\om_* = \om_1 \om_0^{-1}$
\[
(f_{\pi_G(\lb)})^H = \sum_{\om_1 \in \Om(G,H)} \kappa(\om_1 \om_0^{-1}) \eps(\om_1) f_{\pi_H(\om_1 \om_0^{-1} \lb - \mu^*)}.
\]
Next, note that the Euler-Poincar\'e function $\varphi_\lb$ has the same stable orbital integrals as the pseudocoefficient $f_{\pi_H(\lb + \rho_H)}$. Let $\mu  = \om_0^{-1}\lb - \rho_G$ so that $\pi_G(\lb)$ becomes $\pi_G(\mu, \om_0)$. Then
\begin{cor}
We can take
\[
(f_{\pi_G(\mu, \om_0)})^H = \sum_{\om_1 \in \Om(G,H)} \kappa(\om_1 \om_0^{-1}) \eps(\om_1) \varphi_{\om_1(\mu + \rho_G) -\rho_H -  \mu^*}.
\] 
\end{cor}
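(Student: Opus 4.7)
The plan is to derive the corollary directly from Proposition~\ref{onesteppseudo} via the two ingredients that have just been recalled in the paragraph preceding the corollary: the re-indexing $\om_* = \om_1 \om_0^{-1}$, and the equality of stable orbital integrals between a pseudocoefficient $f_{\pi_H(\lb+\rho_H)}$ and the Euler--Poincar\'e function $\varphi_\lb$.

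First I would start from the reindexed form of Proposition~\ref{onesteppseudo} displayed just above the corollary, namely
\[
(f_{\pi_G(\lb)})^H = \sum_{\om_1 \in \Om(G,H)} \kappa(\om_1 \om_0^{-1})\, \eps(\om_1)\, f_{\pi_H(\om_1 \om_0^{-1} \lb - \mu^*)},
\]
where $\lb$ is the ``shifted'' parameter and $\om_0$ is the unique element of $\Om(B_K)$ for which $\om_0^{-1}\lb$ is $B$-dominant. Next, I would replace each pseudocoefficient $f_{\pi_H(\nu)}$ in the sum by the Euler--Poincar\'e function $\varphi_{\nu - \rho_H}$. This is legal because the transfer identity $SO_{\gamma_H}(f^H) = \sum_\gamma \Delta(\gamma_H,\gamma) O_\gamma(f)$ determines $f^H$ only through its stable orbital integrals, and the identity $SO_\gamma(\varphi_{\nu-\rho_H}) = SO_\gamma(f_{\pi_H(\nu)})$ recalled immediately before the corollary ensures that each replacement preserves stable orbital integrals term by term, hence for the whole (finite) linear combination.

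Finally I would translate the parametrization on the left-hand side. Setting $\mu := \om_0^{-1}\lb - \rho_G$, so that $\lb = \om_0(\mu + \rho_G)$, the identification in Section~\ref{dseries} between $\pi_G(w(\mu+\rho))$ and $\pi_G(\mu, w)$ gives $\pi_G(\lb) = \pi_G(\mu, \om_0)$. A direct substitution rewrites the $H$-weight in each summand as
\[
\om_1 \om_0^{-1} \lb - \mu^* - \rho_H = \om_1(\mu + \rho_G) - \rho_H - \mu^*,
\]
yielding exactly the formula stated in the corollary.

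The only non-bookkeeping step is the replacement of $f_{\pi_H(\nu)}$ by $\varphi_{\nu-\rho_H}$, and I expect this to be the main conceptual point to flag; its justification, however, is essentially automatic once one observes that a transfer is specified only up to stable orbital integrals and applies this observation termwise to the finite linear combination produced by the proposition. No further work on transfer factors, root data, or central character compatibility is required since these are already built into Proposition~\ref{onesteppseudo}.
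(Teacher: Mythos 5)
Your proposal is correct and follows the paper's own derivation essentially verbatim: the paper obtains the corollary from the reindexed form of Proposition~\ref{onesteppseudo} by (i) invoking the equality of stable orbital integrals between $f_{\pi_H(\nu)}$ and the Euler--Poincar\'e function $\varphi_{\nu-\rho_H}$, which suffices since a transfer is specified only up to stable orbital integrals, and (ii) substituting $\mu = \om_0^{-1}\lb - \rho_G$ so that $\pi_G(\lb) = \pi_G(\mu,\om_0)$ and $\om_1\om_0^{-1}\lb - \mu^* - \rho_H = \om_1(\mu+\rho_G) - \rho_H - \mu^*$. Your identification of the orbital-integral replacement as the only conceptual step is accurate.
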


Next, since $\kappa$ is $\Om_\R$-right invariant,
\[
\sum_{\om_0 \in \Om(B_K)} \kappa(\om_1 \om_0^{-1}) = \sum_{[\om] \in \Om_\R \bs \Om_\C}  \kappa(\om_1 \om^{-1}) = \sum_{[\om] \in \Om_\C/\Om_\R} \kappa(\om_1 \om) = \sum_{[\om] \in \Om_\C/\Om_\R} \kappa(\om)
\]
where it does not matter which representatives $\om$ we choose. Therefore, averaging over $\om_0 \in \Om(B_K)$,
\begin{cor}[see {\cite[prop. 4.3.2]{Fer07}}]\label{eptransfer}
We can take
\[
(\varphi_{\mu})^H = \bar \kappa \sum_{\om_1 \in \Om(G,H)} \eps(\om_1) \varphi_{\om_1(\mu + \rho_G) - \rho_H - \mu^*}
\]
where $\bar \kappa = \bar \kappa_{G,H}$ is the average value of $\kappa$ over $\Om_\C/\Om_\R$. 
\end{cor}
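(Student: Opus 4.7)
The plan is to deduce this corollary directly from the preceding corollary by averaging the transfer of a pseudocoefficient over the $L$-packet. By definition (see the section on pseudocoefficients and Euler-Poincar\'e functions),
\[
\varphi_\mu = \frac{1}{|\Pi_\disc(\mu)|} \sum_{\pi \in \Pi_\disc(\mu)} f_\pi = \frac{1}{|\Om(B_K)|} \sum_{\om_0 \in \Om(B_K)} f_{\pi_G(\mu, \om_0)},
\]
using that discrete series in $\Pi_\disc(\mu)$ are parametrized by $\Om(B_K)$ as described in section \ref{dseries}. Since endoscopic transfer respects linear combinations, I can transfer term by term and apply the previous corollary to each summand.

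Substituting gives
\[
(\varphi_\mu)^H = \frac{1}{|\Om(B_K)|} \sum_{\om_0 \in \Om(B_K)} \sum_{\om_1 \in \Om(G,H)} \kappa(\om_1 \om_0^{-1}) \eps(\om_1) \varphi_{\om_1(\mu + \rho_G) - \rho_H - \mu^*}.
\]
The target weight $\om_1(\mu + \rho_G) - \rho_H - \mu^*$ is independent of $\om_0$, so I pull the sum over $\om_1$ outside and interchange summations. This isolates the inner average
\[
\frac{1}{|\Om(B_K)|}\sum_{\om_0 \in \Om(B_K)} \kappa(\om_1 \om_0^{-1}),
\]
which is precisely the sum computed in the paragraph immediately preceding the corollary statement: using that $\Om(B_K)$ is a set of coset representatives for $\Om_\R \bs \Om_\C$ and that $\kappa$ is $\Om_\R$-right invariant, this average coincides with the average of $\kappa$ over $\Om_\C / \Om_\R$, namely $\bar\kappa$.

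Combining the two steps yields the formula
\[
(\varphi_\mu)^H = \bar\kappa \sum_{\om_1 \in \Om(G,H)} \eps(\om_1)\, \varphi_{\om_1(\mu + \rho_G) - \rho_H - \mu^*}.
\]
There is no real obstacle here: the main point is simply that the $\om_0$-dependence in the earlier corollary sits entirely in the character $\kappa(\om_1 \om_0^{-1})$, so averaging over $\om_0$ replaces it by the $\Om_\R$-invariant average $\bar\kappa$. The only small thing to keep straight is the notational overload of $\varphi$ (Euler-Poincar\'e on the left, Euler-Poincar\'e on the right obtained by replacing the pseudocoefficient $f_{\pi_H(\lambda+\rho_H)}$ by $\varphi_\lambda$, which is justified in the previous corollary because they share stable orbital integrals and transfer factors only see stable classes).
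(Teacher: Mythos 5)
Your proof is correct and follows essentially the same route as the paper: expand $\varphi_\mu$ as the normalized sum of pseudocoefficients $f_{\pi_G(\mu,\om_0)}$ over $\om_0 \in \Om(B_K)$ (equivalently, average the preceding corollary over $\om_0$), interchange the sums, and use the displayed computation just before the corollary to identify the inner average of $\kappa(\om_1\om_0^{-1})$ with $\bar\kappa$. The paper states this as "averaging over $\om_0 \in \Om(B_K)$," which is exactly your argument.
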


\subsubsection{General case}\label{zexttransfer}
For $\mc H \not\cong \Ld H$, we use the trick in section \ref{zextransfertrick}. Let $\varphi : (G_1)_\infty \to G_\infty$ be the surjective map coming from the $z$-extension $G_1 \to G$: if $f$ is a function on $G_\infty$, we choose $f^{H_1} = (f \circ \phi)^{H_1}$. 

Given elliptic torii $T_{G_1}$ and $T_{H_1}$ as before, we can also get elliptic torus $T_G$ by taking images under the $z$-extensions. The function $\varphi : (G_1)_\infty \to G_\infty$ gives a map $\phi^* : X^*(G_\infty, T_G) \into X^*((G_1)_\infty, T_{G_1})$. Then $f_\pi(\lb) \circ \phi = f_{\pi(\phi^* \lb)}$ so we can still use the above formulas in the general case as long as we treat $\lb$ as an element of $X^*(G_1, T_{G_1})$. 

Note that the character $\lb_{H_1}$ shows up through the weight $\mu^*$---each may be used to compute the other (not that we've explicitly described either here).

\subsubsection{Hyperendoscopic Transfers}
To simplify notation, for any weight $\mu$ of a group $G$, endoscopic group $H$, and $\om \in \Om(G,H)$ as before, let
\[
T_{G,H}(\mu, \om) = \om(\mu+\rho_G) - \rho_H - \mu^*_{G,H}.
\]
As in the previous section, we interpret $\mu$ as an character of $G_1$ corresponding to the chosen $z$-extension $H_1$. 

For any hyperendoscopic path $\mc H = (H_i)_{0 \leq i \leq n}$, let
\[
\Om(\mc H) = \prod_{i=1}^n \Om(H_{i-1}, H_i) \qquad \bar \kappa_\mc H = \prod_{i=2}^n \bar \kappa_{H_{i-1}, H_i}.
\]
For $\om = (\om_i)_{i \leq i \leq n} \in \Om(\mc H)$ let
\[
\eps(\om) = \prod_{i=1}^n \eps(\om_i)
\]
and let
\[
T_\mc H(\mu, \om) = T_{H_{n-1}, H_n}( \cdots T_{G, H_1}(\mu_1, \om_1) \cdots, \om_n )
\]
be the composition of all the $T_{H_{i-1}, H_i}$. Inductively applying propositions \ref{onesteppseudo} and \ref{eptransfer} while keeping in mind section \ref{zexttransfer} then gives:
\begin{prop}[see {\cite[prop. 4.4.2]{Fer07}}]\label{archtransfer}
We can take
\[
(f_{\pi_G(\mu, \om_0)})^\mc H = \bar \kappa_{\mc H} \sum_{\om \in \Om_\mc H} \kappa_{G, H_1}(\om_1 \om_0^{-1}) \eps(\om) \varphi_{T_\mc H(\mu, \om)}
\]
with the terms defined as in the above paragraph.
\end{prop}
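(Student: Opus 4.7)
The plan is a straightforward induction on the length $n$ of the hyperendoscopic path $\mc H = (H_i)_{0 \leq i \leq n}$, obtained by iterating the single-step formulas (proposition \ref{onesteppseudo} and corollary \ref{eptransfer}) while tracking bookkeeping of $z$-extensions via the trick in section \ref{zexttransfer}. The key observation making the induction clean is that after the very first transfer, pseudocoefficients on $H_1$ may be replaced by Euler-Poincar\'e functions with the same stable orbital integrals, so every subsequent transfer in the path is governed by corollary \ref{eptransfer} instead of the (less symmetric) proposition \ref{onesteppseudo}.

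For the base case $n=1$, one notes $\bar\kappa_\mc H = 1$ (empty product) and $T_\mc H(\mu, \om) = T_{G, H_1}(\mu, \om_1)$, so the claimed identity is exactly the corollary immediately following proposition \ref{onesteppseudo}. For the inductive step, let $\mc H' = (H_i)_{0 \leq i \leq n-1}$ denote the truncated path. By the inductive hypothesis,
\[
(f_{\pi_G(\mu, \om_0)})^{\mc H'} = \bar \kappa_{\mc H'} \sum_{\om' \in \Om(\mc H')} \kappa_{G, H_1}(\om_1 \om_0^{-1}) \eps(\om') \, \varphi_{T_{\mc H'}(\mu, \om')}.
\]
Now apply corollary \ref{eptransfer} for the final step $H_{n-1} \leadsto H_n$ to each Euler-Poincar\'e function $\varphi_{T_{\mc H'}(\mu, \om')}$. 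Summing, pulling the new factor $\bar \kappa_{H_{n-1}, H_n}$ outside, and recognizing $\Om(\mc H) = \Om(\mc H') \times \Om(H_{n-1}, H_n)$ together with the composition identity
\[
T_{H_{n-1}, H_n}\bigl(T_{\mc H'}(\mu, \om'), \om_n\bigr) = T_\mc H(\mu, \om)
\]
for $\om = (\om', \om_n)$, yields the stated formula after using $\bar \kappa_\mc H = \bar \kappa_{\mc H'} \, \bar \kappa_{H_{n-1}, H_n}$ and $\eps(\om) = \eps(\om')\eps(\om_n)$.

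The one genuine subtlety, and the main thing to verify carefully rather than grinding, is that the $z$-extension choices assemble consistently along the path. At each step $H_{i-1} \leadsto H_i$, one works with a $z$-extension $(\td H_i, \td \eta_i)$ and, if necessary, a further $z$-extension of $H_{i-1}$ so that $\mc H_i \cong \Ld{\td H_i}$; section \ref{zexttransfer} then tells us to interpret the weight in the preceding step as a character of the enlarged group and to absorb the $\lb_{\td \eta_i}$ twist into the shift $\mu^*_{H_{i-1}, H_i}$ defining $T_{H_{i-1}, H_i}$. The existence of such a consistent choice follows from proposition \ref{zextpath} and the consistency discussion preceding theorem \ref{Hform}, so the induction actually goes through without additional hypotheses on the $H_i$. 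No new estimates or analytic input are needed beyond the single-step formulas already proved.
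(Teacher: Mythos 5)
Your proof is correct and follows the same route the paper indicates: the paper's own ``proof'' is just the sentence ``Inductively applying propositions~\ref{onesteppseudo} and~\ref{eptransfer} while keeping in mind section~\ref{zexttransfer} then gives,'' and you have carried out exactly that induction (pseudocoefficient formula for the first step, EP-function formula for all later steps, $z$-extension bookkeeping via section~\ref{zexttransfer} and proposition~\ref{zextpath}) with the correct base case $\bar\kappa_{\mc H}=1$ for $n=1$.
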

Note that all the coefficients in the sum have norm $1$ and define $\Xi_{\mu, \mc H}$ to be the set of $T_{\mc H}(\mu, \om)$ for $\om \in \Om(\mc H)$.

\subsection{Bounds on Archimedean Transfers}\label{sectionArchtransferbounds}
Here are few lemmas on the terms that appear in proposition \ref{archtransfer}. For $\mu$ a weight of $G$ define
\begin{itemize}
\item
$m(\mu) = m_G(\mu) = \min_{\alpha \in \Phi^+(G)} \langle \alpha, \mu + \rho_G \rangle$
\item
$n(\mu) = n_G(\mu) = \min_{\alpha \in \Phi^+(G)} \langle \alpha, \mu \rangle$
\item
$\dim \mu = \dim_G(\mu)$ is the dimension of the finite dimensional representation with highest weight $\mu$. 
\end{itemize}
\begin{lem}\label{regbound}
If $\mu$ is a weight of $G$ and $\mc H$ as before, then for all $\mu' \in \Xi_{\mu, \mc H}$, $n_G(\mu') \geq n_{\mc H}(\mu)$. In particular, $\mu'$ is regular if $\mu$ is. 
\end{lem}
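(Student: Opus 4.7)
The plan is to proceed by induction on the length $n$ of the hyperendoscopic path $\mc H = (H_0 = G, H_1, \dotsc, H_n)$, reducing the claim to a single-step bound for an endoscopic transfer $G \to H$. For $n = 0$ the assertion is vacuous, and for $n \geq 1$, every $\mu' \in \Xi_{\mu, \mc H}$ factors as $\mu' = T_{H_{n-1}, H_n}(\mu_{n-1}, \om_n)$ for some $\mu_{n-1} \in \Xi_{\mu, \mc H'}$, where $\mc H' = (H_0, \dotsc, H_{n-1})$. It therefore suffices to prove the one-step claim: for $\mu'' := T_{G, H}(\mu, \om) = \om(\mu + \rho_G) - \rho_H - \mu^*$ with $\om \in \Om(G, H)$, one has $n_G(\mu'') \geq n_H(\mu)$.

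For the one-step case, the key identity to exploit is, for any $\alpha \in \Phi^+(G)$,
\[
\langle \alpha, \mu''\rangle = \langle \om^{-1}\alpha, \mu + \rho_G\rangle - \langle \alpha, \rho_H + \mu^*\rangle,
\]
combined with two structural inputs: by lemma \ref{roots}, after fixing an admissible torus isomorphism one has $\Phi^+(H) \subseteq \Phi^+(G)$ and $\Om_H \subseteq \Om_G$; and by the defining property of $\Om(G, H)$, $\om^{-1}$ sends $\Phi^+(H)$ into $\Phi^+(G)$. Writing $\mu^* = (\rho_G - \rho_H) + \chi$ for a character $\chi \in X^*(T)$ (chosen so that $\mu^*$ is $\Om_H$-invariant), the expression rewrites as
\[
\langle \alpha, \mu''\rangle = \langle \om^{-1}\alpha, \mu\rangle + \langle \om^{-1}\alpha - \alpha, \rho_G\rangle - \langle \alpha, \chi\rangle,
\]
and one analyzes this case-by-case according to whether $\om^{-1}\alpha$ lies in $\Phi^+(H)$, in $\Phi^+(G) \setminus \Phi^+(H)$, or in $\Phi^-(G)$, using height comparisons in the $G$-root system to bound the middle term.

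The main obstacle is the delicate handling of $\alpha \in \Phi^+(G) \setminus \Phi^+(H)$, for which $\om^{-1}\alpha$ is unconstrained in $\Phi(G)$ and the shift by $\rho_H + \mu^*$ does not cancel cleanly; the right strategy is to exploit the $\Om_H$-invariance of $\mu^*$ by observing that the $\Om_H$-orbit of $\alpha$ inside $\Phi(G)$ meets $\Phi^+(H)$ in a controlled fashion, absorbing the $\chi$-contribution against the height differential. Once the one-step bound is in hand, the inductive step follows by applying it at each stage of $\mc H$ and using the nesting $\Phi^+(H_n) \subseteq \cdots \subseteq \Phi^+(G)$ from lemma \ref{roots} to compare $n_G$ at successive intermediate weights. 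The $z$-extension case from section \ref{zexttransfer} then reduces to the above via the pullback $\phi^* : X^*(T_H) \hookrightarrow X^*(T_{H_1})$, which preserves both root systems and the pairings, so no new content is needed beyond additional bookkeeping.
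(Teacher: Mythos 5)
There is a critical misreading here that derails the whole approach. The lemma statement as printed contains a typo: the subscripts are swapped. The quantity being bounded should be $n_{\mc H}(\mu')$ (the minimum over roots of the \emph{endoscopic} group, applied to the transferred weight) and the lower bound should be $n_G(\mu)$. This is what the paper's proof actually establishes, and it is what "In particular $\mu'$ is regular if $\mu$ is" requires, since $\mu'$ is a weight of $\mc H$ and regularity must be tested against $\Phi^+(\mc H)$, not $\Phi^+(G)$. You took the statement at face value and set out to bound $\langle\alpha,\mu''\rangle$ over $\alpha \in \Phi^+(G)$. That is strictly harder and, I believe, generally false: $\mu''=\om(\mu+\rho_G)-\rho_H-\mu^*$ is constructed to be $B_H$-dominant, and there is no reason for it to have nonnegative pairing against $\alpha\in\Phi^+(G)\setminus\Phi^+(H)$.

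Because of this, the "main obstacle" you identify is a phantom. Once you restrict to $\alpha\in\Phi^+(H)$ (which is all that $n_H(\mu'')$ requires), the term $\langle\alpha,\mu^*\rangle$ vanishes outright, since an $\Om_H$-invariant element is orthogonal to every root of $H$. You clearly know that $\mu^*$ is $\Om_H$-invariant, but the proposal never cashes that fact in as a vanishing of the pairing; instead it proposes "absorbing the $\chi$-contribution against the height differential" for roots outside $\Phi^+(H)$, which is vague and not an argument. After $\mu^*$ drops out, the remaining content is to show that $\om\rho_G-\rho_H$ is $B_H$-dominant: for $\alpha\in\Delta^*(H)$, $\alpha^\vee(\om\rho_G)\ge 1$ (regularity of $\om\rho_G$ together with $B_H$-dominance) and $\alpha^\vee(\rho_H)=1$, giving $\alpha^\vee(\om\rho_G-\rho_H)\ge 0$. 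Then $\langle\mu'',\alpha\rangle\ge\langle\om\mu,\alpha\rangle=\langle\mu,\om^{-1}\alpha\rangle$, and all these quantities are nonnegative with $\mu$ $B_G$-dominant, forcing $\om^{-1}\alpha\in\Phi^+(G)$ and yielding $n_H(\mu'')\ge n_G(\mu)$. Your inductive reduction to the one-step case is fine, as is the observation that the $z$-extension bookkeeping is harmless; the one-step estimate is where the proposal goes wrong.
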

\begin{proof}
In the situation where $H$ is just an endoscopic group, consider $\om \in \Om_G$ such that $\mu' = \om(\mu + \rho_G) - \rho_H - \mu^* \in \Xi_{\mu, H}$. Consider $\alpha \in \Phi^+(H)$. Since $\mu^*$ is invariant under $\Om_H$, $\langle \mu^*, \alpha \rangle = 0$ so
\[
\langle \mu', \alpha \rangle =  \langle \om \mu, \alpha \rangle + \langle \om \rho_G - \rho_H, \alpha \rangle.
\]
Next, $\rho_G$ is the sum of the fundamental weights so it is a regular weight. This implies that $\om \rho_G$ is too. Therefore, for all $\beta \in \Phi^+(G)$, $\beta^\vee(\om \rho_G) \in \Z \setminus \{0\}$. In particular, since $\om \rho_G$ is $B_H$-dominant, for $\alpha \in \Phi^+(H)$, $\alpha^\vee(\om \rho_G) \geq 1$. If $\alpha$ is in addition simple, we can compute
\[
\alpha^\vee(\om \rho_G - \rho_H) \geq 1 - \alpha^\vee(\rho_H) = 0,
\]
so $\om \rho_G - \rho_H$ is $B_H$-dominant. This gives
\[
\langle \mu', \alpha \rangle \geq \langle \om \mu, \alpha \rangle.
\]
To finish this one-step case,
\[
n_H(\mu') = \min_{\alpha \in \Phi^+(H)} \langle \mu', \alpha \rangle \geq \min_{\alpha \in \Phi^+(H)} \langle \om \mu, \alpha \rangle  = \min_{\alpha \in \Phi^+(H)} \langle \mu, \om^{-1} \alpha \rangle.
\]
All the terms in the last two minimums have to be positive. However, $\mu$ is $B_G$-dominant so this means the $\om^{-1}\alpha$ are all in $\Phi^+(G)$ giving
\[
n_H(\mu') \geq \min_{\alpha \in \Phi^+(G)} \langle \mu, \alpha \rangle = n_G(\mu).
\]

Finally, for an arbitrary endoscopic path, inductively continue this argument through each step.
\end{proof}

\begin{lem}\label{dimbound}
If $\mu$ is a weight of $G$ and $\mc H$ as before, then for all $\mu' \in \Xi_{\mu, \mc H}$
\[
\f{\dim_{\mc H}(\mu')}{\dim_G(\mu)} = O(m_G(\mu)^{-1})
\]
with the implied constant only depending on $G$ and $\mc H$. 
\end{lem}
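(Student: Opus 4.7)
The plan is to apply Weyl's dimension formula to both $\dim_{\mc H}(\mu')$ and $\dim_G(\mu)$ and then exploit the structural compatibility of roots along the hyperendoscopic path. By the iterated version of lemma \ref{roots}, compatible choices of maximal tori along the path identify $\Phi^+(\mc H)$ with a subset of $\Phi^+(G)$; fix such identifications throughout.

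The heart of the argument is the identity
\[
\mu' + \rho_{\mc H} = w(\mu + \rho_G) - c,
\]
where $w := \om_n \om_{n-1} \cdots \om_1 \in \Om_G$ (viewing each $\om_i$ as an element of $\Om_G$ via the chain of torus identifications) and $c$ is a correction vector built from the various $\mu^*_{H_{i-1}, H_i}$, each shifted by a partial product of $\om_j$'s. Crucially, $c$ depends only on $\mc H$ and on the combinatorial data $\om$, not on $\mu$, so $c = O(1)$. This follows by induction on the length $n$ of $\mc H$ from the recursive definition of $T_\mc H$ together with the $\Om_{H_i}$-invariance of $\mu^*_{H_{i-1},H_i}$. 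Moreover, the Weyl chamber condition defining $\Om(H_{i-1}, H_i)$ yields inductively, as in the proof of lemma \ref{regbound}, that $w^{-1}$ sends $\Phi^+(\mc H)$ into $\Phi^+(G)$; call this image $S \subseteq \Phi^+(G)$. Hence for every $\alpha \in \Phi^+(\mc H)$,
\[
\langle \alpha, \mu' + \rho_{\mc H}\rangle \;=\; \langle w^{-1}\alpha, \mu + \rho_G\rangle + O(1),
\]
with the $O(1)$ uniform in $\mu$.

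Plugging this into Weyl's dimension formula and absorbing into the implied constant everything that depends only on $G$ and $\mc H$ — including the $O(1)$ corrections from $c$, which is legitimate because each factor $\langle \alpha, \mu + \rho_G\rangle \geq 1$ — gives
\[
\frac{\dim_{\mc H}(\mu')}{\dim_G(\mu)} \;=\; O\!\left(\prod_{\alpha \in \Phi^+(G) \setminus S} \frac{1}{\langle \alpha, \mu + \rho_G\rangle}\right).
\]
Since each step of $\mc H$ goes to a proper elliptic endoscopic group, $\wh H_i$ is a strict connected centralizer inside $\wh H_{i-1}$, so $|\Phi^+(\mc H)| < |\Phi^+(G)|$ and $\Phi^+(G) \setminus S$ is nonempty. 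Every factor in the product is at least $m_G(\mu)$, so the product is at least $m_G(\mu)$, yielding the claimed bound.

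The main obstacle is the careful bookkeeping in deriving the decomposition $\mu' + \rho_\mc H = w(\mu + \rho_G) - c$ through iterated hyperendoscopy and verifying that $\langle \alpha, c\rangle = O(1)$ uniformly in $\mu$ for $\alpha \in \Phi^+(\mc H)$. This reduces to keeping track of the pairings $\langle \alpha, \om_n \om_{n-1} \cdots \om_{j+1} \mu^*_{H_{j-1}, H_j}\rangle$ and observing that each is bounded by a constant depending only on the groups in $\mc H$, since finitely many combinatorial choices of $\om$ and finitely many $\mu^*$'s are involved.
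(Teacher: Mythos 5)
Your proof is correct and rests on the same two pillars as the paper's argument: Weyl's dimension formula, and the observation (via lemma \ref{roots} and the Weyl-chamber condition on $\Om(G,H)$) that the positive roots of $\mc H$ embed into those of $G$, with strict containment because each endoscopic step passes to a proper connected centralizer. The only real difference is structural. The paper proves the one-step estimate $\dim_H(\mu')/\dim_G(\mu) = O(m_G(\mu)^{-1})$ and then iterates it along the path, which requires knowing that $m_{H_{i-1}}(\mu_{i-1})$ stays comparable to $m_G(\mu)$ at every stage — a fact extracted from lemma \ref{regbound}. You instead flatten the recursion into the single identity $\mu' + \rho_{\mc H} = w(\mu + \rho_G) - c$ with $w = \om_n \cdots \om_1$ and $c$ an accumulated $\mu^*$-shift, apply Weyl's formula once, and absorb $c$ into the $O(1)$ terms. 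Note that for $n > 1$ the correction $c$ is no longer orthogonal to $\Phi^+(\mc H)$ (only the outermost $\mu^*_{H_{n-1},H_n}$ is), so your observation that it is merely $O(1)$ rather than identically zero is the right thing to say. The unrolled version is slightly cleaner in that it avoids tracking the intermediate $m_{H_i}(\mu_i)$ altogether; otherwise the two proofs are the same argument.
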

\begin{proof}
This follows from the Weyl character formula. If $H$ is just an endoscopic group, let $\mu' = \om(\mu + \rho_G) - \rho_H - \mu^*$ for appropriate $\om \in \Om_G$. Using that $\mu^*$ pairs to zero with any root of $H$,
\[
\f{\dim_H(\mu')}{\dim_G(\mu)} = \f{\prod_{\alpha \in \Phi^+(G)} \langle \alpha, \rho_H \rangle}{\prod_{\alpha \in \Phi^+(H)} \langle \alpha, \rho_G \rangle}\f{\prod_{\alpha \in \Phi^+(H)} \lf( \langle \alpha, \om \mu \rangle  + \langle \alpha,  \om\rho_G \rangle \ri)}{\prod_{\alpha \in \Phi^+(G)} \lf( \langle \alpha, \mu \rangle  + \langle \alpha,  \rho_G \rangle\ri)}.
\]
The first fraction is a constant depending only on $G$ and $H$. The second terms in the products in the second fraction are also. A priori, the $\langle \alpha, \om \mu \rangle = \langle \om^{-1} \alpha, \mu \rangle$ are a subset of the $\langle \pm \beta, \mu \rangle$ for $\beta \in \Phi^+(G)$. However, since they all have to be positive since $\om \mu$ is $B_H$-dominant, they are actually a subset of the $\langle \beta, \mu \rangle$. Denote by $A$ the subset of such $\beta$. Then
\[
\f{\dim_H(\mu')}{\dim_G(\mu)} = C \f{\prod_{\alpha \in A} (\langle \alpha, \mu \rangle  + O(1))}{\prod_{\alpha \in \Phi^+(G)} (\langle \alpha, \mu \rangle  + O(1))} = O\lf( \prod_{\alpha \in \Phi^+(G) \setminus A} \langle \alpha, \mu \rangle^{-1} \ri)
\]
using that the pairings are bounded below by a constant. Bounding the pairings again by $m_G(\mu)$, this is $O(m_G(\mu)^{|\Phi^+(H)| - |\Phi^+(G)|})$. Finally, since endoscopic groups have smaller rank, they do not have the same root data as the original group so this difference has to be negative. 

After a quick check that the $m_{\mc H_i}(\mu') = O(m_G(\mu))$, inducting on this argument for each step of the hyperendoscopic path $\mc H$ finishes the proof . 
\end{proof}

\subsection{Truncated Hecke algebras}\label{trunhalg}
We now move on to the unramified finite places. Fix a place $v$ at which $G_v$ is quasisplit. Since we are only working at $v$, for this subsection $G$ will always mean $G_v$ to simplify notation. 

Choose $(B,T)$ to be  a Borel and maximal torus defined over $F_v$. By $G$ being quasisplit, all such choices are conjugate and $T$ automatically contains a maximal split torus $A$. Furthermore, $\Om_F$ can be identified with the fixed points $\Om^{W_F}$ and therefore the Weyl group of the relative root system of rational roots in $X^*(A)$. Let $K$ be a hyperspecial subgroup from a hyperspecial point in the apartment corresponding to $A$. 

Eventually, we will evaluate $I_\geom(f)$ up to some error bounds which depend on how big the support of the finite part of $f$ is. To precisely measure this size, we slightly modify the notion of truncated Hecke algebras as in \cite[\S2]{ST16}. 

Recall then that the elements $\tau^G_\lb = \1_{K \lb(\varpi) K}$ for a chosen uniformizer $\varpi$ and $\lb \in X_*(A)^+$ generate $\ms H(G,K)$.  
Pick a basis $\mc B$ for the $X_*(A)$ and define norm
\[
\|\lb\|_\mc B = \max_{\om \in \Om} (\text{biggest }|\mc B\text{-coordinate of }\om \lb|)
\]
for $\lb \in X_*(A)$. Define truncated Hecke algebra
\[
\ms H(G,K)^{\leq \kappa, \mc B} = \langle \tau^G_\lb : \|\lb\|_\mc B \leq \kappa \rangle.
\]
It turns out (see \cite[\S2]{ST16}) that for any two $\mc B,\mc B'$, $\|\lb\|_\mc B = \Theta(\|\lb\|_{\mc B'})$. All the bounds we use will depend on $\kappa$ only up to an unspecified constant. Therefore we can suppress the $\mc B$.

There is also a truncated Hecke algebra with central character data: choose an $(\mf X, \chi)$ such that $\chi$ is unramified. In the case we care about, $\mf X$ is a subtorus of $Z_{G}$. Let $A_{\mf X}$ be its split part. Define
\[
\ms H(G, K, \chi)^{\leq \kappa, \mc B} = \langle \tau^G_\lb : \|\lb+\zeta\|_\mc B \leq \kappa \text{ for some } \zeta \in X_*(A_{\mf X}) \rangle \cap \ms H(G, K, \chi).
\]
Note that for $x \in K\lb(\varpi)K$ and $z \in \mf X$, then there is $k \in K$ and $\zeta \in X_*(A_{\mf X})$ such that $z = \zeta(\varpi) k$, implying $zx \in K(\lb + \zeta)(\varpi)K$. Therefore, this is a reasonable, non-empty intersection.

\subsubsection{A useful projection}
Working with the basis of $\tau^G_\lb$, it is sometimes useful to consider the following maps. First, there is a map $Q : \chi \mapsto \sum_{\om \in \Om_G} \om \chi$ on $X_*(T)$. This sends every coroot of $G$ to $0$. Normalizing $Q$ by $|\Om_G|^{-1}$ gives a projection $P$ on $X_*(T)\otimes\Q$. Note that this projection is onto $X_*(Z_G)\otimes\Q$ since Weyl-invariant cocharacters are the same as central cocharacters (they pair to zero with every root). 

Recall $X_*(A)$ embeds into $X_*(T)$ as the $W_F$ invariants. 
\begin{lem}
Let $\lb \in X_*(A)$. Then, $Q \lb \in X_*(A)$. 
\end{lem}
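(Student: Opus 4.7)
The plan is to unwind the identification $X_*(A) = X_*(T)^{W_F}$ and show that $Q$ commutes with the Galois action. Since $Q \lambda = \sum_{\omega \in \Omega_G} \omega \lambda$ is a finite sum of elements of $X_*(T)$, it is automatic that $Q$ sends $X_*(T)$ into $X_*(T)$ (not just into $X_*(T) \otimes \Q$), so the only real content is the Galois equivariance.

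First I would recall that because $G$ is quasisplit with $T$ defined over $F_v$, the Galois group $W_{F_v}$ acts on both $X_*(T)$ and on $\Omega_G = N_G(T)/T$, and these two actions are compatible in the sense that $\sigma(\omega \cdot \chi) = \sigma(\omega) \cdot \sigma(\chi)$ for $\sigma \in W_{F_v}$, $\omega \in \Omega_G$, $\chi \in X_*(T)$. This is exactly the statement that the Weyl group action on cocharacters is defined over $F$ as an action of the finite group scheme $\Omega_G$.

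Given this, the computation is a one-line reindexing: for $\lambda \in X_*(A) = X_*(T)^{W_{F_v}}$ and any $\sigma \in W_{F_v}$,
\[
\sigma(Q\lambda) \;=\; \sum_{\omega \in \Omega_G} \sigma(\omega) \cdot \sigma(\lambda) \;=\; \sum_{\omega \in \Omega_G} \sigma(\omega) \cdot \lambda \;=\; \sum_{\omega' \in \Omega_G} \omega' \cdot \lambda \;=\; Q\lambda,
\]
where the second equality uses $W_{F_v}$-invariance of $\lambda$ and the third uses that $\omega \mapsto \sigma(\omega)$ is a bijection of $\Omega_G$. Thus $Q\lambda$ is $W_{F_v}$-invariant and therefore lies in $X_*(A)$, which is the claim.

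There is no real obstacle here; the only subtlety is making sure one uses the integral version of $Q$ (not the projection $P = Q/|\Omega_G|$) so that the conclusion lands in the lattice $X_*(A)$ rather than $X_*(A) \otimes \Q$. Everything else is formal compatibility of Galois and Weyl actions for a quasisplit group.
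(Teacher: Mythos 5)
Your proof is correct, and it takes a genuinely different route from the paper. The paper introduces the normalized projection $P = Q/|\Omega_G|$ on $X_*(T)\otimes\Q$ and argues abstractly: $P$ is the orthogonal projection onto $X_*(Z_G)\otimes\Q$ with respect to a $W_{F_v}$-invariant inner product, and since both the target subspace (clearly Galois-stable, being the cocharacter space of the center) and the inner product are $W_{F_v}$-invariant, $P$ commutes with $W_{F_v}$ by the uniqueness of orthogonal projections. You instead work directly with the integral operator $Q$ and prove equivariance by the reindexing identity $\sigma(Q\lambda) = \sum_\omega \sigma(\omega)\cdot\sigma(\lambda) = Q\lambda$, using the compatibility of the Galois action on $\Omega_G = N_G(T)/T$ with its action on $X_*(T)$. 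Your approach is more elementary and self-contained (no inner product needed), and has the mild virtue of staying in the lattice $X_*(T)$ throughout rather than passing through $\Q$-coefficients, though you do need to invoke the fact that the $\Omega_G$-action on $X_*(T)$ is Galois-equivariant, which the paper's argument sidesteps by instead using that $Z_G$ is defined over the base field. Both are fine; your observation about preferring $Q$ over $P$ to keep things integral is a good one, and in fact the paper's own reduction ``it suffices to show this for $P\lambda$'' is implicitly relying on exactly that distinction.
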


\begin{proof}
It suffices to show this for $P\lb$. The map $P$ is an orthogonal projection onto $W_F$-invariant $X_*(Z_G)\otimes\Q$ with respect to a $W_F$-invariant inner product. Therefore it commutes with $W_F$ and sends $W_F$ invariants to $W_F$ invariants. 
\end{proof}
Therefore, we can consider $Q$ and $P$ as maps of $X_*(A)$ and $X_*(A)\otimes\Q$ respectively. The kernel of $P$ is the span of the roots of $G$ so the kernel in $X_*(A)\otimes\Q$ is $V_F$ where $V_F$ is the span of $\{\alpha^\vee|\alpha \in \Phi^*_F\}$ inside $X_*(A)\otimes\Q$.

\subsection{Formulas for Unramified Non-Archimedean Transfers}\label{unramformulasection}
Fix a place $v$ at which $G_v$ is quasisplit. Since we are only working at $v$, for this subsection $G$ will always mean $G_v$ to simplify notation. 

\subsubsection{The Fundamental Lemma}\label{funlem}
The fundamental lemma allows for computation of unramified non-Archimedean transfers (the lemma is actually enough to show the existence of all non-Archimedean transfers). We will eventually use this to control which $\ms H(H_v, K_{H,v}, \chi_{H,v})^{\leq \kappa}$ transfers end up being in. Use the notation $T,A$, and $K$ analogous to the last section. 

As explained in \cite[\S2.2]{ST16}, the Satake transform gives two isomorphisms
\[
\varphi_G : \ms H(G,K) \to \ms H(A, A\cap K)^{\Om_F} \to \C[X_*(A)]^{\Om_F}.
\]
We mention that this implies:
\begin{lem}
The space $\wh G^\ur$ can be identified with $\Om_F \bs \wh A$. The tempered part is $\Om_F \bs \wh A_c$ where $\wh A_c$ is the maximum compact torus in $\wh A$. 
\end{lem}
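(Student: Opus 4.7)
The plan is to combine the Satake isomorphism already recalled with the standard classification of unramified representations and the temperedness criterion for Satake parameters.

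First, I would recall that an irreducible admissible unramified representation $\pi$ of $G$ is characterized (up to isomorphism) by the action of $\ms H(G,K)$ on its one-dimensional space $\pi^K$ of $K$-fixed vectors, so $\wh G^{\ur}$ is in natural bijection with the set of characters $\ms H(G,K) \to \C$. Under the Satake isomorphism $\varphi_G : \ms H(G,K) \iso \C[X_*(A)]^{\Om_F}$, these characters are identified with the $\C$-points of $\Spec \C[X_*(A)]^{\Om_F}$. Since $\C[X_*(A)]$ is the coordinate ring of the complex torus $\wh A$ (because $X_*(A) = X^*(\wh A)$), its $\Om_F$-invariant points are exactly $\Om_F \bs \wh A$. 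This gives the first identification.

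For the tempered part, I would use that every unramified $\pi$ with Satake parameter $s \in \Om_F \bs \wh A$ occurs as the unique unramified subquotient of the unramified principal series $\Ind_B^G \chi_s$, where $\chi_s$ is the unramified character of $T$ corresponding to $s$ (under $\wh A = \Hom(T/(T \cap K), \C^\times)$). The induced representation $\Ind_B^G \chi_s$ is unitary (hence its irreducible constituents are tempered) precisely when $\chi_s$ is a unitary character, i.e.\ when $s$ lies in the maximal compact subgroup $\wh A_c \subset \wh A$. Conversely, if $s \notin \wh A_c$, the asymptotics of matrix coefficients of $\pi$ on the split torus $A$ are controlled by $|\chi_s|$ on $A^-$, which is nontrivial, and a standard Casselman-square-integrability style argument shows $\pi$ fails to be tempered. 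Combining these two directions gives $\wh G^{\ur,\temp} = \Om_F \bs \wh A_c$.

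The main (minor) technical point I would be careful about is the passage ``$\pi$ tempered iff the Satake parameter is unitary,'' since in general a unitary principal series need not be irreducible; the argument needs the closure of the tempered dual under subquotients together with the characterization of the Langlands quotient of a non-tempered principal series, so that the unramified subquotient of $\Ind_B^G \chi_s$ is tempered exactly when $\chi_s$ is unitary. Everything else is a straightforward unpacking of the Satake isomorphism already stated.
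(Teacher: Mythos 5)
Your proposal is correct and follows essentially the same route as the paper: identify unramified representations with characters of the spherical Hecke algebra via Satake, which gives $\Om_F \bs \wh A$, and then observe that temperedness forces the Satake parameter into $\wh A_c$. The paper's argument is much terser (it simply cites \cite[\S10]{Bor79} and asserts the temperedness step), whereas you unpack the temperedness direction via unramified principal series and the Langlands quotient; that extra detail is sound and is implicitly what the paper is invoking.
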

\begin{proof}
A result in representation theory of $p$-adic groups says that unramified representations of $G$ are the same as characters of $\ms H(G, K)$ and therefore characters on $ \C[X_*(A)]^{\Om_F}$ (see \cite[\S10]{Bor79}). These are the same as elements of $\Om_F \bs \wh A$. Tempered representations need to correspond to tempered characters of $\ms H(G, K)$ which forces the element to be in $\wh A_c$. 
\end{proof}

There are more implications: let $\Ld G^\ur := \Ld G^\ur$ be defined like $\Ld G$ except that the semidirect product is only with $W_{F_v}^\ur$. Define $\C[\ch(\Ld G^\ur)]$ to be the algebra of trace characters of representations of $\Ld G^\ur$ restricted to $(\wh G \rtimes \Frob)_{\ssm}$. There is a third isomorphism
\[
\mc T : \C[\ch(\Ld G^\ur)] \to  \C[X_*(A)]^{\Om_F}
\]
that takes a representation $\pi$ to a function on $\wh T$ given by $a \mapsto \tr_\pi (a \rtimes \Frob)$. This function can be shown to factor through $\wh A$ (see \cite[prop 6.7]{Bor79}).

If we have a map $\eta : \Ld H^\ur \into \Ld G^\ur$, we get a pullback map $b_\eta : \C[\ch(\Ld G^\ur)] \to \C[\ch(\Ld H^\ur)]$. We pick the Whittaker normalization for transfer factors and choose the measures $\mu^\can$ on $H$ and $G$ that give $K$ and $K_{H}$ volume $1$. 
\begin{thm}[Full Fundamental Lemma] 
Let $G$ be an unramified reductive group over the local field $F_v$. Let $(H, \mc H, \eta, s)$ be an elliptic endoscopic quadruple for $G$ such that $\mc H \cong \Ld H$. Then, for $f \in \ms H(G,K)$ we can take
\[
f^H = \begin{cases} \varphi_H^{-1} \circ b_\eta \circ \varphi_G(f) & H \text{ unramified} \\
0 & H \text{ ramified}
\end{cases}.
\]
Here we recall that if $H$ and $G$ are unramified, then the embedding $\mc H \into \Ld G$ descends to one $\mc H^\ur \into \Ld G^\ur$. In addition, $H$ being unramified allows us to pick an $\eta : \Ld H \iso \mc H$ that also descends to unramfied $L$-groups. The pullback $b_\eta$ is defined through such an $\eta$. 
\end{thm}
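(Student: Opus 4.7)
The plan is to treat the unramified and ramified cases separately, in each case reducing to the deep theorems on the Langlands--Shelstad fundamental lemma in the literature.

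For the unramified case, I would first verify the identity on the unit element: since $b_\eta$ is an algebra map sending $1 \mapsto 1$ and the Satake transforms $\varphi_G, \varphi_H$ send $\1_K, \1_{K_H}$ to $1$, the proposed formula on $\1_K$ reduces to the assertion that $\1_{K_H}$ is a valid transfer of $\1_K$. This is the classical fundamental lemma for the unit element, i.e.\ Ng\^o's theorem (following the reductions of Waldspurger, Laumon--Ng\^o, Hales, Kottwitz, and others). To propagate this to arbitrary $f \in \ms H(G,K)$, I would appeal to Hales's theorem that the fundamental lemma for the unit element implies the fundamental lemma for the full spherical Hecke algebra in a way compatible with the Satake isomorphism; alternatively one can check the identity directly on the generators $\tau^G_\lb$ by comparing unramified characters. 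The key conceptual input is that $b_\eta$ is, by construction, the algebra morphism induced by the dual $L$-embedding $\eta$ on character rings, so its action on Satake parameters matches the action of transfer on the spectral side.

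For the ramified case, I would show that $f^H = 0$ already satisfies the matching identity, i.e.\ that $\sum_{\delta} \Delta(\gamma_H, \delta)\, O_\delta(f) = 0$ for every strongly $G$-regular semisimple $\gamma_H \in H(F_v)$ that is a norm. The $K$-bi-invariance of $f$ confines the support to elements whose stable conjugacy class comes from an unramified maximal torus of $G$, so any norm $\gamma_H$ must be conjugate into an unramified torus of $H$. When $H$ itself is ramified, the endoscopic character $\kappa$ attached to such a torus forces the $\kappa$-sum over stable conjugates of $\delta$ to vanish; this is the standard Kottwitz--Shelstad calculation of unramified transfer factors, and is built into the proof of Hales's matching theorem.

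The main obstacle is of course the fundamental lemma itself, which I treat as a black box. The remaining work is entirely bookkeeping: checking that the Whittaker normalization of transfer factors is compatible with the canonical measures assigning $K$ and $K_H$ volume one, and verifying that $b_\eta$ on $\C[\ch(\Ld G^\ur)] \to \C[\ch(\Ld H^\ur)]$ really does implement endoscopic transfer at the level of spherical Hecke algebras via $\varphi_G, \varphi_H$.
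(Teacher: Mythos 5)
Your proposal is correct and takes essentially the same route as the paper: the unramified case is reduced via Hales's theorem (\cite{Hal95}) from the unit element to the full spherical Hecke algebra compatibly with Satake, with the unit element handled by Waldspurger's reduction to Lie algebras and Ng\^o's proof; the ramified case follows from the vanishing of $\kappa$-orbital integrals of $K$-bi-invariant functions, which the paper cites precisely to \cite[\S7.5]{Kot86} (rather than to ``Kottwitz--Shelstad'' or to Hales's proof, but your description of the vanishing mechanism is the right one).
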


\begin{proof}
The statements defining $\eta$ come from the construction of $\mc H$ and the proof of 7.2A in \cite{KS99}. 

The ramified $H$ case is by \cite[\S7.5]{Kot86}. Otherwise, it is reduced in \cite{Hal95} to proving the result for just $\1_K$. This was further reduced to a fundamental lemma for Lie algebras in \cite{Wal97} which was finally proven in \cite{Ngo10}. \cite{Hal95} removes a restriction on the size of the residue field of $F_v$. 
\end{proof}

\subsubsection{Representations of $\Ld G^\ur$}
To compute with the fundamental lemma, we need to describe representations of $\Ld G^\ur$. As a start:

\begin{lem}
Let $\pi$ be a representation of $\Ld T^\ur$. Then there exists $\lb$ a character of $\wh T$ up to $W_F^\ur$-action and $\alpha \in \C^\times$ such that $\pi = \chi_{\lb, \alpha}$ where
\[
\chi_{\lb, \alpha} = \bigoplus_{\gamma \in W_F/\Stab \lb} V_{\gamma \lb}
\]
and each $V_{\mu}$ is a $1$-dimensional space with a chosen generator $v_{\mu}$ on which $\wh T$ acts through $\mu$. Let $\Stab \lb$ be generated by $\Frob^{i(\lb)}$. Then $\Frob^{i(\lb)}$ acts by $v_\lb \mapsto \alpha v_\lb$. Finally, $\Frob(v_\lb) = \beta_\lb v_{\Frob(\lb)}$ for some constants $\beta_\lb$. (Note that by scaling $v_\mu$, without loss of generality all the $\beta_\lb$ are $1$ except one that is $\alpha$). 
\end{lem}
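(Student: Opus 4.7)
\medskip

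\textbf{Plan.} The plan is to analyze $\Ld T^\ur = \wh T \rtimes W_{F_v}^\ur$ using that $W_{F_v}^\ur = \langle \Frob \rangle \cong \Z$ acts on $\wh T$ and that $\pi$ (which I will take to be irreducible, as the statement clearly intends) restricted to $\wh T$ decomposes into weight spaces. The strategy is then to use that $\Frob$ permutes the weight spaces via its action on $X^*(\wh T)$, which forces the set of occurring weights to be a single $\Frob$-orbit, and then to show that each weight space is one-dimensional so that $\Frob^{i(\lb)}$ acts by a scalar $\alpha$.

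\medskip

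First I would restrict $\pi$ to the complex diagonalizable group $\wh T$ and decompose
\[
\pi = \bigoplus_{\mu \in X^*(\wh T)} V_\mu
\]
into weight spaces. Next I would observe that since $\wh T$ is normal in $\Ld T^\ur$, for any $g \in \Ld T^\ur$ and $t \in \wh T$ one has $t \cdot g v = g (g^{-1} t g) v = \mu(g^{-1} t g) g v$, so if $v \in V_\mu$ then $g v \in V_{g \cdot \mu}$ where $g \cdot \mu$ is the action on characters. Applied to $g = \Frob$, this gives that $\Frob$ permutes the weight spaces according to the $\Frob$-action on $X^*(\wh T)$.

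\medskip

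Then, by irreducibility of $\pi$, the support of the weight decomposition must be a single $\Frob$-orbit: pick any weight $\lb$ occurring, let $i(\lb)$ be the smallest positive integer with $\Frob^{i(\lb)} \lb = \lb$ (equivalently, $\Stab_\Frob \lb = \langle \Frob^{i(\lb)} \rangle$), and set $\pi' = \bigoplus_{j=0}^{i(\lb)-1} V_{\Frob^j \lb}$. Since $\pi'$ is stable under both $\wh T$ and $\Frob$, it is all of $\pi$. The operator $\Frob^{i(\lb)}$ preserves $V_\lb$ and commutes with $\wh T$, so each individual $V_\mu$ is a $\wh T$-stable subspace of $\pi$ that is then moved around by $\Frob$; by irreducibility and Schur's lemma applied to $\Frob^{i(\lb)}|_{V_\lb}$ I would conclude $\dim V_\lb = 1$ and that $\Frob^{i(\lb)}$ acts on $V_\lb$ by a scalar $\alpha \in \C^\times$.

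\medskip

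Finally, choosing a generator $v_\lb$ of $V_\lb$, define $v_{\Frob^j \lb} := \Frob^j v_\lb$ for $0 \leq j < i(\lb)$. These give generators of each weight space, and by construction $\Frob(v_{\Frob^j \lb}) = v_{\Frob^{j+1} \lb}$ for $0 \leq j < i(\lb) - 1$, while $\Frob(v_{\Frob^{i(\lb)-1}\lb}) = \Frob^{i(\lb)} v_\lb = \alpha v_\lb$, which matches the stated normalization (all $\beta_\lb = 1$ except one equal to $\alpha$). The dependence only on the $W_{F_v}^\ur$-orbit of $\lb$ is manifest. The main (minor) obstacle is just being careful about irreducibility — the statement of the lemma should be read as applying to irreducible representations, since a general representation is of course a direct sum of such $\chi_{\lb,\alpha}$'s.
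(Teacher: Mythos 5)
Your argument follows essentially the same route as the paper's: decompose $\pi$ into $\wh T$-weight spaces, observe that $\Frob$ permutes them according to its action on $X^*(\wh T)$, pick a base weight $\lb$, and use the $\Frob$-orbit of a distinguished vector in $V_\lb$ to produce a $\chi_{\lb,\alpha}$. You are also right that the lemma must be read as a statement about irreducible representations; the paper's own proof only exhibits a copy of $\chi_{\lb,\alpha}$ \emph{inside} $\pi$ and implicitly relies on irreducibility to conclude equality.

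One step in your writeup is shaky, though. You assert that $\Frob^{i(\lb)}$ ``commutes with $\wh T$'' and then invoke Schur's lemma for $\Frob^{i(\lb)}|_{V_\lb}$ to get $\dim V_\lb = 1$. As operators on $\pi$, $\Frob^{i(\lb)}$ does not commute with $\wh T$ unless $\Frob^{i(\lb)}$ acts trivially on $\wh T$, which is not assumed; it commutes with $\wh T$ only on $V_\lb$, where this is vacuous because $\wh T$ acts by scalars there. Schur's lemma, as stated, does not apply to $V_\lb$ since $V_\lb$ is not a $\Ld T^\ur$-subrepresentation. The clean fix (which is what the paper does) is to not try to prove $\dim V_\lb = 1$ first: simply pick \emph{any} eigenvector $v_\lb$ of the endomorphism $\Frob^{i(\lb)}|_{V_\lb}$, with eigenvalue $\alpha$, and observe that $\mathrm{span}\{v_\lb, \Frob v_\lb, \dots, \Frob^{i(\lb)-1}v_\lb\}$ is $\Ld T^\ur$-invariant (it is $\wh T$-stable termwise and $\Frob$-stable by construction of $\alpha$). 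Irreducibility of $\pi$ then forces this span to be all of $\pi$, which gives both the one-dimensionality of each weight space and the identification $\pi = \chi_{\lb,\alpha}$ in one stroke. Alternatively, if you want to phrase it via Schur, you should first argue via Clifford/Mackey that $V_\lb$ is irreducible as a $\wh T \rtimes \Stab\lb$-module and then quotient out the scalar $\wh T$-action to reduce to the abelian group $\Stab\lb$; that chain of reasoning is valid but needs to be spelled out rather than asserted.
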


\begin{proof}
Decompose $\pi$ into eigenspaces $V_{\mu}$ for $\wh T$. We can compute that, $\gamma V_{\mu} \subseteq V_{\gamma \mu}$ for $\gamma \in W_F^\ur$. Let $\gamma_0$ generate $\Stab \lb$ for some non-empty $V_\lb$. Then $\gamma_0$ acts as an element of $\GL(V_\lb)$. Let $v_\lb$ be a chosen eigenvector of $\gamma_0$ with eigenvalue $\alpha$. The vectors $v_\lb$ generates a $\chi_{\lb, \alpha}$ inside $\pi$.
\end{proof}
Beware that this parametrization depends on the splitting $W_F \into \Ld T$. Next
\begin{prop}
Representations of $\Ld G^\ur$ are parametrized by $\chi_{\lb, \alpha}$ of $\Ld T^\ur$ for $\alpha$ dominant. Call the one corresponding to $\chi_{\lb, \alpha}$ by $\pi_{\lb, \alpha} := \pi^{\Ld G}_{\lb, \alpha}$.
\end{prop}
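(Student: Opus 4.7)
The plan is to run a highest-weight classification for $\Ld G^\ur$ that parallels the previous lemma for $\Ld T^\ur$. The main observation is that a chosen Frobenius (which generates $W_{F_v}^\ur$) acts on $\wh G$ via the pinned unramified action, and hence permutes the irreducible algebraic $\wh G$-subrepresentations of $\pi$ through its action on dominant weights of $\wh T$.

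First, I would restrict $\pi$ to $\wh G$ and decompose it into irreducible highest-weight modules $V_\mu$ indexed by dominant weights $\mu$ of $\wh T$. Because $\Frob$ sends a copy of $V_\mu$ to a copy of $V_{\Frob(\mu)}$, irreducibility of $\pi$ as a $\Ld G^\ur$-representation forces the multiset of $\mu$ appearing to be a single $W_{F_v}^\ur$-orbit $[\lb]$, each with multiplicity one. Picking the dominant representative $\lb$, consider the one-dimensional highest-weight line $\C v_\lb \subseteq V_\lb$. This line is preserved by the stabilizer $\Stab(\lb) = \langle \Frob^{i(\lb)} \rangle$ (since $\Frob^{i(\lb)}$ fixes $\lb$ and hence permutes the one-dimensional $\lb$-weight space of $\pi|_{\wh T}$, which lies entirely inside $V_\lb$), so it acts on $v_\lb$ by a scalar $\alpha \in \C^\times$.

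Second, I would apply the previous lemma to the $\Ld T^\ur$-subrepresentation spanned by the Frobenius iterates of $v_\lb$ and recognize it as $\chi_{\lb,\alpha}$. This attaches the invariants $([\lb],\alpha)$ to $\pi$. For uniqueness, note that $v_\lb$ generates $V_\lb$ under $\wh G$, and the Frobenius iterates $\Frob^i v_\lb$ lie in and generate the remaining summands $V_{\Frob^i(\lb)}$, so $v_\lb$ generates $\pi$ as a $\Ld G^\ur$-module; this forces $\pi$ to be determined up to isomorphism by $([\lb],\alpha)$. For existence, I would build $\pi_{\lb,\alpha}$ explicitly as $\bigoplus_{i=0}^{i(\lb)-1} V_{\Frob^i(\lb)}$, cycling the summands by fixed $\wh G$-equivariant isomorphisms $V_{\Frob^i(\lb)} \iso V_{\Frob^{i+1}(\lb)}$ coming from transport of structure through $\Frob$, and scaling by $\alpha$ on the final step to close up the orbit.

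The step I expect to be most delicate is the last one: coherently pinning down the $\wh G$-equivariant identifications $V_\mu \to V_{\Frob(\mu)}$ so that $\alpha$ really is the only residual scalar. Any other choice rescales $\alpha$ by a root of unity, so I would need to show that the product of these scalars around the Frobenius orbit is independent of the choice (for example by normalizing using a pinning of $\wh G$ and the highest-weight vectors). Once that is in place, irreducibility of the constructed $\pi_{\lb,\alpha}$ is immediate because any invariant subspace must contain a highest-weight vector in some $V_{\Frob^i(\lb)}$, and then both $\wh G$ and Frobenius cycle to recover all of $\pi_{\lb,\alpha}$.
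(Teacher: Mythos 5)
Your argument is correct in outline and takes a genuinely different route from the paper, which simply cites Kostant \cite[pg 375--376]{Kos61} and remarks that $\Ld T^\ur$ plays the role of Kostant's $H^+$ because the unramified Galois action fixes the Borel used to define $\Ld G$. The paper then sketches the construction as a Verma-style highest-weight argument: the highest-weight line carries a $\Ld T^\ur$-representation $\chi_{\lb,\alpha}$ and the rest is generated by the root subgroups of $\wh G$, using that $\wh G$ and $\Ld T^\ur$ generate $\Ld G^\ur$. You instead reconstruct the classification directly via Clifford theory: restrict to $\wh G$, observe that Frobenius permutes the isotypic pieces, and recover $(\lb,\alpha)$ from the highest-weight line and the action of $\Stab(\lb)$. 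What the paper's citation buys is brevity and a pre-checked reference; what your direct argument buys is self-containedness and makes visible the explicit decomposition $\pi_{\lb,\alpha}|_{\wh G} \cong \bigoplus_\gamma V_{\gamma\lb}$ that the paper immediately invokes in the paragraph after the proposition.

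Two details you glide over should really be made explicit. First, multiplicity one of $V_\lb$ inside $\pi|_{\wh G}$ is not just ``forced by irreducibility'' without an argument: the clean reason is that the $\lb$-isotypic component is an irreducible representation of $\wh G \rtimes \Stab(\lb)$, and since $\Stab(\lb)$ is cyclic (hence abelian) it must be $V_\lb$ tensored with a one-dimensional character. Second, your claim that the $\lb$-weight space of $\pi|_{\wh T}$ lies entirely inside the copy of $V_\lb$ requires knowing that $\lb$ is not a weight of $V_{\Frob^j\lb}$ for $\Frob^j\lb \neq \lb$. This does hold, but needs the observation that $\Frob$ preserves the simple roots, so $\Frob^j\lb - \lb \in Q^+$ would iterate to $\lb \leq \Frob^j\lb \leq \Frob^{2j}\lb \leq \cdots$ and cycle back to $\lb$, forcing equality throughout. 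You correctly flag the third subtlety---normalizing the Schur-lemma transport maps $V_\mu \to V_{\Frob\mu}$ around the orbit---and your proposed fix of normalizing by the pinning is exactly what the paper's setup (a pinned $W_F$-action on $\wh G$) supplies. With those points filled in, your proof is a valid replacement for the citation to Kostant.
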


\begin{proof}
This is by \cite[pg 375-376]{Kos61} . We have that $\Ld T^\ur$ is the same as $H^+$ in the reference because the action of $W_F$ fixes the Borel $B$ used to define $\Ld G$. The construction is similar to that for connected complex Lie groups: $\pi_{\lb, \alpha}$ forms a highest weight space on which the actions of the root subgroups of $\wh G$ are determined. Together $\wh G$ and $\Ld T^\ur$ generate $\Ld G^\ur$.
\end{proof}

 In fact, if $\pi^{\wh G}_\lb$ is the representation corresponding to highest weight $\lb$ of $\wh G$, then each of the $V_{\gamma \lb} \subseteq V_{\lb, \alpha}$ generates a copy of $\pi^{\wh G}_{\gamma \lb}$ under the action of $\wh G$. The representation $\pi_{\lb, \alpha}|_{\wh G}$ therefore decomposes as a direct sum of the $\pi^{\wh G}_{\gamma \lb}$ and any $\gamma \in W_F$ sends $\pi^{\wh G}_\mu$ to $\pi^{\wh G}_{\gamma \mu}$. The exact description of this map in complicated but can be computed by the following trick: For any $\gamma \in \Gamma$, the $\mu$ coefficient of $\tr_\pi$ restricted to $\wh T \rtimes \gamma$ is the trace of $1 \rtimes \gamma$ acting on the $\mu$-weight space $V^{\lb, \alpha}_\mu$ of $\pi_{\lb, \alpha}$. This trace can be computed by Kostant's character formula \cite[thm 7.5]{Kos61}.
 
 As an easier way to think about this parametrization, let $F_n$ be the splitting field for $G$. The groups $\Gal(F_n/F_v)$ and $\Om_\C$ together generate a group $C$ in automorphisms of the set of roots. Inside this, $\Gal(F_n/F_v)$ is the stabilizer of the positive Weyl chamber and $\Om_\C$ acts simply on the Weyl chambers so $\Gal(F_n/F_v) \cap \Om_\C = 1$. In addition, $\Om_\C$ is normal since $T$ is fixed by Galois. Therefore, $C = \Om_\C \rtimes \Gal(F_n/F_v)$. The $\lb$ parametrizing $\pi_{\lb, \alpha}$ can be thought of as a $C$-orbit. This decomposes into $\Om_\C$ orbits representing the constituent $\pi^{\wh G}_{\gamma \lb}$.

 \subsubsection{Some Bases}
 We also need to describe some bases of the various spaces. 
 
If $\varpi$ is a chosen uniformizer for $\mc O_F$ and $X_*(A)^+$ a chosen Weyl chamber, then the functions
\[
\tau_\lb^G = \1_{K \lb(\varpi) K} \text{ for all } \lb \in X_*(A)^+
\]
form a basis for $\ms H(G,K)$ (the corresponding double cosets partition $G$ by the Cartan decomposition).

$\C[X_*(A)]^{\Om_F}$ contains functions
\[
\chi_\lb = \f{\sum_{\sigma \in \Om_F} \sgn_F(\sigma) \sigma(\lb \cdot \rho)}{\sum_{\sigma \in \Om_F} \sgn_F(\sigma) \sigma(\rho)} \in \C[X_*(A)]^{\Om_F}
\]
for $\lb \in X_*(A)^+$. We write the addition in $X_*(A)$ multiplicatively for clarity. Here, $\rho = \rho_F$ is the half-sum of the positive roots of $\wh G$ over $F_v$, which is the same as the half-sum of all positive roots since rational roots are sums over orbits of roots. We recall that $\Om_F$ is the same as the Weyl group for the relative root system of rational roots of $G_v$ by quasiplitness (See \cite[\S6.1]{Bor79}). The $\sgn_F$ here are $-1$ to the power of the number of positive rational roots sent to negative roots. If the rational roots form a reduced root system, this is just the standard $\sgn$ on $\Om_F$.

If the relative root system is reduced, these are the standard characters from Weyl's character formula and are studied in \cite{Kat82}. In the non-reduced case, these are the twisted characters from \cite[thm 1.4.1]{CCH19} or \cite[thm 7.9]{Hai18}. Either way, $\chi_\lb$ for dominant weighs $\lb$ form a basis for $\C[X_*(A)]^{\Om_F}$. 

Finally, 
\begin{lem}\label{ldreptrace}
\[
\mc T(\pi_{\lb,\alpha}) = \begin{cases}
\alpha \chi_\lb & \lb \in X_*(A) \\
0 & \text{else}
\end{cases}.
\]
\end{lem}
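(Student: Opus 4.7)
The plan is to compute $\tr_{\pi_{\lb,\alpha}}(a \rtimes \Frob)$ for $a \in \wh T$ directly using the decomposition of $\pi_{\lb,\alpha}$ as a $\wh G$-representation into the $\pi^{\wh G}_{\gamma\lb}$ given earlier, and then restrict to $\wh A$ to land in $\C[X_*(A)]^{\Om_F}$. The two cases of the lemma correspond to whether $\Frob$ permutes the summands non-trivially or fixes $\lb$ individually. Note that $\lb \in X_*(A) = X^*(\wh T)^{\Frob}$ if and only if $\Frob$ fixes $\lb$, which is exactly the condition $i(\lb) = 1$; so the case split by whether $\lb \in X_*(A)$ is the same as the case split on whether there is a single summand or multiple.

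For $\lb \notin X_*(A)$, we have $i(\lb) > 1$ and $\pi_{\lb,\alpha}|_{\wh G} = \bigoplus_{j=0}^{i(\lb)-1} \pi^{\wh G}_{\Frob^j \lb}$ as a direct sum of distinct irreducibles, cyclically permuted by $\Frob$ without a fixed summand. Since $a \in \wh T$ preserves each summand while $\Frob$ sends $\pi^{\wh G}_{\Frob^j \lb}$ to $\pi^{\wh G}_{\Frob^{j+1}\lb}$, the operator $a \rtimes \Frob$ has zero diagonal blocks relative to this decomposition, so its trace vanishes; hence $\mc T(\pi_{\lb,\alpha}) = 0$.

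For $\lb \in X_*(A)$, we have $\pi_{\lb,\alpha}|_{\wh G} = \pi^{\wh G}_\lb$, and $\Frob$ preserves this single irreducible, with its action determined by $\Frob v_\lb = \alpha v_\lb$ on the highest weight vector together with the requirement that $\Frob$ commute with the pinning-preserving action of $W_F^\ur$ on $\wh G$. Write $\Frob = \alpha \cdot \Frob_0$ where $\Frob_0$ is the unique extension fixing $v_\lb$. Apply the twisted Weyl character formula (Kostant's formula from \cite[Thm 7.5]{Kos61}, specialized as on pg.~375--376 of the same reference): for $a \in \wh A$ (so $\Frob$-fixed), the trace of $a \cdot \Frob_0$ on $\pi^{\wh G}_\lb$ equals
\[
\frac{\sum_{\sigma \in \Om_F} \sgn_F(\sigma)\, \sigma(\lb+\rho)(a)}{\sum_{\sigma \in \Om_F} \sgn_F(\sigma)\, \sigma(\rho)(a)},
\]
since the $\Frob$-fixed Weyl group is $\Om_F = \Om^{W_F}$ identified with the Weyl group of the relative root system. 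Multiplying by $\alpha$ gives $\alpha \chi_\lb(a)$. In the reduced case this is the statement of the standard twisted character formula; for non-reduced relative root systems we apply the refined formula from \cite[Thm 1.4.1]{CCH19} (equivalently \cite[Thm 7.9]{Hai18}) whose output is exactly the $\chi_\lb$ defined in the excerpt, with the sign character $\sgn_F$ chosen to match.

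The main obstacle is the non-reduced case: verifying that Kostant's formula, after passing from the absolute Weyl group to $\Om_F$ via $\Frob$-invariants, produces the twisted numerator/denominator appearing in $\chi_\lb$ rather than the naive Weyl expression. Once this identification is made (which is the content of \cite{CCH19} and \cite{Hai18}), both cases combine to yield the stated formula.
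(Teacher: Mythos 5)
Your proof is essentially correct and takes the same route as the paper: both rely on Kostant's twisted character formula \cite[thm 7.5]{Kos61} for the non-vanishing case and identify $W_a$ with $\Om_F$. Two minor structural remarks. First, for $\lb \notin X_*(A)$ you lead with the block-diagonal argument ($\Frob$ cyclically permutes the $\pi^{\wh G}_{\Frob^j\lb}$, so $a\rtimes\Frob$ has zero diagonal blocks); the paper deduces the vanishing from Kostant's formula via the vanishing of $\chi^\delta_1(a)$ when $\Frob\lb\ne\lb$, and only mentions your block argument as a simpler alternative at the end — so you have promoted the remark to the main proof, which is fine. Second, for $\lb\in X_*(A)$ you write the answer directly in the $\sgn_F$ form and defer the reconciliation of $\sgn_\C$, the extra determinant factor $\sgn(\Frob|_{\Phi_\sigma})$, and $\sgn_F$ to the cited results of \cite{CCH19} and \cite{Hai18}; the paper carries out that sign bookkeeping explicitly (showing $\sgn_\C(\sigma)\sgn(\Frob|_{\Phi_\sigma})=\sgn_F(\sigma)$ via decomposing $\Phi_\sigma$ into $\Frob$-orbits). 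You correctly flag this as the point that needs checking, and the references you give are the same ones the paper invokes as alternative sources. The factorization $\Frob=\alpha\cdot\Frob_0$ by Schur's lemma is a clean way to extract the $\alpha$ prefactor, equivalent to the auxiliary lemma the paper proves about the weight spaces $V_\nu$ with $\nu \in X_*(A)$.
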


\begin{proof}
This is just stated in the proof of \cite{ST16} lemma 2.1. We give details here since there seems to be a minor mistake (that is irrelevant to all the work there and here) when $\lb$ is not in $X_*(A)$. This is also proven as \cite[thm 1.4.1]{CCH19} and as \cite[thm 7.9]{Hai18} in a slightly different form. 

We use Kostant's character formula \cite[thm 7.5]{Kos61}. Using the notation there, $a = t \rtimes \Frob$ for some $t \in \wh T$ and $W_a$ is the $W_F^\ur$ invariants in $\Om_\C$ which is $\Om_F$. Also, let $\Phi_\sigma = \Phi^+_\C \cap \sigma(-\Phi^+_\C)$ for $\sigma \in \Om_\C$ where $\Phi^+_\C$ is the set of positive roots. Since $\Frob$ preserves a pinning, it acts by a permutation on some diagonal basis of $\bigoplus_{\phi \in \Phi_\sigma} \mf g_{-\phi}$. Therefore, the determinant of the action of $a$ is
\[
\chi_1^\sigma(a) = \sgn(\Frob|_{\Phi_\sigma})\prod_{\varphi \in \Phi_\sigma}  \varphi^{-1}(t).
\]
In addition $\chi^\delta_1(a)$ for $\delta$ the representation of $\Ld T$ parametrized by $(\lb, \alpha)$ is $\alpha \lb(t)$ if $\lb$ is fixed by $\Frob$ and $0$ otherwise (the $0$ otherwise case is what is missing in \cite{ST16}). By a \cite[pg 15]{LS87}, we can find representations of $\sigma \in W_a$ fixed by $\Frob$ so we get that $\chi^\delta_\sigma(a) = \alpha \sigma\lb(t)$. 

In total, the trace in the non-zero case is
\begin{multline*}
\alpha \f{\sum_{\sigma \in \Om_F}  \sgn_\C(\sigma)\sgn(\Frob|_{\Phi_\sigma})  \sigma\lb(t)\prod_{\varphi \in \Phi_\sigma} \varphi^{-1}(t)}{\sum_{\sigma \in \Om_F}\sgn_\C(\sigma)\sgn(\Frob|_{\Phi_\sigma}) \prod_{\varphi \in \Phi_\sigma} \varphi^{-1}(t)} \\
= \alpha \f{\rho(t)^{-1} \sum_{\sigma \in \Om_F}  \sgn_\C(\sigma)\sgn(\Frob|_{\Phi_\sigma})  \sigma\lb(t) \sigma \rho(t) }{\rho (t)^{-1} \sum_{\sigma \in \Om_F}\sgn_\C(\sigma)\sgn(\Frob|_{\Phi_\sigma}) \sigma \rho(t)}.
\end{multline*}
The $\sgn_\C$ here is the sign character for $\Om_\C$: the number of all positive roots sent to negative roots. This differs from the $\sgn_F$ in the formula for $\chi_\lb$ by a factor of $\sgn(\Frob|_{\Phi_\sigma})$ through an argument breaking up $\Phi_\sigma$ into $\Frob$-orbits and noting that each rational root is a sum over an orbit. Therefore, we are done. 

Note that the $0$ case can be done more easily by thinking about the action in block matrix form with respect to the subspaces $\pi^{\wh G}_{\gamma \lb}$ and noticing that all diagonal blocks are $0$. 
\end{proof}
The key consequence of this is that the $\mc T(\pi_{\lb, 1})$ for $\lb \in X_*(A)$ form a basis for $\C[\ch(\Ld G^\ur)]$.

\subsection{Bounds on Unramified Transfers}\label{unramboundsection} 
\red{LEVEL ASPECT: all polynomials here uniform over endoscopic groups??}
\subsubsection{Trivial $z$-extension case}
As in the Archimedean case, we consider the trivial $z$-extension case first. 

Recall the notation for various bases of spaces related to the Satake isomorphism. From \cite{Gro98} and \cite{Kat82} (again, see \cite[\S7]{Hai18} or \cite[\S1]{CCH19} for the non-split case), we can write
\begin{gather*}
\varphi_G(\tau_\lb^G) = \chi_\lb +  \sum_{\substack{\mu \in X^*(\wh A)^+ \\ 0 \leq \mu < \lb}} b^G_\lb(\mu) \chi^G_\mu, \\
\varphi_H^{-1}(\chi^H_\nu) =   q^{-\langle \nu, \rho_H \rangle } \tau^H_\nu +  \sum_{\substack{\xi \in X^*(\wh A_H)^+ \\ 0 \leq \xi < \nu}} q^{-\langle \xi, \rho_H \rangle } d^H_\nu(\xi) \tau^H_\xi.
\end{gather*}
for some constants $b$ and $d$. Here $\mu \leq \lb$ means that there is some non-negative integer linear combination of roots $\alpha^\vee$ for $\alpha \in \Phi^+$ equal to $\lb - \mu$. 
\begin{lem}\label{dmu}
The $d^G_\lb(\mu)$ and $q^{-\langle \lb, \rho_H \rangle } b^G_\lb(\mu)$ are bounded by a polynomial in the norm $\|\mu\|$ that is independent of $q$ and $\lb$. 
\end{lem}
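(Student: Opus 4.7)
My strategy is to read off the coefficients using the explicit Macdonald--Kato formula for the spherical Satake transform: Gross's formula in the split case and Haines's Theorem 7.9 or CCH's Theorem 1.4.1 in the general quasisplit case. This formula writes
\[
\varphi_G(\tau^G_\lambda) = q^{-\langle\lambda,\rho_G\rangle}\sum_{w \in \Om_G/\Om_{G,\lambda}} t^{w\lambda}\prod_{\alpha > 0}\frac{1 - q^{-1}t^{-w\alpha^\vee}}{1 - t^{-w\alpha^\vee}},
\]
and expanding in the character basis $\{\chi^G_\mu\}$ expresses $b^G_\lambda(\mu)$ as a Laurent polynomial in $q$ with explicit bounds on degree and coefficients.

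For the $b$-bound I plan to extract the $\chi^G_\mu$-coefficient from the above formula term-by-term. Each Weyl summand contributes a sum indexed by subsets $I \subseteq \Phi^+$ (from expanding the numerators $1 - q^{-1}t^{-w\alpha^\vee}$) and non-negative integer tuples $(n_\alpha)_{\alpha \notin I}$ solving a coroot equation $\sum_{\alpha \notin I} n_\alpha\, w\alpha^\vee + \sum_{\alpha \in I} w\alpha^\vee = w\lambda - \mu$. The $q^{-\langle\lambda,\rho_G\rangle}$ prefactor cancels the leading $q$-power of the inner sum exactly, and multiplying further by $q^{-\langle\lambda,\rho_H\rangle}$ is harmless: by Lemma \ref{roots} the positive roots of $H$ embed into those of $G$, so $\langle\lambda,\rho_H\rangle \leq \langle\lambda,\rho_G\rangle$ for dominant $\lambda$ and the extra factor is bounded term-by-term. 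What remains is a polynomial in $q^{-1}$ whose coefficient sum counts lattice solutions in a polytope of diameter $O(\|\mu\|)$, giving the polynomial-in-$\|\mu\|$ bound independent of $\lambda$ and $q$.

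For the $d$-bound (which I read as applying to $d^H_\nu(\xi)$ from the displayed inverse Satake formula, since $G$ plays no role there), I invert the unitriangular change of basis via a standard Neumann series, expressing
\[
d^H_\nu(\xi) = \sum_{\xi = \xi_0 < \xi_1 < \cdots < \xi_k = \nu} (-1)^k \prod_{i=1}^k b^H_{\xi_i}(\xi_{i-1})\, q^{\langle \xi_i - \xi_{i-1},\rho_H\rangle}
\]
as a signed sum over chains in the dominance order on dominant weights. A coroot-counting argument bounds both the chain length $k$ and the total number of such chains by polynomials in $\|\xi\|$ uniformly in $\nu$, since every step adds a positive combination of simple coroots of uniformly bounded-below length. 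Combining with the previous bound on each $b^H$-factor gives the claim.

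The main obstacle throughout is the $\lambda$-uniformity in the $b$-bound: naive estimates on Kostka--Foulkes-type coefficients scale with $\dim V^{\wh G}_\lambda(\mu)$, which grows polynomially in $\|\lambda\|$ for fixed $\mu$, so the precise cancellation of $\lambda$-growth by the $q^{-\langle\lambda,\rho_G\rangle}$ normalization must be verified term-by-term in the Macdonald product. This reduces to showing that the Lusztig--Littelmann path count describing the $q$-weight multiplicity of $\mu$ in the irreducible representation of highest weight $\lambda$ is concentrated near $\mu$ in the coroot lattice with combinatorial complexity depending only on $\|\mu\|$ -- a standard but delicate piece of the Macdonald formalism that will likely consume most of the real work.
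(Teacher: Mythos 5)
Your proposal takes the dual route from the paper: you bound $b^G_\lambda(\mu)$ directly from the Macdonald/Gross formula and then recover $d$ by inverting the unitriangular change of basis, whereas the paper bounds $d$ directly (via \cite[lem 2.2]{ST16}, which encodes Kato's formula for the \emph{inverse} Satake transform as a Kostant-partition count) and then recovers $b$ by back-substitution on the unitriangular matrix. Both decompositions work, but the paper's choice of direction is deliberately the easier one: Kato's formula for $d$ is a single, sign-free partition-function count over $\{(c_{\alpha^\vee}) : \sum c_{\alpha^\vee}\alpha^\vee = \lambda - \mu\}$, so the polynomial bound is immediate. Your direction forces you through the alternating Weyl sum and the $\prod_{\alpha>0}(1-q^{-1}t^{-w\alpha^\vee})/(1-t^{-w\alpha^\vee})$ product, and---as you correctly flag in your last paragraph---a naive bound on the resulting $q$-analog of weight multiplicity scales with $\dim V^{\wh G}_\lambda(\mu)$, so the cancellation against $q^{-\langle\lambda,\rho_G\rangle}$ genuinely has to be tracked. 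That is real extra work that the paper's order of operations avoids. In short: your starting formula is the hard one; the paper starts from the easy one.

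Two smaller remarks. First, your chain-inversion formula for $d^H_\nu(\xi)$ is the right idea (it mirrors the paper's back-substitution for $b$), but the claim that the chain count is ``uniformly in $\nu$'' polynomial in $\|\xi\|$ is the same imprecision the paper itself commits: the number of chains from $\xi$ to $\nu$ plainly depends on $\nu - \xi$, and the honest bound is polynomial in $\|\lambda\|$ (i.e., $\|\nu\|$), not in the lower index; this is harmless in the application since all weights appearing there have norm $\le \kappa$, but neither your proof nor the paper's literally establishes the statement as written. Second, your observation that the $\rho_H$/$\rho_G$ mismatch is ``harmless'' because $\langle\lambda,\rho_H\rangle \le \langle\lambda,\rho_G\rangle$ via Lemma \ref{roots} is a reasonable patch, but the way the lemma is actually invoked in Proposition \ref{valuebound} shows that $\rho_H$ in the exponent is simply a typo for $\rho_G$, so the comparison is not needed.
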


\begin{proof}
First, let's show this for $d^G_\lb(\mu)$. By the above, we can ignore the $\lb=\mu$ case. Otherwise, we apply \cite[lem 2.2]{ST16}. There is a small issue here: this lemma depends on the main result of \cite{Kat82} which only works when the root system is reduced. Nevertheless, \cite[thm 7.10]{Hai18} and \cite[thm 1.9.1]{CCH19} provide an appropriate substitute in the non-reduced case.

\cite[lem 2.2]{ST16} bounds $d^G_\lb(\mu)$ by $|\Om_{G, F_v}|$ times the size of the set of tuples $(c_{\alpha^\vee})$ for $\alpha$ a positive root such that $\sum_{\alpha^\vee} c_{\alpha^\vee} \alpha^\vee = \mu - \lb$ (since both $\mu$ and $\lb$ are in the positive Weyl chamber, the max in the lemma is achieved for the trivial element of the Weyl group). Looking at the coordinate of $\mu$ in the direction used to define positivity, every $\alpha^\vee$ is positive in this coordinate, so some weighted sum of the $c_{\alpha^\vee}$ is bounded. This implies that the number of tuples is only polynomial in this coordinate of $\mu$. The result follows.

For $b^G_\lb(\mu)$, note that the $q^{-\langle \beta, \rho_H \rangle } d^G_\alpha(\beta)$ for $\alpha, \beta \leq \lb$ form an upper-triangular matrix with dimension polynomial in the size of $\lb$. Then, $b^G_\beta(\alpha)$ are coordinates of the inverse of this matrix. Making a change of variables, the $q^{-\langle \beta, \rho_H \rangle } b^G_\beta(\alpha)$ are the coordinates of the inverse of the matrix with coordinates $d^G_\alpha(\beta)$ so these are bounded by a polynomial in $\|\mu\|$ by solving through back substitution. 
\end{proof}

It remains to understand the map $b_\eta$. This is computed exactly in terms of certain partition functions in \cite[\S2.3]{CCH19}, but we only need bounds so we do something slightly different and much simpler.
For $\mu \in X_*(A)$, define coefficients $c_\mu(\nu)$ by
\[
\pi^{\wh G}_\mu|_{\wh H} = \bigoplus_{\substack{\nu \in X_*(T_H)^+ \\0 \leq \nu \leq \mu}} c_\mu(\nu) \pi^{\wh H}_\nu.
\]
The $c_\nu(\mu)$ are in particular bounded by the dimension of $\pi^{\wh G}_\mu$ so they are polynomial in the size of $\mu$ by the Weyl character formula.
\begin{prop}\label{transferform}
As elements of $\C[\ch(\Ld H^\ur)]$,
\[
b_\eta(\pi^{\Ld G}_{\mu,1}) = \bigoplus_{\substack{\nu \in X_*(A_H)^+ \\0 \leq \nu \leq \mu}} \alpha_\mu(\nu) c_\mu(\nu) \pi^{\Ld H}_{\nu,1}
\]
where $A_H$ is the maximal split torus of $H$ contained in some maximal $T_H$ contained in a rational Borel $B_H$ and we consider $\mu \in X_*(T_H) = X_*(T)$ as dominant element by taking its Weyl-translate in the positive Weyl chamber.  

For notational convenience, let $\Gamma = W_{F_v}^\ur$. There exists $t_\eta \in (Z_{\wh G}^{\Gamma})^0$ depending only on $\eta$ such that the constants $\alpha_\mu(\nu)$ satisfy two properties:
\begin{itemize}
\item
$|\alpha_\mu(\nu)| \leq |\nu(t_\eta)|$.  
\item
Let $Y_G$ be the maximal split torus in $Z_G^0$. If $\zeta \in X_*(Y_G)$, then $\alpha_{\mu + \zeta}(\nu+\zeta) = \zeta(t_\eta)\alpha_\mu(\nu)$. 
\end{itemize}
\end{prop}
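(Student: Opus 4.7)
My plan is to exploit the explicit description of $\pi^{\Ld G}_{\mu,1}$ from Kostant's theorem together with the branching rule from $\wh G$ to $\wh H$, to read off the decomposition of $b_\eta(\pi^{\Ld G}_{\mu,1})$ term by term. The first step is to pin down the element $t_\eta$. Since $\eta$ is an unramified $L$-embedding with $\eta|_{\wh H} = \id$, we may write $\eta(1 \rtimes \Frob) = t_\eta \rtimes \Frob$ for some $t_\eta \in \wh G$. The requirement that conjugation by this element reproduces the given $\Frob$-action on $\wh H$, combined with ellipticity (which, together with the unramified assumption, constrains the underlying cocycle $W_F \to Z_{\wh H}$ defining $\eta$) and the freedom to adjust $\eta$ by $\wh H$-inner automorphisms, lets us take $t_\eta \in (Z_{\wh G}^\Gamma)^0$ as claimed.

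Next, I would decompose $\pi^{\Ld G}_{\mu, 1}|_{\wh H}$ using the branching rule. Starting from $\pi^{\Ld G}_{\mu,1}|_{\wh G} = \bigoplus_{\gamma \in \Gamma/\Stab_G \mu} \pi^{\wh G}_{\gamma\mu}$ and $\pi^{\wh G}_{\gamma\mu}|_{\wh H} = \bigoplus_\nu c_{\gamma\mu}(\nu)\pi^{\wh H}_\nu$, together with the $\Gamma$-equivariance $c_{\gamma\mu}(\gamma\nu) = c_\mu(\nu)$ (which follows because $\Gamma$ acts on the common torus $T \cong T_H$ via conjugation in $\wh G$, and $t_\eta$ being central in $\wh G$ means the $\wh H$-action of $\Gamma$ agrees with the restriction of its $\wh G$-action), we obtain a $\wh H$-decomposition which I would then group into $\Gamma$-orbits to compare with the standard form $\pi^{\Ld H}_{\nu,1}|_{\wh H} = \bigoplus_{\delta \in \Gamma/\Stab_H \nu} \pi^{\wh H}_{\delta\nu}$. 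For each orbit representative $\nu$, the coefficient $\alpha_\mu(\nu)$ is determined by computing the trace of $\eta(1 \rtimes \Frob)^{i(\nu)} = t_\eta^{i(\nu)} \rtimes \Frob^{i(\nu)}$ (using $\Gamma$-invariance of $t_\eta$) on a highest weight vector of one copy of $\pi^{\wh H}_\nu$ sitting inside $\pi^{\Ld G}_{\mu,1}$. Since $t_\eta$ is central in $\wh G$, it acts on $\pi^{\wh G}_{\gamma\mu}$ by the scalar $(\gamma\mu)(t_\eta) = \mu(t_\eta)$ (using $\Gamma$-invariance again); combined with the permutation action of $\Frob$ on weight vectors and the fact that $\mu - \nu$ is a non-negative integer combination of coroots and hence pairs trivially with any central cocharacter, this scalar may be rewritten purely in terms of $\nu(t_\eta)$. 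The resulting expression presents $\alpha_\mu(\nu)$ as a sum of unit-modulus combinations of powers of $\nu(t_\eta)$, giving the bound $|\alpha_\mu(\nu)| \leq |\nu(t_\eta)|$.

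Finally, for the equivariance property, shifting $\mu \mapsto \mu+\zeta$ and $\nu \mapsto \nu+\zeta$ for $\zeta \in X_*(Y_G)$ tensors $\pi^{\wh G}_\mu$ and $\pi^{\wh H}_\nu$ by the same one-dimensional central character $\zeta$, on which the central element $t_\eta$ acts by the scalar $\zeta(t_\eta)$. The branching multiplicities $c_\mu(\nu)$ are unchanged under this shift since only the central characters of source and target are twisted (by the same character), so the only modification to $\alpha_\mu(\nu)$ is a single extra factor of $\zeta(t_\eta)$. The main obstacle I anticipate is making the first step rigorous, namely the precise identification of $t_\eta \in (Z_{\wh G}^\Gamma)^0$ from the unramified-elliptic-endoscopy data and the Whittaker-normalized $L$-embedding conventions; once this is set up, the remaining computations are careful bookkeeping via Kostant's formula and central character tracking, exactly analogous to but slightly more intricate than the proof of Lemma \ref{ldreptrace}.
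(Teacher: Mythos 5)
Your overall strategy---restrict $\pi^{\Ld G}_{\mu,1}$ along $\wh H \subset \wh G$ via branching, group the pieces into $\Gamma$-orbits, and read off $\alpha_\mu(\nu)$ as an eigenvalue of the Frobenius element---matches the paper's approach at a high level. But there is a genuine gap in the first step, and it propagates to the rest of the argument.

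You claim that writing $\eta(1\rtimes\Frob)=t_\eta\rtimes\Frob$, one can arrange $t_\eta\in(Z_{\wh G}^\Gamma)^0$. This is not true in general. The cocycle value $c_\Frob$ (your $t_\eta$) only needs to normalize $\wh H$ and, after an $\wh H$-inner adjustment, to preserve a pinning of $\wh H$; conjugation by $c_\Frob\rtimes\Frob$ must implement the $\Ld H$-Galois action, which can differ from the restriction of the $\wh G$-Galois action to $\wh H$ by a nontrivial outer automorphism. Even when these two actions coincide, $c_\Frob$ only lies in $Z_{\wh G}(\wh H)=Z_{\wh H}$, which is strictly larger than $Z_{\wh G}$. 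So $c_\Frob$ is typically not central in $\wh G$. This immediately damages two later steps you rely on: (i) you use centrality in $\wh G$ to argue that $c_\Frob$ acts by the scalar $\mu(c_\Frob)$ on each $\pi^{\wh G}_{\gamma\mu}$, which fails for non-central $c_\Frob$; and (ii) you use it again to claim the $\Gamma$-action on $\wh H$ agrees with the restriction of the $\wh G$-action to deduce $\Gamma$-equivariance of $c_\mu(\nu)$, which likewise fails. The paper instead observes that both actions factor through the group $C_G$ generated by $\Gal$ and the Weyl group, and that $c_\mu(\nu)$ is constant on $C_G$-orbits.

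The missing idea is the $n$-th power trick: choose $n$ so that conjugation by $(c_\Frob\rtimes\Frob)^n$ is trivial on $X^*(\wh T)$ and fixes the pinning of $\wh H$, forcing $(c_\Frob\rtimes\Frob)^n=z_0\rtimes\Frob^n$ with $z_0\in Z_{\wh H}$. Then one argues $z_0$ is $\Gamma$-fixed, enlarges $n$ if needed so that $z_0\in(Z_{\wh H}^\Gamma)^0$, and finally invokes ellipticity to conclude $z_0\in(Z_{\wh G}^\Gamma)^0$. Only now can one take an $n$-th root $t_\eta$ in the connected complex torus and deduce $|\alpha_{i,\mu}(\nu)|=|\nu(t_\eta)|$ from the eigenvalue equation $\alpha_{i,\mu}(\nu)^n=\nu(z_0)$. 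Without this step there is no mechanism for landing in $(Z_{\wh G}^\Gamma)^0$, and the bound $|\alpha_\mu(\nu)|\le|\nu(t_\eta)|$ is unsupported. The central-character equivariance is also more delicate than you indicate: the paper proves it by computing $b_\eta(\zeta)=\zeta(c_\Frob)\,\zeta|_{\wh H}$ for $\zeta\in X_*(Y_G)$ and then checking compatibility of the choices made in defining $t_\eta$, rather than invoking centrality of the cocycle directly.
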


Before starting the proof, note that all such $T_H$ are isomorphic and that the map $X_*(T_H) \to X_*(T)$ is unique up to Weyl element. Therefore, this is well defined. 

\begin{proof}
\underline{Decomposition:} To avoid confusion, $\Gamma_{\wh G}$ is $\Gamma$ acting on $\wh G$ and visa versa for $\wh H$ when it is not clear from context. First, 
\[
b_\eta(\pi^{\Ld G}_{\mu,1})|_{\wh H} = (\pi^{\Ld G}_{\mu,1}|_{\wh G})|_{\wh H} = \bigoplus_\gamma \pi^{\wh G}_{\gamma \mu}|_{\wh H} = \bigoplus_\gamma\bigoplus_{\substack{\nu \in X_*(T_H)^+ \\0 \leq \nu \leq \mu}} c_\mu(\nu) \pi^{\wh H}_{\gamma_G \nu} 
\]
where the $\gamma \mu$ index the $\Gamma_{\wh G}$-orbit of $\mu$ in $X_*(T)$. Note that $c_\mu(\nu)$ is constant on $\Gamma_{\wh G}$ orbits and $\Om_\C(\wh G)$ orbits. 

The $\Gamma_{\wh H}$-action is the composition of the action of $\Gamma_{\wh G}$ with conjugation by elements of $N_{\wh G}(T)$, so since $G$ is quasisplit, $\Gamma_{\wh H}$ acts on $\wh T_H$ through a subgroup $W'$ with $\Gal(F_n/F) \subseteq W' \subseteq C_H \subseteq C_G$ (recall notation $C_G = \Gamma \rtimes \Om_{\wh G, \C}$). This implies that $c_\mu(\nu)$ is constant on $\Gamma_{\wh H}$-orbits. 

Therefore, the sum over such an orbit of the $c_\mu(\nu) \pi^{\wh H}_\nu$ decomposes into $c_\mu(\nu)$ different $\pi^{\Ld H}_{\nu, \alpha_{i,\mu}}$ for possibly different $\alpha_{i,\mu}$. In total
\[
b_\eta(\pi^{\Ld G}_{\mu,1}) = \bigoplus_{\substack{\nu \in X_*(T_H)^+ \\0 \leq \nu \leq \mu}} \bigoplus_{i=1}^{c_\mu(\nu)} \pi^{\Ld H}_{\nu, \alpha_{i,\mu}(\nu)} = \bigoplus_{\substack{\nu \in X_*(A_H)^+ \\0 \leq \nu \leq \mu}} \lf( \sum_{i=1}^{c_\mu(\nu)} \alpha_{i,\mu}(\nu) \ri) \pi^{\Ld H}_{\nu, 1}
\]
as elements of $\C[\ch(\Ld H^\ur)]$ and for some $\alpha_{i,\mu}(\nu) \in \C^\times$. Let $\alpha_\mu(\nu)$ be the average of the $\alpha_{i,\mu}(\nu)$. 

\underline{Properties of $\alpha_\mu(\nu)$:} It remains to show the two properties of  $\alpha_\mu(\nu)$. Since all $(B,T)$-pairs in $\wh G$ are conjugate, without loss of generality take an inner automorphism of $\Ld G$ so that $(\wh B_H, \wh T_H)$ is the pullback of $(\wh B,\wh T)$. The map $\eta$ determines a cocycle $c_\gamma \in C^1(\Gamma_{\wh G}, \wh G)$ by $\eta(1 \rtimes \gamma) = c_\gamma \rtimes \gamma$. We then have that $\alpha_i(\nu)$ is the factor by which $c_\Frob \rtimes \Frob$ acts on the highest weight space $V$ of the $i$th $\pi^{\wh H}_\nu$. 

There exists $n$ such that the conjugation action of $(c_\Frob \rtimes \Frob)^n$ on $X^*(\wh T)$ is trivial. Since this action also fixes a pinning of $H$, we must have
\[
(c_\Frob \rtimes \Frob)^n = z_0 \rtimes \Frob^n
\] 
for some $z_0 \in Z_{\wh H}$. By the lemma below, we know $1 \rtimes \Frob$ acts trivially on $V$. Therefore, $\alpha_{i,\mu}(\nu)^n = \nu(z_0)$. 

Next, note that the $\Gamma_{\wh H}$-action is generated by conjugation by $c_\Frob \rtimes \Frob$. This fixes $z_0$ so $z_0 \in Z_{\wh H}^{\Gamma}$. We can without loss of generality make $n$ bigger so that $z_0$ is trivial in the finite group $\pi_0(Z_{\wh H}^{\Gamma})$---in other words, we may without loss of generality assume $z_0 \in (Z_{\wh H}^{\Gamma})^0$. Then by ellipticity of $H$, $z_0 \in (Z_{\wh G}^{\Gamma})^0$. Since this a complex torus, there then exists $t_\eta \in Z_{\wh G}^0$ such that $t_\eta^n = z_0$, so taking $n$th roots, $|\alpha_{i,\mu}(\nu)| = |\nu(t_\eta)|$. Summing over $i$ then produces the bound on the $\alpha_\mu(\nu)$. 

To get the central character transformation, $\zeta \in X_*(Y_G)$ if and only if it is a $\Gamma_G$ and $\Om_G$-invariant element of $X_*(T) = X^*(\wh T)$. Such characters lift to $\Gamma$-invariant characters of $\wh G$ and therefore characters on $\Ld G$. For such $\zeta$, $\pi_{\mu + \zeta, 1} = \zeta \otimes \pi_{\mu, 1}$ so
\[
b_\eta(\pi_{\mu + \zeta, 1}) = b_\eta(\zeta) \otimes b_\eta(\pi_{\mu, 1}) = \zeta(c_\Frob) \zeta|_{\wh H} b_\eta(\pi_{\mu, 1}).
\]
Since $c_\mu(\nu)$ is $0$ unless $\mu$ and $\nu$ have the same central character and since $c_{\mu+\zeta}(\nu+\zeta) = c_\mu(\nu)$, this implies that $\alpha_{\mu + \zeta}(\nu+\zeta) = \zeta(c_\Frob)\alpha_\mu(\nu)$. Therefore, we are done if all the choices defining $t_\eta$ above are such that $t_\eta$ has the same image in $\wh G_\ab$ as $c_\Frob$. 
\end{proof}
The lemma used in this proof follows:
\begin{lem}
Let $V_\nu$ for $\nu \in X_*(A)$ be a weight space for $\pi^{\Ld G}_{\mu, \alpha}$ for $\mu \in X_*(A)$. Then $1 \rtimes \Frob$ acts as multiplication by $\alpha$ on $V_\nu$. 
\end{lem}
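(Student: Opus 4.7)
The plan is to reduce to Schur's lemma once we observe that the hypothesis $\mu \in X_*(A)$ forces $\mu$ to be $W_{F_v}$-fixed, so that the representation $\pi^{\Ld G}_{\mu, \alpha}$ restricted to $\wh G$ is already irreducible. Recall that $X_*(A) = X_*(T)^{W_{F_v}}$ by quasisplitness, so $\Stab(\mu) = W_{F_v}^\ur$, i.e. $i(\mu) = 1$. Consequently the index set $W_{F_v}^\ur/\Stab(\mu)$ is trivial, and the description of $\pi_{\mu,\alpha}$ as a direct sum $\bigoplus_\gamma \pi^{\wh G}_{\gamma \mu}$ collapses to a single irreducible $\wh G$-representation $\pi^{\wh G}_\mu$. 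Moreover, from the construction preceding the lemma, $1 \rtimes \Frob$ acts on the highest weight line $V_\mu$ (which contains $v_\mu$) as multiplication by $\alpha$.

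Next I would argue that $1 \rtimes \Frob$ acts on the entire representation as a $\wh G$-intertwiner. By construction of $\Ld G^\ur$, conjugation by $1 \rtimes \Frob$ acts on $\wh G$ as an algebraic automorphism preserving a fixed pinning $(\wh B, \wh T, \{X_\alpha\})$. Since $\mu$ is $\Frob$-invariant, this automorphism sends $\pi^{\wh G}_\mu$ to an isomorphic representation, and the operator $1 \rtimes \Frob$ on the underlying vector space is an intertwiner with respect to this twist; equivalently, on $\pi^{\wh G}_\mu$ viewed as a representation of the extended group $\wh G \rtimes \langle \Frob \rangle$, the operator $1 \rtimes \Frob$ commutes with no actual modification of the $\wh G$-action because the pinning-preserving automorphism fixes $\mu$ and therefore, up to the inner twist implementing the isomorphism $\pi^{\wh G}_\mu \circ \Frob \cong \pi^{\wh G}_\mu$, it is a $\wh G$-equivariant endomorphism of the irreducible $\pi^{\wh G}_\mu$.

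By Schur's lemma applied to this intertwiner of the irreducible $\wh G$-representation $\pi^{\wh G}_\mu$, the operator $1 \rtimes \Frob$ acts by a single scalar on all of $\pi^{\wh G}_\mu$. Evaluating on $v_\mu$ pins that scalar down to $\alpha$. Since every weight space $V_\nu$ is a subspace of $\pi^{\wh G}_\mu$, the action of $1 \rtimes \Frob$ on $V_\nu$ is also multiplication by $\alpha$, as required.

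The only potentially delicate step is the Schur-type argument, since $1 \rtimes \Frob$ is not literally in $\wh G$ and one must be careful that the pinning-preserving automorphism it induces on $\wh G$ genuinely fixes the isomorphism class and highest weight line of $\pi^{\wh G}_\mu$. This is exactly where the hypothesis $\mu \in X_*(A)$ enters: $\Frob$-invariance of $\mu$ gives that the highest weight line is stable, and pinning-preservation guarantees that the action on that line is by a scalar uniquely determined by the $\Ld T^\ur$-structure used to define $\pi_{\mu,\alpha}$, namely $\alpha$. Everything else is formal.
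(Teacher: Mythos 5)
Your approach is genuinely different from the paper's, which argues via a trace computation using Kostant's character formula; but there is a real gap in your Schur step. The operator $\Phi = \pi(1 \rtimes \Frob)$ is not a $\wh G$-endomorphism of $\pi^{\wh G}_\mu$: it is a \emph{twisted} intertwiner, satisfying $\Phi\,\pi(g) = \pi(\sigma(g))\,\Phi$, where $\sigma$ is the pinning-preserving (generally nontrivial, possibly outer) automorphism of $\wh G$ by which $\Frob$ acts. Schur's lemma applies only to honest $\wh G$-endomorphisms. What it actually gives you, after choosing the canonical intertwiner $A_\sigma$ normalized so that $A_\sigma v_\mu = v_\mu$, is that $A_\sigma^{-1}\Phi \in \End_{\wh G}(\pi^{\wh G}_\mu)$ is scalar, hence $\Phi = \alpha A_\sigma$. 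The content of the lemma is then precisely the assertion that $A_\sigma$ acts as the identity on each $V_\nu$ with $\nu \in X_*(A)$, and this does \emph{not} follow from Schur. It is not a purely formal fact about pinning-preserving automorphisms either: take $\wh G = \SL_3$ with $\sigma$ the nontrivial diagram automorphism, $\mu = \alpha_1+\alpha_2$ the highest root (so $\pi^{\wh G}_\mu$ is the adjoint representation), and $\nu = 0 \in X_*(A)$. Then $V_0$ is the $2$-dimensional Cartan and $A_\sigma|_{V_0}$ is the transposition of $\alpha_1^\vee$ and $\alpha_2^\vee$, with eigenvalues $+1$ and $-1$ --- not a scalar. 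So ``everything else is formal'' is exactly wrong; the ``inner twist'' you set aside is where all the content lies.

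Your observation that $\mu \in X_*(A)$ forces $\pi_{\mu,\alpha}|_{\wh G}$ to be irreducible is correct and worth keeping: Schur applied to $\Phi^n$ (with $n$ the splitting degree, so $\Frob^n$ acts trivially on $\wh G$) does show $\Phi^n = \alpha^n\,\mathrm{id}$, which pins the eigenvalues of $\Phi$ on $V_\nu$ down to $n$th roots of unity times $\alpha$. But identifying them all as $\alpha$ requires input beyond Schur; the paper supplies it by computing $\tr(\Frob^{ni+1}|_{V_\nu})$ for all $i$ via the same Kostant-character argument as in lemma \ref{ldreptrace} and matching against the traces of the scalar action.
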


\begin{proof}
For any $\gamma \in W_F^\ur$, the trace of $\gamma$ acting on $V_\nu$ is the coefficient of $\nu$ in $\tr \pi^{\Ld G}_{\mu, \alpha}$ restricted to $\wh T \rtimes \gamma$. Let $n$ be the splitting degree of $G$. The same computation as lemma \ref{ldreptrace} gives that this is $\alpha^{ni+1} \dim V_\nu$ for any $\gamma = \Frob^{ni + 1}$. The only representation of $W_F^\ur \cong \Z$ with these traces sends $1$ to scaling by $\alpha$. 
\end{proof}

The element $t_\eta$ defines a function $\chi_\eta^{-1}$ on $G$ by $K \lb(\varpi) K \mapsto \lb(t_\eta)$ for $\lb \in X_*(A)$. Since $t_\eta$ is central, if $Q$ is the map on $X_*(A)$ summing over $\Om_G$-orbits, this is constant on fibers of $Q$. In particular, since products of basis elements $\tau_\lb^G \in \ms H(G,K)$ are a linear combination of $\tau_{\lb'}^G$ for $\lb'$ in a single fiber, $\chi$ is a character of $G$. This is the character that corresponds to $t_\eta$ considered as a Weyl-orbit in $\wh A$ through the Satake isomorphism. 

Furthermore, the relation $\alpha_{\mu + \zeta}(\nu+\zeta) = \zeta(t_\eta)\alpha_\mu(\nu)$ forces $\chi_\eta$ to be the character associated to $\eta$ through transfer factors as in section \ref{transferchar}. This all finally gives that the character on $H$ determined by $K_H \lb(\varpi) K_H \mapsto \lb(t_\eta)$ for $\lb \in X_*(A_H)$ is the same as the one from transfer factors. 

In summary, we get
\[
(\tau^G_\lb)^H =  \delta_H^G(\lb) \tau^H_\lb +  \sum_{\substack{\xi \in X^*(\wh A_H) \\ 0 \leq \xi < \lb}} a_\lb(\xi) \tau^H_\xi
\]
where
\[
a_\lb(\xi) = \sum_{\substack{\mu \in X^*(\wh A) \\ \nu \in X^*(\wh A_H) \\ \xi \leq_H \nu \leq_H \mu \leq_G \lb}} \alpha_\mu(\nu) b^G_\lb(\mu) c_\mu(\nu) q^{-\langle \xi, \rho_H \rangle } d^H_\nu(\xi),
\]
setting terms of the form $*_\mu(\mu) = 1$ here for ease of indexing. We also know that the $\alpha_\mu(\nu)$ can be bounded in terms of the character on $H$ determined by $\eta$. 

Going back to the global context, this finally allows us to compute:

\begin{prop}\label{valuebound}
Let $G$ be a reductive group over a global field and $(H, \mc H, \eta, s)$ an endoscopic quadruple that has a trivial $z$-extension. Let $S$ be a finite set of places $v$ such that:
\begin{itemize}
\item
$G_v,H_v$ are unramified.
\item
$|k_v|$ does not divide $|\Om_G|$.
\end{itemize} 
Let $\chi_{\eta, S}$ be the product of the characters $\chi_{\eta,v}$ on $H_v$ for $v \in S$ determined by $\eta$. 

If $f \in \ms H(G(F_S), K_S)^{\leq \kappa}$ with $\|f\|_\infty \leq 1$, we can take $f^H \in \ms H(H(F_S), K_S)^{\leq \kappa}$ such that $\|\chi_{\eta, S} f^H_S\|_\infty = O(q_{S_1}^{E \kappa} \kappa^{C|S|})$ for constants $C,E$ independent of $f_S$ and $q_S$. In addition, $E$ can be chosen uniformly over all $G$ in endoscopic paths from a fixed $G'$. 
\end{prop}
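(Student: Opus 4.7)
The plan is to first reduce to a single local place. Since transfer factorizes over places of $F$ and since $\|\cdot\|_\infty$ for a product function is the product of $\|\cdot\|_\infty$ at each factor, it suffices to bound $\|\chi_{\eta,v}f_v^H\|_\infty$ at each $v\in S$ and then multiply. At such a $v$, I would write $f_v = \sum_\lambda c_\lambda \tau^G_\lambda$ with $\lambda$ running over dominant cocharacters satisfying the truncation bound; the disjoint supports of the $\tau^G_\lambda$ force $|c_\lambda|\leq\|f_v\|_\infty\leq 1$. I would then take $f_v^H$ to be the linear combination obtained by applying the explicit expansion
\[
(\tau^G_\lambda)^H = \delta^G_H(\lambda)\tau^H_\lambda+\sum_{\xi<\lambda}a_\lambda(\xi)\tau^H_\xi
\]
derived at the end of section \ref{unramformulasection}, collect into $f_v^H=\sum_\xi A_\xi\tau^H_\xi$, and check that the $\xi$ that appear still satisfy the truncation bound (up to constants absorbable into the implicit choice of basis $\mc B$) using the relation $\xi\leq_H\lambda$ and the equivalence of basis-norms on $X_*(A)$.

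The hard part is that the coefficient
\[
a_\lambda(\xi) = \sum_{\xi\leq\nu\leq\mu\leq\lambda}\alpha_\mu(\nu)\,b^G_\lambda(\mu)\,c_\mu(\nu)\,q_v^{-\langle\xi,\rho_H\rangle}\,d^H_\nu(\xi)
\]
contains the factor $\alpha_\mu(\nu)$, which by proposition \ref{transferform} can only be bounded by $|\nu(t_\eta)|$, and this can grow arbitrarily with $\|\nu\|$. The key observation I would use is that this growth is cancelled exactly by the character $\chi_{\eta,v}$: on the support of $\tau^H_\xi$ the character takes the value $\xi(t_\eta)^{-1}$, so $\|\chi_{\eta,v}\tau^H_\xi\|_\infty=|\xi(t_\eta)|^{-1}$, and because $t_\eta\in (Z_{\wh G}^\Gamma)^0\subseteq Z_{\wh G}\subseteq Z_{\wh H}$, every coroot of $G$ or of $H$ (viewed as a root of $\wh G$ or $\wh H$) is trivial on $t_\eta$. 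Since $\lambda-\mu$, $\mu-\nu$, and $\nu-\xi$ are all non-negative integer combinations of such coroots, I would conclude $\lambda(t_\eta)=\mu(t_\eta)=\nu(t_\eta)=\xi(t_\eta)$, hence $|\alpha_\mu(\nu)|\cdot|\xi(t_\eta)|^{-1}\leq 1$ uniformly in all the sum indices.

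Once this cancellation is in place the rest is bookkeeping. The remaining factors in $a_\lambda(\xi)$ are controlled as follows: lemma \ref{dmu} bounds $d^H_\nu(\xi)$ and $q_v^{-\langle\lambda,\rho_H\rangle}b^G_\lambda(\mu)$ polynomially in $\|\mu\|,\|\xi\|=O(\kappa)$; the Weyl dimension formula bounds $c_\mu(\nu)\leq\dim\pi^{\wh G}_\mu$ polynomially in $\|\mu\|$; and $q_v^{-\langle\xi,\rho_H\rangle}\leq 1$ for dominant $\xi$. The remaining factor $q_v^{\langle\lambda,\rho_H\rangle}$ produces the $q_v^{E\kappa}$ growth, where $E$ is roughly $\max\langle\beta,\rho_H\rangle$ over relevant weights $\beta$. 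Because every $H$ appearing in a hyperendoscopic path from a fixed $G'$ has semisimple rank at most that of $G'$ (and correspondingly controlled root system), $E$ can be chosen uniformly. The count of triples $(\mu,\nu,\xi)$ is polynomial in $\kappa$. Combining these estimates at a single place gives $\|\chi_{\eta,v}f_v^H\|_\infty=O(q_v^{E\kappa}\kappa^C)$, and taking the product over $v\in S$ yields the claimed bound $O(q_{S_1}^{E\kappa}\kappa^{C|S|})$.
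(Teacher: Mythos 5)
Your proposal is correct and follows essentially the same strategy as the paper: reduce to a single place, expand in the basis $\tau^G_\lambda$, apply the explicit transfer formula, and observe that the twist by $\chi_{\eta,v}$ cancels the $\alpha_\mu(\nu)$ factor exactly. Your justification that $\xi(t_\eta)=\nu(t_\eta)$ (via the chain $\lambda-\mu$, $\mu-\nu$ being sums of coroots of $G$ and $\nu-\xi$ a sum of coroots of $H$, all of which pair trivially with the central $t_\eta$) is just a slightly more explicit restatement of the paper's remark that $\xi$ and $\nu$ have the same $\Om_G$-orbit sum, and the remaining polynomial and $q$-power bookkeeping matches the paper's.
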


\begin{proof}
Use the notation from the previous discussion. For $s \in S$, $f_s$ is then a linear combination of some of $\tau^G_\lb$. If $\tau^G_\lb$ has a $\tau^H_\xi$ component then $\lb-\xi$ is in particular a non-negative sum of roots of $G$. The number of such $\lb$ is polynomial in $\kappa$. Therefore, if $f^H_s$ is written as a linear combination of $\tau^H_\xi$, the coefficient for $\tau^H_\xi$ is bounded by a sum of polynomially many $a_\lb(\xi)$. Furthermore, all these $\xi$ are smaller than $\lb$. 

Moving to what we are actually bounding, if $t_\eta$ is as in the previous discussion, the corresponding coefficient in $\chi_{\eta, s}^{-1} f^H_s$ is bounded by a sum of polynomially many $ \xi(t_\eta)^{-1} a_\lb(\xi)$. For all $\alpha_\mu(\nu)$ appearing in the sum defining $a_\lb(\xi)$,
\[
|\xi(t_\eta)^{-1}\alpha_\mu(\nu)| \leq |\xi(t_\eta)^{-1}\nu(t_\eta)| = 1 
\]
since $\xi$ and $\nu$ have the same $\Om_G$-orbit sum. In particular, if we define
\[
a'_\lb(\xi) = \sum_{\substack{\mu \in X^*(\wh A) \\ \nu \in X^*(\wh A_H) \\ \xi \leq_H \nu \leq_H \mu \leq_G \lb}} b^G_\lb(\mu) c_\mu(\nu) q^{-\langle \xi, \rho_H \rangle } d^H_\nu(\xi),
\]
then $| \xi(t_\eta)^{-1} a_\lb(\xi)| \leq | a'_\lb(\xi)|$. 

It remains to bound the polynomially many summands in $a'_\lb(\xi)$. Bounding each of these terms, the $c_\mu(\nu)$ are polynomial in how big $\mu$ is. By lemma \ref{dmu}, the term
\[
b^G_\lb(\mu)q^{-\langle \xi, \rho_H \rangle } d^H_\nu(\xi)
\]
is a polynomial in the size of $\lb$ times a factor of $q^{-\langle \xi, \rho_H \rangle + \langle \lb, \rho_G \rangle }$. Therefore, we roughly bound the entire product, $a_\lb(\xi)$, by a polynomial in $\kappa$ times a factor of $q^{-\langle \lb, \rho_G \rangle}$ 

Finally, note that $\langle \lb, \rho_G \rangle \leq \rank_{\ssm}(G) \kappa$. Taking the product of $f^H_s$ over $s \in S$ and setting $E =  \rank_{\ssm}(G)$ gives the result. 
\end{proof}
Note that this lemma can be inductively applied through a hyperendoscopic path by letting $\chi$ at each step be the character defined from the hyperendoscopic path as in section \ref{hyptransferchar}.  

\subsubsection{General case}\label{zextarchtransfer}
Starting as in the Archimedean case argument in section \ref{zexttransfer}, consider $z$-pair $(H_1, \eta_1)$ for $H$. The extension $H_1$ induces an extension $G_1$ such that $H_1$ is an endoscopic group for $G_1$ by proposition \ref{zextpath}. If $\varphi : (G_1)_v \to G_v$ is the projection, we have that $f^{H_1} = (f \circ \varphi)^{H_1}$ for any $H_1$ on $G$ (interpreted as before).

If $H$ is ramified, then all $\kappa$-orbital integrals are still $0$ so this transfer is $0$.  

If $H$ is unramified, $T$ can be pulled back to a maximal torus $T_1$ of $G_1$ and $A$ can be pulled back to $A_1$. By lemma \ref{zextbound} the extending torus $Z$ is without loss of generality unramified so $G_1$ is too. As explained in \cite[\S7]{Kot86}, the reductive model of $G$ corresponding to the chosen hyperspecial $K_{G,v}$ gives a reductive model of $G_1$ so we can find a hyperspecial $K_{G_1,v}$ that surjects onto $K_{G,v}$. The map $\varphi$ induces $\varphi_*: X_*(A_1) \to X_*(A)$ so
\[
\varphi(K_{G_1, v} \lb(\varpi) K_{G_1,v}) = K_{G, v} \varphi_*\lb(v) K_{G,v}.
\]
Therefore,
\[
\tau^G_\lb \circ \varphi = \sum_{\lb' \in \varphi_*^{-1}(\lb)} \tau^{G_1}_{\lb'} 
\]
and the transfer can be computed by the fundamental lemma. 

We describe the transfer of $\tau^G_0$ as an example computation:
\begin{lem}\label{zextfunlemma}
Use the notation above. Then we can take
\[
(\tau^G_0)^{H_1} = \sum_{\lb \in X_*(A_Z)} \chi_{\eta_1}(\lb(\varpi)) \tau^{H_1}_\lb.
\]
Here $A_Z$ is the split part of the extending torus $Z$ and $\chi_{\eta_1}$ is the character on $Z_{G_1}$ determined by $\eta_1$. 
\end{lem}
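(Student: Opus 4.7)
\medskip

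\noindent\textbf{Proof plan.} The plan is to execute the $z$-extension trick of section \ref{zextransfertrick} and then reduce everything to the trivial $z$-extension fundamental lemma plus the central character transformation law.

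First, let $G_1$ be the central extension of $G$ by $Z$ produced in proposition \ref{zextpath}(b), so that $H_1$ sits as an endoscopic group of $G_1$ with $\mc H_1 \cong \Ld H_1$. By section \ref{zextransfertrick}, $(\tau^G_0)^{H_1}$ may be chosen to equal $(\tau^G_0 \circ \varphi)^{H_1}$. The first computational step is to expand $\tau^G_0 \circ \varphi$ as a sum of basis elements $\tau^{G_1}_\lb$. By the discussion immediately preceding the lemma, $\varphi(K_{G_1,v}\lb(\varpi)K_{G_1,v}) = K_{G,v}\varphi_*\lb(\varpi)K_{G,v}$, so the double cosets that land in $K_{G,v}$ are exactly those with $\varphi_*\lb = 0$; since $\ker \varphi_* = X_*(A_Z)$ and elements of $X_*(A_Z)$ are central in $G_1$ (hence automatically dominant), one gets
\[
\tau^G_0 \circ \varphi \;=\; \sum_{\lb \in X_*(A_Z)} \tau^{G_1}_\lb
\]
as a formal (central) sum, understood via the central character datum containing $Z(F_v)$ as noted at the end of section \ref{zextransfertrick}.

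Second, I would compute $(\tau^{G_1}_\lb)^{H_1}$ for each $\lb \in X_*(A_Z)$ using the fundamental lemma in its unramified form of section \ref{funlem}, now that we are in the trivial $z$-extension case for $G_1$. Since $\lb$ is central, the representation $\pi^{\wh G_1}_\lb$ is one-dimensional, so in the notation of proposition \ref{transferform} the only index contributing is $\nu = \lb$ with $c_\lb(\lb) = 1$, giving
\[
b_{\eta_1}(\pi^{\Ld G_1}_{\lb,1}) \;=\; \alpha_\lb(\lb)\,\pi^{\Ld H_1}_{\lb,1} \;=\; \lb(t_{\eta_1})\,\pi^{\Ld H_1}_{\lb,1} \;=\; \chi_{\eta_1}(\lb(\varpi))\,\pi^{\Ld H_1}_{\lb,1},
\]
where the last equality uses the identification of $\chi_{\eta_1}$ with the character associated to $t_{\eta_1}$ established just after proposition \ref{transferform}. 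Finally, since $\lb$ is central it pairs trivially with the roots of $H_1$, so the inverse Satake transform on $H_1$ gives $\varphi_{H_1}^{-1}(\chi^{H_1}_\lb) = \tau^{H_1}_\lb$ with no power of $q$. Combining yields $(\tau^{G_1}_\lb)^{H_1} = \chi_{\eta_1}(\lb(\varpi))\tau^{H_1}_\lb$.

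Summing over $\lb \in X_*(A_Z)$ then produces the claimed formula. The main thing to take care with is step one: interpreting the infinite sum so that the compactly-supported-mod-$\mf X$ framework of the last paragraph of section \ref{zextransfertrick} applies, and checking that the hyperspecials may indeed be chosen compatibly (which is exactly the content of \cite{Kot86}, \S7 cited in section \ref{zextarchtransfer}). The remaining verifications are formal unwindings of the Satake isomorphism and present no real obstacle.
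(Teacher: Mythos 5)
Your proof is correct and follows the route the paper itself intends: the paper states this lemma with no proof environment, treating it as an "example computation" that is supposed to drop out of the preceding two paragraphs (the $z$-extension trick, the compatible hyperspecials, and the formula $\tau^G_\lb \circ \varphi = \sum_{\lb' \in \varphi_*^{-1}(\lb)} \tau^{G_1}_{\lb'}$), and you have simply spelled out those steps. Two small points worth tightening: the identification $\ker \varphi_* = X_*(A_Z)$ deserves a sentence (one needs $(A_1 \cap Z)^0 = A_Z$, which holds since $A_Z$ is split and central, hence lies in $A_1$, and $A_1 \cap Z$ is split, hence lies in $A_Z$); and the equality $\alpha_\lb(\lb) = \lb(t_{\eta_1})$ is not literally one of the two bullet properties in proposition \ref{transferform}, but follows from the central character transformation law there with $\mu = \nu = 0$ together with the normalization $\alpha_0(0) = 1$ (or, more directly, from the fact that a central $\lb$ is a character of $\Ld G_1$, so $b_{\eta_1}$ of it is literally the pullback $\lb(c_\Frob)\,\lb|_{\wh H_1}$).
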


Finally, we get an extension of proposition \ref{valuebound}: that transfers from $\ms H(G_v, K_v, \chi)$ land in $\ms H(H^1_v, K_{H,v}, \chi \chi_{\eta_1})$ with the same bound.

\red{\subsection{Transfer of Some Ramified Functions}
LEVEL ASPECT}

\subsection{Controlling Endoscopic Groups Appearing}
\begin{lem}\label{conditions}
Let $G$ be a reductive group over global field $F$ that is cuspidal at infinity together with central character datum $(\mf X, \chi)$ such that $\mf X$ contains $A_{G, \infty}$. Let $f = \eta_\xi \otimes f^\infty$ be a function on $G(\A)$ where $\eta_\xi$ is some EP-function with central character matching $\chi$. Let $R$ be a finite set of places containing those on which $f^\infty$ or $G$ are ramified. Then there are a finite number of elliptic endoscopic quadruples $(H, \mc H, \eta, s)$ up to equivalence for which $I_\disc(f^{H_1}) \neq 0$ for (all) $z$-extensions $H_1$. For each such $H_1$:
\begin{itemize}
\item
$H_1$ is cuspidal at infinity and $\mf X_{H_1}$ contains $A_{H_1, \infty}$. 
\item
$f^{H_1}$ is unramified outside of $R$ and $H_1$ can be chosen to be. 
\item
$\chi_{H_1}$ is unramified outisde of $R$.
\end{itemize}
\end{lem}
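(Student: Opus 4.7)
The plan is to establish each claim by combining the fundamental lemma, the explicit Archimedean transfer formula, and the structure of the ellipticity condition on endoscopic data.

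\textbf{Finiteness and unramification outside $R$.} At any place $v \notin R$, the local function $f_v$ is the characteristic function of a hyperspecial $K_v$. By the fundamental lemma (section \ref{funlem}), $f^H_v = 0$ whenever $H_v$ is ramified, so only $H$ unramified outside $R$ can produce $I_\disc(f^{H_1}) \neq 0$. There are finitely many such elliptic endoscopic quadruples up to isomorphism, since they are classified by semisimple classes $s \in Z_{\wh H}$ together with $W_F$-actions factoring through a fixed finite unramified Galois quotient. For each such $H$, lemma \ref{zextbound} lets me choose a $z$-extension $H_1$ splitting over the same finite extension as $H$, hence unramified outside $R$. At such $v$, the transfer $f^{H_1}_v$ is computed by the fundamental lemma via the reduction in section \ref{zextarchtransfer}, producing a $K_{H_1,v}$-biinvariant function. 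The central character $\chi_{H_1} = \chi \cdot \lb_{\td\eta}$ recalled in section \ref{hyptransferchar} is then also unramified outside $R$, since both factors are.

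\textbf{Cuspidality at infinity.} Non-vanishing of $I_\disc(f^{H_1})$ requires $f^{H_1}_\infty \neq 0$. By proposition \ref{archtransfer}, $f^{H_1}_\infty$ is a linear combination of pseudocoefficients of discrete series on $H_{1,\infty}$, and by lemma \ref{etorii} this is non-zero only when the elliptic maximal tori of $G_\infty$ transfer from elliptic tori of $H_\infty$; in particular $H_{1,\infty}$ has an elliptic maximal torus mod center. For the split-center equality $A_{H_1, \rat} = A_{H_1, \infty}$ I would combine the hypothesis $A_{G, \rat} = A_{G, \infty}$ with the ellipticity inclusion $(Z_{\wh H}^{W_F})^0 \subseteq Z_{\wh G}$. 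Together with the standard inclusion $Z_{\wh G} \subseteq Z_{\wh H}$, ellipticity forces $(Z_{\wh G}^{W_F})^0 = (Z_{\wh H}^{W_F})^0$, which dualizes to the statement that $A_G \to A_H$ is an $F$-isogeny of global split central tori; hence $A_{H, \rat} = A_{H, \infty}$. Propagating the equality to $H_1$ then amounts to a careful choice of the extending torus in the $z$-extension, so that its own split components at $\rat$ and $\infty$ coincide with those coming from $A_H$.

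\textbf{The $\mf X_{H_1}$ condition.} By definition $\mf X_{H_1}$ is the preimage in $\td H_1(\A)$ of the image of $\mf X$ under $Z_G \hookrightarrow Z_H$. Since $\mf X \supseteq A_{G, \infty}$ and $A_G \to A_H$ is an isogeny by the previous step, the image in $Z_H(\A)$ contains all of $A_{H, \infty}$ after passing to identity components. Pulling back to $H_1$ automatically picks up the infinite points of the extending torus, yielding $\mf X_{H_1} \supseteq A_{H_1, \infty}$. The main obstacle will be the cuspidality paragraph: extracting $A_{H, \rat} = A_{H, \infty}$ from the dual-side ellipticity condition and propagating it through the central $z$-extension in a way that is compatible with the freedom in choosing $H_1$. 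Everything else is a direct application of the fundamental lemma, lemma \ref{etorii}, and the construction of central character data recalled in section \ref{hyptransferchar}.
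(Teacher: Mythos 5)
The central gap is in your cuspidality paragraph. The paper does not try to prove that an elliptic endoscopic group $H_1$ is automatically cuspidal at infinity; instead it observes that for \emph{non}-cuspidal $H_1$, the term $I_\disc^{H_1}(g)$ vanishes whenever $g$ has an EP function at infinity, so the conclusion is read off by contrapositive. Your attempt to prove cuspidality directly from the global ellipticity condition does not go through as written: from $(Z_{\wh H}^{W_F})^0 \subseteq Z_{\wh G}$ together with the standard inclusion $Z_{\wh G}\subseteq Z_{\wh H}$ you correctly get $(Z_{\wh G}^{W_F})^0 = (Z_{\wh H}^{W_F})^0$, and dualizing this gives that $A_G\to A_H$ is an $F$-isogeny, hence $\dim A_{G,\rat}=\dim A_{H,\rat}$. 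But that controls only the $\rat$ side. Cuspidality of $H_1$ is the statement $A_{H_1,\rat}=A_{H_1,\infty}$, and $A_{H_1,\infty}$ is governed by the fixed points under the \emph{local} Weil group at infinity, $(Z_{\wh H}^{W_\R})^0$, which in general strictly contains $(Z_{\wh H}^{W_F})^0$. Global ellipticity says nothing about this bigger group, so the deduction ``hence $A_{H,\rat}=A_{H,\infty}$'' is unjustified. You gesture at the right idea when you cite lemma \ref{etorii} to get an elliptic maximal torus in $H_{1,\infty}$, but then the argument reverts to the global condition. To make your route work you would need to carry through the local point: nonvanishing of the pseudocoefficient transfer forces an elliptic maximal torus $T_H$ of $H_\infty$ to transfer to an elliptic maximal torus $T$ of $G_\infty$, this transfer matches $\R$-split parts, and so $\dim A_{H,\infty}=\dim A_{G,\infty}$; combined with $\dim A_{G,\rat}=\dim A_{H,\rat}$ and the hypothesis $A_{G,\rat}=A_{G,\infty}$, one gets $\dim A_{H,\rat}=\dim A_{H,\infty}$ and hence equality. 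As stated, that step is missing.

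The rest of your sketch is essentially aligned with the paper. The unramification, $\chi_{H_1}$, and finiteness claims are handled the same way (fundamental lemma forces transfers to vanish unless $H$ is unramified outside $R$, then lemma \ref{zextbound} controls the $z$-extension, and the central character is built from $\chi$ and $\lb_{\td\eta}$, both unramified away from $R$). Your finiteness count is a bit more hand-wavy than the paper's --- the paper bounds possible root data of $H_{\overline K}$ by comparing roots to those of $G$, bounds the splitting field degree by $|\Omega_G|$, and then separately argues finiteness of the choices of $s$ via finiteness of $(Z_{\wh H}/Z_{\wh G})^{W_F}$ from ellipticity --- so if you keep your version you should make the bound on the Galois quotient and the finiteness of the set of $s$ explicit. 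The $\mf X_{H_1}\supseteq A_{H_1,\infty}$ claim is also the one the paper makes, and has the same character as the cuspidality issue: it is really a consequence of local ellipticity at infinity (detected via lemma \ref{etorii}), not of the global ellipticity inclusion alone, so the same caution applies there.
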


\begin{proof}
If $H_1$ is not cuspidal at infinity, then $I_\disc(g) = 0$ for any $g$ with infinite part that is a EP function by the previous section. By corollary \ref{eptransfer} and lemma \ref{etorii}, $f^H$ is either a linear combination of such functions or $0$. As before, we remark that $\mf X_{H_1} \supseteq A_{H_1, \infty}$ due to ellipticity.

If $H$ is ramified outside of $R$, then by the full fundamental lemma together with the trick to compute transfers on $z$-extensions, $f^{H_1} = 0$. Otherwise, by lemma \ref{zextbound}, $H_1$ can be chosen to be unramified outside $R$ so $f^{H_1}$ is unramified outside of $R$ by the full fundamental lemma again. The group $H_1$ being unramified outside of $R$ further implies that $\chi_{H_1}$ is too. 

Finiteness of the sum is implicit in the stabilization of the trace formula. Repeating the argument here, note that the roots of $H_{\overline K}$ are a subset of those of $G_{\overline K}$. Therefore, there are a finite number of possibilities for $H_{\overline K}$ and the splitting field of $H$ has degree $\leq \Om_G$.  Since the splitting field is also unramified outside of $R$, there are a finite number of choices for it. This leaves only a finite number of choices for $H$.

To get finitely many quadruples it then suffices to show there are finitely many choices for $s \in (Z_{\wh H}/Z_{\wh G})^{W_F}$. For this, $Z_H^{W_F}/Z_G^{W_F}$ is finite by ellipticity and $Z_H^{W_F}$ having finitely many connected components. Therefore $(Z_{\wh H}/Z_{\wh G})^{W_F}$ is finite by finiteness of a cohomology group. 
\end{proof}
Note that this lemma can be inductively applied through a hyperendoscopic path.

\section{Simple Trace Formula with Central Character}\label{straceformsection}
\subsection{Set-up}
To apply the hyperendoscopy formula, we will need two generalizations of the simple trace formula: first, allowing central characters and second, allowing pseudocoefficients at infinite places on the spectral side.  We use a slightly convoluted and indirect argument to avoid having to go into too many technicalities of Arthur's distributions $I(f, \gamma)$ and $I(f, \pi)$:

Fix central character datum $(\mf X, \chi)$ and let $\chi_0$ be the restriction of $\chi$ to $A_{G, \rat}$. We first define a variant of $I_{\disc, \chi}$ that can be more easily related to $I_{\geom, \chi_0}$. Let $\mf X_F = \mf X \cap Z(F)$. There is a map
\[
\ms H(G, \chi_0) \to \ms H(G, \chi) : f(g) \mapsto \bar f_\chi(g) := \int_{\mf X/A_{G, \rat}} f(gz) \chi(z) \, dz.
\] 
\begin{lem}\label{functiontruncation}
$f \mapsto \bar f_\chi$ is surjective.
\end{lem}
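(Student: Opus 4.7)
The plan is to realize each $h \in \ms H(G, \chi)$ as $\bar f_\chi$ for an $f$ of the form $f = h \cdot \phi$, where $\phi$ is a cutoff built so that the averaging collapses the product back to $h$. Concretely, I would seek a smooth, compactly-supported $\phi$ on $G(\A)/A_{G, \rat}$ (so, in particular, $A_{G, \rat}$-invariant) satisfying
\[
\int_{\mf X/A_{G, \rat}} \phi(gz)\,dz = 1 \quad \text{for all } g \text{ in the support of } h.
\]

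Once $\phi$ is in hand, the verification is formal. The function $f = h \cdot \phi$ lies in $\ms H(G, \chi_0)$ because $h$ already transforms by $\chi^{-1}$ under $\mf X \supseteq A_{G, \rat}$ (and $\chi|_{A_{G, \rat}} = \chi_0$), while $\phi$ is $A_{G, \rat}$-invariant; smoothness and compact support of $f$ mod $A_{G, \rat}$ are inherited from $\phi$. The identity $\bar f_\chi = h$ then reduces, via $h(gz) = \chi^{-1}(z)h(g)$ for $z \in \mf X$, to the computation
\[
\bar f_\chi(g) = h(g)\int_{\mf X/A_{G, \rat}} \phi(gz)\,dz,
\]
which equals $h(g)$ on $\Supp(h)$ by the cutoff condition and is trivially zero elsewhere.

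The real content is producing $\phi$. The support of $h$ projects to a compact $K \subseteq G(\A)/\mf X$, and since $\mf X$ is central the projection $G(\A)/A_{G, \rat} \to G(\A)/\mf X$ is a principal $\mf X/A_{G, \rat}$-bundle. I would invoke the standard surjectivity of the fiber-integration map $C_c^\infty(G(\A)/A_{G, \rat}) \onto C_c^\infty(G(\A)/\mf X)$ to lift a smooth, compactly-supported bump function equal to $1$ on $K$ back to a $\phi$ with the required fiber integral. Using the description of $\mf X$ from the Note—a product of adelic points of an algebraic subtorus of $Z$ with a subgroup of $Z_{G_\infty}(\R)$—this splits into separate Archimedean and non-Archimedean statements, each of which is a routine partition-of-unity and local-trivialization argument.

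The main (minor) obstacle is this surjectivity in the adelic context. It is essentially standard, but some care is needed to juggle the two kinds of smoothness (infinitely differentiable at Archimedean places, locally constant at finite ones) and to glue local lifts into a global one using a partition of unity on $G(\A)/\mf X$ subordinate to a cover of $K$ by trivializing opens.
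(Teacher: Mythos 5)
Your argument is correct and shares the paper's core idea: realize $h = \bar f_\chi$ for $f = h\cdot\phi$, where $\phi$ is $A_{G,\rat}$-invariant, compactly supported mod $A_{G,\rat}$, and has fiber integral $\int_{\mf X/A_{G,\rat}} \phi(gz)\,dz = 1$ on $\Supp h$; the verification $\bar f_\chi(g) = h(g)\int_{\mf X/A_{G,\rat}} \phi(gz)\,dz$ is exactly as you say. Where you diverge is in producing $\phi$: you appeal to surjectivity of the fiber-integration map $C_c^\infty(G(\A)/A_{G,\rat}) \onto C_c^\infty(G(\A)/\mf X)$, to be established via local trivialization of the $\mf X/A_{G,\rat}$-bundle and a partition of unity. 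The paper produces $\phi$ in one step and sidesteps all of that: take any non-negative cutoff $c$ on $G(\A)/A_{G,\rat}$ that is positive on a compact $U$ with $\Supp h \subseteq U\mf X$, set $m(g) = \int_{\mf X/A_{G,\rat}} c(gz)\,dz$, and observe that $m$ is $\mf X$-invariant and strictly positive on $U\mf X$; then $\phi = c/m$ has fiber integral identically $1$ there, so $f = m^{-1}ch$ works immediately. This normalize-by-fiber-integral trick is in fact how one would prove the surjectivity you quote, so nothing in your proposal is wrong -- you are just routing through a more general statement (and its own partition-of-unity apparatus) where a one-line normalization suffices.
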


\begin{proof}
Let $h \in \ms H(G, \chi)$. There exists compact $U \subseteq G(\A)/A_{G, \rat}$ such that $U\mf X$ contains the support of $h$. Let $c$ be a cutoff function: compactly supported, continuous, non-negative real valued, and positive on $U$. Then the function 
\[
m(g) = \int_{\mf X/A_{G, \rat}} c(gz) \, dz
\]
is continuous and non-zero on the support of $h$. If we take $f = m^{-1}ch$, then $\bar f_\chi = h$. 
\end{proof}

We follow a strategy from \cite{KSZ}. For any $\star \in \{\geom, \disc, \spec\}$, also define distributions on $\ms H(G, \chi_0)$:
\[
I'_{\star, \chi}(f) = \f1{\vol(\mf X_F \bs \mf X/ A_{G, \rat})} \int_{\mf X_\Q \bs \mf X/ A_{G, \rat}} \chi(z) I_{\star, \chi_0} (f_z) \, dz
\]
where $f_z : g \mapsto f(gz)$. We of course have that
\[
I'_{\geom, \chi} = I'_{\spec, \chi}.
\]
In addition, if $f$ is cuspidal, then so is $f_z$ for any central $z$ so 
\[
I'_{\spec, \chi}(f) = I'_{\disc, \chi}(f).
\]

For our case, we can only consider central character datum where $A_{G, \infty} \subseteq \mf X$. Fix $(\mf X, \chi)$ for the rest of this section and let $\chi_0$ be the restriction of $\chi$ to $A_{G, \rat}$. The generalized simple trace formula can then be developed in three steps:
\begin{enumerate}
\item
Find a generalized pseudocoefficient $\varphi$ so that $\bar \varphi_\chi$ is the pseudocoefficient $\varphi_\pi$ and traces against $\varphi$ can be computed easily
\item
Compute $I'_{\spec, \chi}(\varphi \otimes f^\infty)$ and show this equals $I_{\spec, \chi}(\varphi_\pi \otimes \overline{(f^\infty)}_\chi)$. Both these are small modifications of Arthur's original spectral side argument together with an extra lemma of Vogan. 
\item
Sum over $\varphi$ to get a generalized Euler-Poincar\'e function $\eta$. Evaluate $I_{\geom, \chi_0}(\eta \otimes f^\infty)$ and average to get a formula for $I'_{\geom, \chi}(\eta \otimes f^\infty)$.
\end{enumerate}
To see how everything depends on Haar measures, $\varphi$ will have dimension $[G_\infty/A_{G, \rat}]^{-1}$ and $f^\infty$ will have dimension $[\mf X^\infty]^{-1}$ so that both sides of our final formula will have dimension $[G^\infty][\mf X^\infty]^{-1}$. 

\subsection{Generalized Pseudocoefficients}\label{genps}
We first need to define a version of truncated/generalized pseudocoefficients from \cite[\S1.9]{HL04} in the real case. This actually can be done slightly more explicitly than the $p$-adic case. A lot of this section is probably implicit somewhere in \cite{CD90}.

For this section only, let $G = G(\R)$ be a group over $\R$ with discrete series mod center. All other variables ($\mf a$, $A_G$, etc.) will refer to real versions. There is a map
\[
H_G : G(\R) \to \mf a_*^G : \lb(H_G(\gamma)) = \log |\lb(\gamma)| \text{ for all } \lb \in \mf a^*_G.
\]
It is well known that this maps $A^0 = A_G(\R)^0$ isomorphically to $a_*^G$ so since $A^0$ is central, we get a splitting $G(\R) = G(\R)^1 \times A^0$, where $G(\R^1)$ is the kernel of $H_G$. 

 Any character $\lb \in (\mf a^*_G)_\C$ of $\mf a_*^G$ corresponds to the character $e^{\lb(H_G(\gamma))}$ on $A^0$ and therefore $G$ through this isomorphism. The unitary characters correspond to $\lb \in i\mf a^*_G$. Finally, if $\pi$ is a representation of $G(\R)$, let $\pi_\lb = \pi \otimes e^{\lb(H_G(\gamma))}$. 
 
 Let $f$ be any smooth, compactly supported function on $\mf a_*^G$ and $\pi$ a discrete series representation. The main theorem \cite{CD90} also allows us to construct a (again not-necessarily unique) compactly supported $\varphi_{\pi,f}$ such that for any unitary $\rho$
 \[
 \tr_\rho(\varphi_{\pi,f}) = \begin{cases} \wh f(\lb) & \rho = \pi_\lb \text{ for some } \lb \in (\mf a^*_G)_\C\\0 & \rho \text{ basic}, \rho \neq \pi_\lb \text{ for all } \lb \in (\mf a^*_G)_\C \\ ? & \text{else} \end{cases}.
 \]
Call such a $\varphi_{\pi,f}$ a generalized pseudocoefficient. For any character $\om$ on $A^0$, we can define
\[
\varphi_{\pi, f, \om}(g) = \int_{A^0} \om(a) \varphi_{\pi,f}(ag) \, da.
\]
This is compactly supported mod center and transforms according to $\om^{-1}$ on $A^0$. Therefore, if $\rho$ has character $\om$ on $A^0$, we can define
\begin{multline}\label{genpstops}
\tr_\rho(\varphi_{\pi, f, \om}) = \int_{G/A^0} \varphi_{f, \pi, \om}(g) \Theta_\rho(g) \, dg = \int_{G/A^0} \int_{A^0} \varphi_{\pi, f}(ag) \om(a) \Theta_\rho(g) \, da \, dg \\
  = \int_{G/A^0} \int_{A^0} \varphi_{\pi, f}(ag) \Theta_\rho(ag) \, da \, dg = \int_G \varphi_{f, \pi}(g) \Theta_\rho(g) \, dg = \tr_\rho(\varphi_{\pi, f}).
\end{multline}
where $\Theta$ is the Harish-Chandra trace character. In particular, $\varphi_{\pi, f, \om}$ appropriately scaled is a pseudocoefficient. 

Averaging $\varphi_{\pi, f}$ over an $L$-packet $\Pi_\disc(\tau)$ for fixed $f$ produces a generalized Euler-Poincar\'e function $\eta_{\tau,f}$. Since the $\eta_{\tau, f, \om}$ are averages of pseudocoeffecients over $L$-packets, they are actually standard Euler-Poincar\'e functions. Therefore, computation \eqref{genpstops} gives that whenever $\tau$ is regular:
\[
\tr_\rho(\eta_{\tau, f}) = \begin{cases} \wh f(\lb)|\Pi_\disc(\tau)|^{-1} & \rho = \pi_\lb \text{ for some } \pi \in \Pi_\disc(\tau), \lb \in (\mf a^*_G)_\C \\0 & \text{else} \end{cases}.
\]
Generalized pseudocoefficients and Euler-Poincar\'e functions are cuspidal for the same reason as the normal versions.

Finally, as a useful lemma relating our notion to the one in \cite{HL04}, 
\begin{lem}
Let $\pi$ be a discrete series representation with character $e^{\lb(H_G(a))}$ on $A^0$ for $\lb \in (\mf a^*_G)_\C$. Let $f$ on $\mf a_*^G$ be smooth and compactly supported. Then we can make choices for $\varphi_{\pi}$ and $\varphi_{\pi, f}$ such that $\varphi_{\pi,f} = f\varphi_\pi$. 
\end{lem}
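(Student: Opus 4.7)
The plan is to construct explicit compatible choices of $\varphi_\pi$ and $\varphi_{\pi,f}$ using the splitting $G(\R) = G(\R)^1 \times A^0$ coming from $H_G$. The key idea is to regard $f$ as a function on $G(\R)$ by composing with $H_G$, and then define $\varphi_{\pi,f}(g) := f(H_G(g))\,\varphi_\pi(g)$.

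First I would fix any pseudocoefficient $\varphi_\pi$ as supplied by Clozel--Delorme. Since $\varphi_\pi$ transforms by $\chi^{-1}$ under $A^0$ where $\chi(a) = e^{\lambda(H_G(a))}$, writing $g = g^1 a$ with $g^1 \in G(\R)^1$ and $a \in A^0$, we automatically obtain $\varphi_\pi(g^1 a) = e^{-\lambda(H_G(a))}\, \tilde\varphi(g^1)$ for a compactly supported function $\tilde\varphi$ on $G(\R)^1$. Then $\varphi_{\pi,f}(g^1 a) = f(H_G(a))\, e^{-\lambda(H_G(a))}\, \tilde\varphi(g^1)$ is compactly supported on $G(\R)$, as the product of two compactly supported factors in the two coordinates.

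The main step is verifying the defining trace property. For a unitary irreducible $\rho$ of $G(\R)$ with central character on $A^0$ of the form $a \mapsto e^{\nu(H_G(a))}$, the trace character satisfies $\Theta_\rho(g^1 a) = e^{\nu(H_G(a))} \Theta_\rho(g^1)$, so using the product decomposition and Fubini,
\[
\tr_\rho(\varphi_{\pi,f}) = \left( \int_{A^0} f(H_G(a))\, e^{(\nu-\lambda)(H_G(a))}\, da \right) \cdot \int_{G(\R)^1} \tilde\varphi(g^1)\, \Theta_\rho(g^1)\, dg^1.
\]
The first factor is $\widehat f(\nu - \lambda)$ under the identification $A^0 \cong \mathfrak{a}_*^G$. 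The second factor depends only on $\rho|_{G(\R)^1}$, so it is unchanged by twisting $\rho$ by any character of $A^0$; choosing the twist whose central character equals $\chi$ makes this factor literally $\tr_{\rho'}(\varphi_\pi)$ with $\varphi_\pi$ regarded as an element of $C_c^\infty(\chi^{-1})$ and $\rho'$ the twist. By the defining property of $\varphi_\pi$, this equals $1$ when $\rho' = \pi$ (equivalently $\rho = \pi_\mu$ with $\mu = \nu - \lambda$) and $0$ when $\rho$ is basic but not of that form. Combining these two factors recovers the required identity $\tr_{\pi_\mu}(\varphi_{\pi,f}) = \widehat f(\mu)$ and vanishing on other basic representations.

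The main obstacle is the careful bookkeeping around central characters: one needs to argue that the integral over $G(\R)^1$ really is the trace of $\varphi_\pi$ against a suitable twist of $\rho$, so that the Clozel--Delorme property can be applied verbatim. Once this is unpacked the lemma follows immediately from the Fubini factorization above, with $\varphi_\pi$ being our fixed initial choice and $\varphi_{\pi,f} := (f \circ H_G)\,\varphi_\pi$ the induced choice on the $\varphi_{\pi,f}$ side.
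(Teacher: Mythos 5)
Your proof is correct, and the core computation is the same as the paper's: split $G(\R) = G(\R)^1 \times A^0$ via $H_G$, use $\Theta_\rho(g^1 a) = e^{\nu(H_G(a))}\Theta_\rho(g^1)$, and factor the trace of the product against $\rho$ into $\widehat f(\nu-\lambda)$ times a $G(\R)^1$-integral that is insensitive to twisting by characters of $A^0$. The one real difference is the direction of construction. The paper begins from an arbitrary generalized pseudocoefficient $\varphi_{\pi,f}$ supplied by Clozel--Delorme, forms $\varphi_\pi := \widehat f(0)^{-1}\varphi_{\pi,f,\lambda}$ by averaging against the character over $A^0$ and checking it is a valid pseudocoefficient, and then verifies that $f\varphi_\pi$ is again a valid $\varphi_{\pi,f}$. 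You go the other way: start from the \emph{standard} pseudocoefficient $\varphi_\pi \in C_c^\infty(\chi^{-1})$, define $\varphi_{\pi,f} := (f\circ H_G)\varphi_\pi$, and verify its defining trace property directly. Your direction is a bit more self-contained, since it only invokes existence of the classical pseudocoefficient and does not presuppose existence of the generalized one; the paper's direction has the mild advantage of producing all valid pairs $(\varphi_\pi,\varphi_{\pi,f})$ from a single preliminary choice, which dovetails with how the generalized objects are actually quoted from \cite{CD90} earlier in the section. Either reading gives the lemma. Two small things worth saying explicitly in your write-up: the identity $\widehat f(\mu) = \widehat f(\nu - \lambda)$ should record that $\nu = \lambda + \mu$ when $\rho = \pi_\mu$ (so the parametrization matches), and the Fubini step is legitimate because $\Theta_\rho$ is locally $L^1$ by Harish-Chandra regularity and $\varphi_{\pi,f}$ is compactly supported on all of $G(\R)$.
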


\begin{proof}
Make a preliminary choice for $\varphi_{\pi,f}$. Then $\wh f(0)^{-1} \varphi_{\pi, f, \lb}$ is a valid choice of $\varphi_\pi$ We evaluate
\begin{align*}
\tr_\rho(f \varphi_{\pi, f, \lb}) &= \int_G f(g) \varphi_{\pi, f, \lb}(g) \Theta_\rho(g) \, dg \\
&= \int_{A^0} \int_{G/A_0} f(ag) \varphi_{\pi, f, \lb}(ag) \Theta_\rho(ag) \, dg \, da \\
&=  \int_{A^0} f(a) e^{(\mu - \lb)(H_G(a))} \int_{G/A_0} \varphi_{\pi, f, \lb}(g) \Theta_{\rho_{\lb - \mu}}(g) \, dg \, da
\end{align*}
where we choose $\mu \in (\mf a^*_G)_\C$ so that $e^{\mu(H_G(g))}$ is the central character of $\rho$ on $A^0$. By previous properties, the inner integral therefore becomes $\tr_{\rho_{\lb-\mu}}(\varphi_{\pi, f})$ and we get
\[
\tr_\rho(f \varphi_{\pi, f, \lb}) = \wh f(\mu - \lb) \tr_{\rho_{\lb-\mu}}(\varphi_{\pi, f}).
\]
Checking each of the three cases in its definition, $f \wh f(0)^{-1} \varphi_{\pi, f, \lb}$ is then a valid alternative choice for $\varphi_{\pi, f}$. 
\end{proof}
A similar property also therefore holds for Euler-Poincar\'e functions. 

\subsubsection{A small modification}
Generalized pseudocoefficients are in $C_c^\infty(G_\infty)$. We instead want functions in some $C_c^\infty(G_\infty, \chi_0)$ so we make a small modification. 

Return to the previous notation where $G$ is a group over $F$. Let $\chi_0$ be a character on $A_{G, \rat}$ and $\pi_0$ a representation of $G_\infty$ consistent with $\chi_0$. Let $\varphi_{\pi_0, f} = f \varphi_{\pi_0}$ be a generalized pseudocoefficient for $\pi_0$ and consider the partial average
\begin{align*}
\bar \varphi(g) &= \int_{A_{G, \rat}} \chi_0(a) f(ag) \varphi_{\pi_0}(ag) \, da \\
&= \int_{A_{G, \rat}} \chi_0(a) f(ag) \chi_0^{-1}(a) \varphi_{\pi_0}(g) \, da = \varphi_{\pi_0}(g) \int_{A_{G, \rat}} f(ag) \, da.
\end{align*}
This is an element of $C_c^\infty(G_\infty, \chi_0)$ and every function $f \in C_c^\infty(A_{G, \infty}/A_{G, \rat})$ arises as an integral this way. Finally, by a similar computation to \eqref{genpstops}, this has the same traces against representations $\pi$ consistent with $\chi_0$ as $\varphi_{\pi_0, f}$.

Therefore, for any function $f \in C_c^\infty(A_{G, \infty}/A_{G, \rat})$, we can construct analogues of generalized pseudocoefficients $\varphi_{\pi_0, f} = f\varphi_{\pi_0} \in C_c^\infty(G_\infty, \chi_0)$. For computations later, note that such $f$ have Fourier transforms defined on any character of $A_{G, \infty}$ trivial on $A_{G, \rat}$. The same discussion carries over to Euler-Poincar\'e functions. These are the functions we will actually be using. 

We fix $f$ to be dimensionless so these generalized pseudocoefficients have dimension $[G_\infty/A_{G, \rat}]^{-1}[A_{G, \infty}/A_{G, \rat}] = [G_\infty]^{-1}[A_{G, \infty}]$.

\subsection{Spectral Side with Central Character}
To get a simple trace formula with central character, we need two spectral side computations: one for $I'_\spec$ and one for $I_\spec$. Start with a lemma:
\begin{lem}\label{pstrace}
Let $\pi_0$ be a regular discrete series representation of $G_\infty$ with weight $\xi_0$ and character $\chi_0$ on $A_{G, \infty}$. Then for any real irreducible representation $\rho$ of $G_\infty$ with character $\chi_0$ on $A_{G, \infty}$, $\tr_{\rho}(\varphi_{\pi_0}) = \delta_{\pi_0}(\rho)$. 
\end{lem}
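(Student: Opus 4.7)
The plan is to express $\rho$ in the Grothendieck group of admissible $G_\infty$-representations with central character $\chi_0$ on $A_{G,\infty}$ as an integer combination of standard modules via the Langlands classification, kill the terms coming from proper parabolic induction by cuspidality, and then apply the defining property of $\varphi_{\pi_0}$ to the remaining essentially tempered terms.

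First I would write $\rho$ as the unique Langlands quotient of a standard module $I(P_\rho, \sigma_\rho, \nu_\rho)$, with $\sigma_\rho$ an irreducible tempered representation of a Levi $M_\rho$ and $\nu_\rho \in (\mathfrak{a}^*_{M_\rho})^+$. Iterating the Langlands classification on the other irreducible subquotients of this standard module produces a presentation
\[
[\rho] = \sum_i a_i \bigl[I(P_i, \sigma_i, \nu_i)\bigr]
\]
in the Grothendieck group, with $a_i \in \mathbb{Z}$ and each $\sigma_i$ tempered on its Levi. Applying $\tr(-)(\varphi_{\pi_0})$ to both sides and invoking cuspidality of $\varphi_{\pi_0}$ annihilates every term with $P_i \subsetneq G$, leaving only contributions from irreducible essentially tempered representations of $G_\infty$.

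The central character condition on $A_{G,\infty}$ forces any essentially tempered contribution to be a tempered representation twisted by the fixed real character of $A_{G,\infty}$ determined by $\chi_0$, and such representations are basic in the sense of Clozel-Delorme. The defining property of $\varphi_{\pi_0}$ then gives $\tr_\tau(\varphi_{\pi_0}) = \delta_{\pi_0,\tau}$ on these, so the computation reduces to extracting the multiplicity of $\pi_0$ in the Grothendieck decomposition of $\rho$. Since $\pi_0$ is a discrete series, the standard module $I(G,\pi_0,0) = \pi_0$ is already irreducible, so inverting the upper-triangular Langlands change-of-basis matrix shows that $\pi_0$ appears in the decomposition of $\rho$ with coefficient exactly $\delta_{\pi_0,\rho}$.

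The main obstacle will be making cuspidality rigorously apply to essentially tempered parabolic inductions from proper Levis, not merely to strictly tempered ones; this should follow either from a continuity argument in $\nu_i$ or by directly verifying that $\varphi_{\pi_0}$ has vanishing constant term along every proper parabolic. The regularity of $\xi_0$ enters here: it guarantees that $\pi_0$ has regular infinitesimal character, which controls which essentially tempered irreducibles can possibly appear in the standard-module decomposition of $\rho$ and ultimately forces the Grothendieck multiplicity to collapse to the Kronecker delta.
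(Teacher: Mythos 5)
The first half of your argument—expand $\rho$ in the Grothendieck group in terms of basic/standard modules, kill the proper‑parabolic terms by cuspidality, and reduce to extracting the coefficient of $\pi_0$—matches the opening of the paper's proof. The last step is where you go wrong, and it is precisely the hard part of the lemma.

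The claim that ``inverting the upper‑triangular Langlands change‑of‑basis matrix shows that $\pi_0$ appears in the decomposition of $\rho$ with coefficient exactly $\delta_{\pi_0,\rho}$'' is false. Discrete series sit at the \emph{bottom} of the Langlands order (their continuous parameter is $\nu=0$), so when you write an irreducible $\rho = \bar I(\gamma)$ in the standard‑module basis, the expansion $[\rho] = [I(\gamma)] - \sum_{\gamma' < \gamma} n(\gamma')[I(\gamma')]$ runs \emph{downward}, and the discrete series can absolutely appear among the $I(\gamma')$. Concretely, for $G=\SL_2(\R)$ and $\rho = V_\xi$ the finite‑dimensional representation with the same infinitesimal character as the regular discrete‑series packet $\{D_k^+, D_k^-\}$, one has $[V_\xi] = [I(B,\chi)] - [D_k^+] - [D_k^-]$, so $m_{V_\xi}(D_k^\pm) = -1$ and $\tr_{V_\xi}(\varphi_{D_k^+}) = -1 \neq 0$. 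Your argument, as written, would ``prove'' this vanishes.

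What rescues the actual lemma is unitarity of $\rho$, which your proof never uses (and the lemma is only ever applied to automorphic, hence unitary, $\pi_\infty$). The paper's proof takes a genuinely different final step: it pairs $\rho$ against the Euler–Poincar\'e function $\eta_{\xi_0}$, using that for \emph{unitary} non‑tempered $\rho$ with regular $\xi_0$ the Euler characteristic vanishes, to obtain $\sum_{\rho' \in \Pi_\disc(\xi_0)} m_\rho(\rho') = 0$; it then invokes a sign‑uniformity statement—that the $m_\rho(\rho')$ for $\rho'$ in the packet all share the same sign, coming either from Salamanca‑Riba's classification of unitary representations with discrete‑series infinitesimal character or from parity vanishing in Kazhdan–Lusztig–Vogan theory—to conclude that each $m_\rho(\rho')$ is zero. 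This sign input is exactly what replaces your unsupported triangularity claim, and there is no way to get around needing something of this depth.
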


\begin{proof}
We thank David Vogan for this argument and note that all mistakes in this writeup are our own. 

The case $\rho = \pi_0$ follows immediately. Consider $\rho \neq \pi_0$. In the Grothendieck group, $\rho$ is a linear combination of basic representations with infinitesimal character matching $\pi_0$:
\[
\rho = \sum_{\rho' \text{ basic}} m_\rho(\rho') \rho'.
\]
Taking traces of both sides, $\tr_\rho(\varphi_{\pi_0}) = m_\rho(\pi_0)$. Now, taking the trace against an EP-function $\eta_{\xi_0}$:
\[
0 = \tr_\rho(\eta_{\xi_0}) =  \f1{|\Pi_\disc(\xi_0)|}\sum_{\rho' \in \Pi_\disc(\xi_0)} m_\rho(\rho')
\]
where $\Pi_\disc(\xi_0)$ is the $L$-packet for $\xi_0$. It therefore suffices to show that the $m_\rho(\rho')$ for $\rho' \in \Pi_\disc(\xi_0)$ all have the same sign. This would force them all to be $0$.  

The most direct way is to use the classification of all unitary representations with infinitesimal character of a discrete series from \cite{Sal99}. These are of the form of certain $A_\mf q(\lb)$ described in terms of Zuckerman functors. These have an explicit decomposition in the Grothendieck group through a version of Zuckerman's character formula proposition 9.4.16 in \cite{Vog81}: $\lb$ is a character on Levi $L_\infty$ so first get a character formula $\lb$ by twisting both sides of 9.4.16 for $L_\infty$ by $\lb$. Then cohomologically induce to get a character formula on $G_\infty$. Alternatively, by Kazhdan-Lusztig theory, the $m_\rho(\rho')$ are Euler characteristics of stalks of certain perverse sheaves. By theorem 1.12 in \cite{LV83} their cohomologies are either concentrated in even degree or odd degree. See the comments in the proof to corollary 4.6 in \cite{Vir15}, for example, for why this applies to $\C$ in addition to $\overline \F_p$. 
\end{proof}
Combining with computation \eqref{genpstops} (note that twisting by a character does not change the regularity of the discrete series) then gives:
\begin{cor}\label{genpstrace}
Let $\pi_0$ be a regular discrete series representation of $G_\infty$ with weight $\xi_0$. Let $f \in C_c^\infty(A_{G, \infty})$.  Then for any real representation $\rho$ of $G_\infty$, $\tr_{\rho}(\varphi_{\pi_0,f}) = f(\rho, \pi_0)$ where
\[
f(\pi, \pi_0) = \begin{cases} \wh f(\lb) & \pi = \pi_\lb \text{ for some } \lb \in \mf (a^*_{G_\infty})_\C \\ 0 & \text{else} \end{cases}.
\]
\end{cor}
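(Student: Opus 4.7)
The plan is to combine Lemma \ref{pstrace} directly with the manipulation in display \eqref{genpstops}. First, invoke the lemma immediately before \eqref{genpstops} to take $\varphi_{\pi_0, f} = f\,\varphi_{\pi_0}$, so
\[
\tr_\rho(\varphi_{\pi_0, f}) = \int_{G_\infty} f(g)\,\varphi_{\pi_0}(g)\,\Theta_\rho(g)\, dg.
\]
Use the splitting $G_\infty = G_\infty^1 \times A^0$ with $A^0 := A_{G, \infty}$, the fact that $f$ factors through $H_G$, and the transformation laws $\varphi_{\pi_0}(g_1 a) = \chi_0(a)^{-1}\varphi_{\pi_0}(g_1)$ and $\Theta_\rho(g_1 a) = \chi_\rho(a)\Theta_\rho(g_1)$ on $A^0$ to decouple the integral into a product of an $A^0$-integral and a $G_\infty^1$-integral.

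Writing $\chi_0 = e^{\mu_0(H_G(\cdot))}$ and $\chi_\rho = e^{\mu(H_G(\cdot))}$ for the central characters of $\pi_0$ and $\rho$ on $A^0$, the $A^0$-factor collapses to $\wh f(\mu - \mu_0)$, while the remaining factor is exactly $\tr_{\rho_{\mu_0 - \mu}}(\varphi_{\pi_0})$: the twist $\rho_{\mu_0 - \mu} := \rho \otimes e^{(\mu_0 - \mu)(H_G(\cdot))}$ has central character $\chi_0$ on $A^0$, and its character agrees with $\Theta_\rho$ on $G_\infty^1$. This is the same rearrangement used in \eqref{genpstops} and in the preceding lemma of section \ref{genps}, except that the Fourier weight now survives rather than being absorbed by an averaging against $\omega$.

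At this point Lemma \ref{pstrace} finishes the job: $\rho_{\mu_0 - \mu}$ is an irreducible real representation with central character $\chi_0$, and $\pi_0$ is regular by hypothesis (regularity depending only on the weight, which is unaffected by twisting by a central character of $A^0$---this is the point of the parenthetical in the hint). Hence $\tr_{\rho_{\mu_0 - \mu}}(\varphi_{\pi_0}) = \delta_{\pi_0}(\rho_{\mu_0 - \mu})$, which is nonzero exactly when $\rho \cong \pi_{0,\,\mu - \mu_0}$. Setting $\lb = \mu - \mu_0$ yields $\tr_\rho(\varphi_{\pi_0, f}) = \wh f(\lb)$ when $\rho \cong \pi_{0,\lb}$ and $0$ otherwise. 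The only point needing care is that the splitting produces $\tr_{\rho_{\mu_0 - \mu}}(\varphi_{\pi_0})$ rather than $\tr_\rho(\varphi_{\pi_0})$ (which would not make sense when $\chi_\rho \neq \chi_0$), and this is exactly the central-character bookkeeping built into \eqref{genpstops}; no substantive obstacle is expected beyond it.
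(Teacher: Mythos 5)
Your proof is correct and takes essentially the same route the paper intends: normalize $\varphi_{\pi_0,f}=f\varphi_{\pi_0}$ via the lemma (which, note, follows \eqref{genpstops} in the text rather than preceding it), split $G_\infty=G_\infty^1\times A_{G,\infty}^0$ to extract the Fourier factor $\wh f(\mu-\mu_0)$, and invoke Lemma~\ref{pstrace} on the central-character-corrected representation. One minor remark: since you twist $\rho$ rather than $\pi_0$, you apply Lemma~\ref{pstrace} to $\pi_0$ itself, which is regular by hypothesis, so the parenthetical ``twisting does not change regularity'' is not actually load-bearing in your variant; it would be needed only if one instead twisted $\pi_0$ to $(\pi_0)_{\lambda_0}$ with central character $\chi_\rho$ and applied the lemma to that pair.
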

A similar result holds for $f \in C_c^\infty(A_{G, \infty}/A_{G, \rat})$. 

This allows us to prove
\begin{prop}\label{ssidebig}
Let $\pi_0$ be a regular discrete series representation of $G_\infty$ with weight $\xi_0$ and character $\chi_0$ on $A_{G, \rat}$. Let $f \in C_c^\infty(A_{G, \infty}/A_{G, \rat})$. Then for all $\varphi^\infty \in \ms H(G^\infty)$:
\[
I^G_{\spec}(\varphi_{\pi_0, f} \otimes \varphi^\infty) = \sum_{\pi \in \mc{AR}_\disc(G, \chi_0)} m_\disc(\pi) f(\pi_\infty, \pi_0) \tr_{\pi^\infty}(\varphi^\infty)
\]
where
\[
f(\pi_\infty, \pi_0) = \begin{cases} \wh f(\lb) & \pi_\infty = (\pi_0)_\lb \text{ for some } \lb \in \mf (a^*_{G_\infty})_\C \\ 0 & \text{else} \end{cases}.
\]
\end{prop}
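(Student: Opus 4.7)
The plan is to apply the invariant trace formula directly, using cuspidality of $\varphi_{\pi_0,f}$ to eliminate the continuous side and using Vogan's lemma \ref{pstrace} to isolate on the discrete side the contribution of $\pi_0$ and its central twists. I would proceed in three steps.

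First, I would observe that $\varphi_{\pi_0,f} = f\varphi_{\pi_0}$ is cuspidal at infinity: this was already noted at the end of section \ref{genps}, since multiplication by the central factor $f$ preserves the vanishing of traces against parabolic inductions of tempered representations. Hence \cite[thm 7.1]{Art88} gives $I_\cts(\varphi_{\pi_0,f}\otimes\varphi^\infty)=0$, so $I_\spec = \sum_t I_{\disc,t}$.

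Second, I would examine each summand in \eqref{Idisc} and argue that only the $M=G$ term survives. For $M\neq G$, the operator $\mc I_{P,t}(\chi,\cdot)$ is built from parabolic inductions of constituents of $L^2_\disc(M(F)\bs M(\A),\chi_0|_{A_{M,\rat}})$. Factoring the test function, the trace at infinity is a convergent sum of $\tr_\rho(\varphi_{\pi_0,f})$ over irreducible unitary constituents $\rho$ appearing in these parabolic inductions at the infinite place. By corollary \ref{genpstrace} (itself a consequence of lemma \ref{pstrace} combined with the computation \eqref{genpstops}), each such trace vanishes unless $\rho=(\pi_0)_\lb$ for some $\lb\in(\mf a^*_{G_\infty})_\C$. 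But any such twist is again a discrete series of $G_\infty$ and so cannot appear as an irreducible constituent of a proper parabolic induction (by Langlands' classification, or equivalently Casselman's subrepresentation theorem, since discrete series are not subquotients of proper parabolically induced representations). Therefore the trace at infinity vanishes for $M\neq G$, killing those terms.

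Third, for $M=G$ the Weyl set $W(G)_\reg$ is trivial, the intertwining operator is the identity, and $\mc I_{G,t}(\chi_0,\cdot)$ is the projection onto the $t$-part of $L^2_\disc(G(F)\bs G(\A),\chi_0)$. Decomposing into irreducible $\pi = \pi_\infty\otimes\pi^\infty$, factoring the trace, summing over $t$ (with absolute convergence as cited after \eqref{Idisc}), and applying corollary \ref{genpstrace} at each $\pi_\infty$ yields exactly the right-hand side of the proposition.

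The main obstacle is the second step: justifying, at the level of trace-class operators and not merely formal sums, that the trace of $\varphi_{\pi_0,f}$ against the intertwined parabolic induction for $M\neq G$ decomposes into a convergent sum of irreducible traces so that the pointwise vanishing from corollary \ref{genpstrace} can be applied termwise. This is standard in Arthur's framework and ultimately relies on $\pi_0$ being a regular discrete series, so that no twist of $\pi_0$ occurs as a composition factor of a proper parabolic induction.
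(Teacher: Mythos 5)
Your overall architecture (cuspidality to restrict to $I_\disc$, isolate the $M=G$ term, use Corollary~\ref{genpstrace} there, cite \cite{FLM11} for absolute convergence of the sum over $t$) matches the paper, but the way you dispose of the proper-Levi terms diverges from the paper's argument and contains a genuine gap. The paper does not try to decompose $\tr(M_{P,t}(\omega)\,\mc I_{P,t}(f))$ into traces against irreducible constituents. Instead it invokes the structural fact, from \cite{Art89} (cf.\ the proof of \cite{Clo86}, lemma 1), that when $Q\neq G$ the quantity in question is a sum $\sum_\pi c_\pi\,\tr_\pi\bigl((\varphi_{\pi_0,f}\varphi^\infty)^1\bigr)$ in which the coefficients $c_\pi$ already vanish whenever the Archimedean infinitesimal character of $\pi$ is regular. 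Combined with the fact that $\varphi_{\pi_0,f}$ is supported on the (regular) infinitesimal character of $\pi_0$, every nonzero $\tr_\pi$ comes paired with $c_\pi=0$, and the term dies. This argument never needs to know anything about which irreducibles appear in proper parabolic inductions, and it already absorbs the intertwining operator into the $c_\pi$.

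Your alternative route has two problems. First, you assert that a twist of $\pi_0$ ``cannot appear as an irreducible constituent of a proper parabolic induction,'' citing the Langlands classification and Casselman's subrepresentation theorem. Both of those results go exactly the other way: Casselman's theorem says every irreducible admissible representation of $G_\infty$ --- discrete series included --- embeds into a (non-unitary) minimal principal series, and the Langlands classification realizes discrete series as subquotients of standard modules $I(P',\sigma',\lambda')$ with $P'\neq G$ in general. What you actually need is the much more specific statement that discrete series cannot occur as direct summands of \emph{unitary} parabolic inductions of unitary representations from proper Levis; that is true, but it is not a corollary of either theorem you cite and requires a separate argument (Plancherel theory or asymptotics of matrix coefficients). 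Second, even granting that fact, you still have to justify that $\tr(M_{P,t}(\omega)\,\mc I_{P,t}(f))$ is a convergent sum of $\tr_\rho(\varphi_{\pi_0,f}\varphi^\infty)$ over irreducible constituents, so that Corollary~\ref{genpstrace} can be applied termwise; you flag this but only assert it is ``standard.'' The intertwining operator genuinely mixes constituents, and Arthur's coefficient formulation is precisely what lets the paper's proof avoid addressing this decomposition at all.
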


\begin{proof}
This is simply a due-diligence check that none of the steps in the derivation of formula 3.5 in \cite{Art89} break. First, $\varphi_{\pi_0,f}$ being cuspidal gives 
\begin{align*}
I^G_\spec(\varphi) &= \sum_{t \geq 0} I^G_{\disc, t}(\varphi)\\
&=\sum_{t \geq 0} \sum_{L \in \ms L(G)} \f{|\Om_M|}{|\Om_G|} \\
& \qquad \sum_{s \in W^G(\mf a_L)_\reg} |\det(s-1)|_{\mf a_L/\mf a_G}|^{-1} \tr(M_{Q|Q}(s,0) \rho_{Q, t}(0, (\varphi_{\pi_k} \varphi^\infty)^1)),
\end{align*}
using that $G$ is connected. This uses a lot of the notation from \cite{Art89}. In particular, $\ms L(G)$ is the set of Levi subgroups of $G$, $Q$ is a parabolic for $L$, $M_{Q|Q}(s,0)$ is some intertwining operator, $\rho_{Q,t}$ is a sum of parabolically-induced representations from $Q$ with Archimedean infinitesimal character having imaginary part of norm $t$, and $(\varphi_\pi \varphi^\infty)^1$ is the restriction of the function to $G(\A)^1$. 

The full definition of the rest of the terms in the inner sum is unnecessary: the only detail Arthur uses is that when $Q \neq G$ it is a sum
\[
\sum_{\pi \in \mc{AR}(G)} c_\pi \tr_\pi((\varphi_{\pi_0, f} \varphi^\infty)^1)
\]
where the $c_\pi$ vanish whenever the Archimedean infintesimal character of $\pi$ is regular. However, a property of the pseudocoefficient $\varphi_{\pi_0,f}$ is that it is only supported on representations which have the same infinitesimal character as ${\pi_0}$ (similar to the the proof of \cite{Clo86} lemma 1). This character minus $\rho$ has to be regular. Therefore the sum is $0$.

For the leftover term, $Q=G$ so $L=G$ and $M_{Q|Q}(s,0)$ is trivial. This gives
\[
I^G_\disc(\varphi_{\pi_0,f} \otimes \varphi^\infty) = \sum_{t \geq 0} \tr \rho_{G, t}(0, (\varphi_{\pi_0,f} \varphi^\infty)^1).
\]
By its definition, $\rho_{G,t}(0)$ is the sum of all irreducible, discrete subrepresentations of the space $L^2(G(\Q) \bs G(\A)^1)$ with Archimedean infinitesimal character having imaginary part with norm $t$. Arthur's original argument for the sum over discrete representations converging absolutely does not work since there are now potentially infinitely many $t$ on which this trace is supported. However, absolute convergence is now known in general by \cite{FLM11}. 

Finally, $(\varphi_{\pi_0,f} \varphi^\infty)^1$ acting on $L^2(G(\Q) \bs G(\A)^1)$ is the same operator as $\varphi_{\pi_0,f} \varphi^\infty$ acting on $L^2(G(\Q) \bs G(\A), \chi_0)$. Therefore, summing over the representations that are actually subrepresentations of $L^2$,
\begin{align*}
I^G_\disc(\varphi_{\pi_0,f} \otimes \varphi^\infty) &= \sum_{\pi \in \mc{AR}_\disc(G, \chi_0)} m_\disc(\pi) \tr_\pi(\varphi_{\pi_0,f} \varphi^\infty) \\
&=  \sum_{\pi \in \mc{AR}_\disc(G, \chi_0)} m_\disc(\pi) \tr_{\pi_\infty}(\varphi_{\pi_0,f})\tr_{\pi^\infty}(\varphi^\infty).
\end{align*}
Corollary \ref{genpstrace} gives that $\tr_{\pi_\infty}(\varphi_{\pi_0,f}) = f(\pi_\infty, \pi_0)$ finishing the argument.
\end{proof}

Next, let $\varphi = \varphi_{\pi_0,f} \otimes \varphi^\infty$. Then
\[
I'_{\spec, \chi}(\varphi) = \f1{\vol(\mf X_F \bs \mf X/ A_{G, \rat})} \int_{\mf X_F \bs \mf X/ A_{G, \rat}} \chi(z) I_{\spec, \chi_0} (\varphi_z) \, dz.
\]
Computing
\begin{align*}
 I_{\spec, \chi_0} (\varphi_z) &= \sum_{\pi \in \mc{AR}_\disc(G, \chi_0)} m_\disc(\pi) \tr_{\pi}(\varphi_z) \\
 &= \sum_{\pi \in \mc{AR}_\disc(G, \chi_0)} m_\disc(\pi) \om_\pi^{-1}(z) \tr_\pi(\varphi)
\end{align*}
where $\om_\pi$ is the central character of $\pi$. Substituting this in and factoring out the sum and constants from the integral gives
\[
\int_{\mf X_F \bs \mf X/ A_{G, \rat}} \chi(z) \om^{-1}_\pi(z) \, dz = \begin{cases} \vol(\mf X_F \bs \mf X/ A_{G, \rat})  & \chi = \om_\pi|_\mf X \\ 0 & \text{else} \end{cases}.
\]
Therefore, a lot of terms in the sum go to $0$. Finally, since $\pi^\infty$ has central character $\chi^\infty$, it can be traced against functions in $\ms H(G^\infty, \chi^\infty)$. By definition
\[
\tr_{\pi^\infty}(\varphi^\infty) = \tr_{\pi^\infty}(\overline{(\varphi^\infty)}_{\chi^\infty}).
\]
Putting it all together,
\begin{cor}
Let $\pi_0$ be a regular discrete series representation of $G_\infty$ with weight $\xi_0$ and character $\chi_0$ on $A_{G, \rat}$. Let $f \in C_c^\infty(A_{G, \infty}, \chi_0)$. Then for all $\varphi^\infty \in \ms H(G^\infty)$:
\[
I'_{\spec, \chi}(\varphi_{\pi_0,f} \otimes \varphi^\infty) = \sum_{\pi \in \mc{AR}_\disc(G, \chi)} m_\disc(\pi) f(\pi_\infty, \pi_0) \tr_{\pi^\infty}(\overline{(\varphi^\infty)}_{\chi^\infty})
\]
(where we only sum over automorphic representations with the correct central character on all of $\mf X$ instead of just $A_{G, \rat}$).
\end{cor}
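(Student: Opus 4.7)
The proof is essentially a direct computation assembling the identities developed just above the corollary, so my plan is to make that derivation explicit and then verify that the orthogonality step does exactly what is claimed. First, I will unfold the definition
\[
I'_{\spec, \chi}(\varphi) = \frac{1}{\vol(\mf X_F \bs \mf X / A_{G,\rat})} \int_{\mf X_F \bs \mf X / A_{G,\rat}} \chi(z)\, I_{\spec, \chi_0}(\varphi_z)\, dz
\]
applied to $\varphi = \varphi_{\pi_0,f} \otimes \varphi^\infty$, and use that $\varphi_z$ is still cuspidal at $\infty$ (since right-translation by a central element preserves cuspidality of the Archimedean component) to replace $I_{\spec, \chi_0}$ by $I_{\disc, \chi_0}$, so that Proposition \ref{ssidebig} applies term-by-term to $\varphi_z$.

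Second, I will observe that for $z \in \mf X$ and $\pi \in \mc{AR}_\disc(G, \chi_0)$ one has $\tr_\pi(\varphi_z) = \om_\pi^{-1}(z)\tr_\pi(\varphi)$, since the representation $\pi$ has a well-defined central character $\om_\pi$ on $Z(\A)$, and in particular on $\mf X$. Substituting Proposition \ref{ssidebig} into the integrand, the sum over $\pi$ factors out of the integral, leaving
\[
I'_{\spec, \chi}(\varphi) = \sum_{\pi \in \mc{AR}_\disc(G, \chi_0)} m_\disc(\pi)\, f(\pi_\infty, \pi_0)\, \tr_{\pi^\infty}(\varphi^\infty) \cdot \frac{1}{\vol(\mf X_F \bs \mf X / A_{G,\rat})} \int_{\mf X_F \bs \mf X / A_{G,\rat}} \chi(z)\om_\pi^{-1}(z)\, dz.
\]
Here I need to justify interchanging sum and integral; this is fine because the $\pi$-sum was already shown to converge absolutely in the proof of Proposition \ref{ssidebig} (via \cite{FLM11}), and the integration is over a compact quotient.

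Third, orthogonality of unitary characters on the compact group $\mf X_F \bs \mf X / A_{G,\rat}$ gives that the inner integral is $\vol(\mf X_F \bs \mf X / A_{G,\rat})$ if $\chi = \om_\pi|_\mf X$ and zero otherwise. Thus only those $\pi$ whose central character agrees with $\chi$ on all of $\mf X$ survive, which is exactly the condition $\pi \in \mc{AR}_\disc(G, \chi)$. Finally, since for any such $\pi$ the Archimedean-away component $\pi^\infty$ has central character $\chi^\infty$ on $\mf X^\infty$, one has $\tr_{\pi^\infty}(\varphi^\infty) = \tr_{\pi^\infty}(\overline{(\varphi^\infty)}_{\chi^\infty})$ by definition of the averaging map (cf.\ Lemma \ref{functiontruncation} and the surrounding discussion). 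Putting these three steps together yields the claimed formula.

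I expect no real obstacles: the content is Proposition \ref{ssidebig} plus character orthogonality. The only mildly delicate point is ensuring the sum-integral interchange, which is handled by absolute convergence of the discrete part as noted above, and ensuring that the restriction $\om_\pi|_{\mf X}$ is well-defined for $\pi \in \mc{AR}_\disc(G, \chi_0)$, which is automatic since $\mf X \subseteq Z(\A)$.
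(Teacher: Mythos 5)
Your proposal is correct and follows essentially the same route as the paper: unfold the definition of $I'_{\spec,\chi}$, apply the spectral expansion from Proposition \ref{ssidebig} to $\varphi_z$ (using that cuspidality and support on regular infinitesimal characters are preserved under central translation), substitute $\tr_\pi(\varphi_z) = \om_\pi^{-1}(z)\tr_\pi(\varphi)$, and finish with character orthogonality on the compact quotient. Your explicit justification of the sum--integral interchange via the absolute convergence from \cite{FLM11} is a small point the paper leaves implicit, but it is not a different argument.
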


Finally, the same arguments as in \autoref{ssidebig} again work for the terms in equation \eqref{Idisc} giving that for $\varphi^\infty \in \ms H(G^\infty, \chi^\infty)$,
\[
I_{\spec, \chi}(\varphi_{\pi_0} \otimes \varphi^\infty) = \f1{\vol(\mf X^1_\infty)} \sum_{\pi \in \mc{AR}_\disc(G, \chi)} m_\disc(\pi) \delta_{\pi_0,\pi_\infty} \tr_{\pi^\infty}(\varphi^\infty)
\]
where we factor $\mf X_\infty = \mf X^1_\infty \times A_{G, \infty}$. Sanity checking dimensions here, we need
\[
[G(\A)][\mf X]^{-1} [G_\infty]^{-1} [A_{G, \infty}] = [\mf X_\infty/A_{G, \infty}]^{-1} [G^\infty][\mf X^\infty]
\]
which holds. 

Putting everything together:
\begin{prop}\label{ssidegenps}
Let $\pi_0$ be a regular discrete series representation of $G_\infty$ with weight $\xi_0$ and that matches character $\chi$ on $\mf X$. Let $f \in C_c^\infty(A_{G, \infty}/A_{G, \rat})$ and $\varphi^{\infty_1} \in \ms H(G^\infty, \chi^\infty)$ such that $\overline{(\varphi^{\infty_1})}_\chi = \varphi^\infty$. Then:
\begin{multline*}
\vol(\mf X^1_\infty) I_{\spec, \chi}(\varphi_{\pi_0} \otimes \varphi^\infty) =   \\
\sum_{\pi \in \mc{AR}_\disc(G, \chi)} m_\disc(\pi) \delta_{\pi_0,\pi_\infty} \tr_{\pi^\infty}(\varphi^\infty) \\
=\f1{\wh f(0)}I'_{\spec, \chi}(\varphi_{\pi_0,f} \otimes \varphi^{\infty_1}).
\end{multline*}
\end{prop}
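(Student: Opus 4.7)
My plan is to obtain the proposition by packaging together the two displayed identities in the paragraphs immediately preceding its statement. The first equality is essentially a rewriting of the formula for $I_{\spec, \chi}(\varphi_{\pi_0} \otimes \varphi^\infty)$ derived just above the proposition, while the second equality requires applying the corollary preceding the proposition and then observing that the central-character hypothesis forces the Fourier-transform sum to collapse to a single term.

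For the first equality, I would simply invoke the formula
\[
I_{\spec, \chi}(\varphi_{\pi_0} \otimes \varphi^\infty) = \frac{1}{\vol(\mf X^1_\infty)}\sum_{\pi \in \mc{AR}_\disc(G, \chi)} m_\disc(\pi)\, \delta_{\pi_0, \pi_\infty}\, \tr_{\pi^\infty}(\varphi^\infty)
\]
obtained in the paragraph just above the statement (a copy of the argument of Proposition \ref{ssidebig}, run with $I_{\spec, \chi}$ via equation \eqref{Idisc}). Multiplying through by $\vol(\mf X^1_\infty)$ yields the left-hand equality.

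For the second equality, I would apply the corollary preceding the proposition to the function $\varphi^{\infty_1}$, using the hypothesis $\overline{(\varphi^{\infty_1})}_\chi = \varphi^\infty$ to identify the finite-place trace on the right-hand side. This gives
\[
I'_{\spec, \chi}(\varphi_{\pi_0,f} \otimes \varphi^{\infty_1}) = \sum_{\pi \in \mc{AR}_\disc(G, \chi)} m_\disc(\pi)\, f(\pi_\infty, \pi_0)\, \tr_{\pi^\infty}(\varphi^\infty),
\]
where $f(\pi_\infty, \pi_0) = \wh f(\lb)$ if $\pi_\infty = (\pi_0)_\lb$ for some $\lb \in (\mf a^*_G)_\C$ and is zero otherwise. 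The key step is to observe that only $\lb = 0$ contributes in this sum. Indeed, since by assumption $A_{G, \infty} \subseteq \mf X$, the condition $\pi \in \mc{AR}_\disc(G, \chi)$ forces the central character of $\pi_\infty$ on $A_{G, \infty}$ to equal that of $\pi_0$ (which also matches $\chi$ on $\mf X$ by hypothesis). But $(\pi_0)_\lb$ has central character $\om_{\pi_0}\cdot e^{\lb \circ H_G}$ on $A_{G, \infty}$, so $e^{\lb \circ H_G}$ must be trivial on $A_{G, \infty}$; since $H_G$ identifies $A_{G, \infty}^0$ with $\mf a_*^G$, this forces $\lb = 0$ and hence $\pi_\infty = \pi_0$ with $f(\pi_\infty, \pi_0) = \wh f(0)$. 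Dividing through by $\wh f(0)$ then reproduces the middle expression.

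I do not expect any serious obstacle; the real work has already been done in establishing Proposition \ref{ssidebig}, Corollary \ref{genpstrace}, and the $I'_{\spec, \chi}$-version corollary preceding the statement. The only point that merits care is the central-character bookkeeping in the previous paragraph---verifying that the ellipticity-style condition $A_{G, \infty} \subseteq \mf X$ is exactly what is needed to collapse the sum over twists to the single term $\lb = 0$. A brief sanity check that $\wh f(0) \neq 0$ is harmless whenever $f$ is chosen positive, which is the only case of interest.
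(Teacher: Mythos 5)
Your proposal is correct and matches the paper's own argument: the first equality is the displayed formula in the paragraph just above the statement multiplied by $\vol(\mf X_\infty^1)$, and the second comes from the corollary preceding the proposition together with the observation (which the paper records in its one-line remark after the statement) that $A_{G,\infty}\subseteq\mf X$ forces the twist parameter $\lb$ to vanish, so $f(\pi_\infty,\pi_0)=\wh f(0)\,\delta_{\pi_0,\pi_\infty}$. You have simply unpacked that remark into the central-character comparison on $A_{G,\infty}$ via $H_G$, which is exactly what is intended.
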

The second equality uses that for any $\pi_\lb \in \mc{AR}_\disc(G, \chi)$, $\lb = 0$. We fix $\varphi^\infty$ to be dimensionless and normalize $\varphi^{\infty_1}$ by it. Therefore, the dimensions are all $[G^\infty][\mf X^\infty]^{-1}$.

\subsection{Geometric Side with Central Character}
\subsubsection{Vanishing of $I^G_{M, \infty}(\gamma, \psi)$}\label{cuspvanishing}
We explicitly describe all the implicit vanishing arguments in \cite{Art89} for the ease of the reader. Assume $\psi$ is some cuspidal function. First, by lemma \ref{centertechnicality}, $I^G_M$ vanishes unless $M \in \ms L^\cusp$; i.e., unless $A_{M, \rat}/A_{G, \rat} = A_{M, \infty}/A_{G, \infty}$. Furthermore, in this case, $I^G_{M, \infty}(\gamma, \psi) = \td I^G_{M, \infty}(\gamma, \psi)$. 

Furthermore, as explained in the summary \cite[\S24]{Art05}, unless $\gamma$ is elliptic in $M$ over $\infty$, it is contained in a smaller Levi at $\infty$, so the descent formula to the smaller Levi shows that $\td I^G_{M, \infty}(\gamma, \psi)$ vanishes. The main result of \cite{Art76} also gives this.

\subsubsection{Computation of $I_{\geom, \chi_0}$}
Next, we compute the geometric side. Let $\Pi_\disc(\lb)$ be a regular discrete series $L$-packet for $G_\infty$ consistent with $\chi$ and $f \in C_c^\infty(A_{G, \infty}/A_{G, \rat})$. We again try to mimic Arthur's arguments. Cuspidality of $\eta_{\lb, f}$ and the splitting formulas reduce the geometric side to 
\[
I_{\geom, \chi_0}(\eta_{\lb, f} \otimes \varphi^\infty) = \sum_{M \in \ms L} \f{|\Om_M|}{|\Om_G|} \sum_{\gamma \in [M(\Q)]_{M,S}} a^M(S, \gamma) I^G_M(\gamma_\R, \eta_{\lb,f}) O^M_\gamma(\varphi^\infty_M).
\]
Define for $\psi \in C_c^\infty(G_\infty, \chi)$:
\[
\Phi_M(\gamma_\R, \psi) = |D^M(\gamma)|^{-1/2} \td I^G_M(\gamma, \psi),
\]
By the previous subsubsection, we can without loss of generality set $\Phi_M(\gamma, \psi) = 0$ if $M$ is not cuspidal over $\R$

For $L$-packet $\Pi_\disc(\lb)$, and elliptic regular $\gamma \in M_\infty$,
\[
\Phi_M(\gamma, \lb) = (-1)^{q(G)} |D^G_M|^{1/2} \sum_{\pi \in \Pi_\disc(\lb)} \Theta_\pi(\gamma).
\]
Arthur shows that $\Phi_M(\gamma, \lb)$ can be extended by continuity to all elements in elliptic maximal tori. Define it to be $0$ for other elements to extend it to all of $M_\infty$; in particular, to non-semisimple elements. 

Next, we need a defintion
\begin{dfn}
Let $\chi$ be a character on $A_{G, \infty}$. A cuspidal function $\psi \in C_c^\infty(G_\infty, \chi)$ is \emph{stable cuspidal} if its trace is supported on discrete series and constant on $L$-packets.
\end{dfn}
Note that Euler-Poincar\'e functions are stable cuspidal. Part of the main result of \cite{CD90} gives that Euler-Poincar\'e functions are also $K$-finite. 

As some notation for the next step, if $H$ is a reductive group over $\R$, let $\overline H$ be the compact form of $H$.  Any Haar measure on $H$ comes from a differential form on $H_\C$ and therefore induces a Haar measure on $\overline H$. Then:
\begin{thm}[{\cite[thm 5.1]{Art89} slightly rephrased}]
Let $\chi$ be a character on $A_{G, \infty}$ and $\varphi \in C_c^\infty(G_\infty, \chi)$ be stable cuspidal and $K$-finite. Then for any $\gamma \in M_\infty$
\[
\Phi_M(\gamma, \varphi) = (-1)^{\dim(A_M/A_G)} \nu(I^M_\gamma)^{-1} \sum_{\substack{\lb \chi^{-1} \in X^*_\C(T) \\ \lb \text{ matches } \chi}}  \Phi_M(\gamma, \lb) \tr_{\lb^\vee}(\varphi):
\]
where $\nu(M_\gamma) = (-1)^{q(G)}\vol(\bar I^M_{\gamma,\infty}/A_{I^M_\gamma,\infty}) |\Om(B_{K_{I^M_{\gamma,\infty}}})|^{-1}$.
\end{thm}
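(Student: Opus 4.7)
The plan is to reduce this statement to Arthur's original Theorem 5.1 of \cite{Art89}, which treats test functions on $G_\infty$ without a fixed central character, via a lifting and central-character-averaging argument. As a preliminary reduction, the vanishing discussion of section \ref{cuspvanishing} shows both sides are zero unless $M \in \ms L^\cusp$ (by lemma \ref{centertechnicality}) and $\gamma$ lies in an elliptic maximal torus of $M_\infty$ (by descent, together with the definition extending $\Phi_M(\gamma, \lb)$ by zero off elliptic tori). Using Arthur's continuity of $\Phi_M(\gamma, \cdot)$ on elliptic tori, it suffices to treat $\gamma$ elliptic regular in $M_\infty$ with $M$ cuspidal over $\R$, so that $A_{M,\rat}/A_{G,\rat} = A_{M,\infty}/A_{G,\infty}$.

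Next, by lemma \ref{functiontruncation}, choose $\td\varphi \in C_c^\infty(G_\infty)$ with $\overline{\td\varphi}_\chi = \varphi$, arranging $\td\varphi$ also to be stable cuspidal and $K$-finite by taking as the cutoff in the construction of lemma \ref{functiontruncation} a $K$-invariant smooth function supported in a fundamental domain for $A_{G,\infty}/A_{G,\rat}$; neither property is affected because the trace of $\td\varphi$ against a discrete series $\pi$ differs from that of $\varphi$ only by an evaluation of the central character on $A_{G,\infty}$, and $K$-finiteness is preserved by multiplication by $K$-invariant functions. Arthur's original theorem applied to $\td\varphi$ yields
\[
\Phi_M(\gamma, \td\varphi) = (-1)^{\dim(A_M/A_G)} \nu'(I^M_\gamma)^{-1} \sum_\mu \Phi_M(\gamma, \mu) \tr_{\mu^\vee}(\td\varphi),
\]
where $\mu$ runs over dominant weights in $X^*_\C(T)$ and $\nu'$ denotes the analogous normalization without central character. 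I then average both sides against $\chi(z)\,dz$ for $z \in A_{G,\infty}/A_{G,\rat}$. On the geometric side, a change of variables in the weighted orbital integral gives $\Phi_M(\gamma, \td\varphi_z) = \Phi_M(\gamma z, \td\varphi)$, and centrality of $z$ collapses this average to $\Phi_M(\gamma, \varphi)$ up to a volume factor. On the spectral side, $\tr_{\mu^\vee}(\td\varphi_z) = \om_{\mu^\vee}(z)^{-1}\tr_{\mu^\vee}(\td\varphi)$, so orthogonality of unitary characters on $A_{G,\infty}/A_{G,\rat}$ annihilates every $\mu$ whose central character on $A_{G,\infty}$ does not match $\chi$. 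The surviving indices are exactly those $\mu = \lb$ with $\lb\chi^{-1} \in X^*_\C(T)$, and for these the averaging converts $\tr_{\lb^\vee}(\td\varphi)$ into $\tr_{\lb^\vee}(\varphi)$.

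The main obstacle I anticipate is the careful bookkeeping of the volume normalizations. The constant $\nu(I^M_\gamma)$ in the claim is defined via $\vol(\bar I^M_{\gamma,\infty}/A_{I^M_\gamma,\infty})$, and the conversion from $\nu'$ to $\nu$ requires tracing through the decomposition $I^M_{\gamma,\infty} = I^M_{\gamma,\infty}^1 \times A_{I^M_\gamma,\infty}$ and matching the leftover volumes of $A_{G,\infty}/A_{G,\rat}$ and $A_{I^M_\gamma,\infty}/A_{G,\infty}$ that arise on the two sides of the averaged identity. The assumption $M \in \ms L^\cusp$ is precisely what makes these match: it guarantees that the split components used in the averaging on the geometric side and in Arthur's original normalization are compatible. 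After verifying this last point, the identity of the two sides follows, completing the sketch.
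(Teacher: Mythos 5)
The paper does not actually prove this statement — it cites it as Arthur's \cite[thm 5.1]{Art89} ``slightly rephrased'' and immediately notes the only substantive content, namely the correction changing $A_{I^M_\gamma,\rat}$ to $A_{I^M_\gamma,\infty}$ and $I^G_M$ to $\td I^G_M$, pointing to \cite{GKM97} for details. Your attempt to \emph{re-derive} the theorem by lifting to a compactly supported test function and averaging over $A_{G,\infty}/A_{G,\rat}$ is therefore a genuinely different route, but it has two significant problems.

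First, the premise is wrong. Arthur's Theorem 5.1 is already stated for test functions that transform under a character of $A_G(\R)^0$ (what the paper calls $A_{G,\infty}$), i.e.\ for $\varphi$ in $C_c^\infty(G_\infty,\chi)$. Pseudocoefficients and Euler--Poincar\'e functions naturally live in this space, not in $C_c^\infty(G_\infty)$; and in Arthur's setting $G$ is cuspidal so $A_{G,\rat}=A_{G,\infty}$ and there is no averaging step to perform. You are proposing to ``extend'' a theorem that already covers the case at hand, so the lifting machinery of lemma~\ref{functiontruncation} is not buying anything here.

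Second, and more seriously, the averaging argument does not touch the actual subtlety that the paper flags. Even for cuspidal $G$ (so $A_{G,\rat}=A_{G,\infty}$), the \emph{centralizer} $I^M_\gamma$ need not be cuspidal, so $A_{I^M_\gamma,\rat}$ can be a proper subgroup of $A_{I^M_\gamma,\infty}$. Arthur's original normalization $\nu(M_\gamma)$ is written in terms that conflate these, and similarly his $I^G_M(\gamma,\cdot)$ conflates the rational-split and $\R$-split versions distinguished in lemma~\ref{centertechnicality}. These discrepancies live on $I^M_\gamma$ and on the weight factors $v_M$ — not on $A_{G,\infty}/A_{G,\rat}$ — so integrating $\td\varphi_z$ against $\chi(z)\,dz$ over $A_{G,\infty}/A_{G,\rat}$ cannot detect or repair them. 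Your ``main obstacle I anticipate'' paragraph gestures at reconciling $\nu'$ with $\nu$ but identifies the wrong pair of split tori; the mismatch that needs fixing is inside the centralizer, and the place where it is actually fixed is the descent/splitting analysis of \cite[§7]{GKM97} (or lemma~\ref{centertechnicality} in this paper), not a central-character unfolding.

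If you want to give an honest proof of the stated theorem rather than a citation, the missing content is precisely the argument that, for cuspidal $\varphi$, $I^G_{M,\infty}(\gamma,\varphi)$ coincides with $\td I^G_{M,\infty}(\gamma,\varphi)$ when $M\in\ms L^\cusp$ and vanishes otherwise (lemma~\ref{centertechnicality}), together with the corresponding correction to Arthur's $\nu(M_\gamma)$. That is orthogonal to any central-character averaging.
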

Note that there is a correction here changing $A_{I^M_\gamma,\rat}$ to $A_{I^M_\gamma,\infty}$ and using $\td I^G_M$ instead of $I^G_M$. (see the end of \cite[\S7]{GKM97}).   

Since lemma \ref{genpstops} gives that without loss of generality, $\eta_{\lb,f} = f\eta_\lb$, we recall the following rephrasing of a fact used in deriving the invariant trace formula:
\begin{lem}
Let $f = f_1 \circ H_{G_\infty}$ be a function on $G_\infty/A_{G, \rat}$ where $f_1$ is a function on $C_c^\infty(A_{G, \infty}/A_{G, \rat})$. Let $\varphi$ be any function on $G_\infty$ compactly supported mod center. Then for any $\gamma \in G_\infty$ and Levi $M$
\[
\td I^G_M(\gamma, f \varphi) = f(\gamma) \td I^G_M(\varphi).
\]
\end{lem}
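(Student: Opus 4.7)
The essential observation is that $H_G := H_{G_\infty}$ is a continuous group homomorphism from $G_\infty$ into the abelian group $\mf a_{G,\infty}^*$, so for all $g,\gamma \in G_\infty$ one has
\[
H_G(g^{-1}\gamma g) = -H_G(g) + H_G(\gamma) + H_G(g) = H_G(\gamma).
\]
Consequently $f = f_1 \circ H_G$ is conjugation invariant: $f(g^{-1}\gamma g) = f(\gamma)$. Moreover $H_G$ vanishes on unipotent elements, so if $\gamma = \gamma_s \gamma_u$ is the Jordan decomposition then $f(\gamma) = f(\gamma_s)$.

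First I would treat the case of a strongly $G$-regular semisimple $\gamma \in M_\infty$ lying in an elliptic maximal torus, where $\td I^G_{M,\infty}(\gamma,\varphi)$ is given directly as Arthur's weighted orbital integral
\[
\td I^G_{M,\infty}(\gamma,\varphi) = |D^G(\gamma)/D^M(\gamma)|^{1/2} \int_{I^M_\gamma(\R)\bs G_\infty} \varphi(g^{-1}\gamma g)\, v_M(g)\, dg,
\]
where the weight factor $v_M(g)$ depends only on $g$ (it is the volume of a convex hull in $\mf a_{M,\infty}/\mf a_{G,\infty}$, which is the space singled out by the tilde version of $I^G_M$ appearing in the descent formula). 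Multiplying $\varphi$ by $f$ replaces the integrand by $f(g^{-1}\gamma g)\,\varphi(g^{-1}\gamma g)\, v_M(g)$, and conjugation invariance of $f$ pulls the constant $f(\gamma)$ outside the integral, giving the identity.

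To pass from strongly regular semisimple $\gamma$ to arbitrary $\gamma$, I would invoke the extension of $\td I^G_M(\gamma,-)$ via the Shalika germ expansion centred at $\gamma_s$, together with analytic continuation along a maximal torus through $\gamma_s$. Both constructions express $\td I^G_M(\gamma,\varphi)$ as a limit of weighted orbital integrals of $\varphi$ attached to regular semisimple approximations $\gamma'$ of $\gamma_s$. Since $f$ is smooth and $f(\gamma') = f(\gamma'_s)$ depends continuously on $\gamma'_s$, the semisimple identity $\td I^G_M(\gamma',f\varphi) = f(\gamma')\,\td I^G_M(\gamma',\varphi)$ propagates to the limit and yields $\td I^G_M(\gamma,f\varphi) = f(\gamma_s)\,\td I^G_M(\gamma,\varphi) = f(\gamma)\,\td I^G_M(\gamma,\varphi)$. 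The only potential obstacle is checking that multiplication by $f$ preserves the function-space hypotheses needed to define $\td I^G_M(\gamma,-)$ on non-semisimple $\gamma$; this is automatic because $f$ is smooth and depends only on $H_G$, so $f\varphi$ inherits all the compact-support-mod-center and smoothness properties of $\varphi$.
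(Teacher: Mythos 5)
Your proof is a genuinely different route from the paper's, and the regular semisimple case of your argument is correct and nicely motivated: observing that $H_{G_\infty}$ is a homomorphism into an abelian group, hence that $f = f_1 \circ H_{G_\infty}$ is invariant under conjugation, and then pulling $f(\gamma)$ out of the weighted orbital integral is exactly right. The paper instead dispatches the whole statement in one sentence by citing Remark 4 after Theorems 23.2 and 23.3 of \cite{Art05}, which asserts a support property of the distribution: $\td I^G_M(\gamma, \psi)$ depends only on the values of $\psi$ on the set $\{g : H_{G_\infty}(g) = H_{G_\infty}(\gamma)\}$. On that set $f$ is literally constant (equal to $f(\gamma)$), so $f\varphi = f(\gamma)\varphi$ there and the result follows with no case analysis at all. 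The trade-off: the paper's argument is shorter and handles all $\gamma$ uniformly at the cost of invoking an off-the-shelf remark, while yours makes the conjugation-invariance mechanism explicit, which is pedagogically valuable.

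However, your extension to non-regular $\gamma$ has a real gap rather than a routine verification. When $\td I^G_M(\gamma, \cdot)$ is defined by Arthur's limit formulas, the approximating elements are of the form $a\gamma$ with $a$ in a torus tending to $1$, and for such elements $H_{G_\infty}(a\gamma) = H_{G_\infty}(a) + H_{G_\infty}(\gamma) \neq H_{G_\infty}(\gamma)$ during the approach. Thus the regular-case identity gives $\td I^L(a\gamma, f\varphi) = f(a\gamma)\,\td I^L(a\gamma,\varphi)$ with $f(a\gamma) \neq f(\gamma)$; passing to the limit then requires controlling the product $r^L_M(\gamma,a)\,\bigl(f(a\gamma)-f(\gamma)\bigr)\,\td I^L(a\gamma,\varphi)$ as $a\to 1$, where the coefficients $r^L_M(\gamma,a)$ can blow up logarithmically. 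Continuity of $f$ alone does not obviously suffice; one would need some quantitative cancellation or uniformity that you haven't supplied. The support property cited by the paper sidesteps this entirely because it does not introduce any auxiliary elements $a\gamma$ with different $H_{G_\infty}$-values. If you wanted to keep your approach you could instead argue via the support property itself (whose proof is again essentially your conjugation-invariance observation applied inside Arthur's construction), but as written the limiting step is not justified.
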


\begin{proof}
Remark 4 after theorems 23.2 and 23.3 in \cite{Art05} gives that $\td I^G_M(\gamma, f \varphi)$ only depends on the values of $f\varphi$ on $g \in G_\infty$ with the same image as $\gamma$ under $H_{G_\infty}$. On this set $f$ is constant so the result follows. 
\end{proof}

In particular,  keeping in mind our normalization for EP-functions, for any $\gamma \in G_\infty$:
\[
|\Pi_\disc(\lb)|\Phi_M(\gamma, \eta_{\lb, f}) = f(\gamma) \Phi_M(\gamma, \eta_\lb) = (-1)^{\dim(A_M/A_G)} f(\gamma) \nu(M_\gamma)^{-1} \Phi_M(\gamma, \lb),
\]
so following the computation in \cite{Art89} section 6 gives:
\begin{cor}\label{geomsidebig}
Let $\lb_0$ be weight consistent with $\chi_0$ and $f \in C_c^\infty(A_{G, \infty}/A_{G, \rat})$. Then
\begin{multline*}
|\Pi_\disc(\lb_0)| I_{\geom, \chi_0}(\eta_{\lb_0, f} \otimes \varphi^\infty) = \sum_{M \in \ms L^\cusp} (-1)^{\dim(A_M/A_G)} \f{|\Om_M|}{|\Om_G|} \\
 \sum_{\gamma \in [M(F)]^\ssm} \chi(I^M_\gamma) |\iota^M(\gamma)|^{-1} f(\gamma) \Phi_M(\gamma, \lb_0) O^M_\gamma(\varphi^\infty_M)
\end{multline*}
where
\[
\chi(I^M_\gamma) = \f{\vol(I^M_\gamma(F)  \bs I^M_\gamma(\A)/ A_{I^M_\gamma, \rat})}{\vol(\bar I^M_{\gamma, \infty}/A_{I^M_\gamma, \infty})}|\Om(B_{K_{I^M_{\gamma,\infty}}})|
\]
and $\iota^M(\gamma)$ is the set of connected components of $M_\gamma$ that have an $F$-point.
\end{cor}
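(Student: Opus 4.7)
The plan is to mimic the computation in \cite[\S6]{Art89} with the small modifications needed to accommodate the central character datum and the factor $f$. First I would start from the spelled-out geometric side
\[
I_{\geom, \chi_0}(\eta_{\lb_0,f} \otimes \varphi^\infty) = \sum_{M \in \ms L} \f{|\Om_M|}{|\Om_G|} \sum_{\gamma \in [M(\Q)]_{M,S}} a^M(S,\gamma)\, I^G_M(\gamma_\R, \eta_{\lb_0,f})\, O^M_\gamma(\varphi^\infty_M),
\]
which is valid because $\eta_{\lb_0,f}$ is cuspidal (the same argument as for ordinary EP-functions: it vanishes on parabolic inductions of tempered representations, as noted in section \ref{genps}). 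Cuspidality at $\infty$ then lets me invoke the vanishing discussion of section \ref{cuspvanishing}: $I^G_{M,\infty}(\gamma, \eta_{\lb_0,f})$ is zero unless $M \in \ms L^\cusp$ and $\gamma_\infty$ is elliptic in $M_\infty$, and in that case equals $\td I^G_{M,\infty}(\gamma,\eta_{\lb_0,f})$ by lemma \ref{centertechnicality}. In particular, only semisimple $\gamma$ survive, so the complicated equivalence relation defining $[M(\Q)]_{M,S}$ collapses to ordinary conjugacy and $a^M(S,\gamma) = |\iota^M(\gamma)|^{-1}\vol(I^M_\gamma(F)\bs I^M_\gamma(\A)^1)$.

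Next I would use $\eta_{\lb_0,f} = f\,\eta_{\lb_0}$ (the small lemma at the end of section \ref{genps}) and apply the lemma
\[
\td I^G_M(\gamma, f\varphi) = f(\gamma)\,\td I^G_M(\gamma,\varphi)
\]
to pull out $f(\gamma)$. Since $\eta_{\lb_0}$ is stable cuspidal and $K$-finite, Arthur's \cite[thm 5.1]{Art89} (corrected version, using $A_{I^M_\gamma,\infty}$) gives
\[
\Phi_M(\gamma,\eta_{\lb_0}) = (-1)^{\dim(A_M/A_G)}\nu(I^M_\gamma)^{-1}\sum_{\lb} \Phi_M(\gamma,\lb)\,\tr_{\lb^\vee}(\eta_{\lb_0})
\]
with $\lb$ running over weights matching $\chi$. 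By the trace identity for EP-functions (with the normalization chosen in section \ref{dseries}), $\tr_{\lb^\vee}(\eta_{\lb_0}) = |\Pi_\disc(\lb_0)|^{-1}\delta_{\lb,\lb_0}$, so only a single term survives and
\[
|\Pi_\disc(\lb_0)|\Phi_M(\gamma,\eta_{\lb_0,f}) = (-1)^{\dim(A_M/A_G)} f(\gamma)\,\nu(I^M_\gamma)^{-1}\Phi_M(\gamma,\lb_0).
\]

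Finally I would substitute this back, reinsert $|D^G_M(\gamma)|^{-1/2}$ via $\td I^G_M = |D^M(\gamma)|^{1/2}\Phi_M$ to cancel with the $|D^M(\gamma)|^{1/2}$ in $\Phi_M(\gamma,\lb_0)$ through $O^M_\gamma(\varphi^\infty_M)$ versus the orbital integral normalization already absorbed in the definition of $\Phi_M$, and regroup volume factors. The product $\vol(I^M_\gamma(F)\bs I^M_\gamma(\A)^1)\,\nu(I^M_\gamma)^{-1}$ is precisely $\chi(I^M_\gamma)$ as defined in the statement, once one writes $I^M_\gamma(\A)^1 = I^M_\gamma(\A)/A_{I^M_\gamma,\rat}$ and uses that $M \in \ms L^\cusp$ forces $A_{I^M_\gamma,\rat}/A_{G,\rat} = A_{I^M_\gamma,\infty}/A_{G,\infty}$ so the quotients by the various $A$'s are compatible. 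This yields the claimed formula.

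The main obstacle is careful bookkeeping of the two Haar measures / two connected components conventions ($\star = \rat$ vs $\star = \infty$) that lurk in $v_M$, in $\nu(I^M_\gamma)$, and in the passage from $I^G_M$ to $\td I^G_M$ — exactly the subtleties isolated in lemma \ref{centertechnicality} and in the correction noted at the end of \cite[\S7]{GKM97}. Everything else is an essentially formal repetition of Arthur's section 6 computation with the extra scalar $f(\gamma)$ carried along.
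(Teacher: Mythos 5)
Your proposal is correct and follows essentially the same route as the paper: reduce via cuspidality and lemma \ref{centertechnicality} to $M \in \ms L^\cusp$, apply the corrected Arthur \cite[thm 5.1]{Art89} to the stable-cuspidal, $K$-finite $\eta_{\lb_0}$, pull out the scalar $f(\gamma)$ via the lemma $\td I^G_M(\gamma, f\varphi) = f(\gamma)\td I^G_M(\gamma,\varphi)$, and recombine the $D$- and volume factors. The only thing worth flagging is that the single-term collapse of Arthur's sum (your $\tr_{\lb^\vee}(\eta_{\lb_0}) = |\Pi_\disc(\lb_0)|^{-1}\delta_{\lb,\lb_0}$) implicitly requires $\lb_0$ regular, as does the paper's own step $|\Pi_\disc(\lb)|\Phi_M(\gamma, \eta_\lb) = (-1)^{\dim(A_M/A_G)}\nu(M_\gamma)^{-1}\Phi_M(\gamma,\lb)$; the irregular case is handled separately via the Lefschetz-number interpretation in the subsequent subsection, so your proof has exactly the same (tacit) scope as the paper's.
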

As explained on the top of page 19 in \cite{Shi12}, we can actually set 
\[
\chi(I^M_\gamma) = \bar \mu^{\can, EP}(I^M_\gamma(F)  \bs I^M_\gamma(\A)/ A_{I^M_\gamma, \rat})
\]
by picking measures appropriately on $I^M_\gamma$. Note a key change from Arthur's formula: the Levi's that appear are those for which $A_{M,\rat}/A_{G, \rat} = A_{M, \infty}/A_{G, \infty}$ instead of just those satisfying Arthur's notion of cuspidal. 

\subsubsection{Computation of $I'_{\geom,\chi}$}\label{geomaverage}
It remains to compute $I'_{\geom, \chi}(\eta_{\lb, f} \otimes \varphi^\infty)$ by averaging. To make the final formula more elegant, without loss of generality assume $\lb_0$ is consistent with $\chi$. We have
\[
I'_{\geom, \chi}(\eta_{\lb_0, f} \otimes \varphi^\infty) = \f1{\vol(\mf X_F \bs \mf X/ A_{G, \rat})} \int_{\mf X_F \bs \mf X/ A_{G, \rat}} \chi(z) I_{\geom, \chi_0} ((\varphi_{\pi_0,f} \otimes \varphi^\infty)_z) \, dz.
\]
Without loss of generality, taking $\eta_{\lb_0, f} = f\eta_{\lb_0}$ by lemma \ref{genpstops}:
\[
(\eta_{\lb_0,f} \otimes \varphi^\infty)_z = (\eta_{\lb_0,f})_{z_\infty} \otimes \varphi^\infty_{z^\infty} = \om^{-1}_{\lb_0}(z_\infty) \eta_{\lb_0, f_{z_\infty}} \otimes \varphi^\infty_{z^\infty}
\]
where $\om_{\lb_0}$ is the central character associated to $\lb_0$. Here, $\varphi_{\lb_0,f_{z_a}}$ is still a generalized Euler-Poincar\'e function so we substitute in corollary \ref{geomsidebig}. The terms that change are
\[
f(\gamma) \Phi_M(\gamma, \lb_0) \mapsto \om^{-1}_{\lb_0}(z_\infty) f_{z_\infty}(\gamma) \Phi_M(\gamma, \lb_0)
\]
and
\[
O^M_\gamma(\varphi^\infty_M) \mapsto O^M_\gamma((\varphi^\infty_{z^\infty})_M).
\]
By our simplifying assumptions, the $\om^{-1}_{\lb_0}(z_\infty)$ can be pulled out and partially cancelled against the $\chi(z)$. Finally, we use proposition \ref{ssidegenps}:
\[
\vol(\mf X^1_\infty)I_{\spec, \chi}(\eta_{\lb_0} \otimes \varphi^\infty) = \f1{\wh f(0)} I'_{\spec, \chi_0}(\eta_{\lb_0, f} \otimes \varphi^\infty) = \f1{\wh f(0)} I'_{\geom, \chi_0}(\eta_{\lb_0, f} \otimes \varphi^\infty)
\]
thereby getting the full formula we will use later:
\begin{prop}\label{geomsidegenps}
Let $\Pi_\disc(\lb_0)$ be a regular discrete series $L$-packet of $G_\infty$ with weight $\xi_0$ and central character $\chi$ on $\mf X$, $f$ a function pulled back through $H_{G_\infty}$ from $C_c^\infty(A_{G, \infty}/A_{G, \rat})$, and $\varphi^{\infty_1} \in \ms H(G^\infty, \chi_0)$ such that $\overline {(\varphi^{\infty_1})}_{\chi^\infty} = \varphi^\infty$.  Then we have geometric expansion
\begin{multline*}
\vol(\mf X^1_\infty) |\Pi_\disc(\lb_0)| I_{\spec, \chi}(\eta_{\lb_0} \otimes \varphi^\infty) = \\ \f1{\wh f(0)} \f1{\vol(\mf X_F \bs \mf X/ A_{G, \rat})}  \int_{\mf X_F \bs \mf X/ A_{G, \rat}} \chi(z^\infty)
 \sum_{M \in \ms L^\cusp} (-1)^{\dim(A_M/A_G)} \f{|\Om_M|}{|\Om_G|} \\
  \sum_{\gamma \in [M(F)]^\ssm}  \chi(I^M_\gamma)  |\iota^M(\gamma)|^{-1}  f(z_\infty \gamma) \Phi_M(\gamma, \lb_0) O^M_\gamma((\varphi^{\infty_1}_{z^\infty})_M) \, dz
\end{multline*}
where
\[
\chi(I^M_\gamma) = \f{\vol(I^M_\gamma(F)  \bs I^M_\gamma(\A)/ A_{I^M_\gamma, \rat})}{\vol(\bar I^M_{\gamma, \infty}/A_{I^M_\gamma, \infty})}|\Om(B_{K_{I^M_{\gamma,\infty}}})|
\]
and $\iota^M(\gamma)$ is the set of connected components of $M_\gamma$ that have an $F$-point.
\end{prop}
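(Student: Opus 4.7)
The plan is to reduce this to what has already been established: namely, Corollary \ref{geomsidebig} for the ``with-regular-central-character'' version $I_{\geom,\chi_0}$, and Proposition \ref{ssidegenps} relating $I_{\spec,\chi}$ to the averaged distribution $I'_{\spec,\chi}$. All of the work is in unfolding the averaging integral and tracking how each factor transforms under translation by $z \in \mf X/A_{G,\rat}$.

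First I would sum the pseudocoefficient version of Proposition \ref{ssidegenps} over $\pi_0 \in \Pi_\disc(\lb_0)$ and divide by $|\Pi_\disc(\lb_0)|$ to pass to the Euler-Poincar\'e function $\eta_{\lb_0}$ (respectively $\eta_{\lb_0,f}$). Since the discrete-series members of $\Pi_\disc(\lb_0)$ all share the same central character, this gives
\[
\vol(\mf X^1_\infty)\,I_{\spec,\chi}(\eta_{\lb_0}\otimes\varphi^\infty) \;=\; \frac{1}{\wh f(0)}\,I'_{\spec,\chi}(\eta_{\lb_0,f}\otimes\varphi^{\infty_1}).
\]
Because $\eta_{\lb_0,f}$ is cuspidal (generalized EP-functions are cuspidal for the same reason as the usual ones), $I'_{\cts,\chi_0}$ vanishes on each fiber of the averaging integral, hence $I'_{\spec,\chi} = I'_{\disc,\chi} = I'_{\geom,\chi}$ on this test function.

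Next I would unfold the definition of $I'_{\geom,\chi}$. For each $z = z_\infty z^\infty \in \mf X/A_{G,\rat}$ I need the identity
\[
(\eta_{\lb_0,f}\otimes\varphi^{\infty_1})_z \;=\; \om_{\lb_0}^{-1}(z_\infty)\,\eta_{\lb_0,f_{z_\infty}}\otimes\varphi^{\infty_1}_{z^\infty},
\]
which follows from Lemma \ref{genpstops} (without loss of generality $\eta_{\lb_0,f}=f\cdot\eta_{\lb_0}$) together with the transformation rule of $\eta_{\lb_0}$ under $A_{G,\infty}$ by its central character $\om_{\lb_0}$. Plugging this into Corollary \ref{geomsidebig} applied to $(\eta_{\lb_0,f}\otimes\varphi^{\infty_1})_z$ replaces $f(\gamma)\Phi_M(\gamma,\lb_0)$ by $\om_{\lb_0}^{-1}(z_\infty)f(z_\infty\gamma)\Phi_M(\gamma,\lb_0)$ and replaces $O^M_\gamma(\varphi^\infty_M)$ by $O^M_\gamma((\varphi^{\infty_1}_{z^\infty})_M)$, the rest of the summand being independent of $z$. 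The hypothesis that $\lb_0$ is consistent with $\chi$ means that on $\mf X_\infty$ we have $\chi|_{A_{G,\infty}}=\om_{\lb_0}$, so the factor $\chi(z_\infty)\om_{\lb_0}^{-1}(z_\infty)$ in the integrand cancels, leaving only $\chi(z^\infty)$ as written.

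The only potential obstacle is a bookkeeping one: making sure the dimensional/normalization factor $\vol(\mf X^1_\infty)/\wh f(0)$ is tracked consistently, and that the passage from $\varphi^{\infty_1}$ to $\varphi^\infty$ via $\overline{(\cdot)}_{\chi^\infty}$ (which is what Proposition \ref{ssidegenps} uses on the spectral side) is compatible with the appearance of $\varphi^{\infty_1}$ (not its average) inside the geometric orbital integrals, as dictated by the integral over $\mf X_F\bs\mf X/A_{G,\rat}$. This is a sanity check, not a new idea: the averaging map $f\mapsto \bar f_\chi$ is surjective by Lemma \ref{functiontruncation}, and the definition of $I'_{\geom,\chi}$ is exactly designed so that integrating orbital integrals of $\varphi^{\infty_1}_{z^\infty}$ against $\chi(z^\infty)$ recovers orbital integrals of the $\chi^\infty$-averaged function. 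Once these cancellations are in place, the stated formula drops out.
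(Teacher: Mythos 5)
Your proposal is correct and follows essentially the same route as the paper's own argument (unfolding $I'_{\geom,\chi}$, translating $\eta_{\lb_0,f}\otimes\varphi^{\infty_1}$ by $z$, substituting Corollary \ref{geomsidebig}, then cancelling $\chi(z_\infty)\om_{\lb_0}^{-1}(z_\infty)$ via the consistency assumption, and finally identifying the whole thing with $\vol(\mf X_\infty^1)|\Pi_\disc(\lb_0)|I_{\spec,\chi}(\eta_{\lb_0}\otimes\varphi^\infty)$ through Proposition \ref{ssidegenps} and $I'_\spec=I'_\geom$). The only stylistic difference is that you begin by summing the pseudocoefficient identity over the packet, while the paper just invokes Proposition \ref{ssidegenps} directly by linearity; one small imprecision is that the consistency hypothesis gives $\chi|_{\mf X_\infty}=\om_{\lb_0}|_{\mf X_\infty}$ (not merely on $A_{G,\infty}$, as you wrote), which is what makes the cancellation go through on all of $\mf X_\infty/A_{G,\rat}$.
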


\subsubsection{Further Simplification} Mimicking some simplifications from \cite{KSZ}, the integral can be evaluated to remove $f$ and $\varphi^1$-dependence. This version of the formula and the method of its derivation are useful for some bounds later. 

$\mf X_F$ acts on  $[M(F)]^\ssm$ by multiplication. Let the set of orbits be $[M(F)]^\ssm_{\mf X}$. For any $\gamma$, let $\Stab_\mf X(\gamma)$ be the stabilizer of $\gamma$ under this action. This is finite by using a faithful representation (which always induces a finite-to-one map on semisimple conjugacy classes) to reduce to the case $G = \GL_n$. Here conjugacy classes are just sets of eigenvalues and the $\mf X$-action just scales each eigenvalue. Note also that since $\mf X$ is central, $\iota$ and $\nu$ are constant on $\mf X$-orbits.

We can therefore move the integral into the inner sum over $\gamma$ and break it up as
\begin{multline*}
\sum_{\gamma \in [M(F)]^\ssm_{\mf X}} \chi(I^M_\gamma) |\iota^M(\gamma)|^{-1} |\Stab_\mf X(\gamma)|^{-1} \\
\sum_{x \in \mf X_F} \int_{\mf X_F \bs \mf X/ A_{G, \rat}} \chi(z^\infty) f_{z_\infty}(x\gamma) \Phi_M(x \gamma, \lb_0) O^M_\gamma((\varphi^{\infty_1}_{xz^\infty})_M) \, dz.
\end{multline*}
Since $\chi$ is defined to be trivial on rational points, the innermost sum simplifies to
\begin{multline*}
\sum_{x \in \mf X_F} \int_{\mf X_F \bs \mf X/ A_{G, \rat}} \chi(z^\infty x) f(z_\infty x\gamma) \om^{-1}_{\lb_0}(x) \Phi_M(\gamma, \lb_0) O^M_\gamma((\varphi^{\infty_1}_{xz^\infty })_M) \, dz \\
=\Phi_M(\gamma, \lb_0) \lf( \int_{\mf X_\infty/A_{G, \rat}} f(z \gamma) \, dz \ri) \lf(\int_{\mf X^\infty} \chi(z) O^M_\gamma((\varphi^{\infty_1}_{z})_M) \, dz \ri).
\end{multline*}
Recalling
\[
(\varphi^{\infty_1}_{z})_M = \delta_{P_M}(\gamma^\infty)^{1/2} \int_{K^\infty} \int_{N_M(\A^\infty)} \varphi^{\infty_1}(k^{-1} \gamma^\infty z n k) \, dn \, dk,
\]
a bunch of Fubini's steps gives that the non-Archimedean integral is $O^M_\gamma((\overline{(\varphi^{\infty_1})}_\chi)_M) = O^M_\gamma((\varphi^\infty)_M)$ where we recall
\[
\overline{\varphi_\chi}(g) = \int_{\mf X^\infty} \varphi(gz) \chi(z) \, dz
\]
for any $\varphi$. 

For the Archimedean integral, let the $G_\infty = G_\infty^1 \times A_{G, \infty}$ components of any $g$ be $g_1 \times g_a$. Then $f(z \gamma) = f(z_a \gamma_a)$. This factorization gives a corresponding one $\mf X^\infty/A_{G, \rat} = \mf X_\infty^1 \times A_{G, \infty}/A_{G, \rat}$. Then the integral becomes
\[
\int_{\mf X_\infty^1} \int_{A_{G, \infty}/A_{G, \rat}} f(z_a \gamma_a) \, dz_a \, dz_1 = \vol(\mf X_\infty^1)\wh f(0).
\]
Putting it all together:
\begin{multline*}
|\Pi_\disc(\lb_0)|I'_{\geom, \chi}(\eta_{\lb_0, f} \otimes \varphi^\infty) = \f{\vol(\mf X_\infty^1)\wh f(0)}{\vol(\mf X_F \bs \mf X/ A_{G, \rat})}\sum_{M \in \ms L^\cusp} (-1)^{\dim(A_M/A_G)} \f{|\Om_M|}{|\Om_G|} \\
 \sum_{\gamma \in [M(F)]^\ssm_\mf X} \chi(I^M_\gamma) |\iota^M(\gamma)|^{-1}|\Stab_\mf X(\gamma)|^{-1} \Phi_M(\gamma, \lb_0) O^M_\gamma((\varphi^\infty)_M).
\end{multline*}
Using proposition \ref{ssidegenps} as before finally gives:
\begin{prop}\label{sgeomsidegenps}
Let $\Pi_\disc(\lb_0)$ be a regular discrete series $L$-packet of $G_\infty$ with weight $\xi_0$ and central character $\chi$ on $\mf X$. Then for any $\varphi^\infty \in \ms H(G^\infty, \chi^\infty)$, we have geometric expansion:
\begin{multline*}
I_{\spec, \chi}(\eta_{\lb_0} \otimes \varphi^\infty) = \f1{|\Pi_\disc(\lb_0)|}\f1{\vol(\mf X_F \bs \mf X/ A_{G, \rat})}\sum_{M \in \ms L^\cusp} (-1)^{\dim(A_M/A_G)} \f{|\Om_M|}{|\Om_G|} \\
 \sum_{\gamma \in [M(F)]^\ssm_\mf X} \chi(I^M_\gamma) |\iota^M(\gamma)|^{-1}|\Stab_\mf X(\gamma)|^{-1} \Phi_M(\gamma, \lb_0) O^M_\gamma((\varphi^\infty)_M).
\end{multline*}
\end{prop}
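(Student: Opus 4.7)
The plan is to derive Proposition \ref{sgeomsidegenps} from Proposition \ref{geomsidegenps} by explicitly evaluating the integral over $\mf X_F \bs \mf X/A_{G,\rat}$ that appears there. The strategy is the standard unfolding trick: rewrite the sum over $[M(F)]^\ssm$ as a sum over $\mf X_F$-orbits $[M(F)]^\ssm_\mf X$ paired with an inner sum over $\mf X_F$, and then combine this inner sum with the outer integral to obtain an unrestricted integral over $\mf X/A_{G,\rat}$, which then factors as a product of an Archimedean and a non-Archimedean piece, each of which can be evaluated explicitly.

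First I would record that $\mf X_F$ acts on $[M(F)]^\ssm$ with finite stabilizers $\Stab_\mf X(\gamma)$, verified by reducing to $\GL_n$ via a faithful representation where conjugacy classes become unordered tuples of eigenvalues and $\mf X$-action is scaling. The factors $\chi(I^M_\gamma)$, $|\iota^M(\gamma)|^{-1}$, and $\Phi_M(\gamma, \lb_0)$ are all $\mf X$-invariant (for $\Phi_M$ this uses $f \varphi_\pi = \varphi_{\pi,f}$ together with the fact that $\lb_0$ is consistent with $\chi$, so twisting $\gamma$ by $x \in \mf X_F$ produces a factor $\om_{\lb_0}^{-1}(x)$ in $\Phi_M$ that will cancel against the factor $\chi(x)$ below). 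Unfolding then expresses the $\gamma$-sum in Proposition \ref{geomsidegenps} as
\[
\sum_{\gamma \in [M(F)]^\ssm_\mf X} \chi(I^M_\gamma) |\iota^M(\gamma)|^{-1} |\Stab_\mf X(\gamma)|^{-1} \Phi_M(\gamma, \lb_0) \sum_{x \in \mf X_F} [\cdots],
\]
and the sum over $x$ combined with the integral over $\mf X_F \bs \mf X/A_{G,\rat}$ collapses to an integral over all of $\mf X/A_{G,\rat}$, after using $\chi(z^\infty x) \om_{\lb_0}^{-1}(x) = \chi(z^\infty)$ (since $\chi$ is trivial on $\mf X_F$ and $\om_{\lb_0}$ matches $\chi$ on $\mf X$).

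Next I would factor $\mf X/A_{G,\rat} = \mf X_\infty/A_{G,\rat} \times \mf X^\infty$. The non-Archimedean integral is $\int_{\mf X^\infty} \chi(z) O^M_\gamma((\varphi^{\infty_1}_z)_M)\, dz$, which by Fubini applied to the constant-term integral defining $(\cdot)_M$ becomes $O^M_\gamma\bigl((\overline{\varphi^{\infty_1}}_\chi)_M\bigr) = O^M_\gamma((\varphi^\infty)_M)$. The Archimedean integral is $\int_{\mf X_\infty/A_{G,\rat}} f(z\gamma)\, dz$; since $f$ is pulled back through $H_{G_\infty}$ it depends only on the $A_{G,\infty}/A_{G,\rat}$ component, and the factorization $\mf X_\infty/A_{G,\rat} = \mf X_\infty^1 \times A_{G,\infty}/A_{G,\rat}$ shows this integral equals $\vol(\mf X_\infty^1)\wh f(0)$.

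Putting the pieces together yields a clean formula for $I'_{\geom,\chi}(\eta_{\lb_0,f} \otimes \varphi^\infty)$ whose prefactor is $\tfrac{\vol(\mf X_\infty^1)\wh f(0)}{\vol(\mf X_F \bs \mf X/A_{G,\rat})}$. Invoking Proposition \ref{ssidegenps} to pass from $I'_{\spec,\chi}$ back to $I_{\spec,\chi}$ introduces exactly the factor $\vol(\mf X_\infty^1)\wh f(0)$ in the denominator, which cancels and leaves the stated formula with prefactor $|\Pi_\disc(\lb_0)|^{-1} \vol(\mf X_F \bs \mf X/A_{G,\rat})^{-1}$. The main obstacle I expect is purely bookkeeping: tracking the central character identities (between $\chi$, $\chi_0$, and $\om_{\lb_0}$) and the measure-theoretic factorizations consistently so that the stabilizer factor $|\Stab_\mf X(\gamma)|^{-1}$ emerges correctly from the unfolding and no volume factor is lost or doubled.
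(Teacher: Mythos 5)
Your proposal is correct and follows essentially the same line of reasoning as the paper's own derivation in the section on further simplification: you unfold the $\gamma$-sum over $\mf X_F$-orbits with stabilizer factors, collapse the resulting sum over $\mf X_F$ with the quotient integral into an integral over $\mf X/A_{G,\rat}$ after cancelling the central-character twists, factor into Archimedean and non-Archimedean integrals (evaluated to $\vol(\mf X_\infty^1)\wh f(0)$ and $O^M_\gamma((\varphi^\infty)_M)$ respectively), and then invoke Proposition \ref{ssidegenps} to convert $I'_{\geom,\chi}$ back to $I_{\spec,\chi}$ with the $\vol(\mf X_\infty^1)\wh f(0)$ factor cancelling. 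The bookkeeping steps you flag as the main obstacle are exactly the places the paper also spends care, and your handling of them is consistent with the paper's.
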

The dimensions on both sides are $[G^\infty][\mf X^\infty]^{-1}[\mf X^1_\infty]^{-1} = [G^\infty][\mf X/A_{G, \infty}]^{-1}$. We state again that the Levi's that appear are those for which $A_{M,\rat}/A_{G, \rat} = A_{M, \infty}/A_{G, \infty}$ instead of just those satisfying Arthur's notion of cuspidal.

\subsection{Irregular Discrete Series}
When $\lb_0$ is not regular, $\tr_{\pi_\infty} \eta_{\lb_0}$ does not simply test if $\pi_\infty$ is in a given $L$-packet. However, it can be interpreted as a cohomology as in \cite[\S2]{Art89}. While we will not use this more general result, we state it here in case it is useful in other applications.

Even with irregular $\lb_0$, we still have
\[
|\Pi_\disc(\lb_0)|I_{\spec, \chi}(\eta_{\lb_0} \otimes \varphi^\infty) = \f1{\vol(\mf X^1_\infty)}\sum_{\pi \in \mc{AR}_\disc(G, \chi)} m_\disc(\pi) \tr_{\pi_\infty}(\eta_{\lb_0}) \tr_{\pi^\infty}(\varphi^\infty).
\]
The Euler-Poincar\'e function $\eta_{\lb_0}$ always satisfies $\tr_{\pi_\infty}(\eta_{\lb_0}) = \chi_{\lb_0}(\pi_\infty)$ where $\chi_{\lb_0}$ is the Euler characteristic
\[
\chi_{\lb_0}(\pi_\infty) = \sum_q (-1)^q \dim H^q(\mf g(\R), K_\infty, \pi_\infty \otimes \pi_{\lb_0}).
\]
Here, $H^q$ is the $(\mf g, K)$-cohomology: $K_\infty$ is a maximal compact of $G_\infty$ and $\pi_{\lb_0}$ is the finite dimensional complex representation with highest weight $\lb_0$. The equality holds in general because it holds on basic representations which generate the Grothendieck group. 

In particular, if we define the $L^2$-Lefschetz number
\[
\ms L_{\lb_0}(\varphi^\infty) = \sum_{\pi \in \mc{AR}_\disc(G, \chi)} m_\disc(\pi) \chi_{\lb_0}(\pi_\infty) \tr_{\pi^\infty}(\varphi^\infty),
\]
we get
\[
|\Pi_\disc(\lb_0)| I_{\spec, \chi}(\eta_{\lb_0} \otimes \varphi^\infty) = \f1{\vol(\mf X^1_\infty)}\ms L_{\lb_0}(\varphi^\infty).
\]
Combining with the calculations before proposition \ref{sgeomsidegenps} gives the formula:
\begin{cor}
Let $\pi_0$ be a possibly irregular discrete series representation of $G_\infty$ with weight $\xi_0$ matching character $\chi$ on $\mf X$. Then for any $\varphi^\infty \in \ms H(G^\infty, \chi^\infty)$:
\begin{multline*}
\ms L_{\lb_0}(\varphi^\infty) = \f{\vol(\mf X^1_\infty)}{\vol(\mf X_F \bs \mf X/ A_{G, \rat})}\sum_{M \in \ms L^\cusp} (-1)^{\dim(A_M/A_G)} \f{|\Om_M|}{|\Om_G|} \\
 \sum_{\gamma \in [M(F)]^\ssm_\mf X} \chi(I^M_\gamma) |\iota^M(\gamma)|^{-1}|\Stab_\mf X(\gamma)|^{-1} \Phi_M(\gamma, \lb_0) O^M_\gamma((\varphi^\infty)_M).
\end{multline*}
\end{cor}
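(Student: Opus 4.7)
The plan is to combine two ingredients. First, the spectral identity
\[
|\Pi_\disc(\lb_0)|\, I_{\spec, \chi}(\eta_{\lb_0} \otimes \varphi^\infty) = \f{1}{\vol(\mf X^1_\infty)}\ms L_{\lb_0}(\varphi^\infty)
\]
displayed in the paragraph preceding the corollary is already derived for arbitrary $\lb_0$. Its derivation uses only that $\eta_{\lb_0}$ is cuspidal and that every discrete-series constituent of $\eta_{\lb_0}$ shares the single infinitesimal character $\lb_0 + \rho_G$---exactly the property that forced Arthur's proper-parabolic contributions to vanish in the proof of proposition \ref{ssidebig}---together with the Grothendieck-group identity $\tr_{\pi_\infty}(\eta_{\lb_0}) = \chi_{\lb_0}(\pi_\infty)$ and the averaging of section \ref{geomaverage}. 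None of these steps uses regularity.

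The substantive remaining task is to verify that the geometric expansion of proposition \ref{sgeomsidegenps} also goes through for irregular $\lb_0$. I would retrace the chain from corollary \ref{geomsidebig} through the averaging of section \ref{geomaverage}. Corollary \ref{geomsidebig} rests on $\eta_{\lb_0}$ being stable cuspidal and $K$-finite, both of which are established in \cite{CD90} for any $\lb_0$; on the explicit formula for $\Phi_M(\gamma, \lb_0)$ on elliptic regular elements of $M$, which involves only the stable character sum $\sum_{\pi \in \Pi_\disc(\lb_0)} \Theta_\pi(\gamma)$ together with Arthur's continuity extension; and on the geometric identity $\td I^G_M(\gamma, f\eta_{\lb_0}) = f(\gamma)\, \td I^G_M(\gamma, \eta_{\lb_0})$, which is formal. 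The subsequent averaging in section \ref{geomaverage} is formal as well. Regularity entered proposition \ref{sgeomsidegenps} only at the final step, where proposition \ref{ssidegenps} was invoked to rewrite $I'_{\spec, \chi}(\eta_{\lb_0, f} \otimes \varphi^{\infty_1})$ as a Kronecker-delta sum at $\pi_0$; in the present context one instead applies the Lefschetz-number identity above, which carries no regularity hypothesis.

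Combining the two gives
\[
\f{1}{\vol(\mf X^1_\infty)} \ms L_{\lb_0}(\varphi^\infty) = \f{1}{\vol(\mf X_F \bs \mf X / A_{G, \rat})} \sum_{M \in \ms L^\cusp} (-1)^{\dim(A_M/A_G)} \f{|\Om_M|}{|\Om_G|} \sum_\gamma \chi(I^M_\gamma)\, \dotsb,
\]
and multiplying through by $\vol(\mf X^1_\infty)$ yields the stated formula. The only real obstacle is the verification in the second paragraph that each step of the proposition \ref{sgeomsidegenps} derivation survives irregularity; once one records that the regularity hypothesis was a convenience for invoking the cleaner spectral identity rather than a structural requirement on the geometric expansion, the result is immediate.
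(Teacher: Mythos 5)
Your proposal is correct and matches the paper's approach. The paper's own proof is compressed to a single sentence ("Combining with the calculations before proposition \ref{sgeomsidegenps} gives the formula"), and your proposal correctly unpacks it: the geometric expansion through corollary \ref{geomsidebig} and the averaging of section \ref{geomaverage} are formal and never use regularity of $\lb_0$ (Arthur's theorem 5.1 only needs the stable cuspidal and $K$-finite hypotheses, and the pairing with $\tr_{\pi_\infty}(\eta_{\lb_0,f}) = \wh f(0)\,\tr_{\pi_\infty}(\eta_{\lb_0})$ is likewise formal), while regularity was used in proposition \ref{ssidegenps} solely to replace the spectral trace by a Kronecker delta --- and the Lefschetz-number identity displayed just before the corollary is the regularity-free substitute, valid because $\xi_0 + \rho_G$ is regular for any dominant $\xi_0$ so the proper-parabolic contributions to $I_{\disc,t}$ still vanish.
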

The dimensions on both sides are $[G^\infty][\mf X^\infty]^{-1}$.

\section{Trace Formula Computation Set-Up}
Now we can finally set up our main computation.
\subsection{Conditions on $G$ and Defining Families}\label{cndtns}
Let $G$ be a reductive group over a number field $F$ with discrete series at $\infty$. By instead looking at $\Res_\Q^F G$, we could without loss of generality take $F = \Q$ since $\Res_\Q^F G(\Q) = G(F)$ and $\Res_\Q^F G(\A) = G(\A_F)$ as topological groups. Fix central character datum $(\mf X, \chi)$. Assume $G$ is connected.

Let:
\begin{itemize}
\item
$\pi_0$ be a regular real discrete series representation for $G$ with weight $\xi_0$ and character $\chi$ on $A_{G, \infty}$. 
\item
$\varphi_{\pi_0}$ be its pseudocoefficient.
\item
$S_0$ be a finite set of finite places and choose $\varphi_{S_0} \in \ms H (G_{S_0}, \chi_{S_0})$.
\item
$S_1$ be another finite set of finite places disjoint from $S_0$ such that $\chi_{S_1}$ is unramfied.
\item
$S = S_0 \sqcup S_1$.
\item
$U^{S, \infty} \subset G(\A^{S, \infty})$ an open compact subset on which $\chi^{S, \infty}$ is trivial. 
\item
$S_\bad$ is a set of places that $S_1$ needs to be disjoint from that will be defined in section \ref{final}.
\end{itemize}

Define a family of automorphic representations $\mc F$ in $\mc{AR}_\disc(G, \chi)$ through discrete multiplicities
\[
a_{\mc F}(\pi)  = m_\disc(\pi)  \delta_{\pi_0, \pi_\infty}  \dim(\pi^{S, \infty})^{U^{S, \infty}} \f{\wh{\m 1}_{K_{S_1}}(\pi_{S_1})}{\vol(K_{S_1})}.
\]
Note that the second-to-last term is just checking if $\pi_{S_1}$ is unramified. The coefficient $a_{\mc F}(\pi)$ is dimensionless.

Define function
\[
\1_{U^{S, \infty}, \chi} = \vol(U^{S, \infty} \cap \mf X^{S, \infty})^{-1} \overline{(\m 1_{U^{S, \infty}})}_\chi.
\]
This is normalized so that $\1_{U^{S, \infty}, \chi}(1) = 1$. For any test function $\varphi_{S_1} \in \mc H^\ur(G_{S_1}, \chi_{S_1})$ let 
\[
\varphi = \varphi_{\pi_0, f, \varphi_{S_0}} = \varphi_{\pi_0} \otimes \varphi^\infty = \varphi_{\pi_0} \otimes \1_{U^{S, \infty}, \chi} \otimes \varphi_{S_0} \otimes \varphi_{S_1} 
\]
where as before, $\varphi_\pi$ is the pseudocoefficient for $\pi$. Test function $\varphi$ will momentarily be shown to pick out the family $a_{\mc F}$. 

Intuitively, the test function is
\begin{itemize}
\item
putting weight restrictions on the infinite place,
\item
putting level restrictions on finite places away from $S$,
\item
forcing $S_1$ parts to be unramified,
\item
counting possible components at $S$ according to test function $\varphi_S$ with $\varphi_{S_1}$ unramified.
\end{itemize}

To make all the traces well-defined, we fix Haar measures on factors of $G(\A_F)$:
\begin{itemize}
\item
Use the normalization from \cite[\S6.6]{ST16} of Gross' canonical measure from \cite{Gro97} on $G_S$ and the $\mf X_S$. 
\item
Use Euler-Poincar\'e measure on $G_\infty$, $A_{G, \infty}$, $A_{G, \rat}$, and $\mf X^1_\infty$. 
\end{itemize}
This determines all appropriate Plancherel measures. We call the product measure $\mu^{\can, EP}$ and the volume of the adelic quotient under it the modified Tamagawa number $\tau'(G)$. 
 
\subsection{Spectral Side}\label{SSide}
We can now directly compute the spectral expansion of $I_{\spec, \chi}(\varphi)$:
\begin{cor}\label{spectral}
Let $\pi_0$ be a regular discrete series representation of $G$ with weight $\xi_0$. Then:
\[
I^G_{\spec, \chi}(\varphi_{\pi_0} \otimes \varphi^\infty) = \bar \mu^\can(U^{S, \infty}_\mf X) \sum_{\pi \in \mc{AR}_\disc(G, \chi)} a_{\mc F}(\pi) \wh \varphi_S(\pi)
\]
where $U^{S, \infty}_\mf X = U^{S, \infty}/\mf X^{S, \infty} \cap U^{S, \infty}$. 
\end{cor}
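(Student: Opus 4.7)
The plan is to apply Proposition \ref{ssidegenps} and then carry out two routine unpackings. Starting from
\[
\vol(\mf X^1_\infty)\, I^G_{\spec, \chi}(\varphi_{\pi_0} \otimes \varphi^\infty) = \sum_{\pi \in \mc{AR}_\disc(G, \chi)} m_\disc(\pi)\, \delta_{\pi_0, \pi_\infty}\, \tr_{\pi^\infty}(\varphi^\infty),
\]
the factorization $\varphi^\infty = \1_{U^{S, \infty}, \chi} \otimes \varphi_{S_0} \otimes \varphi_{S_1}$ together with the decomposition $\pi^\infty = \pi_S \otimes \pi^{S, \infty}$ gives
\[
\tr_{\pi^\infty}(\varphi^\infty) = \tr_{\pi^{S, \infty}}(\1_{U^{S, \infty}, \chi}) \cdot \wh\varphi_S(\pi),
\]
so everything reduces to identifying $\tr_{\pi^{S, \infty}}(\1_{U^{S, \infty}, \chi})$.

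For that computation, I unpack the definition $\1_{U^{S, \infty}, \chi} = \vol(U^{S, \infty} \cap \mf X^{S, \infty})^{-1}\, \overline{\1_{U^{S, \infty}}}_\chi$. The integrand defining $\pi^{S, \infty}(\overline{\1_{U^{S, \infty}}}_\chi)$ on $G(\A^{S, \infty})/\mf X^{S, \infty}$ is $\mf X^{S, \infty}$-invariant because $\pi^{S, \infty}$ has central character $\chi^{S, \infty}$, so the two $\chi^{\pm 1}$ factors cancel; unfolding back to $G(\A^{S, \infty})$ then yields
\[
\pi^{S, \infty}(\overline{\1_{U^{S, \infty}}}_\chi) = \int_{G(\A^{S, \infty})} \1_{U^{S, \infty}}(g)\, \pi^{S, \infty}(g)\, dg = \vol(U^{S, \infty}) \cdot P_{U^{S, \infty}},
\]
where $P_{U^{S, \infty}}$ is the projection onto $U^{S, \infty}$-invariants (well-defined since $\chi^{S, \infty}$ is trivial on $U^{S, \infty}$). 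Dividing by $\vol(U^{S, \infty} \cap \mf X^{S, \infty})$ and taking the trace produces
\[
\tr_{\pi^{S, \infty}}(\1_{U^{S, \infty}, \chi}) = \bar\mu^\can(U^{S, \infty}_{\mf X}) \cdot \dim (\pi^{S, \infty})^{U^{S, \infty}}.
\]

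The last step is to slip the factor $\wh\1_{K_{S_1}}(\pi_{S_1})/\vol(K_{S_1})$ into the sum at no cost: since $\varphi_{S_1}$ lies in the unramified Hecke algebra, $\wh\varphi_{S_1}(\pi_{S_1})$ already vanishes whenever $\pi_{S_1}$ is ramified, while for unramified $\pi_{S_1}$ the inserted factor equals $\dim \pi_{S_1}^{K_{S_1}} = 1$. Collecting the surviving pieces into $a_{\mc F}(\pi)$ and absorbing $\vol(\mf X^1_\infty)$ into the Haar-measure convention (the chosen Euler-Poincar\'e measure on $\mf X^1_\infty$ normalizing it to $1$) assembles the stated identity. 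The only step that needs any real care is the unfolding in the middle paragraph; everything else is bookkeeping of Haar measures.
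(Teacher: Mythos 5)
Your proof is correct and follows essentially the same route as the paper: apply Proposition \ref{ssidegenps}, factor $\tr_{\pi^\infty}(\varphi^\infty)$ over $S_0$, $S_1$, and the prime-to-$S$ places, compute the trace of $\1_{U^{S,\infty},\chi}$ by unfolding the central-character average against $\pi^{S,\infty}$, and observe that inserting the unramified-indicator factor $\wh{\1}_{K_{S_1}}(\pi_{S_1})/\vol(K_{S_1})$ is free because $\wh\varphi_{S_1}$ already vanishes on ramified $\pi_{S_1}$. The paper is terser (it simply states the factored formula and the fact $\vol(\mf X^1_\infty)=1$), but your middle-step unfolding is precisely the computation it is leaving implicit.
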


\begin{proof}
By proposition \ref{ssidegenps} and using that $\vol(\mf X^1_\infty) = 1$, 
\[
I^G_{\spec, \chi}(\varphi_{\pi_0} \otimes \varphi^\infty) = \f1{\vol(\mf X^1_\infty)} \sum_{\pi \in \mc{AR}_\disc(G, \chi)} m_\disc(\pi) \delta_{\pi_0, \pi_\infty} \tr_{\pi^\infty}(\varphi^\infty).
\]
Factoring the finite trace into its $S_0, S_1$ and other components gives that
\[
\tr_{\pi^\infty}(\varphi^\infty) = \wh \varphi_{S_0}(\pi_{S_0}) \f{\wh{\m 1}_{K_{S_1}}(\pi_{S_1})}{\vol(K_{S_1})}\wh \varphi_{S_1}(\pi)  \mu^\can(U^{S, \infty}_\mf X) \dim(\pi^{S, \infty})^{U^{S, \infty}} 
\]
so we are done.
\end{proof}

\subsection{Geomteric Side Outline}
We get a geometric expansion $I_{\spec, \chi}(\varphi_{\pi_0} \otimes \varphi^\infty)$ by using the hyperendoscopy formula (proposition \ref{Hform}). Since Euler-Poincar\'e functions and pseudocoefficients have the same stable orbital integrals:
\begin{multline*}
I_{\spec,\chi}^G(\varphi_{\pi_0} \otimes \varphi^\infty) \\
= I_{\spec,\chi}^G(\eta_{\lb_0} \otimes \varphi^\infty) + \sum_{\mc H \in \mc{HE}_\el(G)} \iota(G, \mc H)I_{\spec, \chi_\mc H}^{\mc H}((\eta_{\xi_0} - \varphi_{\pi_\infty})^{\mc H} \otimes (\varphi^\infty)^\mc H).
\end{multline*}
Simplifying and bounding this takes a few steps:
\begin{enumerate}
\item
Notice that transfers $(\eta_{\xi_0} - \varphi_{\pi_\infty})^{\mc H}$ through hyperendoscopic paths can be chosen to be linear combinations of regular Euler-Poincar\'e functions.
\item
Substitute in proposition \ref{geomsidegenps} for each hyperendoscopic group.
\item
The result will have a main term consisting of central elements of $G$ and an error term consisting of non-central elements, Levi terms, and terms from the hyperendoscopic groups.
\item
Use a Poisson summation argument to compute the main term.
\item
Bound the error term using bounds on non-Archimedean transfers and small generalizations of the results of \cite{ST16}.
\end{enumerate} 
For sanity checks later, note that both sides of our computation have dimension $[G^\infty][\mf X/A_{G, \infty}]^{-1}$.

\section{Geometric Side Details}
We are eventually going to use the hyperendoscopic formula with $f_1$ of the form
\[
f_1 = \eta_\xi  \otimes \varphi^\infty.
\]
All transfers appearing will have linear combinations of Euler-Poincar\'e functions as infinite parts so we only need to analyze the geometric side with test functions of the form $\eta_\xi \otimes \varphi^\infty$. This is similar to what was done in \cite{ST16}.

\subsection{Original Bounds}
Recall the notation and conditions from \ref{cndtns}. We state the main bounds from \cite{ST16} for reference.
$G$ determines a finite set of places $S_{\bad', G}$ in a complicated, uncontrolled manner. We assume three conditions:
\begin{itemize}
\item
$S$ does not intersect $S_{\bad', G}$.
\item
$G$ is cuspidal.
\item 
$\mf X$ is trivial. 
\end{itemize}
Then we get the following bounds (changing to our normalization of EP-functions):
\begin{thm}[Weight-aspect bound {\cite[thm 9.19]{ST16}}]\label{owtbound}
Consider the case where $Z_G =1$. Let $f_{S_1} \in \ms H^\ur(G(F_{S_1}))^{\leq \kappa}$ such that $\|f_{S_1}\|_\infty \leq 1$. Let $\xi$ be a dominant weight. Then
\[
\f{|\Pi_\disc(\lb_0)|}{\tau'(G) \dim(\xi) \wh \mu^\pl_{S_0}(\wh \varphi_{S_0})} I_\spec(\eta_\xi \otimes \varphi^\infty) = \wh \mu^\pl_{S_1}(\wh f_{S_1}) + O_{G, \varphi_{S_0}}(q_{S_1}^{A_\wt + B_\wt \kappa} m(\xi)^{-C_\wt})
\]
for some constants $A_\wt,B_\wt,C_\wt$ depending only on $G$.
\end{thm}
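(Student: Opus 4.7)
My plan is to apply Arthur's simple trace formula to convert the spectral side into a geometric sum, isolate the identity conjugacy class as the main term (which gives the Plancherel expression), and bound everything else using archimedean weight-aspect decay together with non-archimedean orbital-integral estimates.

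First, since $\eta_\xi$ is cuspidal at $\infty$ and $G$ is cuspidal over $\Q$, equation \eqref{geom} applies. It expresses $|\Pi_\disc(\xi)|I_\spec(\eta_\xi \otimes \varphi^\infty)$ as a double sum over $M\in\ms L^\cusp$ and $\gamma\in[M(F)]^\ssm$ with summands of the form
\[
\chi(I^M_\gamma)|\iota^M(\gamma)|^{-1}\Phi_M(\gamma_\infty,\xi)O^M_\gamma(\varphi^\infty_M).
\]

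Second, I would isolate the contribution $(M,\gamma)=(G,1)$. Since $Z_G=1$ by hypothesis, $1$ is the unique central semisimple class. For this term $\Phi_G(1,\xi)=\tr\xi(1)=\dim\xi$, $\chi(I^G_1)=\tau'(G)$ after normalizing with $\mu^{\can,EP}$, $|\iota^G(1)|=1$, and $O^G_1(\varphi^\infty)=\varphi^\infty(1)$ factors over places as $\varphi_{S_0}(1)\cdot f_{S_1}(1)\cdot 1$ (the $U^{S,\infty}$-factor equals $1$ by construction of $\1_{U^{S,\infty},\chi}$). Plancherel inversion at $S_0$ and $S_1$ identifies $\varphi_{S_0}(1)=\wh\mu^\pl_{S_0}(\wh\varphi_{S_0})$ and $f_{S_1}(1)=\wh\mu^\pl_{S_1}(\wh f_{S_1})$. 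Dividing through by $\tau'(G)\dim(\xi)\wh\mu^\pl_{S_0}(\wh\varphi_{S_0})|\Pi_\disc(\xi)|$ produces the asserted main term $\wh\mu^\pl_{S_1}(\wh f_{S_1})$.

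Third, the error decomposes into two regimes. For $M=G$ and $\gamma\ne 1$, bound the archimedean character crudely by $|\Phi_G(\gamma_\infty,\xi)|\leq\dim\xi$ and bound $O^G_\gamma(\varphi^\infty)$ using the motivic (Cluckers--Gordon--Halupczok) bounds on orbital integrals of functions in $\ms H^\ur(G_{S_1})^{\leq\kappa}$ by $q_{S_1}^{A+B\kappa}$; a polynomial-in-$\kappa$ count of the rational $\gamma$ whose $S_1$-orbit meets the support of $f_{S_1}$ (combined with the $S_0$-support and the level structure $U^{S,\infty}$ constraining $\gamma$ away from $S$) absorbs the global terms, using Kottwitz-style bounds on $\chi(I^G_\gamma)$ in terms of the Weyl discriminant. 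For $M\ne G$, the crucial weight saving comes from the archimedean factor: an explicit Weyl character formula computation using that $M$ omits some positive roots of $G$ gives
\[
\frac{|\Phi_M(\gamma_\infty,\xi)|}{\dim\xi}\;=\;O\!\left(m(\xi)^{-C(G,M)}\right)
\]
for an effective $C(G,M)>0$. Summing over the finitely many Levis and the polynomially many relevant $\gamma$ yields the stated error $O(q_{S_1}^{A_\wt+B_\wt\kappa}m(\xi)^{-C_\wt})$.

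The hardest step is getting uniform-in-$\gamma$ bounds on the product of local orbital integrals against the global volume factor $\chi(I^M_\gamma)$, with constants depending only on $G$ (not on $\gamma$, $\kappa$, or $q_{S_1}$). This combination of motivic integration bounds on the non-archimedean side with Kottwitz volume estimates on the global side is where the bulk of Shin--Templier's work goes; once those are in place, the identification of the main term as a Plancherel value and the root-theoretic weight decay on the archimedean side are routine.
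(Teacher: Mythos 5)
This theorem is imported from \cite[Thm.\ 9.19]{ST16}; the paper does not reprove it, so the comparison has to be with Shin--Templier's own argument. Your overall architecture matches theirs: apply the simple trace formula, isolate the $(M,\gamma)=(G,1)$ term, read off the Plancherel value by inversion at the identity, and then bound the rest using global volume factors together with the Cluckers--Gordon--Halupczok--style orbital integral bounds. Your main-term computation is correct.

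There is, however, a genuine gap in your treatment of the $M=G$, $\gamma\ne 1$ terms. You propose to ``bound the archimedean character crudely by $|\Phi_G(\gamma_\infty,\xi)|\leq\dim\xi$,'' which only gives $|\Phi_G(\gamma_\infty,\xi)|/\dim\xi\leq 1$, i.e.\ no decay in $m(\xi)$ whatsoever. Since every term in the stated error carries a factor $m(\xi)^{-C_\wt}$, the non-central $M=G$ terms would not be absorbed under your bound; you would only recover $O(q_{S_1}^{A+B\kappa})$ and not the crucial weight-aspect decay. The essential archimedean input in \cite{ST16} (their Lemma 6.10 / 6.11 and Corollary 6.12) is a Weyl character formula estimate showing that for any non-central semisimple $\gamma$ in the contributing set, $|\Phi_M(\gamma_\infty,\xi)|/\dim\xi = O(D(\gamma)^{\,\ast}\,m(\xi)^{-\beta})$ for a positive exponent $\beta$ governed by which roots are non-trivial on $\gamma$, applied uniformly over the relevant $\gamma$. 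This estimate is \emph{not} specific to proper Levis $M\ne G$; it is precisely what kills the $M=G$, $\gamma\ne 1$ terms as well, and it is the mechanism behind the ``weight saving'' you correctly identify for $M\ne G$ but then deny to $M=G$. Repairing your sketch requires replacing the crude bound $|\Phi_G(\gamma_\infty,\xi)|\leq\dim\xi$ by this refined character estimate, after which the $q_{S_1}^{A+B\kappa}$ accounting for orbital integrals, volume factors $\chi(I^M_\gamma)$, and the count of contributing classes proceeds as you describe.
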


\begin{thm}[Level-aspect bound {\cite[thm 9.16]{ST16}}]\label{lvbound}
Consider the case where $U^{S, \infty}$ is a level subgroup $K^{S, \infty}(\mf n)$ for some ideal $\mf n$ relatively prime to $S_{\bad', G}$. Let $f_{S_1} \in \ms H^\ur(G(F_{S_1}))^{\leq \kappa}$ such that $\|f_{S_1}\|_\infty \leq 1$. Let $\xi$ be a dominant weight. Then, if $\N(\mf n)$ is large enough,
\[
\f{|\Pi_\disc(\lb_0)|}{\tau'(G) \dim(\xi) \wh \mu^\pl_{S_0}(\wh \varphi_{S_0})} I_\spec(\eta_\xi \otimes \varphi^\infty) = \wh \mu^\pl_{S_1}(\wh f_{S_1}) + O_{G, \varphi_{S_0}}(q_{S_1}^{A_\lv + B_\lv \kappa} \N(\mf n)^{-C_\lv})
\]
for some constants $A_\lv,B_\lv,C_\lv$ depending only on $G$.
\end{thm}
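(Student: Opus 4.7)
\medskip
\noindent\textbf{Proof proposal.} The plan is to mirror the strategy of \cite[\S8--9]{ST16}: start from Arthur's simple trace formula for $\eta_\xi \otimes \varphi^\infty$, isolate the contribution of central elements as the main term, and bound everything else by the error estimate. Since $G$ is cuspidal, $\mf X$ is trivial, and $\eta_\xi$ is stable cuspidal and $K_\infty$-finite, proposition \ref{sgeomsidegenps} (or equivalently Arthur's original simple trace formula) applies and gives
\begin{equation*}
|\Pi_\disc(\xi)| I_\spec(\eta_\xi \otimes \varphi^\infty) = \sum_{M \in \ms L^\cusp} (-1)^{\dim(A_M/A_G)}\frac{|\Om_M|}{|\Om_G|} \sum_{\gamma \in [M(F)]^\ssm} \chi(I^M_\gamma) |\iota^M(\gamma)|^{-1} \Phi_M(\gamma, \xi) O^M_\gamma(\varphi^\infty_M).
\end{equation*}
I would then split this into the contribution of $M = G$ with $\gamma$ central (the main term) and everything else (the error term), where centrality is taken in the strong sense $\gamma \in Z_G(F)$.

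For the main term, observe that level structure forces the central elements contributing to be exactly those $\gamma \in Z_G(F) \cap K^{S,\infty}(\mf n) \cdot \mathrm{supp}(\varphi_S) \cdot A_{G, \rat}$; because $\mf X$ is trivial and $\N(\mf n)$ is assumed large, only $\gamma = 1$ survives. At $\gamma = 1$ the orbital integral factors as a product over places: at $\infty$, $\Phi_G(1, \xi) = \dim(\xi)$; at places away from $S$, Gross's canonical measure gives the stated product; at $S_0$ we get $\wh\mu^\pl_{S_0}(\wh\varphi_{S_0})$ by the Macdonald/Plancherel formula for unramified functions (and a standard pseudocoefficient-Plancherel identity at $S_0$); at $S_1$ we get $\wh\mu^\pl_{S_1}(\wh f_{S_1})$. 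Keeping track of the volume factor $\chi(I^G_1) = \tau'(G)$ yields the predicted main term.

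For the error, the plan is to bound uniformly the remaining sum, giving a saving of $\N(\mf n)^{-C_\lv}$ and controlled growth in $q_{S_1}^\kappa$. The key ingredients are: (i) Shin-Templier's geometric bounds on weighted orbital integrals $O^M_\gamma((\varphi^\infty)_M)$ in terms of conductors, Weyl discriminants, and the support of $f_{S_1}$; (ii) the elementary bound on $\Phi_M(\gamma_\infty, \xi)$ by $\dim(\xi)$ times a polynomial in the weights of $\gamma_\infty$; (iii) the finiteness lemma that for each $\kappa$ and each fixed support at $S_0$, only finitely many conjugacy classes of $\gamma$ contribute and their number grows at most polynomially in $\kappa$ and $\log \N(\mf n)$. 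Combining these with the level shrinkage $O^M_\gamma(\1_{K^{S,\infty}(\mf n)}) \ll \N(\mf n)^{-C_\lv}$ for $\gamma \neq 1$ (which follows from the fact that $\gamma \in K^{S,\infty}(\mf n)$ forces $\gamma$ to lie in an increasingly small neighborhood of the identity at places dividing $\mf n$) gives the desired error estimate.

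The main obstacle is establishing step (iii) with constants uniform in $\kappa$ and $\N(\mf n)$. This is where \cite{ST16} does significant work, relying on motivic/model-theoretic bounds in the spirit of Cluckers-Gordon-Halupczok to get polynomial control on orbital integrals as the Hecke function varies in a truncated algebra. The other delicate point is handling the non-cuspidal (in the Arthur sense) Levi contributions $M \neq G$: these carry weighted orbital integrals which are only defined through analytic continuation, but cuspidality of $\eta_\xi$ at $\infty$ kills them via lemma \ref{centertechnicality}, so only genuinely semisimple elliptic $\gamma$ in cuspidal Levi's contribute and the bounds of \cite[\S8]{ST16} apply directly.
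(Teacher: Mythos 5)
This theorem is not proved in the paper: it is stated as a citation to \cite[thm 9.16]{ST16} in the background section, and the level-aspect computation for the paper's actual families is explicitly deferred to future work. So there is no proof in the paper to compare your sketch against; the relevant benchmark is the argument in Shin--Templier itself.

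Against that benchmark, your outline is broadly in the right spirit (simple trace formula, main term from $\gamma=1$, Plancherel inversion $\varphi_{S_0}(1) = \wh\mu^\pl_{S_0}(\wh\varphi_{S_0})$, error bound on the rest) but it oversimplifies the crux of the level aspect, and it contains one misattribution. The misattribution: at $S_0$ one does not use a "pseudocoefficient--Plancherel identity" --- $\varphi_{S_0}$ is an arbitrary element of $\ms H(G_{S_0})$ at finite places, and the identity used is just the Plancherel inversion formula $\varphi(1)=\wh\mu^\pl(\wh\varphi)$; pseudocoefficients only enter at the Archimedean place. The substantive gap is in the source of the saving $\N(\mf n)^{-C_\lv}$. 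You assert a bound of the form $O^M_\gamma(\1_{K^{S,\infty}(\mf n)}) \ll \N(\mf n)^{-C_\lv}$ for non-central $\gamma$ directly from the shrinking support, but this is not what the saving actually rests on: for any \emph{fixed} non-central $\gamma$, the orbital integral vanishes once $\mf n$ is large, so the statement has no uniform content on its own. The real work in \cite[\S8--9]{ST16} is a coupled estimate: bound the number of conjugacy classes of $\gamma$ whose $\A^{S,\infty}$-component meets $K^{S,\infty}(\mf n)$ and whose $S$-component meets the truncated support (this is a lattice-point count, and it is here that the $\N(\mf n)$-saving is extracted), bound each orbital integral polynomially in $q_S$ and the Weyl discriminant via the motivic/model-theoretic bounds, and control the discriminant factors. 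Simply appealing to ``level shrinkage of a single orbital integral'' does not assemble into the stated inequality. Finally, the paper explicitly warns that the orbital integral bound used for this (\cite[\S7]{ST16}) has a known erratum and must be replaced by the argument in \cite[appendix B]{ST16}, which sacrifices control on the constants and on $S_{\bad'}$; your proposal leans on the motivic bounds without registering this caveat, which matters if one wants the constants to be as described.
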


For clarity later, we emphasize that the implied constants in the big $O$ depend on $G$ and $\varphi_{S_0}$. As noted in errata on the authors' websites, there is a mistake in \cite[\S7]{ST16} so the alternate argument in \cite[B]{ST16} must be used for the orbital integral bounds that go into the results. This alternate argument does not provide any control on the constants or $S_{\bad'}$. 

\subsubsection{Clarifying a minor detail}
As another note, there is a small detail assumed in the bound for $a_{\gamma, M}$ used in proving the weight aspect bound: corollary 6.16 used to bound the $L$ function in the formula for $\bar \mu^{\can, EP}(G(F) \bs G(\A)/A_{G, \rat})$  only applies to groups with anisotropic center. However 6.17 uses it for centralizers of elements and these can have arbitrary center. We can use the following lemma to get an alternate bound for $\bar \mu^{\can, EP}(G(F) \bs G(\A)/A_{G, \rat})$ in general in terms of the bound for groups with anisotropic center:
\begin{lem}\label{aboundgap}
Let $G$ be a connected reductive group over $F$ and $G' = G/A_G$. Then
\begin{multline*}
\bar \mu^{\can, EP}(G(F) \bs G(\A)/A_{G, \infty}) \\
= \bar \mu^{\can, EP}(G'(F) \bs G'(\A)) \bar \mu^{\can, EP}(A_G(F) \bs A_G(\A) / A_{A_G, \rat}).
\end{multline*}
Note that the factor $\mu^{\can, EP}(A_G(F) \bs A_G(\A) / A_{A_G, \rat})$ is a constant depending only on the field $F$ and the dimension of $A_G$.
\end{lem}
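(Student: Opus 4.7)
The proof rests on the short exact sequence of connected reductive $F$-groups
\[
1 \to A_G \to G \to G' \to 1,
\]
in which $A_G$ is a central $F$-split torus (so $G'$ has $F$-anisotropic center by maximality of $A_G$). My plan is to turn this into a decomposition of the adèlic quotient and then check that the chosen measures are compatible.

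\emph{Step 1 (Exactness on points).} I would first note that because $A_G$ is $F$-split, Hilbert~90 gives $H^1(F, A_G) = H^1(F_v, A_G) = 0$ at every place $v$. This implies that $G(F_v) \twoheadrightarrow G'(F_v)$ and $G(F) \twoheadrightarrow G'(F)$ are surjective with kernels $A_G(F_v), A_G(F)$, and that the adèlic sequence
\[
1 \to A_G(\A) \to G(\A) \to G'(\A) \to 1
\]
is exact as topological groups. In particular, the induced map $G(F)\bs G(\A) \to G'(F) \bs G'(\A)$ is a principal $A_G(F) \bs A_G(\A)$-bundle.

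\emph{Step 2 (Compatibility of measures).} Next I would verify that the chosen Haar measures fit into a Fubini decomposition. Gross's canonical measure at a non-Archimedean place decomposes as a product under a short exact sequence of unramified reductive groups (by multiplicativity of the local $L$- and Euler factors that define it), and the same is true for Euler-Poincaré measure at Archimedean places: the EP-measure on a torus is Lebesgue-like and that on $G_\infty$ fibers through $G'_\infty$ with $A_G$-fiber, so the orbifold Euler characteristics multiply. Combining places, $\mu^{\can, EP}_G$ matches the product of $\mu^{\can, EP}_{A_G}$ and $\mu^{\can, EP}_{G'}$ on a Borel section of $G(\A) \to G'(\A)$.

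\emph{Step 3 (Handling the $A$-quotient and conclusion).} Since $A_G$ is itself $F$-split, $A_{A_G} = A_G$ and consequently $A_{A_G, \rat}$ coincides with the $A_{G, \rat}$ that appears naturally in the decomposition (and $A_{G', \rat}$ is trivial by anisotropy of $Z(G')$). So after identifying $A_{G, \infty}$ with its image in the $A_G(\A)$-fiber (up to a constant coming from the finite-dimensional quotient $A_{G, \infty}/A_{G, \rat}$, which also appears on the RHS through $A_{A_G, \infty}/A_{A_G, \rat}$), integrating $1$ over the fibration yields
\[
\bar\mu^{\can, EP}\bigl(G(F) \bs G(\A) / A_{G,\infty}\bigr) = \bar\mu^{\can, EP}\bigl(G'(F) \bs G'(\A)\bigr) \cdot \bar\mu^{\can, EP}\bigl(A_G(F) \bs A_G(\A) / A_{A_G, \rat}\bigr).
\]
The final assertion that $\bar\mu^{\can, EP}\bigl(A_G(F) \bs A_G(\A) / A_{A_G, \rat}\bigr)$ depends only on $F$ and $\dim A_G$ is then immediate because $A_G \cong \Gm^{\dim A_G}$ over $F$, reducing the computation to the known volume for $F^\times \bs \A_F^\times / (\R^\times_{>0})$ raised to the appropriate power.

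\emph{Main obstacle.} The genuine technical point is Step 2, reconciling the two normalizations for measures across the sequence, and especially keeping track of the possible discrepancy between $A_{G,\infty}$ and the more naive $A_{A_G, \rat}$ when $F \neq \Q$: one must verify that the extra Archimedean split directions contributed by Weil restriction cancel cleanly on both sides so that the quotient by $A_{G, \infty}$ on the left really does absorb only into the $A_G$-fiber on the right.
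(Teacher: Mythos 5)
Your overall strategy matches the paper's: use the split central exact sequence, establish a factorization of the adelic quotient, and check measure compatibility place by place. The high-level plan is sound. But two points in the execution are more delicate than your sketch acknowledges, and the paper spends most of its effort precisely there.

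First, in Step~1 you derive the exactness of $1 \to A_G(\A) \to G(\A) \to G'(\A) \to 1$ from Hilbert~90 at each place. That gives surjectivity of $G(F_v) \to G'(F_v)$ for every $v$, but it does not by itself give the restricted product structure: to have $G'(\A) = G(\A)/A_G(\A)$ one needs that for almost all $v$ the map $\underline{G}(\mc O_v) \to \underline{G'}(\mc O_v)$ is surjective with kernel $\underline{A_G}(\mc O_v)$, where $\underline{G}, \underline{G'}, \underline{A_G}$ are the reductive models coming from the chosen hyperspecials. This is a statement about $\mc O_v$-points, not $F_v$-points, and requires an extra input. The paper gets it from Lang's theorem over the residue field combined with Hensel's lemma and smoothness of the quotient map; an alternative is to note $H^1_{\text{\'et}}(\mc O_v, \Gm^n) = \Pic(\mc O_v)^n = 0$. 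Either way, this step is genuinely needed and is absent from your outline. This is also what makes the measure comparison at finite quasisplit places go through, since Gross's canonical measure at such a place is characterized by giving the special subgroup volume $1$, and the computation $\underline{G'}(\mc O_v) = \underline{G}(\mc O_v)/\underline{A_G}(\mc O_v)$ is exactly what lets that characterization pass through the sequence.

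Second, in Step~2 your appeal to multiplicativity of $L$- and Euler-factors is plausible, and indeed additivity of the motive of $G$ over the sequence $A_G \to G \to G'$ would give a clean uniform argument, but you should be aware that the paper does not argue this way. Instead it treats non-quasisplit places separately: there the measure $\mu^\can$ is defined by transferring a top form from the quasisplit inner form $G^\qs$, and one must check that the isomorphism $G_{\overline{k}} \cong G^\qs_{\overline{k}}$ carries $A_G$ to $A_{G^\qs}$ (it does, since inner forms have identified centers) and that the normalization constant $\Lambda$ depends only on the motive, which depends only on the quasisplit form. If you intend to use the $L$-factor route, you should verify that the motive of $G$ really is the direct sum of the motives of $A_G$ and $G'$ in the relevant sense, and that the transfer-of-top-form normalization is compatible with this at non-quasisplit places. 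Neither is hard, but neither is free.

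Your "main obstacle" paragraph about the Archimedean split directions under Weil restriction is a fair concern, though the paper essentially sidesteps it by factoring $G(\A)^1 = G(\A)/A_{G, \rat}$ and defining the EP-measure at infinity so that this matches by fiat, then invoking \cite[\S6.5]{ST16} for the definitions; you should check that your version of Step~3 is actually computing the same object as the lemma states (the lemma quotients by $A_{G, \infty}$ on the left but by $A_{A_G, \rat}$ on the right, and reconciling these is exactly the point you flag). In summary: same route, but the content is in the integral-model exactness and the case analysis at non-quasisplit $v$, both of which your sketch currently skips.
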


\begin{proof}
If $G$ is quasisplit at finite $v$, there is a special model $\underline G$ over $F_v$. Then $\underline G(\mc O_v) \cap A_G(F_v)$ is a maximal (a bigger subgroup times $G(\mc O_v)$ is otherwise a bigger compact) connected compact subgroup and therefore corresponds to a model $\underline A_G$ consistent with the inclusion. Consider the quotient model $\underline{G/A_G}$. By Lang's theorem, $\underline{G'}(k_v) = \underline G(k_v)/\underline{A_G}(k_v)$, so by Hensel's lemma and smoothness of quotient maps by smooth subgroups, $\underline{G/A_G}(\mc O_v) = \underline G(\mc O_v)/\underline{A_G}(\mc O_v)$. By Hilbert 90, $G'(F_v) = G(F_v)/A_G(F_v)$ for any local $F_v$. This gives that $G'(\A) = G(\A)/A_G(\A)$ implying $G'(\A)^1 = G'(\A) = G(\A)^1/A_G(\A)^1$. 

Using  $G'(F) = G(F)/A_G(F)$, we then get an isomorphism of topological spaces
\[
G(F) \bs G(\A)^1 \cong G'(F) \bs G'(\A) \times A_G(F) \bs A_G(\A)^1.
\]
Next, $\mu^{\can, EP}$ on $G'(\A)$ and $G(\A)$ induces a measure $\mu_A$ on $A_G(\A)$. By the above factorization, it suffices to show that this equals $\mu_A^{\can, EP}$ place by place. At the infinite place, they are the same by definition (see \cite[\S6.5]{ST16}). 

If $G$ is quasisplit at finite $v$, then $\mu^\can$ is characterized by giving any special subgroup volume $1$.   As before, $\underline{G/A_G}(\mc O_v) = \underline G(\mc O_v)/\underline A_G(\mc O_v)$. In particular, $\underline{G/A_G}(\mc O_v)$ also needs to be maximal connected so it is special. Since these are all special subgroups, this forces $\mu_A = \mu_A^\can$ at $v$. 

If $G$ is not quasisplit at $v$, then $\mu^\can$ is determined by the transfer of a top-form $\om_{G^\qs}$ from $G^\qs$ (since the normalization factor  $\Lambda$ in \cite{ST16} depends only on the motive for $G$ which depends only on the quasisplit form of $G$). The isomorphism $G_{\overline k} \iso G^\qs_{\overline k}$ carries $(A_G)_{\overline k}$ to $(A_{G^\qs})_{\overline k}$ since centers are identified between inner forms. This means that $G'^\qs = G^\qs/A_{G^\qs}$ through the isomorphism over $\overline k$. By the previous paragraph, the defining top-forms for $G'_\qs$ and $A_{G^\qs}$ wedge together to that of $G^\qs$. Therefore, this same property holds for $G$ and $A_G$ which is what we want. 
\end{proof}
The previous lemma is implicit in later sections of \cite{ST16} but not explained in detail.

\subsection{New Bounds Set-up}\label{setup} 
For our use, we will need a generalization of these bounds that works when $Z_G \neq 1$ and when $G$ is not necessarily cuspidal. We will also need the big $O$, choices of $S_{\bad,H}$, and the constants $A,B,C$ to be uniform over all groups $H$ appearing in hyperendoscopic paths of $G$. The final statement requires some notation and will be in Theorem \ref{mainresult}. 

Let $\xi$ be a dominant weight and choose central character datum $(\mf X, \chi)$ where $A_{G, \infty} \subseteq \mf X$ and $\chi$ is consistent with $\xi$. Let $\chi_0$ be its restriction to $A_{G, \rat}$. We start similar to \cite[thm 4.11]{Shi12} and \cite[thm 9.19]{ST16}, instead trying to apply proposition \ref{geomsidegenps}. This requires making some choices:
\begin{itemize}
\item
a cutoff function $f \in C_c^\infty(A_{G, \infty}/A_{G, \rat})$,
\item
a $\varphi^{\infty_1} \in \ms H(G^\infty, \chi_0)$ such that $\overline{(\varphi^{\infty_1})}_\chi = \varphi^\infty$, 
\item
lots of Haar measures: fix them to be $\mu^{\can \times EP}$ whenever necessary.
\end{itemize}
We need to bound the term for all endoscopic groups. Considering all the previous lemmas on transfers, we are interested in the case where:
\begin{itemize}
\item
$\varphi$ and $\chi$ are unramified outside of $S_0$ and $\infty$. 
\item
$\chi$ extends to a character on $G_v$. 
\item
$(\varphi^{S, \infty})^1$ can be chosen to be $\vol(\mf X^{S, \infty} \cap U^{S, \infty})^{-1}\1_{U^{S, \infty}}$. For endoscopic groups we will without loss of generality expand $S_0$ so that $U^{S, \infty} = K^{S, \infty}$. Then this follows from the computation of  transfers in section \ref{zextarchtransfer}. 
\item
$\varphi_s \in \ms H(G_s, K_s, \chi_s)^{\leq \kappa}$ and $\|\chi_s \varphi_s\|_\infty \leq 1$ for all $s \in S_1$. 
\end{itemize}
We choose a specific $\varphi_s^1$ for $s \in S_1$ according to the following lemma. 
\begin{lem}
Pick unramified character datum $(\mf X_v, \chi_v)$ such that $\chi_v$ extends to a character on $G$. Let $\varphi_v \in \ms H(G_v, K_v, \chi_v)^{\leq \kappa}$ such that $\| \chi_v \varphi_v\|_\infty \leq 1$. Fix the canonical measure on $\mf X_v$ so that $\vol(K \cap \mf X_v) = 1$. Then there exists $\varphi^1_v \in \ms H(G_v, K_v)^{\leq \kappa}$ such that $\overline{(\varphi^1_v)}_{\chi_v} = \varphi_v$ and $\|\chi_v \varphi^1_v\|_\infty \leq 1$. 
\end{lem}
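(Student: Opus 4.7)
\medskip

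\noindent\textbf{Proof proposal.}
The plan is to invert the $\chi_v$-averaging explicitly on the natural basis of truncated Hecke functions. First I would recall that because $A_\mf X \subseteq Z_G$ and $\mf X_v$ is unramified, the normalization $\vol(K_v \cap \mf X_v) = 1$ identifies $\mf X_v/(K_v \cap \mf X_v)$ with $X_*(A_\mf X)$ under counting measure via $\zeta \mapsto \zeta(\varpi)$, and $X_*(A_\mf X)$ acts on $X_*(A)^+$ by translation (preserving positivity since $A_\mf X$ is central). A bi-$K_v$-invariant $\chi_v^{-1}$-equivariant function is therefore determined by its values $\varphi_v(\lb(\varpi))$ on one representative of each $X_*(A_\mf X)$-orbit in $X_*(A)^+$, and the truncation condition $\varphi_v \in \ms H(G_v, K_v, \chi_v)^{\leq \kappa}$ says exactly that each orbit on which $\varphi_v$ is nonzero contains a representative of $\mc B$-norm at most $\kappa$.

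Next I would construct $\varphi^1_v$ by picking, for each such orbit $[\lb]$, a representative $\lb_0 = \lb_0([\lb])$ with $\|\lb_0\|_\mc B \leq \kappa$ and setting
\[
\varphi^1_v = \sum_{[\lb]} \varphi_v(\lb_0(\varpi))\,\tau^G_{\lb_0}.
\]
This finite sum manifestly lies in $\ms H(G_v, K_v)^{\leq \kappa}$. To verify $\overline{(\varphi^1_v)}_{\chi_v} = \varphi_v$ I would use bi-$K_v$-invariance of both sides to reduce to evaluation at $\mu(\varpi)$ for $\mu \in X_*(A)^+$; the discrete description of $\mf X_v$ then reduces the average to a sum over $\zeta \in X_*(A_\mf X)$ in which only the term $\zeta_\mu = \lb_0([\mu]) - \mu$ survives, yielding $\chi_v(\zeta_\mu(\varpi)) \varphi_v(\lb_0(\varpi)) = \varphi_v(\mu(\varpi))$ by the $\chi_v^{-1}$-covariance of $\varphi_v$.

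For the sup-norm bound I would extend $\chi_v$ to an unramified character on $G_v$, which exists because $G_v$ is unramified and $\chi_v$ is trivial on $K_v \cap \mf X_v$ by the unramified hypothesis on the datum. With this choice $\chi_v$ is bi-$K_v$-invariant, so $\chi_v \varphi^1_v$ takes the constant value $\chi_v(\lb_0(\varpi)) \varphi_v(\lb_0(\varpi))$ on each $K_v\lb_0(\varpi)K_v$ and is bounded by $\|\chi_v\varphi_v\|_\infty \leq 1$. The main delicate point is that the statement refers ambiguously to ``$\chi_v$ extended to a character on $G$''; the lemma really needs this extension to be bi-$K_v$-invariant, so in applications one should check that the extension arising from transfer factors in section \ref{transferchar} is unramified, which it is because it comes from a parameter valued in the unramified $L$-group.
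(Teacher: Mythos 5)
Your argument is essentially the paper's own proof with more detail spelled out: write $\varphi_v$ in the basis $\tau^G_\lb$, use the covariance $a_{\lb+\zeta} = \chi_v(\zeta(\varpi))^{-1}a_\lb$ under translation by $X_*(A_{\mf X})$, pick a representative $\lb_0$ of each class of norm $\leq\kappa$, and take $\varphi^1_v = \sum a_{\lb_0}\tau^G_{\lb_0}$. Your additional remark that the extension of $\chi_v$ to $G_v$ needs to be unramified for the $L^\infty$ bound to make sense is a useful clarification that the paper leaves implicit, but the underlying construction is identical.
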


\begin{proof}
Let
\[
\varphi_v = \sum_{\lb \in X_*(A)} a_\lb \tau_\lb.
\]
Let $A_{\mf X_v}$ be the split part of $\mf X_v$. Then for any $\zeta \in X_*(A_{\mf X_v})$, $a_{\lb + \zeta} = \chi(\zeta(\varpi))^{-1} a_\lb$. For each $\lb$ such that $a_\lb \neq 0$, there is a representative $\lb'$ of its class $[\lb] \in X_*(A)/X_*(A_{\mf X_v})$ such that $\|\lb'\| \leq \kappa$. Let $\Lambda$ be the set of all these chosen representatives. Then
\[
\varphi_v^1 = \varphi_v = \sum_{\lb \in \Lambda} a_\lb \tau_\lb
\]
satisfies $\overline{(\varphi^1_v)}_{\chi_v} = \varphi_v$ The $L^\infty$ bound on $\varphi_v$ gives that $|\chi_v(\lb(\varpi)) a_\lb| = 1$ implying the needed bound on $\varphi^1_v$. 
\end{proof}

\begin{note}
There is a small technicality here. The original $\chi_v$ chosen on the subgroup of $G_v$ may not necessarily extend to $G_v$. However, section \ref{hyptransferchar} still gives that $\chi_{\mc H,v}$ on any $\mc H_v$ is a character $\lb$ that extends to $\mc H_v$ times $\chi_v$. Since $Z_{G^\der}$ is finite, $\chi_v$ can be factored as a unitary character times a character on $G_v$. Since the bounds here are only up to absolute value, this does not matter.
\end{note}
 
 Beginning the computation:
\begin{multline*}
 \f{|\Pi_\disc(\lb_0)|}{\tau'(G) \dim(\xi)} I_{\spec, \chi}(\eta_\xi \otimes \varphi^\infty) = 
\f1{\wh f(0)} \f1{\vol(\mf X_F \bs \mf X/ A_{G, \rat})}  \int_{\mf X_F \bs \mf X/ A_{G, \rat}} \chi(z^\infty) \\
 \sum_{M \in \ms L^\cusp} \sum_{\gamma \in [M(F)]^\ssm}  a_{M,\gamma}  |\iota^M(\gamma)|^{-1}  f(z_\infty \gamma) \f{\Phi_M(\gamma, \xi)}{\dim \xi} O^M_\gamma((\varphi^{\infty_1}_{z^\infty})_M) \, dz.
\end{multline*}
Here
\[
a_{M,\gamma} = \tau'(G)^{-1} \f{|\Om_M|}{|\Om_G|} \f{\overline \mu^{\can, EP}(I^M_\gamma(F) \bs I^M_\gamma(\A_F) / A_{I^M_\gamma, \Q})}{\overline \mu^{EP}(\bar I^M_{\gamma,\infty}/A_{I^M_\gamma, \infty})}
\]
(see the top of page 19 in \cite{Shi12}).

This double sum breaks into three pieces: $M=G$ and $\gamma \in Z_G$, $M=G$ otherwise, and $M \neq G$. For $M=G$, $\Phi_M(\gamma, \xi) = \tr \xi(\gamma_\infty)$. For central $\gamma$, the centralizer is everything so $|\iota^G(\gamma)| = 1$. In addition, the measure on the quotient is just counting measure on a point so $O^M_\gamma(\varphi^{\infty_1}_{z^\infty}) = \varphi^{\infty_1}(z^\infty \gamma)$. Finally,
\[
a_{G, \gamma}  = \tau'(G)^{-1} \f{\overline \mu^{\can, EP}(G(F) \bs G(\A_F) / A_{G, \rat})}{\overline \mu^{EP}(\bar G_\infty/A_{G, \infty})} = \overline \mu^{EP}(\bar G_\infty/A_{G, \infty})^{-1} = 1
\]
since existence of a discrete series requires that the last group is compact and therefore has EP-measure $1$.This leaves us with
\[
\f1{\wh f(0)} \f1{\vol(\mf X_F \bs \mf X/ A_{G, \rat})}  \int_{\mf X_F \bs \mf X/ A_{G, \rat}} \chi(z^\infty)  \sum_{\gamma \in Z_G(F)} \varphi^{\infty_1}(\gamma) f(z \gamma) \f{\tr \xi(z_\infty \gamma)}{\dim \xi}  \, dz.
\]
Next, note that by a Fourier inversion formula
\[
\f{\tr \xi(\gamma)}{\dim \xi} = \om_\xi^{-1}(\gamma) = \om_\xi(z_\infty) \om_\xi^{-1}(z_\infty \gamma) = \om_\xi(z_\infty) \eta_\xi(z_\infty \gamma) \eta_\xi(1)^{-1}
\]
where $\om_\xi$ is the central character for $\xi$. Therefore, the term inside the sum is simply $\om_\xi(z_\infty)f(z \gamma) \varphi^1(z \gamma)$ where $\varphi^1 = \eta_\xi \varphi^{\infty_1}$.

Combining the $\om_\xi(z_\infty)$ factor with the $\chi$, we get a main term
\begin{equation}\label{main}
\f1{\wh f(0) \eta_\xi(1)} \f1{\vol(\mf X_F \bs \mf X/ A_{G, \rat})}  \int_{\mf X_F \bs \mf X/ A_{G, \rat}} \chi(z)  \sum_{\gamma \in Z_G(F)} f(z_\infty \gamma) \varphi^1(z\gamma)  \, dz
\end{equation}
The leftovers form an error term
\begin{multline}\label{error}
\f1{\wh f(0)} \f1{\vol(\mf X_F \bs \mf X/ A_{G, \rat})}  \int_{\mf X_F \bs \mf X/ A_{G, \rat}} \chi(z^\infty) \\
\lf( \sum_{\substack{\gamma \in [G(F)]^\ssm \\ \gamma \notin Z_G}} a_{G,\gamma} |\iota^G(\gamma)|^{-1} f(z_\infty \gamma) \f{\tr \xi(\gamma_\infty)}{\dim \xi} O_{z^\infty\gamma}^G(\varphi^{\infty_1})  \ri. \\
\lf. + \sum_{\substack{M \in \ms L^\cusp \\ M \neq G}} \sum_{\gamma \in [M(F)]^\ssm} a_{M,\gamma} |\iota^M(\gamma)|^{-1} f(z_\infty \gamma) \f{\Phi_M(\gamma, \xi)}{\dim \xi} O_{z^\infty\gamma}^M(\varphi_M^{\infty_1}) \ri) \, dz
\end{multline}
We compute these separately since they require pretty different ideas to understand.

\subsection{The Main Term}\label{themainterm}
\subsubsection{Central Fourier transforms}\label{cft}
This section uses material on Fourier analysis on non-abelian groups. See \cite{Fol16} chapter 7 for a good reference. That $p$-adic reductive groups are type I is a classic result from \cite{Ber74}.

The main term initially simplifies in terms of the Fourier transform $\bar f_S$ of $f_S$ with respect to $(Z_G)_S$. To actually get a reasonable interpretation, we need to relate $\bar f_S$ to $\wh f_S$. Therefore, for this subsection only, redefine $G = G_S$, $Z = (Z_G)_S$ and consider arbitrary $f \in \mc H(G_S)$. Note that the following results probably hold for general type I unimodular groups with an appropriate modification of $\mc H(G)$ to a more complicated function space; the case of $p$-adic groups just makes the analytic issues a lot nicer.

There is a map from $P : \wh G \to \wh Z_G$ taking $\pi$ to its central character $\om_\pi$. 
\begin{lem}
$P$ is measurable with respect to the usual sigma algebras on $\wh G$ and $\wh Z$. 
\end{lem}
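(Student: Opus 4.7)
The plan is to exhibit the central character map $P$ as a Borel map between standard Borel spaces, using the type I structure of $p$-adic reductive groups. Both $\wh G$ and $\wh Z$ are standard Borel: since $G = G_S$ is a finite product of $p$-adic reductive groups it is type I (by \cite{Ber74}), so $\wh G$ carries its Mackey Borel structure; and $Z$ is locally compact abelian, so $\wh Z$ is Polish with Borel structure generated by the evaluation maps $\mathrm{ev}_z:\om\mapsto\om(z)$ for $z\in Z$. This reduction means it suffices to show that for each fixed $z\in Z$ the composite $\pi\mapsto\om_\pi(z)$ is Borel $\wh G\to\mathbb T$.

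To establish this, I would work through the canonical measurable field of Hilbert spaces $\pi\mapsto H_\pi$ attached to the Mackey parametrization. Standard direct-integral theory for type I groups (Dixmier, \textit{Les C*-alg\`ebres et leurs repr\'esentations}) supplies a fundamental sequence $\{\xi_n(\cdot)\}$ of Borel vector fields whose values span $H_\pi$ for every $\pi$. Setting $n(\pi)=\min\{n:\xi_n(\pi)\neq 0\}$ (a Borel function) and $\xi_\pi=\xi_{n(\pi)}(\pi)/\|\xi_{n(\pi)}(\pi)\|$ produces a Borel unit vector field. Since $\pi$ is irreducible, $\pi(z)$ acts by the scalar $\om_\pi(z)$ for $z\in Z$, so
\[
\om_\pi(z) \;=\; \bigl\langle \pi(z)\xi_\pi,\xi_\pi\bigr\rangle.
\]
By the defining property of the measurable field, the right-hand side is Borel in $\pi$ for each fixed $z$, which is exactly what we needed.

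An alternative (and essentially equivalent) route, well-suited to the $p$-adic setting, is to avoid the measurable field formalism by using Hecke algebras. Fix a countable basis $\{K_m\}$ of neighborhoods of the identity by compact open subgroups; then $\wh G$ is the union of the Borel subsets $\wh G_m=\{\pi:\pi^{K_m}\neq 0\}$, and on each $\wh G_m$ the representation of the Hecke algebra $\ms H(G,K_m)$ on $\pi^{K_m}$ is a Borel function of $\pi$. Choosing $f_m\in\ms H(G,K_m)$ with $\pi(f_m)\neq 0$ on a Borel refinement, the identity $\pi(\delta_z* f_m)=\om_\pi(z)\pi(f_m)$ lets one read off $\om_\pi(z)$ as a Borel ratio of matrix coefficients on each piece, giving the conclusion by gluing.

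The main obstacle is bookkeeping rather than substance: one has to verify that the chosen vector field (or Hecke-algebra choice) is genuinely Borel, which is routine once the standard measurable-field framework for type I groups is in place. Everything else is a straightforward application of the definitions.
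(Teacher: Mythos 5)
Your proof is correct and lands on the same essential point as the paper: since $G$ is type I, the Mackey Borel structure on $\wh G$ is generated by matrix-coefficient functions, and for central $z$ these are scalar multiples of $\pi \mapsto \om_\pi(z)$, which is then Borel by construction. The paper's write-up is a bit lighter on machinery than yours, though. It works directly with the concrete space $\Pi$ of irreducible representations realized on one of a fixed countable family of Hilbert spaces $H_i$, with the Borel structure on $\wh G$ defined as the quotient of the smallest $\sigma$-algebra on $\Pi$ making $\pi \mapsto \langle \pi(g)v, w\rangle$ measurable for $v, w \in H_i$. On each piece $\Pi_i$ the Hilbert space is fixed, so one can take a single fixed unit vector $v = w \in H_i$ and read off $\om_\pi(z) = \langle \pi(z)v, v\rangle$ immediately; there is no need to build a Borel unit-vector field over $\wh G$ through the direct-integral/measurable-field formalism as you do. Your construction of $\xi_\pi$ from a fundamental sequence is valid, but it imports the Plancherel-theoretic apparatus only to recover a fact that is already built into the definition of the Mackey Borel structure as a quotient from $\Pi$. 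Your alternative Hecke-algebra route is also fine and well adapted to the $p$-adic setting, though again heavier than necessary for this measurability statement.
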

\begin{proof}
Fix a Hilbert space $H_i$ of dimension $i$ for $i \in \N$ or countable infinity. Let $\Pi$ be the set of irreducible unitary representations of $G$ on some $H_i$. Consider the functions on $\Pi$ defined by $\pi \mapsto \langle \pi(g)v, w \rangle$ for $g \in G$ and $v,w$ in the appropriate Hilbert space. Since $G$ is type I, the $\sigma$-algebra on $\wh G$ is the quotient of the smallest one on $\Pi$ that makes these functions continuous. An analogous statement holds for $\wh Z$. 

Then, since central elements act by central characters, the functions defined by $z \in Z$ on $\wh G$ are exactly the pullbacks by $P$ of the analogous functions on $\wh Z$.
\end{proof}
Denote the Fourier transform of $f|_{Z_G}$ by $\bar f$. 
\begin{lem}\label{centerftl}
For any functions $\varphi \in \mc H(\wh Z)$ and $f \in \mc H(G)$
\[
\int_{\wh Z} \varphi \bar f \, d\mu^\pl = \int_{\wh G} (\varphi \circ P) \wh f \, d\mu^\pl.
\]
\end{lem}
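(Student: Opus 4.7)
The plan is to reduce both sides of the identity to the common integral $\int_Z \psi(z) f(z)\, dz$ for an appropriate $\psi \in C_c(Z)$, using Plancherel inversion on both $G$ and $Z$. The key observation enabling this is that, by Schur's lemma, for any irreducible unitary representation $\pi$ of $G$ and any central element $z \in Z$, the operator $\pi(z)$ acts on the representation space by the scalar $\om_\pi(z) = P(\pi)(z)$. Consequently, the Plancherel inversion formula on $G$, applied at a central point $z \in Z$, yields
\[
f(z) = \int_{\wh G} \tr(\pi(z^{-1})\pi(f))\, d\mu^\pl(\pi) = \int_{\wh G} \om_\pi(z)^{-1} \wh f(\pi)\, d\mu^\pl(\pi),
\]
a formula relating $f|_Z$ directly to $\wh f$ on $\wh G$.

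By density (using that $Z$ is a type I abelian locally compact group, so the Fourier transform $\psi \mapsto \bar\psi$ sends a dense subspace of $L^1(Z)$ onto a dense subspace of $\mc H(\wh Z)$), I would reduce to verifying the identity for $\varphi$ of the form $\varphi = \bar\psi$ with $\psi \in C_c(Z)$. For such $\varphi$, $\varphi(\om_\pi) = \int_Z \psi(z) \om_\pi(z)^{-1}\, dz$, and substituting into the right-hand side of the lemma, Fubini gives
\[
\int_{\wh G} \varphi(P(\pi)) \wh f(\pi)\, d\mu^\pl(\pi) = \int_Z \psi(z) \left( \int_{\wh G} \om_\pi(z)^{-1} \wh f(\pi)\, d\mu^\pl(\pi) \right) dz = \int_Z \psi(z) f(z)\, dz
\]
by the displayed formula above.

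The left-hand side of the lemma is then handled by the Plancherel (Parseval) identity on the abelian group $Z$: since $\bar f$ is the Fourier transform of $f|_Z \in C_c(Z)$ and $\varphi = \bar\psi$,
\[
\int_{\wh Z} \varphi(\chi) \bar f(\chi)\, d\mu^\pl_Z(\chi) = \int_Z \psi(z) f(z)\, dz.
\]
Both sides therefore equal $\int_Z \psi(z) f(z)\, dz$, establishing the identity on the dense class, and extending by continuity finishes the proof.

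The main obstacle is justifying the Fubini exchange and the density/continuity step: one needs $\wh f$ to be sufficiently well-behaved on $\wh G$ so that the iterated integral against the compactly supported $\psi$ and the Plancherel measure is absolutely convergent, and one needs to choose topologies on $\mc H(\wh Z)$ and $\mc H(G)$ under which both sides of the identity are continuous in $\varphi$. For $p$-adic reductive groups, these points follow from Harish-Chandra's Plancherel theorem and the type I property already invoked in the preceding lemma, so the technicalities are standard though they should be stated explicitly.
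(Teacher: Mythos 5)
Your proof follows the paper's approach: evaluate Plancherel inversion on $G$ at central points $z$ to get $f(z)$ in terms of $\wh f$, write $\varphi$ as a Fourier transform of some $\psi \in C_c(Z)$, and interchange integrals. One small caution: the last step, $\int_{\wh Z} \varphi\,\bar f = \int_Z \psi\, f$, is not literally Parseval (which carries a complex conjugate); it is the same Fubini-plus-Fourier-inversion argument you used for $\wh G$, now run on $\wh Z$, and one must keep the Fourier conventions on the two sides dual so the central characters line up --- the paper handles this symmetrically by performing the identical interchange on both sides rather than appealing to Parseval.
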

\begin{proof}
Using both Fourier inversion theorems, for any $z \in Z$
\[
\int_{\wh Z} \om(z) \bar f(\om)  \, d\om = f(z) = \int_{\wh G} \om_\pi(z) \wh f(\pi)  \, d\pi.
\]
For a general $\varphi$
\begin{align*}
\int_{\wh G} \varphi(\om_\pi) \wh f(\pi) \, d\pi &= \int_{\wh G} \int_Z  \bar \varphi(z) \om_\pi^{-1}(z) \wh f(\pi) \, dz \, d\pi \\
&= \int_Z \bar \varphi(z) \int_{\wh G}   \om_\pi^{-1}(z) \wh f(\pi) \, d\pi \, dz \\
&= \int_Z \bar \varphi(z) \int_{\wh Z}   \om^{-1}(z) \bar f(\om) \, d\om \, dz \\
&= \int_{\wh Z} \int_Z  \bar \varphi(z) \om^{-1}(z) \bar f(\om) \, dz \, d\om  = \int_{\wh Z} \varphi(\om) \bar f(\om) \, d\om
\end{align*}
so we are done.
\end{proof}

Intuitively, we can therefore think of $\bar f(\om)$ as an average of $\wh f$ over representations with central character $\om$. To make this notion precise, push $\wh f \, d\mu^\pl$ forward to a measure $\mu_{\wh f}$ on $\wh Z_G$. 
\begin{lem}
$\mu_{\wh f}$ is absolutely continuous with respect to Haar measure on $\wh Z_G$. 
\end{lem}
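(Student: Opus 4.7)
The plan is to identify $\mu_{\wh f}$ explicitly: I claim it has density $\bar f$ against Plancherel measure on $\wh Z$, which for an abelian locally compact group coincides with Haar measure on the Pontryagin dual.

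First I would rewrite the defining integral of $\mu_{\wh f}$ using the previous lemma. By the definition of pushforward, for any $\varphi \in \ms H(\wh Z)$,
\[
\int_{\wh Z} \varphi \, d\mu_{\wh f} = \int_{\wh G} (\varphi \circ P) \, \wh f \, d\mu^\pl,
\]
and lemma \ref{centerftl} rewrites the right-hand side as $\int_{\wh Z} \varphi \, \bar f \, d\mu^\pl_{\wh Z}$, where $\mu^\pl_{\wh Z}$ denotes Plancherel measure on $\wh Z$. Since $Z = (Z_G)_S$ is abelian, the Plancherel theorem for abelian LCA groups identifies $\mu^\pl_{\wh Z}$ with a Haar measure on $\wh Z$ (suitably normalized against the fixed Haar measure on $Z$).

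Next I would promote this equality of integrals against test functions to an equality of Radon measures. Because $f \in \ms H(G)$ is compactly supported, $f|_Z$ is compactly supported and in particular lies in $L^1(Z)$, so $\bar f \in C_0(\wh Z)$ is continuous and bounded. Both $\mu_{\wh f}$ and $\bar f \, d\mu^\pl_{\wh Z}$ are therefore Radon measures on $\wh Z$, and since $\ms H(\wh Z)$ separates points and is dense in $C_c(\wh Z)$ (in the inductive limit topology), equality of their integrals against $\varphi \in \ms H(\wh Z)$ forces $d\mu_{\wh f} = \bar f \, d\mu^\pl_{\wh Z}$ as measures. This exhibits $\mu_{\wh f}$ as absolutely continuous with respect to Haar measure on $\wh Z$, with explicit continuous density $\bar f$.

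The only step requiring any care is the density argument promoting equality on test functions to equality of measures; this is standard. The real content of the lemma is just the combination of lemma \ref{centerftl} with the fact that Plancherel measure on the dual of an abelian group is Haar measure.
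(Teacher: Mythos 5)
Your approach is genuinely different from the paper's. The paper proves absolute continuity directly by the null-set criterion: given $X \subseteq \wh Z$ of Haar measure zero, it uses outer regularity and continuity of $\bar f$ to enclose $X$ in a countable union $X_\eps$ of compact opens with $\int_{X_\eps} \bar f \, d\mu^\pl < \eps$, then applies Lemma \ref{centerftl} (in the form with indicator functions) to get $\mu_{\wh f}(X) < \eps$ for every $\eps$. You instead aim for the stronger conclusion $\mu_{\wh f} = \bar f \, d\mu^\pl$ as an identity of measures, which would subsume both this lemma and Corollary \ref{centerft} in one stroke. That is a legitimate consolidation and arguably cleaner, \emph{if} the final step can be made rigorous.

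The gap is in the final step, "since $\ms H(\wh Z)$ separates points and is dense in $C_c(\wh Z)$ (in the inductive limit topology), equality of their integrals against $\varphi \in \ms H(\wh Z)$ forces $d\mu_{\wh f} = \bar f \, d\mu^\pl$." Two issues. First, you assert without proof that $\mu_{\wh f}$ is Radon; this needs $\wh f \, d\mu^\pl$ to be a finite (complex) measure on $\wh G$, i.e.\ $\int_{\wh G}|\wh f|\,d\mu^\pl < \infty$, which is true for $f \in \ms H(G)$ (write $f = f_1 * f_2$ and use Cauchy--Schwarz with the Plancherel theorem) but must be said. Second, the density claim is not obviously correct: the space $\ms H(\wh Z)$ appearing in Lemma \ref{centerftl} is really the space of functions on $\wh Z$ for which Fourier inversion applies, and $\wh Z$ need not be totally disconnected (it can have circle factors), so density in $C_c(\wh Z)$ for the inductive limit topology is not clear. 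The clean way to finish is to sidestep density entirely: both $\mu_{\wh f}$ and $\bar f\, d\mu^\pl$ are finite complex Radon measures on the LCA group $\wh Z$, and the chain of identities used to prove Lemma \ref{centerftl} shows that both have Fourier--Stieltjes transform equal to $z \mapsto f(z)$ on $Z$; uniqueness of Fourier transforms of finite measures on an LCA group then forces $\mu_{\wh f} = \bar f \, d\mu^\pl$. With that substitution your proof is correct and self-contained.
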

\begin{proof}
Let $X \subset \wh Z$ have measure $0$. By $\sigma$-finiteness, outer regularity, and continuity of $\bar f$, for any $\eps > 0$, $X$ is contained in a union $X_\eps$ of countably many compact open sets such that $\int_{X_\eps} \bar f d\mu^\pl < \eps$. Then
\[
\mu_{\wh f}(X) \leq \mu_{\wh f}(X_\eps) = \int_{\wh G} \1_{P^{-1}(X_\eps)} \wh f \, d\mu^\pl = \int_{\wh Z} \1_{X_\eps} \bar f \, d\mu^\pl < \eps.
\]
Since this is true for every $\eps > 0$, $\mu_{\wh f}(X) = 0$, so we are done.
\end{proof}
Therefore we can define:
\begin{dfn}
The \emph{conditional Plancherel expectation} is the Radon-Nikodym derivative
\[
E^\pl(\wh f | \om) := \dif{\mu_{\wh f}}{\mu^\pl_{Z_G}}(\om).
\]
\end{dfn}
This is defined up to a set of measure $0$. However, note that the measures $E^\pl(\wh f|\om) \, d\mu^\pl$ and $\bar f d\mu^\pl$ are the same on $\wh Z$ so:
\begin{cor}\label{centerft}
$E^\pl(\wh f|\om)$ can be taken to be continuous. If so $E^\pl(\wh f|\om) = \bar f(\om)$. 
\end{cor}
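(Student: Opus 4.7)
The plan is to observe that Lemma \ref{centerftl}, restated in the language of pushforward measures, already asserts equality of $\bar f\,d\mu^\pl$ and $d\mu_{\wh f}$ as (complex) measures on $\wh Z$, and that $\bar f$ itself provides a continuous representative of the Radon--Nikodym derivative.

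More precisely, first I would unpack the right-hand side of Lemma \ref{centerftl} using the definition of pushforward: for $\varphi \in \mc H(\wh Z)$,
\[
\int_{\wh G} (\varphi\circ P)\,\wh f\,d\mu^\pl \;=\; \int_{\wh Z} \varphi\,d\mu_{\wh f}.
\]
Combined with the identity in Lemma \ref{centerftl}, this gives $\int_{\wh Z} \varphi\,\bar f\,d\mu^\pl = \int_{\wh Z} \varphi\,d\mu_{\wh f}$ for every $\varphi \in \mc H(\wh Z)$. Since $\mc H(\wh Z)$ is dense in $C_c(\wh Z)$ (and both sides are complex Radon measures of finite total variation on compacts), this yields the equality of measures $\bar f\,d\mu^\pl_{Z_G} = d\mu_{\wh f}$ on $\wh Z$. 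By the uniqueness part of the Radon--Nikodym theorem, $\bar f$ and $E^\pl(\wh f|\om)$ agree $\mu^\pl_{Z_G}$-almost everywhere.

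Next I would verify continuity of $\bar f$. Since $f \in \mc H(G)$ is locally constant and compactly supported (we are in the $p$-adic setting), its restriction to $Z_G$ is compactly supported and in $L^1(Z_G)$; hence its Fourier transform $\bar f$ on the LCA group $\wh Z_G$ lies in $C_0(\wh Z_G)$ by the standard Riemann--Lebesgue statement for abelian Fourier transforms. In particular, $\bar f$ is a continuous representative of the Radon--Nikodym derivative class, so we may define $E^\pl(\wh f|\om)$ by this representative and conclude $E^\pl(\wh f|\om) = \bar f(\om)$ pointwise.

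The only subtlety I anticipate is just confirming that one may restrict to $\varphi \in \mc H(\wh Z)$ and still recover equality of the two Radon measures on $\wh Z$; this is standard but should be explicitly invoked (one can alternatively take $\varphi$ to be a bump approximating a continuous compactly supported function). No deeper obstacle arises, since absolute continuity of $\mu_{\wh f}$ was already established in the preceding lemma.
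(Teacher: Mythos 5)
Your proposal is correct and takes essentially the same route as the paper: the paper's argument is precisely that Lemma \ref{centerftl}, read as an equality of measures $\bar f\,d\mu^\pl = d\mu_{\wh f}$ on $\wh Z$, identifies $\bar f$ with the Radon--Nikodym derivative almost everywhere, and one then takes $\bar f$ as the continuous representative. You merely supply the (routine) density step and the justification that $\bar f$ is continuous, which the paper leaves implicit.
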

We borrow the notation of conditional expectation from probability theory to emphasize first, the same definition in terms of Radon-Nikodym derivatives and second, the analogous intuition as an average over the measure-zero set of representations with central character $\om$. Beware that under this analogy, $E^\pl$ is an unnormalized expectation since $E^\pl(\wh f |\om) = \bar f$ and the operation $f \mapsto \bar f$ multiplies in a factor of $[Z]$ to the dimensions of $f$.

\subsubsection{Main term computation}
\begin{prop}\label{wtbound}
The main term \eqref{main} simplifies to
\[
\f1{|X|} \f\mu{\vol(Z'_{S, \infty}/L)}  \sum_{\om_S \in \wh Z_{S, L, \xi, \chi}} E^\pl(\wh \varphi_S | \om_S),
\]
where $Z'_{S, \infty} = Z_{G_{S,\infty}}/A_{G, \rat}$, $L = Z_G(F) \cap U^{S,\infty}$, and $\wh Z_{S, L, \xi, \chi}$ is the set of $\om_S \in \wh Z_S$ such that $\om_S|_L = \om_\xi|_L$ and $\om_S|_{\mf X_S} = \chi_S$. The normalizing factors are
\begin{itemize}
\item
$\mu = \mu_{Z'_\infty}/\mu^{EP}_{Z'_\infty}$ where $\mu_{Z'_\infty}$ is the measure chosen on  $Z'_\infty$ to compute the other terms.
\item
$X$ is the finite group $\mf X^{S, \infty}/\mf X^{S, \infty} \cap \overline{Z_G(F)}Z_{U^{S, \infty}}$ where the closure is taken in $Z^{S, \infty}$. 
\end{itemize}
For shorthand, we denote this sum $E(\wh \varphi_S | \om_\xi, L, \chi_S)$.
\end{prop}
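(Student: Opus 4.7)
The plan is to evaluate the main term by abelian Poisson summation on the cocompact lattice $L = Z_G(F)\cap U^{S,\infty}$ inside $Z'_{S,\infty} = Z_{G_{S,\infty}}/A_{G,\rat}$, with the dual side producing exactly the sum over $\om_S \in \wh Z_{S,L,\xi,\chi}$.

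First, substitute the explicit factorization
\[
\varphi^1 = \eta_\xi\otimes\vol(\mf X^{S,\infty}\cap U^{S,\infty})^{-1}\1_{U^{S,\infty}}\otimes\varphi^1_S
\]
into \eqref{main} and use that on central arguments $\eta_\xi(z_\infty\gamma_\infty) = \eta_\xi(1)\om_\xi^{-1}(z_\infty\gamma_\infty)$, so the $\eta_\xi(1)$ in the denominator cancels. Then use the triviality of $\chi$ on $\mf X_F\subseteq Z_G(F)$ to unfold: combining the quotient by $\mf X_F$ in the domain of integration with the sub-sum over $\mf X_F\subseteq Z_G(F)$ converts the main term into an integral over $\mf X/A_{G,\rat}$ paired with a sum over $Z_G(F)/\mf X_F$.

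Second, perform the integral over $z^{S,\infty}\in\mf X^{S,\infty}$. The indicator $\1_{U^{S,\infty}}(z^{S,\infty}\gamma^{S,\infty})$ is non-vanishing only for $\gamma^{S,\infty}\in U^{S,\infty}\mf X^{S,\infty}$; for such $\gamma$, a substitution together with triviality of $\chi^{S,\infty}$ on $U^{S,\infty}\cap\mf X^{S,\infty}$ yields a factor $\vol(U^{S,\infty}\cap\mf X^{S,\infty})\chi^{S,\infty}(\gamma^{S,\infty})^{-1}$, cancelling the corresponding volume denominator. Collecting the remaining rational representatives into cosets of $L$ inside $Z_G(F)/\mf X_F$ and identifying the resulting overcount of $\mf X^{S,\infty}$-representatives modulo $\overline{Z_G(F)}Z_{U^{S,\infty}}$ produces exactly the prefactor $|X|^{-1}$.

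Third, apply Poisson summation to the inner sum over $\lb\in L$ acting on the remaining Archimedean and $S$-factors $f\cdot\varphi^1_S$; the lattice sum becomes $\vol(Z'_{S,\infty}/L)^{-1}$ times a dual sum over unitary characters $\om$ of $Z'_{S,\infty}$ whose restriction to $L$ agrees with $\om_\xi|_L$. Integrating the residual $z_\infty$ against $f$ cancels $\wh f(0)$ and forces $\om|_{A_{G,\infty}/A_{G,\rat}}$ to match $\om_\xi$; integrating $z_S$ against $\chi_S\varphi^1_S$ produces the central Fourier transform $\bar\varphi_S(\om_S)$ and imposes $\om_S|_{\mf X_S} = \chi_S$. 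These three constraints together cut out exactly $\wh Z_{S,L,\xi,\chi}$, and Corollary \ref{centerft} replaces $\bar\varphi_S(\om_S)$ by $E^\pl(\wh\varphi_S|\om_S)$. The mismatch between the chosen Haar measure on $Z'_\infty$ and $\mu^{EP}$ accounts for the scalar $\mu$.

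The principal obstacle is careful bookkeeping of volume factors --- in particular, checking that the combinatorial collapse in the second step really produces the index $|X|$ as defined, and that the various measure normalizations combine to give the ratio $\mu/\vol(Z'_{S,\infty}/L)$; once this is accounted for the Fourier-analytic skeleton of the argument is standard.
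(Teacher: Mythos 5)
Your route is genuinely different from the paper's.  The paper applies Poisson summation \emph{once}, to the full rational lattice $Z_G(F)$ inside $Z^1 = Z_G(\A)/A_{G,\rat}$; this gives immediately
\[
\f1{\wh f(0)\,\vol(Z^1/Z_G(F))}\sum_{\substack{\om\in \wh{Z^1}\\ \om(Z_G(F))=1,\ \om|_{\mf X}=\chi}}\overline{f\varphi}(\om),
\]
and only \emph{then} does it factor $\overline{f\varphi}(\om)$ into local pieces and parametrize the surviving $\om$ by their $S$-components.  The $|X|^{-1}$ arises there as a ratio of two indices on the dual side: the number of admissible extensions $\om^{S,\infty}$ of a given $\om_S$ (which is $|Z^{S,\infty}/E^{S,\infty}|$ with $E^{S,\infty}=\overline{Z_G(F)}Z_{U^{S,\infty}}\mf X^{S,\infty}$) divided by the index $|Z^{S,\infty}/\overline{Z_G(F)}Z_{U^{S,\infty}}|$ that appears when converting $\vol(Z^1/Z_G(F))$ to $\vol(Z'_{S,\infty}/L)$.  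You instead want to integrate out $z^{S,\infty}$ first, collapse to $L$-cosets on the geometric side, and only then Poisson-sum over the smaller lattice $L\subset Z'_{S,\infty}$.  That is a legitimately different decomposition, but it shifts all of the delicate accounting onto the geometric side.

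There is a real gap in your step 2, and you partly flag it yourself.  Two things go unaddressed.  First, the $z^{S,\infty}$-integral is not a pure indicator: for $\gamma^{S,\infty}\in U^{S,\infty}\mf X^{S,\infty}$ it produces $\vol(\mf X^{S,\infty}\cap U^{S,\infty})$ \emph{times a phase} $\chi^{S,\infty}\bigl((\gamma^{S,\infty})^{-1}u_0(\gamma)\bigr)$ (well-defined because $\chi$ is trivial on $U^{S,\infty}\cap\mf X^{S,\infty}$), and this phase depends non-trivially on the $L$-coset of $\gamma$ in $\Gamma=\{\gamma\in Z_G(F):\gamma^{S,\infty}\in U^{S,\infty}\mf X^{S,\infty}\}$.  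You cannot simply absorb it.  Second, after you Poisson-sum the inner $L$-lattice, the outer sum over coset representatives $\delta\in\Gamma/L\mf X_F$ contributes a character sum $\sum_\delta\om(\delta_{S,\infty})$ (times the phases above), which is not a clean overcount $|X|$: it either vanishes or produces a factor depending on whether $\om$, combined with those phases, is trivial on the image of $\Gamma$.  So the $|X|^{-1}$ you claim in step 2 cannot appear there as a pure combinatorial multiplicity; in the paper's proof it only emerges after the dual-side constraints have been imposed and divided out.  The paper's order of operations (global Poisson, then constraint-integrate, then factorize) is precisely what makes those phase sums unnecessary, and this is why its bookkeeping is manageable.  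Your approach is plausibly salvageable, but as written the crucial step is asserted rather than proved, and the asserted form of the answer at that stage is likely wrong.

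A smaller wording issue: in your step 3 the factor $\bar\varphi_S(\om_S)$ does not come from ``integrating $z_S$ against $\chi_S\varphi^1_S$'' (that would be a Fourier transform over $\mf X_S$, not $Z_S$); it is the $S$-component of the Poisson dual $(f\varphi^1_S)^\wedge(\om)$ over $Z'_{S,\infty}$, while the $z_{S,\infty}$-integral only enforces the constraints $\om_\infty=\om_\xi$ and $\om_S|_{\mf X_S}=\chi_S$.
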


\begin{proof}
Start with \eqref{main}:
\[
\f1{\wh f(0) \eta_\xi(1)} \f1{\vol(\mf X_F \bs \mf X/ A_{G, \rat})}  \int_{\mf X_F \bs \mf X/ A_{G, \rat}} \chi(z)  \sum_{\gamma \in Z_G(F)} f(z_\infty \gamma) \varphi^1(z\gamma)  \, dz.
\]
$Z_G(F)$ is cocompact and discrete inside $Z^1 = Z_G(\A)/A_{G, \rat}$. Then by Poisson summation, the inner sum becomes
\[
\f1{\vol(Z/Z_G(F))}\sum_{\substack{\om \in \wh{Z^1} \\ \om(Z_G(F)) = 1}} \om^{-1}(z) \overline{f\varphi^1}(\om)
\]
since if $\varphi_z : x \mapsto \varphi(zx)$, then $\bar \varphi_z(\om) = \om^{-1}(z) \bar \varphi(\om)$. Integrating over $z$, all terms with $\om \neq \chi$ vanish so \eqref{main} becomes
\[
\f1{\wh f(0)} \f1{\vol(Z^1/Z_G(F))}\sum_{\substack{\om \in \wh{Z^1} \\ \om(Z_G(F)) = 1 \\ \om|_\mf X = \chi}} \overline{f\varphi}(\om).
\]
Here we use that $\varphi^\infty$ has Fourier transforms on any $\om^\infty$ in the sum and $\overline{\varphi^\infty} = \overline{\varphi^{\infty_1}}$ on these characters. We next break this up into local components to make it more interpretable. First,
\[
\bar \varphi(\om) = \overline{f\eta_\xi}(\om_\infty) \bar \varphi_S(\om_S) \bar \varphi^{S, \infty} (\om^{S, \infty})
\]
after choosing Haar measures on the components of $Z^1$. Let $\om_\xi$ be the central character associated to $\xi$. For any test function $\psi$ compactly supported on $Z'_\infty = Z_{G, \infty}/A_{G, \rat}$, 
by lemma \ref{centerftl} applied to $G_{\infty}/A_{G, \rat}$,
\begin{multline*}
\int_{\wh{Z'_\infty}} \psi(\om) \overline{f \eta_\xi}(\om) \, d\om^\pl = \int_{(G_\infty/A_{G, \rat})^\vee} \psi(\om_\pi) \wh{f \eta_\xi}(\pi) \, d\pi^\pl = \\
\int_{\wh A} \int_{\wh{G^1_\infty}} \psi(\om \om_\pi) \wh{f \eta_\xi}(\pi \otimes \om) \, d\pi^\pl \, d\om^\pl = \vol_{\wh{G^1_\infty}}(\Pi_\disc(\xi)) \int_{\wh A} \psi(\om_\xi \om) \wh f(\om) \, d\om^\pl
\end{multline*}
where $A = A_{G, \infty}/A_{G, \rat}$. We want to change the integral to be over $\wh{Z'_\infty}$. The measure chosen on on $Z'_\infty$ induces Plancherel measure on $\wh{Z'_\infty}$ which restricts to a measure on $\wh A$ lying co-discretely inside. This corresponds to the quotient measure on $A$ coming from setting $\vol(Z^1_\infty) = 1$. Therefore, if we had EP-measure on $Z'_\infty$, our fixed EP-measure on $\wh A$ would have matched that on $\wh{Z'_\infty}$. The choices of measures we made also fix EP-measure on $G^1_\infty$ so the volume factor becomes $1$. 

In general, let $\mu = \mu_{Z'_\infty}/\mu^{EP}_{Z'_\infty}$. Then the identity finally simplifies to
\[
\int_{\wh{Z'_\infty}} \psi(\om) \overline{f \eta_\xi}(\om) \, d\om^\pl = \mu  \int_{\wh{Z'_\infty}}\psi(\om_\xi \om)  \1_{\wh A}(\om)  \wh f(\om) \, d\om^\pl
\]
for any test function $\psi$. Therefore we get
\[
\overline{f \varphi_\infty}(\om) = \mu \delta_{\om|_{Z^1_\infty} = \om_\xi|_{Z^1_\infty}} \wh f(\om \om_\xi^{-1}).
\] 
In our case $A_{G, \infty} \subseteq \mf X_\infty$ so for $\om|_{\mf X_\infty} = \om_\xi|_{\mf X_\infty}$, this simplifies to
\[
\overline{f \varphi_\infty}(\om) = \mu \delta_{\om_\infty = \om_\xi} \wh f(0).
\]

Next, let $Z_{U^{S, \infty}}  = U^{S, \infty} \cap Z^1$. Since it is an integral over a subgroup
\[
\bar \varphi^{S, \infty} (\om^{S, \infty}) = \begin{cases} \vol(Z_{U^{S, \infty}} ) & \om^{S, \infty} |_{Z_{U^{S, \infty}} } = 1 \\ 0 & \text{else} \end{cases}.
\] 
In total, cancelling the $\wh f(0)$ factors, the terms that do not vanish are 
\[
\f{ \mu \vol(Z_{U^{S, \infty}} )}{\vol(Z^1/Z_G(F))} \bar \varphi_S(\om_S)
\]
for every character $\om$ satisfying
\begin{enumerate}
\item $\om(Z_G(F)) = 1$,
\item $\om|_{\mf X} = \chi$, 
\item $\om_\infty = \om_\xi$,
\item $\om^{S, \infty}(Z_{U^{S, \infty}} ) = 1$.
\end{enumerate}

We try to characterize such $\om$. Consider $\om = \om_\infty \om_S \om^{S, \infty}$. Let $L = Z_G(F) \cap U^{S, \infty}$. These conditions require that $\om_S \om_\infty = 1$ on $L$ and that $\om_S \chi_S^{-1} = 1$ on $\mf X_S$. Given $\om_S$ satisfying this, the conditions determine $\om^{S, \infty} = \om_S^{-1} \om_\infty^{-1}$ on $Z_G(F)$. Since the determined $\om^{S, \infty}$ is trivial on $Z_G(F) \cap U^{S, \infty}$, it extends to a continuous character on $\overline{Z_G(F)} \subseteq Z^{S, \infty}$. The character $\om^{S, \infty}$ is also determined on $U^{S, \infty}$ and $\mf X^{S, \infty}$ so in total, the possible choices of $\om^{S, \infty}$ are those that restrict to a particular value on $E^{S, \infty} = \overline{Z_G(F)}Z_{U^{S, \infty}}\mf X^{S, \infty}$. 

 Since quotient maps of groups are open, $Z_{U^{S, \infty}}$ is open mod $\overline{Z_G(F)}$. Therefore, since $Z^{S,\infty}/\overline{Z_G(F)}$ is compact, $Z^{S, \infty}/E^{S, \infty}$ is a finite group. Therefore the choices are in bijection with $Z^{S, \infty}/E^{S, \infty}$.

By comparing $U^{S, \infty}$ times a fundamental domain for $Z^1/Z_G(F)$ to a fundamental domain for $Z^1_{S, \infty}/L$, we get
\[
\f{\vol(Z_{U^{S, \infty}})}{\vol(Z^1/Z_G(F))} = \f1{\vol(Z'_{S, \infty}/L)|Z^{S, \infty}/\overline{Z_G(F)}Z_{U^{S, \infty}}|}.
\]
Therefore, pulling out just the non-zero terms in the sum gives 
\[
\f1{|X|} \f\mu{\vol(Z'_{S, \infty}/L)} \sum_{\substack{\om_S \in \wh Z_S \\ \om_S \om_\xi^{-1}(L) = 1 \\ \om_S \chi_S^{-1}(\mf X_S) = 1}} \bar \varphi_S(\om_S)
\]
where
\[
|X|^{-1} = \f{|Z^{S, \infty}/\overline{Z_G(F)}Z_{U^{S, \infty}}\mf X^{S, \infty}|}{|Z^{S, \infty}/\overline{Z_G(F)}Z_{U^{S, \infty}}|} = |\overline{Z_G(F)}Z_{U^{S, \infty}}\mf X^{S, \infty}/\overline{Z_G(F)}Z_{U^{S, \infty}}|^{-1}.
\]
An application of corollary \ref{centerft} to $G_S/\mf X_S$ then finishes the argument.
\end{proof}

The formula here is complicated and requires some discussion. First, $\om_\xi$ determines a character on $L$ consistent with $\chi_S$. Therefore, $\om_\xi$ and $\chi_S$ together determine a character $\lb$ on $L\mf X_S$. The term $E(\wh \varphi_S | \om_\xi, L, \chi_S)$ can be thought of as some sort of normalized average of $\hat \varphi_S$ along representations with central character extending $\lb$. 

Note that if $Z_G$ is compact and $\mf X = A_{G, \rat} = 1$, we can choose a measure so that $\mu(Z_v) = 1$ for all $v$. This gives $\mu = 1$ so 
\[
\f1{|X|} \f\mu{\vol(Z_{S, \infty}/L)} = \f1{\mu(Z_{S,\infty}/L)} = |L| = |Z_G(F) \cap U^{S, \infty}|
\]
and $\wh Z_S$ has the counting measure. Therefore, $E^\pl(\wh f | \om)$ is the literal integral of $f \, d\mu^\pl$ over representations with character $\om$. This is in line with the result in \cite{KSZ16}. 

This computation can be compared to the very short argument at the beginning of \cite[\S2]{FL18}.  Reconciling notation, $\Theta$ in that paper is the same as $L$ here and $S$ there is $S \cup \infty$ here. Our argument is much longer since we are factoring out the infinite part of $\mu_{\Theta, S}$ requiring a sum over a complicated set of $\om_S$ instead of just a term for $E^\pl(\varphi_{S, \infty} | 1)$.  In addition, issues involving $\mf X$ appear.

\subsubsection{Main term bound}
It will also be useful to have a very rough bound on the magnitude of this main term.
\begin{prop}\label{roughwt}
Let  $\varphi_{S_1} \in \ms H(G_{S_1}, K_{S_1}, \chi_{S_1})^{\leq \kappa}$ such that $|\chi_{S_1}(x) \varphi_{S_1}(x)| \leq 1$ for all $x$. Then for some constant $C$ depending only on $G$, the main term \eqref{main} is $O_{\varphi_{S_0}}(q_{S_1}^{C \log \kappa})$
where the implied constant is independent of $\varphi_{S_1}$ and the $L$-packet weight $\xi$. \red{LEVEL ASPECT: The implied constants can in addition be chosen uniformly over all hyperendoscopic groups of $G$.}
\end{prop}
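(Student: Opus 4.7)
The plan is to apply Proposition \ref{wtbound} to rewrite the main term \eqref{main} as
\[
\frac{1}{|X|}\frac{\mu}{\vol(Z'_{S,\infty}/L)}\sum_{\om_S\in\wh Z_{S,L,\xi,\chi}}E^\pl(\wh\varphi_S|\om_S),
\]
and then to estimate each factor. By Corollary \ref{centerft}, $E^\pl(\wh\varphi_S|\om_S)=\bar\varphi_S(\om_S)$, which factors as $\bar\varphi_{S_0}(\om_{S_0})\bar\varphi_{S_1}(\om_{S_1})$ since $\varphi_S=\varphi_{S_0}\otimes\varphi_{S_1}$. The first factor is bounded by $M_{\varphi_{S_0}}:=\|\bar\varphi_{S_0}\|_\infty$, a constant depending only on $\varphi_{S_0}$.

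For the second factor I would expand $\varphi_{S_1}=\sum_\lambda c_\lambda\tau_\lambda$ in the truncated Hecke basis of section \ref{trunhalg}, where $\lambda$ ranges over representatives modulo $X_*(A_{\mf X_{S_1}})$ of norm $\leq\kappa$. After absorbing any non-unitary factor of $\chi_{S_1}$ into $\varphi_{S_1}$ (legitimate for absolute-value bounds by the note at the end of section \ref{setup}), the hypothesis $\|\chi_{S_1}\varphi_{S_1}\|_\infty\leq 1$ yields $|c_\lambda|\leq 1$. A direct Cartan-decomposition check shows $K\lambda(\varpi)K\cap Z_{S_1}$ is empty unless $\lambda\in X_*(A_Z)$ modulo torsion, and otherwise equals $\lambda(\varpi)(K_{S_1}\cap Z_{S_1})$ of canonical measure one; thus $\bar\tau_\lambda(\om_{S_1})=\om_{S_1}(\lambda(\varpi))$ or $0$. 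Unitarity of $\om_{S_1}$ bounds this by $1$, and since the number of central cocharacters $\lambda\in X_*(A_Z)/X_*(A_{\mf X_{S_1}})$ with $\|\lambda\|\leq\kappa$ is $O_G(\kappa^d)$ for $d=\dim(A_Z/A_{\mf X_{S_1}})$, the triangle inequality gives $|\bar\varphi_{S_1}(\om_{S_1})|=O_G(\kappa^d)$.

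Next, I would bound the cardinality of the summation set. The Poisson-dual description used in the proof of Proposition \ref{wtbound} realises it as the projection to $\wh Z_S$ of a finite set of characters of $Z^1/Z_G(F)$ satisfying the prescribed conditions at $\infty$, on $\mf X$, and on $Z_{U^{S,\infty}}$, whose size is bounded by $|Z^{S,\infty}/E^{S,\infty}|$ with $E^{S,\infty}=\overline{Z_G(F)}Z_{U^{S,\infty}}\mf X^{S,\infty}$ — a finite number depending only on $G$, $\mf X$, and $U^{S,\infty}$. Combining these three bounds with the uniformly bounded prefactor $\mu/(|X|\vol(Z'_{S,\infty}/L))$ yields that the main term is $O_{\varphi_{S_0}}(\kappa^d)$. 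Finally, since every finite prime power satisfies $q_{S_1}\geq 2$, one has $\kappa^d\leq q_{S_1}^{(d/\log 2)\log\kappa}$, which gives the claimed bound with $C=d/\log 2$ depending only on $G$.

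The main subtlety is ensuring that the cardinality of $\wh Z_{S,L,\xi,\chi}$ is $\xi$-independent: the coset itself shifts with $\xi$ through its base-point $\om_\xi|_L$, but the subgroup cutting it out is determined by $L$ and $\mf X_S$ alone, so the size bound is uniform in both $\xi$ and $\varphi_{S_1}$. A secondary point requiring care is the absorption of the non-unitary part of $\chi_{S_1}$, which is harmless for the absolute-value bound but would need to be tracked more carefully if one wanted the level-aspect uniformity over hyperendoscopic groups flagged in red.
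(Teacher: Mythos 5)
Your route is genuinely different from the paper's. The paper does \emph{not} pass through Proposition \ref{wtbound}: it evaluates the integral in \eqref{main}, reduces to the central sum $\sum_{\gamma\in[Z_G(F)]^\ssm_{\mf X}}\om_\xi^{-1}(\gamma)\varphi(\gamma)$, and then bounds it directly by counting the (finitely many) cosets of $Z_{K_S}$ meeting the support of $\varphi_S|_{Z_S}$ and the rational points inside them. This sidesteps the dual picture entirely. Your approach --- pass to the Fourier-dual formula, factor $E^\pl(\wh\varphi_S|\om_S)=\bar\varphi_{S_0}(\om_{S_0})\bar\varphi_{S_1}(\om_{S_1})$, and bound each factor via the Hecke basis --- is the Plancherel-dual version of the same counting, which is a legitimate alternative but requires controlling two independent quantities (the size of each summand and the number of contributing characters), whereas the paper only has to count group elements.

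The gap is in your bound on the number of terms. You claim that the proof of Proposition \ref{wtbound} realises $\wh Z_{S,L,\xi,\chi}$ as the projection of a finite set of size $\leq|Z^{S,\infty}/E^{S,\infty}|$. That is a misreading: in the paper's argument, $|Z^{S,\infty}/E^{S,\infty}|$ counts the away-from-$S$ extensions $\om^{S,\infty}$ for each \emph{fixed} $\om_S$, not the number of $\om_S$. The set $\wh Z_{S,L,\xi,\chi}$ itself is a coset of the annihilator $\wh{Z_S/\overline{\pi_S(L)\mf X_S}}$, i.e.\ the Pontryagin dual of a compact group, which is typically a countably infinite discrete group. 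The sum in \ref{wtbound} is effectively finite only because $\bar\varphi_S$ is compactly supported in $\wh Z_S$ (for $p$-adic tori, the Fourier transform of a smooth compactly supported function is again compactly supported), so that only the characters $\om_S$ lying in $\Supp\bar\varphi_S$ contribute. To complete your argument you would need to estimate $\#\bigl(\wh Z_{S,L,\xi,\chi}\cap\Supp\bar\varphi_S\bigr)$, observe that for unramified $\varphi_{S_1}$ the support at $S_1$ is the ($\kappa$-independent) unramified dual $\wh Z_{S_1}^{\mathrm{ur}}$, and argue this count is uniform in $\xi$ (the base point of $\wh Z_{S,L,\xi,\chi}$ moves with $\om_\xi|_L$ but its density does not) --- none of which is in your write-up.

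A secondary, fixable, slip: the count of central cocharacter tuples $\lambda=(\lambda_s)_{s\in S_1}$ with each $\|\lambda_s\|\leq\kappa$ is $O_G(\kappa^{d|S_1|})$, not $O_G(\kappa^d)$, since the Hecke algebra at $S_1$ is a tensor product over places. The conversion to $q_{S_1}^{C\log\kappa}$ then needs the inequality $|S_1|\leq\log_2 q_{S_1}$ (i.e.\ $q_{S_1}\geq 2^{|S_1|}$), which is exactly how the paper's $\kappa^{C|S_1|}\leq q_{S_1}^{C\log\kappa}$ step works; your invocation of just $q_{S_1}\geq 2$ gives too weak a statement.
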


\begin{proof}
Start with the expression \eqref{main}:
\[
\f1{\wh f(0) \eta_\xi(1)} \f1{\vol(\mf X_F \bs \mf X/ A_{G, \rat})}  \int_{\mf X_F \bs \mf X/ A_{G, \rat}} \chi(z)  \sum_{\gamma \in Z_G(F)} f(z_\infty \gamma) \varphi^1(z\gamma)  \, dz.
\]
Here it is actually convenient to evaluate the integral, giving the central terms in \ref{sgeomsidegenps}:
\[
\f1{\vol(\mf X_F \bs \mf X/ A_{G, \rat})} \sum_{\gamma \in [Z_G(F)]^\ssm_{\mf X}} \om_\xi^{-1}(\gamma) \varphi(\gamma).
\]
The sum becomes 
\[
\sum_{\gamma \in [Z_G(F)]^\ssm_{\mf X}} \chi_{S_1} (\gamma) \varphi_{S_1} (\gamma) \chi_{S_0}(\gamma) \varphi_{S_0}(\gamma) \varphi^{S, \infty}.
\]
By construction, $\varphi^1_{S_1}$ and $(\varphi^1)^{S, \infty}$ intersect every $\mf X$-class in $Z_G(F)$ that $\varphi_{S_1}$ does. Pick a $\varphi^1_{S_0}$ with the same property. Finally let $U_\infty \subset Z_\infty$ be such that every point with non-zero summand can be translated into it. We will choose specific $U_\infty$ later. 

We may then instead bound
\[
\sum_{\gamma \in [L]^\ssm_{\mf X}} \1_{U_\infty}(\gamma) \chi_{S_1} (\gamma) \varphi^1_{S_1} (\gamma) \chi_{S_0}(\gamma) \varphi^1_{S_0}(\gamma)
\]
where $L = Z_G(F) \cap U^{S, \infty}$. 
We will do this by first bounding the number of terms in this sum by the size of $L \cap U_\infty \Supp \varphi_S$. 

If $K_s$ are the chosen maximal compacts, for each $s \in S_1$, $\varphi^1_s \in \ms H(G_s, K_s)^{\leq \kappa}$ so $\varphi^1_s$ is a linear combination of indicator functions $\1_{K_s \lb(\om) K_s}$ for a number of possible $\om$ that is polynomial in $\kappa$. \red{LEVEL ASPECT: The polynomial here is a count of roots with bounded norm so it can be chosen uniformly over hyperendsocopic groups.} Therefore, for some constant $C$, $\varphi_{S_1}$ is supported on a union of $O(\kappa^{C |S_1|})$ double cosets of $K_{S_1}$. Since $\varphi^1_{S_0}$ is compactly supported, this gives that $\varphi^1_S$ is supported on a union of $O_{\varphi^1_{S_0}}(\kappa^{C |S_1|})$ double cosets of $K_S$. Note that $\kappa^{C |S_1|} \leq \kappa^{C \log q_{S_1}} = q_{S_1}^{C \log \kappa}$. 

Let $Z_{K_S} = Z_S \cap K_S$ be the maximal compact for abelian $Z_S$. Consider the double coset $D = K_S \alpha K_S$. If $D \cap Z_S \neq \emptyset$, without loss of generality let $\alpha$ be in the intersection. Then $D = \alpha K_S$ and $D \cap Z_S$ is a union of cosets of $Z_{K_S}$ in $Z_S$. Consider two of these cosets $xZ_{K_S}$ and $yZ_{K_S}$. Then there exists $k \in K_S$ such that $x = ky \implies k = xy^{-1} \implies k \in Z_S$. Therefore, $x \in Z_{K_S}$ and the two cosets are equal. In total, $D \cap Z_S$ is either empty or a coset of $K_S$. This finally implies that $\Supp \varphi \cap Z_S$ is contained in a union of $O_{\varphi^1_{S_0}}(q_{S_1}^{C \log \kappa})$ cosets of $Z_{K_S}$.

To continue, we need to choose a particular $U_\infty$. First, $Z_\infty$ factors as $A_{G, \infty}/A_{G, \rat}$ times a compact real torus $Z_c$. Let $U'_\infty$ be some subset of $A_{G, \infty}/A_{G, \rat}$ and choose $f$ to be the pullback of the characteristic function of $U'_\infty$ through $H_{G_\infty}$  (we are not technically allowed to do this due to the smoothness restriction but we can take a close enough approximation in $L^1$). Then $f$ has support on $U_\infty = U'_\infty \times Z_c$. 

Let $c_1 = |L \cap Z_{K_S} U_\infty|$ and assume for now that this is finite. If coset $C = \alpha_S Z_{K_S} U_\infty$ contains an element of $L$, without loss of generality let this element be $\alpha_S$. Multiplying by $\alpha_S^{-1}$ bijects $L \cap C$ to $L \cap Z_{K_S} U_\infty$ so $|L \cap C| = c_1$. Counting all possible cosets, $|L \cap \Supp(f\varphi_S)| = O_{\varphi_{S_0}}(c_1q_{S_1}^{C \log \kappa})$. By a similar argument, $|L \cap \Supp(f\varphi_S)_z| = O_{\varphi_{S_0}}(c_zq_{S_1}^{C \log \kappa})$ where $c_z = |L \cap Z_{K_S} z_\infty^{-1} U_\infty|$. 

It remains to bound
\[
c_z = |Z_G(F) \cap Z_{K_S} Z_{U^{S, \infty}} z_\infty^{-1} U_\infty| \leq |Z_G(F) \cap Z_{K_S} Z_{K^{S, \infty}} z_\infty^{-1} U_\infty| 
\]
where $K^{S, \infty}$ is the maximal compact (since $Z^{S, \infty}$ is abelian). This is finite since $Z_G(F)$ is discrete inside $Z/A_{G, \rat}$. Then, $Z_G(F) \cap  Z_{K_S} Z_{K^{S, \infty}}$ is a co-compact lattice inside $Z_\infty$. It is still so when projecting down to $A_{G, \infty}/A_{G,\Q}$. Choose $U'_\infty$ to be a fundamental domain for this lattice. Then $c_z = 1$ for all $z$ and $\wh f(0) = \vol(U'_\infty)$ which depends only on $G$. 

Finally, the terms in the sum all have norm $1$ up to the factor $\chi_{S_0} \varphi_{S_0}^1$ that depends on $\varphi_{S_0}$. Therefore the sum is $O_{\varphi_{S_0}}( q_{S_1}^{C \log \kappa})$ for all $z$. The factor in front depends only on $(G, \mf X)$ so the entire term is $O_{\varphi_{S_0}, G}(q_{S_1}^{C \log \kappa})$ \red{LEVEL ASPECT: this part still needs uniformizing over hypernendoscopic groups} . 
\end{proof}

\subsection{The Error Term}
We need to do a few things to bound the error term.  First, the orbital integral bounds used only apply to elements in $\ms H(H_v, K_{H,v})^{\leq \kappa}$ so we need to extend them to spaces like $\ms H(H_v, K_{H,v}, \chi)^{\leq \kappa}$. 

Second, a given group has infinitely many endoscopic groups. Unfortunately, the alternate proof of orbital integral bounds in \cite[\S B]{ST16} gives no control over constants and $S_{\bad'}$. Therefore, it is useful to have some result that allows the use of the same constants and a choice of uniform $S_\bad$. 

\red{LEVEL ASPSECT: Next, we add a new orbital integral bound to combine both the weight and level aspect bounds.}
Finally, we need to do another due-diligence check that one, all the lemmas used in the proofs of theorems \ref{owtbound} and \ref{lvbound} still hold over to the non-trivial center case, and two, all the constants from those lemmas can also be uniformly bounded over all hyperendoscopic groups that contribute a non-zero term. This in particular uses the correction to \cite[cor 6.17]{ST16}.

\subsubsection{Uniform bounds for orbital integrals}

The model-theoretic method for bounding orbital integrals gives the following
\begin{thm}[{\cite[thm B.2]{ST16}}]\label{ointbound}
Let $\Xi$ be the root datum for an unramified group over some non-Archimedean local field (so the Galois action is determined by the Frobenius action). Choose a norm of the form $\|\cdot\|_\mc B$ on $X_*(A)$. Then there exist $T_\Xi, a_\Xi, b_\Xi$ depending only on $(\Xi, \|\cdot\|_\mc B)$ such that for all non-Archimedean local fields $F$ (including ones of positive characteristic) with residue field degree $q \geq T_\Xi$ the following holds: 

Let $G^F$ be the unramified group over $F$ with root datum $\Xi$, $K$ a hyperspecial of $G^F$, $A$ a maximal split torus, and $\varpi$ a uniformizer for $F$. Then for all $\lb \in X_*(A)$ with $\|\lb\| \leq \kappa$ and semisimple $\gamma \in G^F(F)$:
\[
|O_\gamma(\tau^{G^F}_\lb)| \leq q^{a_\Xi \kappa + b_\Xi} D^{G^F}(\gamma)^{-1/2}
\]
where as before, $\tau^{G^F}_\lb = \1_{K \lb(\varpi) K}$.
\end{thm}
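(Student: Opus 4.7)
The plan is to prove this by the Cluckers-Hales-Loeser motivic/definable framework (as used in \cite[\S B]{ST16}), leveraging uniformity across local fields encoded in the Denef-Pas language. First, I would observe that since $G^F$ is determined by the root datum $\Xi$, one can realize $G^F$, the hyperspecial $K$, the torus $A$, the uniformizer $\varpi$, and the double coset $K\lb(\varpi)K$ uniformly as definable objects in the Denef-Pas language. In particular, for fixed $\lb \in X_*(A)$, the characteristic function $\tau^{G^F}_\lb$ is the specialization of a definable function whose defining formulas do not depend on $F$. Likewise, for semisimple $\gamma$, the Weyl discriminant $D^{G^F}(\gamma)$ and the orbit $\{g^{-1}\gamma g : g \in G^F(F)/I^{G^F}_\gamma(F)\}$ admit definable descriptions in terms of the coefficients of the characteristic polynomial of $\gamma$ in a faithful representation.

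Next I would invoke the main uniform boundedness results for motivic exponential functions (the ``uniform bound'' statements of Cluckers-Gordon-Halupczok and Cluckers-Hales-Loeser). These give: for any definable family of functions $f_\lb$ parametrized by integer points of some definable set, the associated family of integrals $\gamma \mapsto \int f_\lb(\gamma, g) dg$ is itself a definable function, and its absolute value is bounded on any bounded definable domain by an expression of the form $C(\lb) q^{N(\lb)}$, with $C, N$ definable functions of $\lb$. The transfer principle then says that such bounds, once proved over local fields of sufficiently large residue characteristic in characteristic zero (where the orbital integrals of $\tau^G_\lb$ against $D^G(\gamma)^{1/2}$ are known to be bounded by standard representation-theoretic/harmonic analysis on $p$-adic groups), transfer to all sufficiently large residue characteristic, including the positive characteristic case. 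This forces the bound to hold uniformly for all $q \geq T_\Xi$ for some threshold.

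Then I would analyze the dependence on $\lb$. The norm $\|\lb\|_\mc B$ is a linear functional (in absolute value sense) on a finitely generated abelian group, and the exponents appearing in $q^{N(\lb)}$ come from counting lattice points inside a polytope cut out by root-hyperplane inequalities whose coefficients depend on $\lb$ linearly. This gives a linear bound $N(\lb) \leq a_\Xi \kappa + b_\Xi$ when $\|\lb\|_\mc B \leq \kappa$, with $a_\Xi, b_\Xi$ depending only on the shape of the polytope, hence only on $\Xi$ and $\mc B$. Combining this with the uniform motivic bound produces the claimed inequality $|O_\gamma(\tau^{G^F}_\lb)| \leq q^{a_\Xi \kappa + b_\Xi} D^{G^F}(\gamma)^{-1/2}$.

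The main obstacle is the passage from existence of \emph{some} definable bound to an explicitly \emph{exponential-in-$\kappa$} bound with the correct form: the Cluckers-Hales-Loeser machinery gives bounds in terms of abstractly definable functions of $\lb$, and extracting the linear dependence $N(\lb) \leq a_\Xi \kappa + b_\Xi$ requires an honest combinatorial/geometric argument about how the support of $\tau^G_\lb$ and its intersections with conjugacy classes grows with $\lb$. As noted in the paper, this is precisely the step that obstructs any effective control of the constants $a_\Xi, b_\Xi, T_\Xi$ or of $S_\bad$ via this method, which is why the authors later need the lemma to be applied in a way that is insensitive to these constants.
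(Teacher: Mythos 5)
The paper does not prove this theorem: it is stated verbatim with the citation \cite[thm B.2]{ST16}, whose proof is in Appendix B of that paper (due to Cluckers, Gordon, and Halupczok). So there is no in-paper argument to compare against; the question is whether you have accurately described the cited proof. Your high-level framing is correct: realize the group, hyperspecial, cocharacter cosets, Weyl discriminant, and orbit in the Denef--Pas language; observe that $(\gamma,\lambda) \mapsto D^{G^F}(\gamma)^{1/2} O_\gamma(\tau^{G^F}_\lambda)$ is a bounded constructible motivic function; then invoke the uniformity-over-local-fields machinery and the transfer principle to get a bound valid for all $q \geq T_\Xi$, including positive characteristic.

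Where your account goes astray is in the origin of the linear-in-$\kappa$ exponent and, correspondingly, of the ineffectivity. The linear form $N(\lambda) \leq a_\Xi \kappa + b_\Xi$ is not extracted by a separate ``honest combinatorial/geometric argument about how the support of $\tau^{G^F}_\lambda$ intersects conjugacy classes.'' It is an output of the structural theorem for bounded constructible functions in a $\Z$-valued parameter: Cluckers--Gordon--Halupczok prove that any bounded constructible function of $(x,n)$ with $n \in \Z$ is automatically bounded by $q^{a|n|+b}$ for some integers $a,b$, uniformly over all local fields of sufficiently large residue characteristic, and the proof of this is via cell decomposition and compactness arguments, not lattice-point counting. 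This matters for your final paragraph, which is internally inconsistent: you claim the obstruction to effectivity is a combinatorial lattice-point step, but if that step were genuinely combinatorial/geometric one could make the constants explicit. The real reason $a_\Xi, b_\Xi, T_\Xi$ are uncontrolled is precisely that the linear bound comes from the compactness/ultraproduct-style arguments underlying the structural theorem, with no explicit content. Your description of the transfer principle is also slightly off: the boundedness input itself (before applying the structural theorem) comes from Shalika germ expansions and Harish-Chandra finiteness results, and transfer is used to carry definability and integrability across characteristics, not to lift an externally-proved char-$0$ bound directly to characteristic $p$.
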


\begin{note}
Elements of $\ms H(G_{S_1}, K_{S_1})^{\leq \kappa}$ are linear combinations of a number of $\1_{K \lb(\varpi) K}$ that is bounded by a polynomial in $\kappa$. Therefore, this can be used to get a bound of shape
\[
|O_\gamma(\varphi_{S_1})| = O(\|\varphi_{S_1}\|_\infty q_{S_1}^{a_\Xi \kappa + b_\Xi} D^{G}_{S_1}(\gamma)^{-1/2})
\]
for $\varphi_{S_1} \in \ms H(G_{S_1}, K_{S_1})^{\leq \kappa}$ and where we slightly increase $a_\Xi$ to absorb the $\kappa^{d|S_1|}$ factor from the polynomials. 
\end{note}

\begin{note}
We actually need a bound on  $\varphi \in \ms H(G_{S_1}, K_{S_1}, \chi_{S_1})^{\leq \kappa}$. By shifting double cosets by central elements, we can extend it at the cost of a factor of $|\chi^{-1}_{S_1}(\gamma_{S_1})|$. (Recall that this is well defined by the note in section \ref{setup}). 
\end{note}

By the following lemma, we can choose $a_\Xi, b_\Xi$, and $T$ uniformly over all $H$ appearing in an endoscopic path of $G$ and all places $v$ where $H$ is unramified:
\begin{lem}\label{finnumgroups}
Let $H$ be a group appearing in a hyperendoscopic path for $G$, $M_H$ a Levi of $H$, $v$ a place where $H$ is unramified, and $\Xi$ the unramified root data for $(M_H)_v$. Then $\Xi$ is an element of a finite set depending only on $G$. 
\end{lem}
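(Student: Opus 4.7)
The plan is to control all the ingredients of the root datum (the ambient lattice, the root/coroot system, the Galois action, and the splitting field) simultaneously through the hyperendoscopic path, and then show that taking a Levi and restricting to an unramified local datum leaves only finitely many possibilities.

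First I would argue that the absolute root system does not grow along a hyperendoscopic path. By lemma \ref{roots}, if $H'$ is an endoscopic group of $H$, then after identifying maximal torii one can realize $\Phi^*(H', T_{H'}) \subseteq \Phi^*(H, T_H)$ inside $X^*(T_H) = X^*(T_{H'})$; passing to a $z$-extension $\tilde H'$ only adds a central induced torus and so leaves the root system unchanged, merely enlarging the cocharacter lattice by a direct summand. Iterating this down the path $(H_1, \dots, H_n)$, the absolute roots of each $H_i$ (and of each $\tilde H_i$) sit inside the roots of $G$. Since the root system of $G$ has only finitely many subsystems, there are only finitely many possibilities for the absolute root system of $H$.

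Second I would bound the splitting field. The absolute Weyl group at each step is a subgroup of $\Om_G$, so as observed in the proof of lemma \ref{conditions}, the splitting field of an endoscopic group of $H_{i-1}$ has degree at most $|\Om_{H_{i-1}}| \le |\Om_G|$ over the splitting field of $H_{i-1}$. Since the path has length at most $\rank_\ssm G$, the total splitting field $K'/F$ has degree bounded by a constant depending only on $G$. By lemma \ref{zextbound}, the $z$-extensions in the path can be chosen to split over $K'$ with extending torus of dimension bounded by $[K':\Q]\rank_\ssm G$, so even after taking $z$-extensions the ambient cocharacter lattice $X_*(T_{\tilde H_i})$ lives in a lattice of bounded rank. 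The Galois action on these finitely many lattices factors through $\Gal(K'/F)$, and there are finitely many such actions preserving a given root system.

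Third I would pass to a Levi. A Levi $M_H$ of $H$ is determined up to conjugacy by a Galois-stable subset of the simple roots of $H$, of which there are only finitely many. The root datum of $M_H$ (as an absolute root datum with Galois action) is thus one of finitely many possibilities once $H$ is fixed from among the finite set of candidates produced above. Finally, at a place $v$ where $H$ is unramified, the Galois action on the root datum of $(M_H)_v$ factors through the Frobenius $\Frob_v \in \Gal(K'/F)$, so the unramified root datum $\Xi$ is determined by one of the finitely many Galois root data produced above together with the choice of an element of a bounded-order cyclic quotient; this gives only finitely many $\Xi$ in total, depending only on $G$.

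The main subtle step will be bookkeeping the $z$-extensions along the path: each one enlarges the ambient lattice, and one must verify that the bound on extending torus dimension in lemma \ref{zextbound} applies inductively using the controlled splitting field $K'$, so that the ambient lattice of $\tilde H_i$ cannot grow without bound. Once this is in place the rest is just counting subsets and finite Galois actions.
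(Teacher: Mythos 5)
Your argument is essentially the same as the paper's: you control the roots via lemma~\ref{roots} (roots of each group in the path sit inside those of $G$), bound the rank of the ambient cocharacter lattice via iterated application of lemma~\ref{zextbound}, and then observe there are finitely many Frobenius actions on the resulting finite collection of root data. You fill in two details the paper elides: the explicit bound on the splitting field $K'$ (which is needed for lemma~\ref{zextbound} to give a uniform rank bound), and the passage from $H$ to a Levi $M_H$ via Galois-stable subsets of simple roots. These are correct and useful to spell out, but the proof strategy is identical.
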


\begin{proof}
The (co)root spaces of $M_H$ are isomorphic to those of $G$ and the (co)roots of $M_H$ are a subset of those of $G$ so there are only finitely many possibilities for the root system of $M_H$ (without Galois action) since its rank is bounded by a finite number through iteratively applying lemma \ref{zextbound}. Then, there are only finitely many ways for Frobenius to map into the automorphisms of this root system.
\end{proof}

This bound is absurdly inefficient---in particular, it involves factorials nested to the degree of the semisimple rank of $G$. In any application one should use properties of the exact group being studied to describe the set more explicitly.

\subsubsection{Error term bound for weight aspect}
We can now show
\begin{prop}\label{weakuniform}
Assume that $\varphi_{S_1} \in \ms H(G_{S_1}, K_{S_1}, \chi_{S_1})^{\leq \kappa}$ with $\|\chi_{S_1} \varphi_{S_1}\|_\infty \leq 1$. Consider error term \eqref{error} for any group $H$ unramified on $S_1$ and appearing in an endoscopic path of $G$ with induced central character datum $(\mf X, \chi)$ such that $A_{H, \infty} \subseteq \mf X$ and $\chi$ is unramified on $S_1$. It is $O_{\varphi_0, H}(q_{S_1}^{A_{\wt,H}+ B_{\wt,H} \kappa} m(\xi)^{-C_{\wt, H}})$ for some constants $A_{\wt, H},B_{\wt, H}$, and $C_{\wt, H}$ as long as $S_1$ contains no fields with residue degree less than some $M_G$ that is uniform over all $H$. 
\end{prop}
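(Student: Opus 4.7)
The plan is to adapt Shin--Templier's proof of Theorem \ref{owtbound} to the central character setting, starting from the simplified geometric expansion of Proposition \ref{sgeomsidegenps}. After evaluating the $\mf X$-integration in \eqref{error} as in section \ref{geomaverage}, the error term becomes
\[
\frac{C(H, \xi, \chi)}{|\Pi_\disc(\lb_0)|} \sum_{M \in \ms L^\cusp} \frac{|\Om_M|}{|\Om_G|} \sum_{\gamma} a_{M, \gamma} |\iota^M(\gamma)|^{-1} |\Stab_\mf X(\gamma)|^{-1} \frac{\Phi_M(\gamma, \xi)}{\dim \xi} O^M_\gamma((\varphi^\infty)_M),
\]
the outer sum ranging over $\mf X$-orbits in $[M(F)]^\ssm_\mf X$ and omitting only the central-orbit pair $(G, \gamma)$ already accounted for by the main term. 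It then suffices to bound each factor analogously to \cite[\S7--9]{ST16}.

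First I would handle the non-Archimedean orbital integrals at $S_1$. Using the central-shift trick from section \ref{setup}, lift $\varphi_{S_1} \in \ms H(H_{S_1}, K_{H,S_1}, \chi_{S_1})^{\leq \kappa}$ to $\varphi^1_{S_1} \in \ms H(H_{S_1}, K_{H,S_1})^{\leq \kappa}$ so that orbital integrals of the two agree up to the bounded factor $|\chi_{S_1}(\gamma_{S_1})|$ (bounded because $\chi_{S_1}$ is unitary up to the extension issue discussed there). Decomposing $\varphi^1_{S_1}$ into polynomially-many $\tau^M_\lambda$ with $\|\lambda\| \leq \kappa$ and invoking Theorem \ref{ointbound} yields
\[
|O^M_{\gamma_{S_1}}((\varphi^1_{S_1})_M)| = O\bigl( q_{S_1}^{a\kappa + b}\, D^M_{S_1}(\gamma)^{-1/2} \bigr),
\]
with $a, b$ depending only on the unramified root datum of Levi's of $H$ at $S_1$. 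By Lemma \ref{finnumgroups} such data vary in a finite set determined by $G$, so $a, b$ and the threshold $M_G \geq \max T_\Xi$ can be chosen uniformly in $H$. The places at $S_0$ are absorbed into the implied constant, and away from $S \cup \infty$ the function $\1_{U^{S, \infty}, \chi}$ confines $\gamma$ to a compact set modulo $\mf X$ and supplies its own uniform discriminant-type bound.

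Second I would bound the Archimedean factor $\Phi_M(\gamma, \xi)/\dim \xi$. For $M = G$ this is $\tr \xi(\gamma_\infty)/\dim \xi$, which for non-central $\gamma$ is $O(m(\xi)^{-C})$ by the standard character estimate (\cite[\S7]{ST16}). For $M \neq G$ the analogous $m(\xi)^{-C}$ bound follows by combining Arthur's formula \cite[\S4]{Art89} for $\Phi_M$ in terms of $\Theta_\pi(\gamma^\infty)$ with the character bound on an elliptic torus of $M$. The combinatorial factors $a_{M, \gamma}$ are polynomial in $\log D^M(\gamma)$ by \cite[cor.~6.17]{ST16}, now valid for centralizers with nontrivial center by Lemma \ref{aboundgap} (factor out $A_{I^M_\gamma}$ and apply the anisotropic-center bound to the quotient). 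The number of $\mf X$-orbits $\gamma$ with bounded discriminant is polynomial in $D^M(\gamma)$ by \cite[\S8]{ST16}. Multiplying these bounds through, the discriminant factors $D^M(\gamma)^{-1/2+o(1)}$ combine with the polynomial count of orbits to give a convergent sum, and matching powers of $m(\xi)$ and $q_{S_1}$ deliver the desired estimate $O_{\varphi_{S_0}, H}(q_{S_1}^{A_{\wt, H} + B_{\wt, H} \kappa} m(\xi)^{-C_{\wt, H}})$.

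The main obstacle is the uniformity of the unknown $S_{\bad, H}$ and of the various constants across hyperendoscopic $H$. The model-theoretic orbital integral bound of \cite[App.~B]{ST16} offers no effective control on excluded places, but Lemma \ref{finnumgroups} restricts the relevant root data to a finite list determined by $G$; taking $M_G$ larger than all corresponding $T_\Xi$ and setting $A_{\wt, H}, B_{\wt, H}$ to the maxima over this list makes the $S_1$-dependence uniform. The remaining $H$-dependence (in the $\varphi_{S_0}$-implicit constants and in $C_{\wt, H}$) is acceptable because the proposition only requires uniformity in $\xi$ and $\varphi_{S_1}$; full uniformity in $H$ is postponed to the level-aspect treatment, which is flagged throughout the section with the \texttt{LEVEL ASPECT} tag.
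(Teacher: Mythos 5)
Your proposal follows the paper's strategy closely — evaluate the $\mf X$-integration, check each factor against the corresponding bound in \cite{ST16}, count orbits with bounded discriminant, multiply through, and secure the needed uniformity over $H$ via Lemma \ref{finnumgroups} — but it has a genuine gap in how the central-character factors are handled. You claim the factor $|\chi_{S_1}(\gamma_{S_1})|$ picked up by the central-shift trick is ``bounded because $\chi_{S_1}$ is unitary up to the extension issue,'' and you never account for the analogous factors at the other places. This is not correct in general: the character $\chi$ in the central character datum, together with the $\lb_{\td\eta}$ characters contributed at each step of the hyperendoscopic path, need not be unitary, and the factorization noted in section \ref{setup} only splits off a unitary piece from one that extends to the full group --- the latter still has unbounded modulus on non-compact supports.

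The correct mechanism is a global cancellation: each factor --- $\chi_\infty^{-1}(\gamma_\infty)$ from the $\Phi_M$ bound, $|\chi_{S_0}^{-1}(\gamma_{S_0})|$ from the Shalika-germ argument at $S_0$, $|\chi_{S_1}^{-1}(\gamma_{S_1})|$ from Theorem \ref{ointbound} at $S_1$, and (missing from your sketch entirely) $|\chi_v^{-1}(\gamma)|$ at each $v$ in the auxiliary bad set $S_{M,\gamma}$ outside $S$ --- multiplies across all places to $|\chi^{-1}(\gamma)| = 1$ because $\chi$ is trivial on rational elements. Without this product-formula cancellation no individual factor is controlled and the estimate does not close. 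Two smaller points: you should, as the paper does, first enlarge $S_0$ to absorb all places of residue degree below $M_G$ and replace the away-from-$S$ component by a plain hyperspecial indicator, so that every $v \in S_{M,\gamma}$ can be treated by Theorem \ref{ointbound}; and $\1_{U^{S,\infty},\chi}$ does not by itself ``supply a discriminant-type bound'' --- that bound again comes from Theorem \ref{ointbound} applied at the auxiliary places, uniformly because $M_H \leq M_G$.
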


\begin{proof}
Let $M_G$ be the maximum of the $T_\Xi$ from \ref{ointbound} over all root data $\Xi$ from lemma \ref{finnumgroups}. This is then a due-diligence check that all the steps in \cite[thm 9.19]{ST16} still hold. We start by evaluating the integral in \eqref{error} getting term
\begin{multline*}
\f{\vol(\mf X_\infty^1)}{\vol(\mf X_F \bs \mf X/ A_{H, \rat})} \lf( \sum_{\substack{\gamma \in [H(F)]^\ssm \\ \gamma \notin Z_H}} a_{H,\gamma} |\iota^H(\gamma)|^{-1} |\Stab_\mf X(\gamma)|^{-1} \f{\tr \xi(\gamma_\infty)}{\dim \xi} O^M_\gamma(\varphi^\infty_M) \ri. \\
+ \lf.\sum_{\substack{M \in \ms L^\cusp_H \\ M \neq H}} \sum_{\gamma \in [M(F)]^\ssm} a_{M,\gamma} |\iota^H(\gamma)|^{-1} |\Stab_\mf X(\gamma)|^{-1}\f{\Phi_M(\gamma, \xi)}{\dim \xi} O^M_\gamma((\varphi^\infty)_M) \ri).
\end{multline*}
Without loss of generality, expand $S_0$ so that $\varphi$ is the characteristic function of a hyperspecial $K^{S, \infty}$ away from $S \cup \infty$ and that the places less than $M_G$ are contained in $S_0$. If a conjugacy class intersects the support of $\varphi_{S_1}$, then we can scale it by an element of $\mf X_S$ so that it intersects the support of $\varphi_{S_1}^1$. The same holds for $\varphi^{S, \infty}$ which has support $K^{S, \infty}$. Choose $\varphi_{S_0}$ and $\varphi_S$ similarly and let their supports after taking constant terms to $M$ be $U_{S_0,M}$ and $U_{\infty, M}$. We can then replace terms in the sum through the rule
\[
\f{\Phi_M(\gamma, \xi)}{\dim \xi} O^M_\gamma((\varphi^\infty)_M) \mapsto \1_{U_{\infty, M}} \f{\Phi_M(\gamma, \xi)}{\dim \xi} O^M_\gamma((\varphi^\infty)^1_M).
\]
Let $U_{S_1,M} = \Supp \ms H^\ur(M_{S_1})^{\leq \kappa}$. 
Let $Y_{M}$ be the set of semisimple rational conjugacy classes intersecting the set $U_{S_1,M} U_{S_0,M} U_{\infty,M} K^{S, \infty}_M$. The number of terms in the sum is less than or equal to $|Y_M|$. 

We check that each of factors can be bounded as in the proof of \cite[thm 9.19]{ST16}. The finite set of places $S_{M, \gamma}$ disjoint from $S$ can be defined in the same way. Then:
\begin{itemize}
\item
\cite[cor 6.17]{ST16} still applies to $a_{M, \gamma}$,  (see the missing step lemma \ref{aboundgap} for why this works for general center).
\item
The bound in \cite[lem 6.11]{ST16} still applies to the $\Phi_M(\gamma, \xi)$ terms. There is an extra factor of $\chi_\infty^{-1}(\gamma_\infty)$. 
\item
A version of \cite[thm A.1]{ST16} modified to work on functions with central character still applies to bound $O_{\gamma}^M(\varphi_{S_0,M})$. There is an extra factor of $|\chi_{S_0}^{-1}(\gamma_{S_0})|$. \item
Proposition \ref{ointbound} still bounds $O_{\gamma}^M(\varphi_{S_1,M})$. There is an extra factor of $|\chi^{-1}_{S_1}(\gamma_{S_1})|$. 
\item
Proposition \ref{ointbound} still gives the same bound for $O_{\gamma}^M(\varphi_{v,M})$ for $v \in S_{M, \gamma}$ since $M_H \leq M_G$.There is again an extra factor of $|\chi|$. 
\item
\cite[lem 2.18]{ST16} and \cite[lem 2.21]{ST16} still provide a bound on the $D^M$ terms since we can still construct the embedding from \cite[prop 8.1]{ST16}.
\item
$|Y_{M}|$ can still be bounded by \cite[cor 8.10]{ST16} (this also applies to groups with general center).
\item
$|\Stab_\mf X(\gamma)|^{-1} \leq 1$.
\end{itemize} 
Since $\chi$ is trivial at rational elements, all the $\chi_v$ terms cancel. Therefore, the entire term can similarly be bounded by 
\[
O(q_{S_1}^{A_{\wt,H} + B_{\wt,H} \kappa} m(\xi)^{-c_{\wt,H}}),
\]
folding in the constant that only depends on $H$ and $\mf X$. 
\end{proof}
This very weak uniformity in just $M_G$ is all we will need for the weight aspect.

\red{\subsubsection{A new orbital integral bound}
LEVEL ASPECT}

\red{\subsubsection{More uniform error term bound}
LEVEL ASPECT}

\section{Final Computation}\label{final}
\subsection{Weight Aspect}
Assume the previous conditions on $(G, \mf X, \chi)$ from section \ref{cndtns}. Let $\pi_k$ be a sequence of discrete series representations of $G_\infty$ such that their corresponding finite-dimensional representations $\xi_k$ have regular weights $m(\xi_k) \to \infty$. Let $S_1$ be disjoint from $S_{\bad, G}$: the set of places with residue degree less than the uniform $M_G$ from proposition \ref{weakuniform}. Choose constant $S_0$, $\varphi_{S_0}$ and $U^{S, \infty}$ to define a sequence of families $\mc F_k$ for each $\xi_k$. 
\begin{thm}\label{mainresult}
There are constants $A'_{G, \wt}$ and $B'_{G, \wt}$ such that for any $\varphi_{S_0}$ and $\varphi_{S_1} \in \ms H(G_{S_1}, K_{S_1}, \chi_{S_1})^{\leq \kappa}$, 
\begin{multline*}
\f{\bar \mu^\can(U^{S, \infty}_\mf X)|\Pi_\disc(\xi_k)|}{\tau'(G) \dim(\xi_k)} \sum_{\pi \in \mc{AR}_\disc(G, \chi)} a_{\mc F_k}(\pi) \wh \varphi_S(\pi) \\
= E(\wh \varphi_S | \om_\xi, L, \chi_S) + O(q_{S_1}^{A'_{\wt} + B'_{\wt} \kappa} m(\xi_k)^{-1})
\end{multline*}
(using notation from corollary \ref{spectral} and theorem \ref{wtbound}). The constants in the error depend on $(G, \mf X, \chi)$, $\varphi_{S_0}$, and $U^{S, \infty}$. 
\end{thm}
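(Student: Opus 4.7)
The plan is to apply the hyperendoscopy formula (Theorem \ref{Hform}) and then combine the main-term computation of Proposition \ref{wtbound} with the various error bounds. The LHS equals $I^G_{\spec, \chi}(\varphi_{\pi_k} \otimes \varphi^\infty) / (\tau'(G) \dim(\xi_k) / |\Pi_\disc(\xi_k)|)$ by Corollary \ref{spectral}. Since $\varphi_{\pi_k}$ and $\eta_{\xi_k}$ have identical stable orbital integrals, Theorem \ref{Hform} rewrites this as
\[
I^G_{\spec,\chi}(\eta_{\xi_k} \otimes \varphi^\infty) + \sum_{\mc H \in \mc{HE}_\el(G)} \iota(G, \mc H) \, I^{\td{\mc H}}_{\spec, \chi_{\td{\mc H}}}\bigl((\eta_{\xi_k} - \varphi_{\pi_k})^{\td{\mc H}} \otimes (\varphi^\infty)^{\td{\mc H}}\bigr).
\]
By Lemma \ref{conditions} only finitely many $\mc H$ contribute, and for each such $\mc H$ the Archimedean transfer $(\eta_{\xi_k} - \varphi_{\pi_k})^{\td{\mc H}}$ is (by Proposition \ref{archtransfer}) a finite linear combination of Euler-Poincar\'e functions $\eta_{\mu}$ with $\mu \in \Xi_{\xi_k, \td{\mc H}}$; by Lemma \ref{regbound} each such $\mu$ is regular once $\xi_k$ is regular, so the generalized simple trace formula of Proposition \ref{geomsidegenps} (equivalently Proposition \ref{sgeomsidegenps}) applies to each term.

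The first step is to handle the $G$-term $I^G_{\spec,\chi}(\eta_{\xi_k} \otimes \varphi^\infty)$. Using Proposition \ref{geomsidegenps} and the split of Section \ref{setup} into \eqref{main} and \eqref{error}, the central main term \eqref{main} equals $E(\wh\varphi_S \mid \om_{\xi_k}, L, \chi_S)$ after division by $\dim(\xi_k)/|\Pi_\disc(\xi_k)|$ and $\tau'(G)$, by the Poisson summation computation carried out in Proposition \ref{wtbound}; the error term \eqref{error} is bounded by Proposition \ref{weakuniform} as $O_{\varphi_{S_0}}(q_{S_1}^{A_{\wt,G} + B_{\wt,G}\kappa} m(\xi_k)^{-C_{\wt,G}})$, which is absorbed into the target bound for any $C_{\wt,G} \geq 1$ (and if necessary one can use $m(\xi_k)^{-C_{\wt,G}} \leq m(\xi_k)^{-1}$ for $m(\xi_k) \geq 1$).

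Next, I handle each hyperendoscopic term. For fixed $\mc H$, write $(\eta_{\xi_k} - \varphi_{\pi_k})^{\td{\mc H}}$ as a bounded linear combination $\sum_\mu c_\mu \eta_\mu$ with $\mu \in \Xi_{\xi_k, \td{\mc H}}$ and apply Proposition \ref{geomsidegenps} to each $\eta_\mu \otimes (\varphi^\infty)^{\td{\mc H}}$. Now normalize: Proposition \ref{wtbound} applied on $\td{\mc H}$ gives that $I^{\td{\mc H}}_{\spec}(\eta_\mu \otimes (\varphi^\infty)^{\td{\mc H}})$ equals $\tau'(\td{\mc H}) \dim(\mu)/|\Pi_\disc(\mu)|$ times a main term of the shape $E(\cdot\mid\cdot)$ plus an error $O(q_{S_1}^{A+B\kappa} m(\mu)^{-C})$ controlled by Proposition \ref{weakuniform} (using Section \ref{unramboundsection} to bound $\|(\varphi_{S_1})^{\td{\mc H}}\|_\infty$ uniformly). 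Dividing by the $G$-normalization $\tau'(G)\dim(\xi_k)/|\Pi_\disc(\xi_k)|$ produces a factor of $\dim_{\td{\mc H}}(\mu)/\dim_G(\xi_k)$ which, by Lemma \ref{dimbound}, is $O(m_G(\xi_k)^{-1})$, uniformly in $\mc H$ (since only finitely many $\mc H$ contribute). The main term of each hyperendoscopic contribution is controlled by the rough bound of Proposition \ref{roughwt}, which produces $O(q_{S_1}^{C\log\kappa})$ that is dominated by $O(q_{S_1}^{B\kappa})$, and the hyperendoscopic error terms are controlled by Proposition \ref{weakuniform}. Both contribute the decisive $m(\xi_k)^{-1}$ factor through the dimension ratio.

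The hard part is the bookkeeping: verifying that all normalizing constants ($\iota(G,\mc H)$, $|\Pi_\disc(\mu)|$, $\tau'(\td{\mc H})$, and the volume factors from Proposition \ref{wtbound}) combine uniformly across the finitely many $\mc H$, and that the truncated-Hecke-algebra level parameter for the transferred test function on $\td{\mc H}$ can be taken to be the same $\kappa$ (with an adjusted $B$ absorbing the rank dependence coming from Proposition \ref{valuebound}). Once this is verified, one collects the constants: set $A'_{\wt}$ and $B'_{\wt}$ to be the maxima over $\mc H \in \mc{HE}_\el(G)$ (restricted to the finitely many relevant ones) of $A_{\wt, \td{\mc H}}$ and $B_{\wt, \td{\mc H}}$ from Proposition \ref{weakuniform}, with an additive adjustment from the rough bound of Proposition \ref{roughwt} on main-term contributions from $\mc H \neq G$. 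This yields the claimed bound.
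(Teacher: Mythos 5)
Your proposal is correct and follows essentially the same route as the paper: apply the hyperendoscopy formula, reduce the $G$-term to $E(\wh\varphi_S\mid\om_\xi,L,\chi_S)$ plus error via Propositions \ref{wtbound} and \ref{weakuniform}, and absorb each hyperendoscopic contribution into the error by combining Propositions \ref{roughwt} and \ref{weakuniform} with the dimension ratio $\dim_{\td{\mc H}}(\mu)/\dim_G(\xi_k)=O(m(\xi_k)^{-1})$ from Lemma \ref{dimbound}, using Lemma \ref{conditions} for finiteness, Lemma \ref{regbound} for regularity of the transferred weights, and Proposition \ref{valuebound} to control the transferred non-Archimedean test functions. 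The "bookkeeping" you defer is exactly what the paper makes explicit via the uniformly bounded constants $W_{\xi,\mc H}=\iota(G,\mc H)\,\tau'(\mc H)\,|\Pi^G_\disc(\xi_k)|/(\tau'(G)\,|\Pi^{\mc H}_\disc(\xi)|)$ (together with the observation that only boundedly many weights $\xi\in\Xi_{\xi_k,\mc H}$ appear per $\mc H$), and a minor reduction enlarging $S_0$ so that $U^{S,\infty}=K^{S,\infty}$.
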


\begin{proof}
For $\|\varphi_{S_1} \chi_{S_1} \|_\infty \leq 1$, let
\[
\varphi_k = \varphi_{\pi_k} \otimes \1_{U^{S, \infty}, \chi} \otimes \varphi_{S_1} \otimes \varphi_{S_0}
\]
as in section \ref{cndtns}. Let $\varphi^1_k = \eta_{\xi_k} \otimes \varphi_k^\infty$. Then $\varphi_k$ and $\varphi^1_k$ are unramified outside of $S$. Let $\mc A$ be the set of hyperendoscopic tuples that contribute a non-zero value to the hyperendoscopy formula as in lemma \ref{conditions}.
 
 Then using the hyperendoscopy formula:
\begin{multline*}
\f{|\Pi_\disc(\xi_k)|}{\tau'(G) \dim(\xi_k)} I_\disc(\varphi_k)  \\
= \f{|\Pi_\disc(\xi_k)|}{\tau'(G) \dim(\xi_k)}   \lf(I_{\disc}^G(\varphi_k^1) + \sum_{\mc H \in \mc A} \iota(G, \mc H)I_{\disc}^{H_{n_\mc H}}((\varphi_k^1 - \varphi_k)^{\mc H})
\ri).
\end{multline*}

 We choose arbitrary transfers of $\varphi_0$. 
 Choose $(\1_{K_G^{S, \infty}})^{\mc H}$ according to lemma \ref{zextfunlemma} since by lemma \ref{conditions}, $\mc H$ stays unramified away from $S, \infty$. Let $\Pi_\disc(\xi_k)$ be the $L$-packet containing $\pi_k$ and let its size be $X_k$. Then
\[
(\varphi_k^1 - \varphi_k)^\infty = \varphi_k^\infty \qquad (\varphi_k^1 - \varphi_k)_\infty = \f1{X_k} \sum_{\pi_k \neq \pi \in \Pi_\disc(\xi_k)} \varphi_\pi - \f{X_k - 1}{X_k} \varphi_{\pi_k}.
\]
By proposition \ref{archtransfer}, we can choose the infinite part transfer to be a linear combination of EP-functions  
\[
\sum_{\xi \in \Xi_{\xi_k, \mc H}} c_\xi \eta_\xi
\]
for some constants
\[
|c_\xi| \leq (X_k - 1) \f1{X_k} + \f{X_k - 1}{X_k} \leq 2.
\]
Now, checking some conditions:
\begin{itemize}
\item
All groups in the hyperndoscopic paths are unramified on $S_1$ and cuspidal at infinity. In addition, each $\mf X_{\mc H} \supseteq A_{\mc H, \infty}$ by lemma \ref{conditions}.
\item
Let $\chi_{\mc H}$ be the character determined by $\mc H$ as in section \ref{hyptransferchar}. The transfer $\chi_{\mc H,S_1} \varphi_{S_1}^\mc H$ can be chosen to be in $\ms H(G_{S_1}, K_{S_1}, \chi_{\mc H, S_1})^{\leq \kappa}$ and have $L^\infty$-norm bounded by some $q_{S_1}^{E_\mc H \kappa} \kappa^{|S_1|}$ by repeated application of proposition \ref{valuebound}. We can apply this due to the above.
\item
The $\xi$ are regular by lemma \ref{regbound}.
\item
Without loss of generality, enlarge $S_0$ so that $U^{S, \infty} = K^{S, \infty}$. Then $\1_{U^{S, \infty}}^\mc H$ is still the indicator function of an open compact subgroup averaged over $\chi_\mc H^{S, \infty}$. 
\end{itemize}

We can therefore apply the main term bound in proposition \ref{roughwt} and the error term bound in propostion \ref{weakuniform} to each term in the sum and get
\begin{multline*}
I_{\disc}^{\mc H}((\varphi_k^1 - \varphi_k)^{\mc H}) = I_{\spec}^{\mc H}((\varphi_k^1 - \varphi_k)^{\mc H}) = \\
\sum_{\xi \in \Xi_{\xi_k, \mc H}} \iota(G, \mc H) \f{\tau'(\mc H) \dim(\xi)}{|\Pi^{\mc H}_\disc(\xi)|} O_{\varphi_0^\mc H, U^{S, \infty}, \mc H}(q_{S_1}^{(A_{\wt, \mc H} + E_\mc H + \eps) \kappa + B_{\wt, \mc H} })
\end{multline*}
for some constant $U_{\xi, \mc H}$. We use here that $O(\kappa^{C|S_1|}) O(q_{S_1}^{(A + E) \kappa + B}) = O(q_{S_1}^{(A + E+ \eps) \kappa + B})$.

By the computation in \ref{wtbound} and the error term bound \ref{weakuniform},
\[
\f{|\Pi_\disc(\xi_k)|}{\tau'(G) \dim(\xi_k)} I_{\disc}^G(\varphi_k^1) = E + O(q_{S_1}^{A_{\wt, G} + B_{\wt, G} \kappa_1} m(\xi_k)^{-C_{\wt, G}})
\]
where we shorthand $E = E(\wh \varphi_S | \om_\xi, L, \chi_S)$. Multiplying through,
\begin{multline*}
\f{|\Pi_\disc(\xi_k)|}{\tau'(G) \dim(\xi_k)} I_\disc(\varphi_k)  =  \\
E + \sum_{\mc H \in \mc A} \sum_{\xi \in \Xi_{\xi_k, \mc H}} W_{\xi, \mc H} \f{\dim(\xi)}{\dim(\xi_k)} O_{\mc H, \varphi_0^\mc H}(q_{S_1}^{(A_{\wt, \mc H} + E_\mc H+ \eps) \kappa + B_{\wt, \mc H} }) \\+  O(q_{S_1}^{A_{\wt, G}\kappa + B_{\wt, G} } m(\xi_k)^{-C_{\wt}})
\end{multline*}
where
\[
\lf| W_{\xi, \mc H} = \iota(G, \mc H) \f{\tau'(\mc H)}{\tau'(G)} \f{{|\Pi^{G}_\disc(\xi_k)|}}{{|\Pi^{\mc H}_\disc(\xi)|}} \ri| \leq W
\] 
for some constant $W$ independent of $\xi_k$, $k$, and $q_{S_1}$. Finally, by lemma \ref{dimbound} the ratio of dimensions is $O(m(\xi_k)^{-1})$. 

In total, the inner sum has $|\Om_\mc H|$ elements so the entire double sum has finite size independent of $S_1$ and $\xi$. Therefore, it can be bounded to
\[
\f{|\Pi_\disc(\xi_k)|}{\tau'(G) \dim(\xi_k)} I_\disc(\varphi_k) = E + O(q_{S_1}^{A'_{\wt} + B'_{\wt} \kappa} m(\xi_k)^{-1}).
\]
where $A'_\wt, B'_\wt$ are anything bigger than the maxima over all groups appearing in $\mc A$ (Note that $C_{\wt}$ can be chosen to be $\geq 1$). Finally, plug in corollary \ref{spectral}.
\end{proof}

\red{\subsection{Level aspect}
For this, will need transfers of level subgroup indicators, fix Ferrari!}

\red{will also need much stronger uniformity and a bound on how many groups you get in $\mc A$}

\red{some notes: only finitely many possible Levis of endoscopic groups possible at places $v \in S_0$ so can use uniform constants in Shalika germ bound}

\red{Will also need to bound ratios
\[
\f{\mu^\can(K_G(\mf p^n))}{\mu^\can(K_{\mc H}(\mf p^n))}.
\]
}

\section{Corollaries}\label{corollaries} 
Theorem \ref{mainresult} can be substituted in for \cite{ST16}'s 9.19 to most of the same corollaries. We leave the result on zeros of $L$-functions for the future because the computations are complicated---the term $\beta^\pl_v$ gets replaced by something far more complicated in the case with central character.

Recall the notation from last section and for brevity define
\[
\mu_{\mc F_k}(\wh \varphi_S) = \f{\bar \mu^\can(U^{S, \infty}_\mf X)|\Pi_\disc(\xi_k)|}{\tau'(G) \dim(\xi_k)} \sum_{\pi \in \mc{AR}_\disc(G, \chi)} a_{\mc F_k}(\pi) \wh \varphi_S(\pi) 
\]
for any $\wh \varphi_S$ on $\wh G_S$. Theorem \ref{mainresult} computes this when $\varphi_S \in \ms H(G_S)$ and $\varphi_{S_1}$ is unramified.

\subsection{Plancherel Equidistribution}
First, we get a version of \cite[cor 9.22]{ST16} using a similar Sauvageot density argument. We thank the reviewers for pointing out that \cite[pg. 103]{NV19} discusses a discovered gap in \cite[pg. 181]{Sau97}. Here, Bernstein components are conflated with ``$\mf l$-components'' (defined in \cite[\S 23.7]{NV19}) when arguing that a certain algebra separates points. Most of this section is dependent on that gap being fixed.

We phrase things as in \cite{FL18}. Restrict to the case where all the $\xi_k$ have the same central character $\om_\xi$ and $S_1$ is trivial. Let $\Theta = L \mf X_S$ and let $\psi$ be the character on $\Theta$ induced by $\om_\xi$ and $\chi_S$. Let $\wh G_{S,\psi} \subseteq \wh G_S $ be all representations with central character extending $\psi$. We can define a measure $\mu^\pl_\psi$ on $\wh G_{S,\psi}$ by $\mu^\pl_\psi(f) = E(f^* | \om_\xi, L, \chi_S)$ where $f^*$ is a continuous extension of $f$ to $\wh G_S$. 

The non-conditional result is a quick consequence of \ref{mainresult} restated in the terminology of this section:
\begin{cor}[Unconditional Plancherel equidistribution up to central character]
For any $f \in \ms H(G_S, \chi_S)$, 
\[
\lim_{k \to \infty} \mu_{\mc F_k}(\wh f) = \mu^\pl_\psi(\wh f).
\]
\end{cor}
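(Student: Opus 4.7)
The plan is to deduce this directly from Theorem \ref{mainresult} specialized to $S_1 = \emptyset$. With $S_1$ trivial we have $S = S_0$, and the test function in Theorem \ref{mainresult} reduces to $\varphi_S = f \in \ms H(G_{S_0}, \chi_{S_0}) = \ms H(G_S, \chi_S)$; the Hecke truncation parameter $\kappa$ and the factor $q_{S_1}^{A'_{\wt} + B'_{\wt}\kappa}$ are vacuous (empty products equal $1$), so the theorem specializes to
\[
\mu_{\mc F_k}(\wh f) = E(\wh f \mid \om_\xi, L, \chi_S) + O_{f}(m(\xi_k)^{-1}).
\]

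Next I would identify the main term with $\mu^\pl_\psi(\wh f)$. By the definition of $\mu^\pl_\psi$ given just above the corollary, for any continuous function $h$ on $\wh G_{S,\psi}$ one has $\mu^\pl_\psi(h) = E(h^* \mid \om_\xi, L, \chi_S)$ for a continuous extension $h^*$ to $\wh G_S$. Since $\wh f$ is already defined and continuous on all of $\wh G_S$, we may take $h = \wh f|_{\wh G_{S,\psi}}$ and $h^* = \wh f$. Moreover, unwinding the definition of $E(\wh f \mid \om_\xi, L, \chi_S)$ from Proposition \ref{wtbound} shows that it is a (normalized) sum of conditional Plancherel expectations $E^\pl(\wh f \mid \om_S)$ over exactly those $\om_S \in \wh Z_S$ that extend $\psi = (\om_\xi|_L)\cdot \chi_S$ on $\Theta = L\mf X_S$, i.e.\ over the central characters parametrizing $\wh G_{S,\psi}$; this is precisely $\mu^\pl_\psi(\wh f)$.

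Finally, letting $k \to \infty$ makes $m(\xi_k) \to \infty$ by hypothesis, so the error $O_f(m(\xi_k)^{-1})$ vanishes and we conclude $\lim_{k \to \infty} \mu_{\mc F_k}(\wh f) = \mu^\pl_\psi(\wh f)$. The only genuine point to check is the measure-theoretic identification in the previous paragraph, and that is a formal matching of definitions; there is no hard analytic obstacle here since we have restricted to $f \in \ms H(G_S, \chi_S)$, for which Theorem \ref{mainresult} already supplies a pointwise asymptotic. The real difficulty, which this ``unconditional'' version avoids, would arise when extending beyond the Hecke algebra to more general test functions on $\wh G_{S,\psi}$ by a Sauvageot-type density argument, where one must contend with the gap noted above.
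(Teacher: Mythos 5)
Your proposal is correct and takes the same route the paper intends. The paper simply states that the result is a ``quick consequence'' of Theorem \ref{mainresult} restated in the new notation, and your write-up supplies exactly the bookkeeping that remark compresses: with $S_1$ empty, $q_{S_1}^{A'_\wt + B'_\wt \kappa} = 1$ so the error reduces to $O_f(m(\xi_k)^{-1})$, and the main term $E(\wh f \mid \om_\xi, L, \chi_S)$ equals $\mu^\pl_\psi(\wh f)$ immediately from the definition of $\mu^\pl_\psi$ (indeed the definition is literally $\mu^\pl_\psi(h) := E(h^*\mid \om_\xi, L, \chi_S)$, so taking $h^* = \wh f$ closes the argument without even needing to unwind Proposition \ref{wtbound}, though the unwinding you give is a correct cross-check). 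Your remark that the hard density/Sauvageot issue only arises for the conditional version is also the paper's point.
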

Note that there is no uniformity in this result---the rate of convergence depends heavily on the exact $f$. 

{\color{gray}The following is all conditional on \cite{Sau97}: 
 When $\psi$ is trivial, $\mu^\pl_\psi = \mu_{\Theta, \pl}$ from \cite{FL18} up to some constant. The lemma in the middle of the proof of \cite[thm 2.1]{FL18} extends to our case of non-trivial $\psi$ and $\Theta$ a general subgroup of $Z_{G_S}$. 
\begin{lem}
Let $\eps > 0$:
\begin{enumerate}
\item
For any bounded $A \subseteq \wh G_S \setminus \wh G_S ^\temp$, there exists $h \in \ms H(G_S)$ such that $\wh h \geq 0$ on $\wh G_S $, $\wh h \geq 1$ on $A$, and $\mu^\pl_\psi(\wh h) \leq \eps$.
\item
For any Riemann integrable function $\wh f$ on $\wh G_{S,\psi}^\temp$, there exist $h_1, h_2 \in \ms H(G_S)$ such that $|\wh f - \wh h_1| \leq \wh h_2$ on $\wh G_{S,\psi}$ and $\mu^\pl_\psi(\wh h_2) \leq \eps$. 
\end{enumerate}
Both these results depend on 
\end{lem}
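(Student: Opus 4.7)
The plan is to mimic the corresponding lemma in the proof of \cite[Thm.~2.1]{FL18}, passing from their setting (where $\Theta$ is a lattice in $Z_{G_S}$ and $\psi=1$) to ours. The ingredients are Sauvageot's density theorem (assumed repaired per the discussion preceding the statement) together with the disintegration of $\mu^\pl_\psi$ afforded by Proposition~\ref{wtbound}.

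The first step is to record
\[
\mu^\pl_\psi(\wh h) \;=\; \frac{1}{|X|}\,\frac{\mu}{\vol(Z'_{S,\infty}/L)} \sum_{\om \in \wh Z_{S, L, \xi, \chi}} E^\pl(\wh h \mid \om),
\]
exhibiting $\mu^\pl_\psi$ as a positive countable sum of the per-central-character measures $E^\pl(\cdot \mid \om)$, each supported on the tempered fiber $\wh G_{S,\om}^\temp$. For $h\in\ms H(G_S)$, Corollary~\ref{centerft} identifies $\om\mapsto E^\pl(\wh h\mid\om)=\bar h(\om)$ with the Fourier transform of a compactly supported function built from $h|_{Z_S}$, hence of Schwartz class on $\wh Z_S$; this makes the sum over $\om$ uniformly convergent and effectively finite for any fixed $h$.

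For part (1), the bounded non-tempered set $A$ has vanishing Plancherel measure on each fiber. Apply the per-central-character version of Sauvageot's theorem inside each relevant $\ms H(G_S,\om)$ (functions transforming by $\om^{-1}$ under $Z_S$, which carries its own Plancherel-type measure) to construct nonnegative $h_\om$ with $\wh h_\om \geq \1_A$ on $\wh G_{S,\om}$ and $E^\pl(\wh h_\om\mid\om)$ arbitrarily small. Assemble these into a single $h \in \ms H(G_S)$ by choosing a nonnegative, compactly supported, positive-definite ``periodizing'' cutoff $c = c_1 * c_1^*$ on $Z_S$ and integrating the family $\{h_\om\}$ against $c$; the resulting $h$ has $\wh h$ pointwise nonnegative on all of $\wh G_S$, dominates $\1_A$ after normalization, and satisfies $\mu^\pl_\psi(\wh h) \leq \eps$. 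For part (2), decompose the Riemann integrable $\wh f$ into finitely many step functions along the Bernstein stratification of $\wh G_{S,\psi}^\temp$, apply the construction of part (1) to each indicator from above and below, and sum.

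The main obstacle is ensuring that the cutoff-and-assembly step yields an $h \in \ms H(G_S)$ whose Fourier transform is \emph{globally} nonnegative on $\wh G_S$, not merely on the fibers $\wh G_{S,\om}$ for $\om$ extending $\psi$. This requires simultaneous positive-definiteness of both the per-fiber Sauvageot pieces (of the form $g_\om * g_\om^*$) and the central cutoff $c$, together with the Schwartz decay of $\bar h$ on $\wh Z_S$ to convert the per-fiber estimates into a uniform bound for $\mu^\pl_\psi(\wh h)$. A secondary technicality is verifying that Sauvageot's construction, once the Sau97 gap is repaired, carries over unchanged to the central-character-equivariant Hecke algebra; this is expected, since his proof uses only general properties of the tempered spectrum and the abstract Plancherel theorem, which remain valid after fixing a central character.
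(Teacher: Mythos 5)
Your route is genuinely different from the paper's, and it has real gaps rather than merely technical loose ends.

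\textbf{What the paper does.} It never invokes Sauvageot on a measure-zero fiber. The key observation is that $\Theta_f := \Theta \cap Z_{G_\der}(F_S)$ is \emph{finite}, so the fibers $\wh G_{S,\psi|_{\Theta_f}}$ of the central-character map over the finite group $\wh\Theta_f$ are clopen of positive Plancherel measure. Sauvageot's density theorem for $\ms H(G_S)$ then applies directly (extending by zero). To pass from $\psi|_{\Theta_f}$ to $\psi$, one averages the resulting $H_i$ over finitely many translates by unitary characters $s(\lb)$, $\lb\in T_0\subset X(\overline\Theta)$ (a section of $X(G_S)\to X(\overline\Theta)$), and controls $\mu^\pl_\psi(h_2)$ by a Poisson-summation argument on a lattice $M\subset\Theta$ chosen so sparse that $\Supp(H_2)\cap M=\{1\}$. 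Multiplying by a character $s(\lb)$ is a genuine operation $\ms H(G_S)\to\ms H(G_S)$ and only translates Fourier transforms, so positivity on all of $\wh G_S$ is automatic.

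\textbf{Where your proposal has gaps.} (a) The ``per-central-character Sauvageot'' you want to apply to each measure-zero fiber $\wh G_{S,\om}$ is not the statement of Sauvageot's theorem; it is a version for the group $G_S$ with fixed central character $\om$, which would need its own argument. You flag this as a ``secondary technicality,'' but it is the load-bearing step, and without it the fiber-by-fiber construction never gets off the ground. (b) The assembly step --- passing from a family $\{h_\om\}_{\om}$ with $h_\om\in\ms H(G_S,\om)$ to a single $h\in\ms H(G_S)$ by ``integrating against a cutoff $c$ on $Z_S$'' --- is not specified well enough to check. The $\om$ range over a countable set of characters that are not translates of one another by characters of $G_S$ (they differ by characters of $Z_S$ that need not extend to $G_S$), so there is no obvious coherent way to choose compactly supported lifts. (c) Schwartz decay of $\bar h$ on $\wh Z_S$ gives convergence of $\sum_\om E^\pl(\wh h\mid\om)$ for each fixed $h$, but it does not give the quantitative control you need: as $\eps\to0$ the candidate $h_2$ changes, and you must bound $\mu^\pl_\psi(\wh h_2)$ uniformly. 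The paper's Poisson-summation-with-sparse-lattice step does exactly this and has no analogue in your sketch. Your proposal would become a proof only if you establish (a) independently and replace (b), (c) by an honest construction; at that point it would likely collapse back into something resembling the paper's averaging argument.
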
 

\begin{proof}
We try to mimic the argument in \cite[thm 2.1]{FL18}. Let $\Theta_f = \Theta \cap Z_{G_\der}(F_S)$ and $\overline \Theta = \Theta/\Theta_f$. Then $\Theta_f$ is finite. In addition, if we denote by $X(\cdot)$ taking complex-valued characters, the map $X(G_S) \to X(Z_{G,S}/Z_{G_\der}(F_S)) \to X(\overline \Theta)$ is surjective. Choose a set-theoretic section $s$ of this map. 

We can ignore normalization constants by, without loss of generality, changing $\eps$. Then this result for $\Theta$ trivial follows from the main result of \cite{Sau97}. If $\overline \Theta$ is trivial, then the various $\wh G_{S, \psi}$ are positive-measure clopen subsets of $\wh G_S$ so we can use the $h_i$ for either $A$ or the extension of $f$ by $0$ on $\wh G_S$. 

For the general case, given $f$ on $\wh G_{S, \psi}$ define $F$ on $\wh G_{S, \psi|_{\Theta_f}}$ by $F(\pi) = f(\pi \otimes s(\om_\pi^{-1} \psi))$. Choose $H_1$ and $H_2$ satisfying the condition for $F$. For any finite subset $T_0\subseteq X(\overline \Theta)$, the averages
\[
h_i = \f1{|T_0|} \sum_{\lb \in T_0}  s(\lb)H_i
\]
satisfy $|\wh f - \wh h_i| \leq \wh h_2$ (each individual term in the sum does) so we simply need to find an $T_0$ such that $\mu^\pl_\psi(h_2) \leq \eps$. 

Up to some constants
\[
\mu^\pl_\psi(h_2) = \int_{\Theta} \psi(z) h_2(z) \, dz  = \f1{|T_0|} \sum_{\lb \in T_0}  \overline H_2(\lb \psi) \qquad \mu^\pl_{\psi|_{\Theta_f}}(H_2) = \sum_{z \in \Theta_f} \psi(z) H_2(z)
\]
by variations of the arguments in section \ref{cft}. Choose a sparse enough lattice $M$ so that the support of $H_2$ intersect $M$ is $\{1\}$. Then, up to constants,
\[
\eps > \mu^\pl_{\psi|_{\Theta_f}}(H_2) = \sum_{z \in \Theta_fM} \psi(z) H_2(z) = \sum_{\lb \in T} \overline H_2(\lb \psi),
\] 
where $T$ is some subset of $X(\overline \Theta)$. The last step was Poisson summation on $\Theta$. Therefore, since the last sum converges, choosing $T_0$ to be a large enough finite subset of $T$ suffices. 

The argument for subsets $A$ is the same averaging trick---in place of the function $F$, we use set $A' = \{\pi \otimes \lb: \pi \in A, \lb \in X(\overline \Theta)\}$.  
\end{proof}
The same ``$3 \eps$''-argument as \cite[cor 9.22]{ST16} then gives:
\begin{cor}[Plancherel equidistribution up to central character]\label{plancherel}
Recall the conditions and notation from the above discussion. Then
\begin{enumerate}
\item
For any bounded $A \subseteq \wh G_S \setminus \wh G_S ^\temp$,
\[
\lim_{k \to \infty} \mu_{\mc F_k}(\1_A) = 0.
\]
\item
For any Riemann integrable $\wh f$ on $\wh G_{S,\psi}^\temp$,
\[
\lim_{k \to \infty} \mu_{\mc F_k}(\wh f) = \mu^\pl_\psi(\wh f).
\]
\end{enumerate}
\end{cor}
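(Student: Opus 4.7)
The plan is a three-epsilon argument that reduces both claims to the unconditional Plancherel equidistribution already stated above (valid for $f \in \ms H(G_S, \chi_S)$), combined with the preceding approximation lemma. First I would record two elementary but essential bookkeeping facts: $\mu_{\mc F_k}$ is a non-negative measure (immediate from $a_{\mc F_k}(\pi) \geq 0$), and both $\mu_{\mc F_k}$ and $\mu^\pl_\psi$ are concentrated on $\wh G_{S,\psi}$---the former because automorphicity of $\pi$, the equality $\pi_\infty = \pi_k$, and invariance of $\pi^{S,\infty}$ under $Z_G(F) \cap U^{S,\infty} = L$ together force $\om_{\pi_S}|_\Theta = \psi$, and the latter by construction of $\mu^\pl_\psi$ from Proposition \ref{wtbound}.

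Next, for any $h \in \ms H(G_S)$, I set $h' := \overline h_{\chi_S} \in \ms H(G_S, \chi_S)$. The Fourier transforms $\wh h$ and $\wh{h'}$ agree on every $\pi \in \wh G_S$ with central character restricting to $\chi_S$ on $\mf X_S$, which by the previous paragraph contains the entire support of both measures. Hence $\mu_{\mc F_k}(\wh h) = \mu_{\mc F_k}(\wh{h'})$ and $\mu^\pl_\psi(\wh h) = \mu^\pl_\psi(\wh{h'})$, and the unconditional corollary applied to $h'$ upgrades to $\lim_k \mu_{\mc F_k}(\wh h) = \mu^\pl_\psi(\wh h)$ for every $h \in \ms H(G_S)$.

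With this in hand, part (1) proceeds as follows: given bounded $A \subseteq \wh G_S \setminus \wh G_S^\temp$ and $\eps > 0$, I invoke part (1) of the lemma to obtain $h \in \ms H(G_S)$ with $\wh h \geq \1_A$ pointwise and $\mu^\pl_\psi(\wh h) \leq \eps$. Non-negativity gives $\mu_{\mc F_k}(\1_A) \leq \mu_{\mc F_k}(\wh h)$, so $\limsup_k \mu_{\mc F_k}(\1_A) \leq \eps$, and then $\eps \to 0$. Part (2) is analogous: given Riemann integrable $\wh f$ on $\wh G_{S,\psi}^\temp$ (extended by zero elsewhere) and $\eps > 0$, apply part (2) of the lemma to produce $h_1, h_2$ with $|\wh f - \wh h_1| \leq \wh h_2$ on $\wh G_{S,\psi}$ and $\mu^\pl_\psi(\wh h_2) \leq \eps$; the triangle inequality then yields
\[
\bigl|\mu_{\mc F_k}(\wh f) - \mu^\pl_\psi(\wh f)\bigr| \leq \mu_{\mc F_k}(\wh h_2) + \bigl|\mu_{\mc F_k}(\wh h_1) - \mu^\pl_\psi(\wh h_1)\bigr| + \mu^\pl_\psi(\wh h_2),
\]
and letting $k \to \infty$ and then $\eps \to 0$ closes the argument.

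The main obstacle I anticipate is the bookkeeping in the first paragraph: correctly identifying the support of $\mu_{\mc F_k}$ inside $\wh G_{S,\psi}$ requires carefully tracking the inverse/sign conventions for central characters across $\pi_\infty$ (where the coefficient $\om_\xi$ enters via $\tr\xi(\gamma)/\dim\xi = \om_\xi^{-1}(\gamma)$), $\pi^{S,\infty}$, and $\pi_S$, propagated from Section \ref{SSide} through the main term formula of Proposition \ref{wtbound}. Once that matching is verified, the three-epsilon skeleton is essentially \cite[cor 9.22]{ST16} transcribed to the present setting.
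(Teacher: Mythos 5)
Your proposal is correct and takes exactly the route the paper indicates (the paper only cites the ``$3\eps$''-argument of \cite[cor.~9.22]{ST16} without spelling it out). The only substantive step you add beyond the bare citation is the bridge from the approximation lemma's $h \in \ms H(G_S)$ to the unconditional corollary's required input $h' = \overline{h}_{\chi_S} \in \ms H(G_S,\chi_S)$ via the observation that $\wh h = \wh{h'}$ on the common support $\wh G_{S,\psi}$, and that bookkeeping checks out.
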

Beware that part (1) does not give a Ramanujan conjecture at $S$  on average; it cannot count that the total number of $\pi$ in $\mc F$ with non-tempered $\pi_S$ is $O(m(\xi_k))^{-1}$ since $A$ needs to be bounded. It is nevertheless somewhat close.}

\subsection{Sato-Tate Equidistribution}
For this section we need to slightly modify our notation. Allow $S_1$ to be infinite and define modified measure
\[
\mu_{\mc F_k,v}^\natural(\wh \varphi_v)  = \f{\bar \mu^\can(U^{S, \infty}_\mf X)|\Pi_\disc(\xi_k)|}{\tau'(G) \dim(\xi_k)} \sum_{\pi \in \mc{AR}_\disc(G, \chi)} a_{\mc F_k}(\pi) \wh \varphi_{S_0}(\pi_{S_0}) \wh \varphi_v(\pi_v) 
\]
for any $v \in S_1$. Then $\mu_{k,v}^\natural(\wh \varphi_v)$ can still be picked out by a test function $\varphi$ of the form we have been considering by setting $\varphi_w = \1_{K_w}$ for all $w \in S_1 \setminus v$. 
\subsubsection{Sato-Tate measures}
We recall the definition of the Sato-Tate measure from \cite[\S3,\S5]{ST16}. Recall the Satake isomorphism $\ms H(G_v, K_v) \to \C[X_*(A)]^{\Om_F}$ in the notation of section \ref{funlem} and how it identifies $\wh G_v^{\ur, \temp}$ with $\Om_{F_v} \bs \wh A_c$. 

We can find a maximal compact $\wh K$ of $\wh G$ invariant under $\Frob_v$. Then since $G_v$ is unramified, $\Om_{F_v} \bs \wh A_c$ can be identified with the $\wh G$ classes in $\wh K \rtimes \Frob_v \subseteq \Ld G$ and also $\wh T_{c, v} = \Om_{F_v} \bs \wh T_c/(\Frob_v - \id) \wh T_c$ (see \cite[lem 3.2]{ST16}).   

In general, let $G$ split over $F_1$ and let $\Gamma_1 = \Gal(F_1/F)$. Given $\Theta \in \Gamma_1$, define
\[
\wh T_{c, \Theta} = \Om_G^\Theta \bs \wh T_c/(\Theta - \id) \wh T_c.
\]
Given $\tau \in \Gamma_1$, $t \mapsto \tau t$ canonically identifies $T_{c, \Theta}$ with $T_{c, \tau \Theta \tau^{-1}}$. All these identifications are consistent with each other so $T_{c, \Theta}$ depends only on the $\Gamma_1$-conjugacy class of $\Theta$. Note then that $\wh T_{c, \Frob_v} = \wh T_{c, v}$ since $G_v$ is quasisplit.

Choose the Haar measure on $\wh K$ with total volume $1$. This induces a quotient measure on the set of conjugacy classes in $\wh K \rtimes \Theta$ and therefore on $\wh T_{c, \Theta}$. Call this $\mu^\ST_\Theta$.
Finally, let $\mc V_F(\Theta)$ be the set of places $v$ such that $F_1$ is unramified at $v$ and $\Frob_v$ is in the conjugacy class of $\Theta$. For such a $v$, we get a measure $\mu_v^{\pl,\ur}$ from the identification $T_{c, \Theta}$ with $\wh G_v^{\ur, \temp}$. Normalize this to also have total volume $1$. 

\begin{prop}[{\cite[prop 5.3]{ST16}}]
For any $\Theta \in [\Gamma_1]$, let $v \to \infty$ in $\mc V_F(\Theta)$. Then there is weak convergence $\mu_v^{\pl,\ur} \to \mu^\ST_\Theta$. 
\end{prop}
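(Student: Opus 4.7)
The plan is to express both measures as explicit continuous densities on the compact torus $\wh T_{c,\Theta}$ and show that the density of $\mu_v^{\pl,\ur}$ converges uniformly to that of $\mu^\ST_\Theta$ as $q_v \to \infty$. First, I would invoke Macdonald's formula (as in Macdonald's \emph{Spherical Functions on a Group of $p$-adic Type}, or the summary in \cite[\S3]{ST16}) for the Plancherel measure on the unramified tempered dual of the quasisplit group $G_v$. Under the Satake identification of $\wh G_v^{\ur,\temp}$ with $\wh T_{c,v}$, the measure takes the form
\[
d\mu_v^{\pl,\ur}(t) = C_v\, |c_v(t)|^{-2}\, dt,
\]
where $dt$ is the normalized Haar measure on the compact torus, $c_v(t)$ is the Harish-Chandra $c$-function, which is a product over $\Frob_v$-orbits of positive roots of factors of the form $\frac{1-q_v^{-1}\alpha(t)}{1-\alpha(t)}$ (suitably modified for the non-reduced case, as in \cite{CCH19}), and $C_v$ is the explicit volume normalization making the total mass $1$.

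Next, I would rewrite the Sato-Tate measure using the Weyl integration formula on the compact extension $\wh K \rtimes \Theta$: the image in $\wh T_{c,\Theta}$ of the (normalized) quotient Haar measure on conjugacy classes has density
\[
d\mu^\ST_\Theta(t) = C_\infty\, \prod_{\alpha \in \Phi^+_\Theta}|1-\alpha(t)|\, dt,
\]
where the product runs over a set of representatives for $\langle\Theta\rangle$-orbits on the positive roots that survive the quotient by $(\Theta-\id)\wh T$ (these are precisely the ``restricted'' roots relative to $\Theta$). The hypothesis $v \in \mc V_F(\Theta)$ guarantees that $\Frob_v$ and $\Theta$ are conjugate in $\Gamma_1$, so the canonical identification $\wh T_{c,v} \cong \wh T_{c,\Theta}$ carries $\Frob_v$-orbits of roots bijectively onto $\Theta$-orbits, matching the two product structures above.

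With this alignment, the comparison is essentially factorwise: on $\wh T_c$ we have $|\alpha(t)|=1$, so each Macdonald factor $|1-q_v^{-1}\alpha(t)|^2$ tends uniformly to $1$ as $q_v \to \infty$, leaving exactly the factor $|1-\alpha(t)|^2$ in the denominator, whose square root (after telescoping with the $c$-function structure) assembles into $\prod |1-\alpha(t)|$. For the normalizing constant, I would use Gross's motivic volume formula \cite{Gro97} together with the mass formula for $\wh K^{\Theta}$: both $C_v$ and $C_\infty$ can be written as products of cyclotomic-type factors in $q_v$ (resp.\ their leading terms), and the ratios $|\wh G^{F_v}|^{-1}$ versus $(\dim \wh K)$-volume normalization force $C_v/C_\infty \to 1$. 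Together, this yields uniform convergence of densities on the compact space $\wh T_{c,\Theta}$, which immediately implies weak convergence of the measures.

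The main obstacle, and the only place where real care is required, is the matching of Galois/root-orbit combinatorics: one must verify that the $\Frob_v$-orbit structure entering Macdonald's product collapses in the $q_v \to \infty$ limit to precisely the $\Theta$-restricted root system governing $\mu^\ST_\Theta$, with no extra or missing factors from coroots lying in $(\Theta-\id)\wh T$. This bookkeeping is the content of identifying $\wh T_{c,v}$ with $\wh T_{c,\Theta}$ compatibly with both measures, and is exactly where the conjugacy class hypothesis on $\Frob_v$ is used; once this identification is pinned down, the analytic limit is immediate from the Macdonald formula.
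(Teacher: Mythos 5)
Your proposal is correct and follows essentially the same route as the paper: the paper's proof is literally a one-line citation to the explicit Macdonald density \cite[prop 3.3]{ST16} and the explicit Sato--Tate density \cite[lem 5.2]{ST16}, and you have reconstructed exactly the content of those two formulas and the observation that the Macdonald $c$-function factors degenerate termwise as $q_v \to \infty$ to the Weyl-integration factors. The only blemish is a bookkeeping slip in the Sato--Tate density (the product over $\Theta$-orbits of positive roots should appear to the second power, so that it matches $|c_v(t)|^{-2} \to \prod_{\alpha>0}|1-\alpha(t)|^2$ directly; the "square root after telescoping" patch is not needed and does not quite parse), but the overall comparison of densities, the role of the conjugacy-class hypothesis in aligning $\Frob_v$-orbits with $\Theta$-orbits, and the uniform-convergence-of-densities-implies-weak-convergence conclusion are all exactly as intended.
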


\begin{proof}
by the explicit formulas \cite[prop 3.3]{ST16} and \cite[lem 5.2]{ST16}
\end{proof}

\subsubsection{Central character issues}
Recall all the notation from proposition \ref{wtbound}. Our result is in terms of $E(\wh \varphi | \om_\xi, L, \chi_S)$ instead of $\mu_v^{\pl, \ur}$ so we need to define an alternate Sato-Tate measure in terms of this. First, we need to understand $E^{\pl, \ur}_v$ better.   

There is a central character map $T_{c, \Theta} \to \wh Z_{G_v}$. This lets us define $E^{\ST, \Theta}(\wh \varphi | \om)$ for any $\wh \varphi$ on $T_{c, \Theta}$ similar to $E^{\pl, \ur}_v(\wh \varphi | \om)$ from section \ref{cft}. Now Langlands for tori gives that $\wh Z_{G_v}$ is the set of $L$-parameters $\varphi : W_{F_v}^\ur \into \Ld{(Z_G)}_{F_v}^\ur$. If $\Frob_v$ and $\Frob_w$ are conjugate in $\Gamma_1$, we can identify the set of these parameters and therefore $\wh Z_{G_v}$ and $\wh Z_{G_w}$. For $v \in \mc V_F(\Theta)$, call this common set $\wh Z_\Theta$. Note that these identifications commute with the identifications of $\wh T_{c, v}$ and the map taking central characters.
\begin{lem}
Fix a common measure on $\wh Z_\Theta$. Choose $\wh \varphi_\Theta$ on $\wh T_{c, \Theta}$. Then $E^{\pl, \ur}_v(\wh \varphi | \om) \to E^{\ST, \Theta}(\wh \varphi | \om)$ pointwise for $\om \in \wh Z_\Theta$.
\end{lem}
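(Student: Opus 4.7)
The plan is to reinterpret both $E^{\pl,\ur}_v(\wh\varphi|\cdot)$ and $E^{\ST,\Theta}(\wh\varphi|\cdot)$ as continuous functions on $\wh Z_\Theta$ via the Radon-Nikodym formulation of section \ref{cft}, establish weak convergence of the densities directly from the weak convergence $\mu^{\pl,\ur}_v \to \mu^{\ST,\Theta}$ on $\wh T_{c,\Theta}$, and then upgrade this to pointwise convergence using either explicit Satake-type formulas or an equicontinuity argument.

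\paragraph{}
First, I would apply corollary \ref{centerft} to each unramified $G_v$ (for $v \in \mc V_F(\Theta)$ large enough): this yields a continuous representative of $E^{\pl,\ur}_v(\wh\varphi|\cdot)$, realized as the restricted Fourier transform $\bar\varphi_v$ of the inverse Satake of $\wh\varphi$, viewed on $\wh T_{c,v}$ through the canonical identification $\wh T_{c,v} \cong \wh T_{c,\Theta}$. Repeating the Radon-Nikodym derivation from section \ref{cft}, now with $\mu^{\ST,\Theta}$ in place of Plancherel measure and the continuous central character map $P: \wh T_{c,\Theta} \to \wh Z_\Theta$ (which is well-defined and continuous because it is induced from restriction $\wh T_c \to \wh Z$ and compatible with the quotient defining $\wh T_{c,\Theta}$), shows that $E^{\ST,\Theta}(\wh\varphi|\cdot)$ likewise has a continuous representative.

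\paragraph{}
Second, for any continuous compactly supported test function $\psi$ on $\wh Z_\Theta$, the pullback $(\psi \circ P)\wh\varphi$ is a continuous function on the compact space $\wh T_{c,\Theta}$. By Fubini and the definition of pushforward:
\[
\int_{\wh Z_\Theta} \psi(\om) E^{\pl,\ur}_v(\wh\varphi|\om)\, d\om = \int_{\wh T_{c,\Theta}} (\psi \circ P)(\pi) \wh\varphi(\pi) \, d\mu^{\pl,\ur}_v(\pi),
\]
and similarly for the Sato-Tate side. Applying \cite[prop 5.3]{ST16} to the continuous test function $(\psi \circ P)\wh\varphi$ on $\wh T_{c,\Theta}$ gives convergence of the right-hand sides, hence weak convergence of the density measures $E^{\pl,\ur}_v(\wh\varphi|\om)\,d\om \to E^{\ST,\Theta}(\wh\varphi|\om)\,d\om$ on $\wh Z_\Theta$.

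\paragraph{}
Third---and this is where the main obstacle lies---I need to upgrade weak convergence of the two continuous densities to pointwise convergence. The cleanest route is to expand $\bar\varphi_v(\om)$ explicitly using the Satake basis of section \ref{unramformulasection}: since $\wh\varphi$ on $\wh T_{c,\Theta}$ corresponds to a fixed function on the cocharacter lattice $X_*(A)$ (independent of $v$ via the identification), the coefficient of each lattice element in $\varphi_v|_{Z_{G_v}}$ depends on $q_v$ in a controlled polynomial way whose leading-order limit as $q_v \to \infty$ matches the corresponding Sato-Tate coefficient, computed by a parallel Fourier-inversion argument on $\wh T_{c,\Theta}$ against $\mu^{\ST,\Theta}$. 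The hard part is matching the normalizations on the common measure on $\wh Z_\Theta$ consistently on both sides so that the leading-order term really equals $E^{\ST,\Theta}(\wh\varphi|\om)$. As an alternative that sidesteps explicit formulas, one could instead establish a uniform equicontinuity bound on $\{E^{\pl,\ur}_v(\wh\varphi|\cdot)\}_{v}$ on compact subsets of $\wh Z_\Theta$ (which is plausible because the family of inverse-Satake functions has uniformly controlled support through the fixed $\wh\varphi$), and then combine this with the weak convergence of the previous paragraph via Arzelà-Ascoli to deduce uniform and hence pointwise convergence.
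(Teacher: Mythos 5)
Your second alternative in the final paragraph is essentially the paper's proof: the paper gets weak convergence (in $L^2(\wh Z_\Theta)$, from \cite[prop 5.3]{ST16}, as in your Fubini/pushforward computation) and then upgrades to pointwise convergence by \emph{equicontinuity} of the family $\{E^{\pl,\ur}_v(\wh\varphi\,|\,\cdot)\}_v$. So the architecture you describe is the right one, and your first two paragraphs correctly set up the Radon--Nikodym interpretation and the weak convergence step.

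The gap is in the justification of equicontinuity. You write that it is ``plausible because the family of inverse-Satake functions has uniformly controlled support through the fixed $\wh\varphi$,'' but bounded support of $\varphi_v$ in $X_*(A)$ is not by itself enough: $E^{\pl,\ur}_v(\wh\varphi\,|\,\om) = \bar\varphi_v(\om)$ is a finite linear combination of characters of $\wh Z_\Theta$ whose \emph{coefficients} are values $\varphi_v(\zeta(\varpi_v))$, and these depend on $q_v$. To conclude equicontinuity one needs uniform control of these coefficients as $q_v \to \infty$, and that is precisely what the paper extracts from the explicit Macdonald-type formula \cite[prop 3.3]{ST16} for the unramified tempered Plancherel density: the density is a rational function of the Satake parameter with $q_v$-dependent coefficients that stay bounded (and in fact converge), from which the pushforward densities are an equicontinuous family of trigonometric polynomials of bounded degree. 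Your first alternative (direct term-by-term comparison of Fourier coefficients) would also work but, as you note yourself, you leave the normalization-matching unresolved, so neither route is actually closed in your write-up. To complete the argument you should either invoke the Macdonald formula from \cite[prop 3.3]{ST16} to establish equicontinuity, or carry out the explicit coefficient comparison.
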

\begin{proof}
The previous result gives weak convergence $E^{\pl, \ur}_v(\wh \varphi_\Theta | \om) \to E^{\ST, \Theta}(\wh \varphi_\Theta | \om)$ in $L^2(\wh Z_\Theta)$. By the formula \cite[prop 3.3]{ST16}, the $E^{\pl, \ur}_v(\wh \varphi | \om)$ are equicontinuous so this implies pointwise convergence.
\end{proof} 

To understand the more complicated $E(\wh \varphi | \om_\xi, L, \chi_S)$, we now have to parametrize $Z_{S, \xi, L, \chi}$ in terms of local components. Assume $\om_S = \om_{S_1} \om_{S_0} \in Z_{S, \xi, L, \chi}$: i.e. $\om_S \om_\xi = 1$ on $L$ and $\om_S|_{\mf X_S} = \chi_S$. Assume also that $\om_{S_1}$ is unramified. Let $L_0 = L \cap K_{S_1}$. It is a cocompact lattice in $Z_{S_0}$. Then we always have that $\om_{S_0} \om_\xi = 1$ on $L_0$ and that $\om_{S_0} |_{\mf X_{S_0}} = \chi_{S_0}$. 

Given such $\om_{S_0}$, it forces $\om_{S_1} = \om_{S_0}^{-1} \om_\xi^{-1}$ on $L$. The determined $\om_{S_1}$ is trivial on $L \cap K_{S_1}$ and therefore extends to a continuous character on $\overline L \subseteq Z_{S_1}$. Therefore, the possible choices for $\om_{S_1}$ are those that restrict to $\om_{S_0}^{-1} \om_\xi^{-1}$ on $L$, restrict to $\chi_{S_1}$ on $\mf X_{S_1}$, and are unramified. 

Let $E_{S_1}$ be the group $\overline L K_{S_1} \mf X_{S_1}$. Since $Z_{S_1}/E_{S_1}$ is finite, there are finitely many choices for $\om_{S_1}$ and we can factor
\[
\sum_{\substack{\om_S \in \wh Z_S \\ \om_S \om_\xi(L) = 1 \\ \om_S|_{\mf X_S} = \chi_S}} E^\pl(\wh \varphi_S | \om_S) = \sum_{\substack{\om_{S_0} \in \wh Z_{S_0} \\ \om_{S_0} \om_\xi(L_0) = 1 \\ \om_{S_0}|_{\mf X_{S_0}} = \chi_{S_0}}} E^\pl(\wh \varphi_{S_0} | \om_{S_0}) \sum_{\substack{\om_S \in \wh Z_{S_1}^\ur \\ \om_{S_1} \om_{S_0} \om_\xi(L) = 1 \\ \om_{S_1}|_{\mf X_{S_1}} = \chi_{S_1}}} E^\pl(\wh \varphi_{S_1} | \om_{S_1}).
\]
To compute $\mu_{k,v}^\natural$, we consider $\varphi_{S_1} = \1_{K_{S_1 \setminus v}} \varphi_v$ so
\[
E^\pl(\wh \varphi_{S_1} | \om_{S_1}) = E^\pl(\wh \varphi_v | \om_v) \prod_{w \in S_1 \setminus v} \vol(Z_w \cap K_w) = \f{\vol(Z_{S_1} \cap K_{S_1})}{\vol(Z_v \cap K_v)} E^\pl(\wh \varphi_v | \om_v).
\]

Let the set of summands for the second sum be $\wh Z_{v, \om_{S_0}, \chi_v} \subseteq \wh Z_v$ and let $\om_S \in \wh Z_{v, \om_{S_0}, \chi_v}$. The possible $\om_v$ components are those satisfying two conditions: $\om_v \om_{S_0} \om_\xi$ extends continuously to $\overline L \subseteq Z_{S \setminus v}$, and $\om_v|_{\mf X_v} = \chi_v$. The first condition is equivalent to $\om_v$ being the $F_v$-component of a character $\om$ on $Z_G(\A)/Z_G(F)$ trivial on $U^{S, \infty}$ that also has $F_{S_0, \infty}$-component $\om_{S_0} \om_\xi$. 

Next, by global Langlands for tori, this is equivalent to its parameter $\psi_{\om_v} : W_{F_v} \to \Ld{(Z_G)}_{F_v}$ being a restriction of a global parameter $\psi_\om : W_F \to \Ld Z_G$ satisfying certain conditions. However, if $\Frob_w$ is conjugate to $\Frob_w$, then $\psi_\om|_{W_{F_w}}$ is the transport of $\psi_\om|_{W_{F_v}}$ through the identification before. In particular, if we identify all the $\wh Z_v$ for $v \in \mc V_F(\Theta) \cap S_1$, $\wh Z_{v, \om_{S_0}, \chi_v}$ depends on $v$ only through $\Theta$. Call the common value $\wh Z_{\Theta, \om_{S_0}, \chi_v} \subseteq \wh Z_\Theta$. 

In total, if we set $\varphi_{S_1} = \1_{K_{S_1 \setminus v}} \varphi_v$ for some $v \in \mc V_F(\Theta) \cap S_1$,
\begin{multline*}
E(\wh \varphi_S | \om_\xi, L, \chi_S) = \f1{|X|} \f{\mu}{\vol(Z'_{S, \infty}/L)} \f{\vol(Z_{S_1} \cap K_{S_1})}{\vol(Z_v \cap K_v)} \\
 \sum_{\substack{\om_{S_0} \in \wh Z_{S_0} \\ \om_{S_0} \om_\xi(L_0) = 1 \\ \om_{S_0}|_{\mf X_{S_0}} = \chi_{S_0}}} E^\pl(\wh \varphi_{S_0} | \om_{S_0}) \sum_{\om_v \in \wh Z_{\Theta, \om_{S_0}, \chi_v}} E^\pl(\wh \varphi_v | \om_v).
\end{multline*}
This allows us to define an $E_{\ST, \Theta}(\wh \varphi_v | \om_\xi, L, \chi_S, \wh \varphi_{S_0})$ analogously:
\begin{multline*}
E_{\ST, \Theta}(\wh \varphi_S | \om_\xi, L, \chi_S, \wh \varphi_{S_0}) = \f1{|X|} \f{\mu}{\vol(Z'_{S, \infty}/L)} \f{\vol(Z_{S_1} \cap K_{S_1})}{\vol(Z_v \cap K_v)} \\
 \sum_{\substack{\om_{S_0} \in \wh Z_{S_0} \\ \om_{S_0} \om_\xi(L_0) = 1 \\ \om_{S_0}|_{\mf X_{S_0}} = \chi_{S_0}}} E^\pl(\wh \varphi_{S_0} | \om_{S_0}) \sum_{\om_v \in \wh Z_{\Theta, \om_{S_0}, \chi_v}} E^{\ST, \Theta}(\wh \varphi_v | \om_v).
\end{multline*}
 Then we get:
\begin{prop}
Choose a sequence $v \to \infty$ in $\mc V_F(\Theta) \cap S_1$ such that the characters $\chi_v$ all correspond in $\wh{\mf X}_\Theta$. Choose $\wh \varphi_\Theta$ on $\wh T_{c, \Theta}$. Then
\[
E(\wh \1_{K_{S_1 \setminus v}} \wh \varphi_\Theta \wh \varphi_{S_0} | \om_\xi, L, \chi_S, ) \to E_{\ST, \Theta}(\wh \varphi_\Theta | \om_\xi, L, \chi_S, \wh \varphi_{S_0}).
\]
\end{prop}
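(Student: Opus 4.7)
The plan is to take the two factored formulas for $E(\wh{\mathbf 1}_{K_{S_1 \setminus v}} \wh\varphi_\Theta \wh\varphi_{S_0} \mid \om_\xi, L, \chi_S)$ and $E_{\ST,\Theta}(\wh\varphi_\Theta \mid \om_\xi, L, \chi_S, \wh\varphi_{S_0})$ derived just above the statement, observe that they differ only in replacing each inner term $E^{\pl}(\wh\varphi_v \mid \om_v)$ by $E^{\ST,\Theta}(\wh\varphi_\Theta \mid \om_v)$, and then reduce to the pointwise convergence lemma $E^{\pl,\ur}_v(\wh\varphi \mid \om) \to E^{\ST,\Theta}(\wh\varphi \mid \om)$ proven earlier.

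First I would verify that the ``outer'' data matches for every $v$ in the sequence: the constants $|X|^{-1}$, $\mu$, $\vol(Z'_{S,\infty}/L)^{-1}$, and $\vol(Z_{S_1} \cap K_{S_1})/\vol(Z_v \cap K_v)$ are identical (as prefactors) in the two expressions, so they play no role in the limit. Next, the outer sum over $\om_{S_0}$ in both formulas is identical, indexed by the finite set of $\om_{S_0} \in \wh Z_{S_0}$ with $\om_{S_0}\om_\xi|_{L_0} = 1$ and $\om_{S_0}|_{\mf X_{S_0}} = \chi_{S_0}$, and each coefficient $E^\pl(\wh\varphi_{S_0} \mid \om_{S_0})$ is independent of $v$. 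Since this is a finite sum, it suffices to prove the convergence after fixing one $\om_{S_0}$.

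For a fixed $\om_{S_0}$, the inner sums run over $\wh Z_{\Theta,\om_{S_0},\chi_v}$. Here I would use the key observation from the paragraph defining this set: once we identify all $\wh Z_v$ for $v \in \mc V_F(\Theta) \cap S_1$ via global Langlands for tori (using that conjugate Frobenii transport local parameters consistently), the set $\wh Z_{\Theta,\om_{S_0},\chi_v}$ is determined purely by $\Theta$, $\om_{S_0}$, and the common character in $\wh{\mf X}_\Theta$ to which all $\chi_v$ correspond. The hypothesis that ``the $\chi_v$ all correspond in $\wh{\mf X}_\Theta$'' is exactly what makes the index set of the inner sum $v$-independent for $v$ large in the sequence. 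Thus the inner sum is a finite sum with a fixed index set, and we may pass the limit inside.

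The last step is then termwise: for each fixed $\om_v$ in that finite common index set, the pointwise convergence $E^\pl(\wh\varphi_v \mid \om_v) \to E^{\ST,\Theta}(\wh\varphi_\Theta \mid \om_v)$ is precisely the lemma established in the previous subsubsection. Summing the finitely many terms and reinstating the prefactors and the outer sum yields the claimed convergence. The main (minor) obstacle is bookkeeping the identifications: one must check that the transport maps $\wh T_{c,v} \cong \wh T_{c,\Theta}$, $\wh Z_v \cong \wh Z_\Theta$, and the central character map are all compatible, so that ``the same $\om_v$'' on the two sides really refers to the same point and produces the same value of $\wh\varphi_\Theta$. This compatibility was however built into the constructions earlier, so no new work is required beyond tracking the identifications carefully.
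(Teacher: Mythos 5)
Your proposal is correct and takes the same approach as the paper: both read off the convergence from the two factored formulas for $E$ and $E_{\ST,\Theta}$ derived just above the statement, reducing to the pointwise convergence lemma $E^{\pl,\ur}_v \to E^{\ST,\Theta}$ applied to each term of the (finite, $v$-independent) double sum. The paper compresses this into one sentence, but also explicitly records one point you treat as an afterthought: to assert that $\vol(Z_v \cap K_v)$ is genuinely constant along $v \in \mc V_F(\Theta) \cap S_1$ (and hence that $E_{\ST,\Theta}$ is a well-defined $v$-independent quantity to converge to), one should fix a single measure on $\wh Z_\Theta$ and transport it to each $\wh Z_v$; this is also what makes the Plancherel densities $E^\pl_v(\cdot\,|\,\om_v)$ and $E^{\ST,\Theta}(\cdot\,|\,\om_v)$ live with respect to the same reference measure, as the cited lemma already presupposes. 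You gesture at this in the closing paragraph (``bookkeeping the identifications''), but it is worth stating it as the hypothesis it is rather than a prefactor that ``plays no role in the limit'' — without fixing that measure the prefactor $\vol(Z_{S_1}\cap K_{S_1})/\vol(Z_v\cap K_v)$ is not a priori $v$-independent.
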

\begin{proof}
Use the above formula for $E_\ST$ and $E$ together with the previous lemma. We can compute both sides by fixing a common measure on $\wh Z_\Theta$ which makes $\vol(Z_v \cap K_v)$ constant on $v \in \mc V_F(\Theta)$. 
\end{proof}
This is a replacement for \cite[prop 5.3]{ST16} in our case.

\subsubsection{Final Statment}
Arguing as in \cite[thm 9.26]{ST16}, we get the full corollary. Note that remark 9.5 in \cite{ST16} removes the dependence on Sauvageot's result. 
\begin{cor}[Sato-Tate equidistribution up to central character]\label{SatoTate}
Choose a sequence $v_j \to \infty$ in $\mc V_F(\Theta) \cap S_1$ such that the characters $\chi_v$ all correspond in $\wh{\mf X}_\Theta$. Choose a Riemann integrable function $\wh f_\Theta$  on $\wh T_{c, \Theta}$. Then
\[
\lim_{(j,k) \to \infty} \mu_{\mc F_k, v_j}^\natural(\wh f_\Theta) = E_{\ST, \Theta}(\wh f_\Theta | \om_\xi, L, \chi_S, \wh \varphi_{S_0})
\]
where the limit is over any sequence of pairs $(j,k)$ such that $q_{v_j}^N m(\xi_k)^{-1} \to 0$ for all integers $N$. 
\end{cor}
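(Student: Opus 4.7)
My plan is to reduce to the case of a test function in $\ms H(G_{v_j}, K_{v_j})$, apply the main result and the preceding proposition, and then extend to Riemann integrable $\wh f_\Theta$ by a density/sandwich argument. Concretely, for a test function $\varphi_{v_j} \in \ms H(G_{v_j}, K_{v_j})^{\leq \kappa}$ (with $\chi_{v_j}$ trivial since $\wh f_\Theta$ lives on the unramified tempered spectrum), set $\varphi_{S_1} = \1_{K_{S_1\setminus v_j}} \varphi_{v_j}$ so that $\mu^\natural_{\mc F_k, v_j}(\wh \varphi_{v_j})$ is exactly the quantity $\mu_{\mc F_k}(\wh \varphi_S)$ appearing in Theorem \ref{mainresult}, up to the factor $\vol(Z_{S_1}\cap K_{S_1})/\vol(Z_{v_j}\cap K_{v_j})$ absorbed by $E_{\ST,\Theta}$. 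Then Theorem \ref{mainresult} gives
\[
\mu^\natural_{\mc F_k, v_j}(\wh \varphi_{v_j}) = E(\wh \1_{K_{S_1\setminus v_j}} \wh \varphi_{v_j} \wh \varphi_{S_0} \mid \om_\xi, L, \chi_S) + O\!\bigl(q_{v_j}^{A'_\wt + B'_\wt \kappa} m(\xi_k)^{-1}\bigr),
\]
and the preceding proposition gives that the leading term converges to $E_{\ST, \Theta}(\wh \varphi_{v_j} \mid \om_\xi, L, \chi_S, \wh \varphi_{S_0})$ as $j \to \infty$. Under the joint hypothesis $q_{v_j}^N m(\xi_k)^{-1} \to 0$ for every $N$, the error vanishes for any fixed $\kappa$.

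Next I will upgrade from $\varphi_{v_j}$ of bounded Satake support to Riemann integrable $\wh f_\Theta$ on the compact space $\wh T_{c, \Theta}$, avoiding Sauvageot as in Remark 9.5 of \cite{ST16}. The key point is that $\wh T_{c,\Theta}$ is compact and the characters $\tr \pi_{\lb, 1}|_{\wh K \rtimes \Theta}$ for $\lb \in X_*(A)^+$, obtained from $\ms H(G_{v_j}, K_{v_j})$ via the Satake isomorphism, span a point-separating $*$-subalgebra of continuous functions. By Stone-Weierstrass they are uniformly dense in $C(\wh T_{c,\Theta})$. A Riemann integrable function is, by definition, squeezable between continuous $\wh g_1 \leq \wh f_\Theta \leq \wh g_2$ with $\mu^\ST_\Theta(\wh g_2 - \wh g_1) < \eps$; since $E_{\ST, \Theta}(\cdot \mid \dotsc)$ is a finite positive sum of central-character-weighted versions of $\mu^\ST_\Theta$-style measures, we likewise get $E_{\ST,\Theta}(\wh g_2 - \wh g_1 \mid \dotsc) < C\eps$ uniformly in $\Theta$. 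Approximating each $\wh g_i$ uniformly by a finite $\wh h_i$ in the span of characters (so $\wh h_i = \wh \psi_i$ for some $\psi_i \in \ms H(G_{v_j}, K_{v_j})^{\leq \kappa_i}$ with $\kappa_i$ depending only on $\eps$, not on $v_j$) reduces the problem to the case already handled.

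The final step is a standard $3\eps$ argument: choose $\kappa = \max(\kappa_1, \kappa_2)$ depending only on $\eps$; for this fixed $\kappa$ the error term in Theorem \ref{mainresult} tends to zero along the prescribed sequence of $(j,k)$; the preceding proposition converts $E$ into $E_{\ST, \Theta}$ as $j \to \infty$; and the sandwich bound $\wh h_1 - C'\eps \leq \wh f_\Theta \leq \wh h_2 + C'\eps$ propagates through both $\mu^\natural_{\mc F_k, v_j}$ and $E_{\ST, \Theta}$ with uniformly bounded distortion, using positivity and total mass bounds.

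The main obstacle is controlling the positivity/monotonicity passage through $\mu^\natural_{\mc F_k, v_j}$: we need $\mu^\natural_{\mc F_k, v_j}(\wh h_2 - \wh h_1)$ itself to be bounded by a constant times $\eps$ plus a vanishing error, and this requires observing that $\wh h_2 - \wh h_1$ is itself of the form $\wh \psi$ for $\psi \in \ms H(G_{v_j}, K_{v_j})^{\leq \kappa}$ with $\|\psi\|_\infty$ controlled; the main term for such $\psi$ is bounded by $E_{\ST, \Theta}(\wh h_2 - \wh h_1 \mid \dotsc) + o(1) < C\eps$ by the preceding proposition applied to $\wh h_2 - \wh h_1$ itself, and the error is $o(1)$ by the joint limit hypothesis. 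This produces the requisite uniform bound and completes the proof.
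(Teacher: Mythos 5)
Your overall structure is right and tracks the paper's intended route, which is to argue "as in \cite[thm 9.26]{ST16}" with \cite[rem. 9.5]{ST16} avoiding Sauvageot. The skeleton is: (i) apply Theorem \ref{mainresult} for $\varphi_{v_j}$ in a truncated Hecke algebra with $\kappa$ depending only on the target accuracy, (ii) use the preceding proposition to pass from $E(\cdots)$ to $E_{\ST,\Theta}(\cdots)$ as $v_j \to \infty$, and (iii) approximate a Riemann integrable $\wh f_\Theta$ via a sandwich using Stone–Weierstrass on the compact space $\wh T_{c,\Theta}$. All of this is correct in outline, and your observation that the truncation parameter $\kappa$ can be taken independent of $v_j$ because the Satake image spans the same characters for all $v \in \mc V_F(\Theta)$ is the right point.

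However, there is a genuine gap in your positivity/monotonicity step. The sandwich $\wh h_1 \leq \wh f_\Theta \leq \wh h_2$ is established only on $\wh T_{c,\Theta}$, the tempered unramified locus, while the spectral sum defining $\mu^\natural_{\mc F_k, v_j}$ ranges over all $\pi \in \mc{AR}_\disc(G, \chi)$ — in particular over $\pi$ with $\pi_{v_j}$ unramified but non-tempered. The Satake transform of a Hecke element is a Laurent polynomial on $\wh A$, and the inequality $\wh h_1 \leq \wh h_2$ on the compact torus $\wh A_c$ does \emph{not} persist off the unitary-tempered locus. Consequently, applying the main theorem to $h_2 - h_1$ shows $\mu^\natural(\wh h_2 - \wh h_1) \to E_{\ST,\Theta}(\wh h_2 - \wh h_1) < C\eps$, but you cannot conclude from this that $\mu^\natural_{\temp}(\wh h_2 - \wh h_1)$ is small, nor that $\mu^\natural(\wh h_1)$ and $\mu^\natural_\temp(\wh h_1)$ are close, because the non-tempered contribution $\mu^\natural_{\text{non-temp}}(\wh h_i)$ has uncontrolled sign and magnitude. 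Since $\mu^\natural(\wh f_\Theta)$ is (under the extension-by-zero convention) the tempered piece, your chain of inequalities $\mu^\natural_\temp(\wh h_1) \leq \mu^\natural(\wh f_\Theta) \leq \mu^\natural_\temp(\wh h_2)$ is not yet related to the quantities $\mu^\natural(\wh h_i)$ that the main theorem controls. This is precisely the place where Sauvageot's density theorem (or its replacement in \cite[rem. 9.5]{ST16}) enters: one needs either majorizers $\wh h_i$ dominating $\wh f_\Theta$ on the entire unramified unitary dual with $\mu^\pl$-small difference, or a separate argument showing the mass of $\mu^\natural$ on the bounded non-tempered unramified locus is $o(1)$ in the diagonal limit. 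You assert "positivity and total mass bounds" without producing either, and the proposal is incomplete at exactly this point. A correct version should either invoke Sauvageot in the form used for Corollary \ref{plancherel}, or reproduce the specific test-function construction of \cite[rem. 9.5]{ST16} that bounds the non-tempered mass unconditionally in the regime $q_{v_j}^N m(\xi_k)^{-1} \to 0$.

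Two minor imprecisions worth flagging: the parenthetical claim that $\chi_{v_j}$ may be taken trivial is not quite right — $\chi_v$ is imposed by the global datum $(\mf X, \chi)$ and need only be unramified for $v \in S_1$, and the correct bookkeeping is to work in $\ms H(G_{v_j}, K_{v_j}, \chi_{v_j})^{\leq \kappa}$; and the hypothesis on the sequence $v_j$ that the $\chi_{v_j}$ correspond in $\wh{\mf X}_\Theta$ is a substantive compatibility condition that the sandwich argument should use explicitly when invoking the convergence $E(\cdots) \to E_{\ST,\Theta}(\cdots)$, since otherwise the set of $\om_v$ in the definition of $E_{\ST,\Theta}$ is not well-posed across the sequence.
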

This can be thought of as sort of a ``diagonal'' equidistribution as opposed to the ``vertical'' Plancherel equidistribution involving $\lim_{k \to \infty} \mu_{\mc F_k, v_j}^\natural(\wh f_\Theta)$ or the conjectural ``pure horizontal'' Sato-Tate equidistribution involving $\lim_{j \to \infty} \mu_{\mc F_k, v_j}^\natural(\wh f_\Theta)$.

\red{\subsection{Zeroes of $L$-functions} 
$\beta_v^\pl$ replaced by something nasty, is this worth doing??}.

\bibliography{/Users/Rahul/Documents/Rahul/Tbib}
\bibliographystyle{alpha}

\end{document}